\DeclareMathOperator{\sSet}{\mathbf{S}} % simplicial sets
\DeclareMathOperator{\ssSet}{\mathbf{SS}} % bisimplicial sets
\DeclareMathOperator{\Sets}{\mathbf{Set}} % sets
\DeclareMathOperator{\SCat}{\mathbf{SCat}} % simplicial categories 
\DeclareMathOperator{\SGpd}{\mathbf{SGpd}} % simplicial groupoids 
\DeclareMathOperator{\Kan}{\mathbf{Kan}} % Kan complexes
\DeclareMathOperator{\bL}{\mathbf{L}} % left fibrations 
\DeclareMathOperator{\bR}{\mathbf{R}} % right fibrations 
\DeclareMathOperator{\NN}{\mathbb{N}} 
\DeclareMathOperator{\sk}{\mathrm{sk}} 
\DeclareMathOperator{\Un}{\mathrm{Un}} % unstraightening 
\DeclareMathOperator{\St}{\mathrm{St}} % straightening 
\DeclareMathOperator{\ho}{\mathrm{ho}} 
\DeclareMathOperator{\cA}{\mathcal{A}}
\DeclareMathOperator{\cS}{\mathcal{S}}
\DeclareMathOperator{\map}{\mathrm{map}} 
\DeclareMathOperator{\Hom}{\mathrm{Hom}}
\renewcommand{\Re}{\mathrm{Re}}
\DeclareMathOperator{\Sing}{\mathrm{Sing}} 
\DeclareMathOperator{\Ob}{\mathrm{Ob}}
\DeclareMathOperator{\op}{\mathrm{op}}
\DeclareMathOperator{\sNerve}{\mathrm{N}_{\Delta}}
\theoremstyle{plain}
\newtheorem*{theorem*}{Theorem}
\newtheorem{theorem}{Theorem}[section]
\newtheorem{lemma}[theorem]{Lemma}
\newtheorem{proposition}[theorem]{Proposition}
\newtheorem{corollary}[theorem]{Corollary}
\theoremstyle{definition}
\newtheorem{definition}[theorem]{Definition}
\newtheorem{remark}[theorem]{Remark} 
\begin{document} 

\title[Covariant model structures and simplicial localization]{Covariant model structures and simplicial localization}
  \author[D.\ Stevenson]{Danny Stevenson}
  \address[Danny Stevenson]
  {School of Mathematical Sciences\\
  University of Adelaide\\
  Adelaide, SA 5005 \\
  Australia}
  \email{daniel.stevenson@adelaide.edu.au}

\thanks{This research was supported under the Australian
Research Council's {\sl Discovery Projects} funding scheme (project number DP120100106).}

\subjclass[2010]{55U35, 18G30, 18G55}

\begin{abstract}
In this paper we prove that for any simplicial set $B$, there is a Quillen equivalence 
between the covariant model structure on $\mathbf{S}/B$ and a certain localization of the 
projective model structure on the category of simplicial presheaves on the simplex category 
$\Delta/B$ of $B$.  We extend this result to give a new Quillen equivalence between 
this covariant model structure and the projective model structure on the category of 
simplicial presheaves on the simplicial category $\mathfrak{C}[B]$.  
We study the relationship with Lurie's straightening theorem.  
Along the way we also prove some results on localizations of simplicial 
categories and quasi-categories.    
\end{abstract}
\maketitle

\tableofcontents 

\section{Introduction}

Let $B$ be a simplicial set and let $\Delta/B$ denote the simplex category of $B$.  
The starting point of this paper is the following observation: via Dugger's interpretation of the 
projective model structure on the category of simplicial presheaves 
$[(\Delta/B)^{\op},\sSet]$ as a {\em universal homotopy theory} \cite{Dugger}, 
left Kan extension along the canonical functor $y/B\colon \Delta/B\to \sSet/B$ induces a Quillen 
adjunction  
\[
\Re \colon [(\Delta/B)^{\op},\sSet]\rightleftarrows \sSet/B\colon \Sing 
\]
for the projective model structure on $[(\Delta/B)^{\op},\sSet]$ and the covariant 
model structure on $\sSet/B$, the category of simplicial sets over $B$.  Recall that the covariant model structure, 
due to Joyal and Lurie, has as its fibrant objects the {\em left fibrations} over 
$B$.  These are the quasi-categorical analogs of discrete left 
fibrations in ordinary category theory; just as discrete left fibrations correspond to 
diagrams of sets so too do left fibrations correspond to (homotopy coherent) diagrams of {\em spaces}.  
This is the content of the following theorem due to Lurie; the {\em straightening theorem}.  

\begin{theorem*}[Theorem 2.2.1.2 \cite{HTT}]
Let $B$ be a simplicial set.  Then there is a Quillen equivalence 
\[
\St_B\colon \sSet/B\rightleftarrows [\mathfrak{C}[B],\sSet] \colon \Un_B 
\]
where the category of simplicial functors $[\mathfrak{C}[B],\sSet]$ is equipped 
with the projective model structure and $\sSet/B$ is equipped with the covariant model structure.  
\end{theorem*}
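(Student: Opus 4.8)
The plan is to deduce the straightening equivalence by factoring the comparison through the category of simplicial presheaves on $\Delta/B$, in the spirit of the adjunction $\Re\dashv\Sing$ of the introduction, and only at the end to match the outcome with Lurie's pair $(\St_B,\Un_B)$.  Recall that $\St_B$ is built by adjoining a cone point to $\mathfrak{C}[B]$: given $p\colon X\to B$ one forms the pushout of simplicial categories
\[
\cC_X \;=\; \mathfrak{C}[B]\amalg_{\mathfrak{C}[X]}\mathfrak{C}[X^{\triangleleft}],
\]
which contains $\mathfrak{C}[B]$ together with one new object, the cone point $v$, and one sets $\St_B(X)$ equal to the functor on $\mathfrak{C}[B]$ corepresented by $v$, namely $b\mapsto\Map_{\cC_X}(v,b)$; the right adjoint $\Un_B$ is then forced, with $n$-simplices over $\sigma\colon\Delta^n\to B$ given by $\Hom_{[\mathfrak{C}[B],\sSet]}(\St_B(\sigma),-)$, where $\sigma$ is read as an object of $\sSet/B$.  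Cofibrations on both sides are the monomorphisms, so $\St_B$ preserves cofibrations for formal reasons, and the content is (a) that $\St_B$ is left Quillen and (b) that the adjunction is a Quillen equivalence.

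First I would treat the presheaf side.  Using Dugger's description of $[(\Delta/B)^{\op},\sSet]_{\mathrm{proj}}$ as the universal homotopy theory built from $\Delta/B$, left Kan extension along $y/B$ gives $\Re\dashv\Sing$, with $\Re(h_\sigma)=(\Delta^n\xrightarrow{\sigma}B)$ on a representable and $\Sing(Z)(\sigma)=\map_B(\Delta^n,Z)$.  Let $\mathcal{L}$ be the class generated by the Yoneda images of the initial-vertex inclusions $\Delta^{\{0\}}\hookrightarrow\Delta^n$ over $B$; then $\Re$ carries $\mathcal{L}$ to left-anodyne maps over $B$, hence to covariant equivalences, so $\Re\dashv\Sing$ descends formally to the Bousfield localization, and it remains to see that the descended adjunction $[(\Delta/B)^{\op},\sSet]_{\mathrm{proj},\mathcal{L}}\rightleftarrows\sSet/B_{\mathrm{cov}}$ is a Quillen equivalence.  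For this I would check that $\Sing$ of a left fibration $Z\to B$ is $\mathcal{L}$-local — which amounts to the restriction $\map_B(\Delta^n,Z)\to Z_{\sigma(0)}$ being a trivial fibration, since $\Delta^{\{0\}}\hookrightarrow\Delta^n$ is left anodyne — and that the class of $\mathcal{L}$-local equivalences coincides with the class of maps carried by $\Re$ to covariant equivalences, using that $\Re$ preserves colimits, that every object of $\sSet/B$ is canonically a colimit of its simplices, and the characterization of covariant equivalences by derived mapping spaces into left fibrations.  This is the first Quillen equivalence of the abstract.

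Next I would compare $[(\Delta/B)^{\op},\sSet]_{\mathrm{proj},\mathcal{L}}$ with $[\mathfrak{C}[B],\sSet]_{\mathrm{proj}}$.  Because $\mathfrak{C}$ is a left adjoint, the cocone $\{\mathfrak{C}[\Delta^n]\to\mathfrak{C}[B]\}$ exhibits $\mathfrak{C}[B]$ as the colimit over $\Delta/B$ of the categories $\mathfrak{C}[\Delta^n]$, and picking the initial vertex of each $\mathfrak{C}[\Delta^n]$ defines a functor $F\colon\Delta/B\to\mathfrak{C}[B]$, $\sigma\mapsto\sigma(0)$.  Left Kan extension along $F$ is a left Quillen functor $[(\Delta/B)^{\op},\sSet]_{\mathrm{proj}}\to[\mathfrak{C}[B],\sSet]_{\mathrm{proj}}$ sending $h_\sigma$ to the corepresentable $\mathfrak{C}[B](\sigma(0),-)$; it inverts $\mathcal{L}$ (each initial-vertex inclusion is sent to an identity), and I would prove it becomes a Quillen equivalence after inverting $\mathcal{L}$ using Dugger's results on presenting model categories by localized diagram categories — here the object-surjectivity of $F$ is clear, while the homotopical full faithfulness of $F$ after localization reduces to a cofinality computation built on the classical fact that the last-vertex map $\sNerve(\Delta/B)\to B$ is a weak equivalence.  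Composing with the previous step produces a Quillen equivalence $\sSet/B_{\mathrm{cov}}\simeq[\mathfrak{C}[B],\sSet]_{\mathrm{proj}}$, which is the second Quillen equivalence of the abstract.

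It remains to recognize this composite as $(\St_B,\Un_B)$.  The two left adjoints need not be isomorphic, but it suffices to show that $\bL\St_B$ agrees up to natural isomorphism with the composite equivalence $\bL\,\mathrm{Lan}_F\circ\bR\Sing$ of homotopy categories obtained above; since a Quillen adjunction is a Quillen equivalence exactly when its left-derived functor is an equivalence of categories, this forces $(\St_B,\Un_B)$ to be one.  Both functors preserve homotopy colimits — the composite because $\bR\Sing$ is, in this situation, an equivalence — and both target homotopy categories are generated under homotopy colimits by the simplices $\sigma\colon\Delta^n\to B$, so it is enough to compare on a single simplex; there a bar-construction computation, using that $0$ is an initial object of $\mathfrak{C}[\Delta^n]$, shows $\St_B(\sigma)=\Map_{\cC_\sigma}(v,-)$ to be naturally weakly equivalent to $\mathfrak{C}[B](\sigma(0),-)=\mathrm{Lan}_F(h_\sigma)$, matching the composite on generators.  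I expect the main obstacle to be the third paragraph together with this final identification: pinning $\mathcal{L}$ down finely enough for Dugger's criterion to apply, and then matching it with $\St_B$, which is presented by an opaque pushout of simplicial categories rather than by a universal property, so that the combinatorics of $\mathfrak{C}[-]$ on cones and the cofinality estimates carry the real weight.  (A more classical route, closer to Lurie's own, would instead verify (a) directly from a generating set of the covariant trivial cofibrations, and then prove (b) by d\'evissage over the simplices of $B$ — showing $\St$ compatible with base change $B'\to B$ and with the skeletal filtration, reducing to $B=\Delta^n$ and then to $B=\Delta^0$, where the covariant structure on $\sSet/\Delta^0$ is Kan--Quillen, $[\mathfrak{C}[\Delta^0],\sSet]_{\mathrm{proj}}=\sSet$, and $\St_{\Delta^0}X\simeq X$ by a direct cone computation.)
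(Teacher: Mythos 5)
Your overall skeleton --- pass through simplicial presheaves on $\Delta/B$, localize at the initial-vertex maps, compare with $[\mathfrak{C}[B],\sSet]$, and identify the composite with $\St_B$ on simplices --- is close in spirit to the proof given here, but the step that carries all the weight, your third paragraph, has a genuine gap. First, the assignment $\sigma\mapsto\sigma(0)$ does not define a strict functor from $(\Delta/B)^{\op}$ (note the variance: your $F$ must be contravariant for the Kan extension you want) to $\mathfrak{C}[B]$: the natural morphism attached to $u\colon\sigma\to\sigma'$ is the vertex of $\mathfrak{C}[B](\sigma'(0),\sigma(0))$ given by the edge $\sigma'|_{\set{0,u(0)}}$, and composites of such vertices in $\mathfrak{C}[\Delta[n]]$ are unions of vertex sets, not direct edges (already over $\Delta[2]$ one gets $\set{0,1,2}\neq\set{0,2}$); other natural choices fail similarly. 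So ``left Kan extension along $F$'' is not defined as stated, and the comparison has to be routed through the strictly defined functor $\mathfrak{C}[p_B]\colon\mathfrak{C}[N(\Delta/B)]\to\mathfrak{C}[B]$ together with the DK-equivalence $\mathfrak{C}[N(\Delta/B)]\to\Delta/B$ of Proposition~\ref{prop:Riehl}, which is exactly the detour taken in Sections~\ref{subsec:reversed straightening} and~\ref{subsec:proof of straightening thm}. Second, and more seriously, your justification that the comparison becomes a Quillen equivalence after inverting $\mathcal{L}$ --- ``Dugger's results'' plus ``the classical fact that the last-vertex map $N(\Delta/B)\to B$ is a weak equivalence'' --- cannot carry it. That classical fact is a weak homotopy equivalence statement and controls no mapping spaces; what is needed is that $p_B$ exhibits $B$ as a localization of $N(\Delta/B)$ at the last-vertex maps in the quasi-categorical sense (Theorem~\ref{thm:Joyals flattening thm}), the identification of $\mathfrak{C}[B]$ with the simplicial localization of $\Delta/B$ at those maps (Corollary~\ref{corr:flattening} and Proposition~\ref{thm: comparison C[B] and deltaB}, resting on Proposition~\ref{prop:DK simp loc and Lurie simp loc}), and the Dwyer--Kan theorem on localized diagram categories (Theorem~\ref{thm:DK2 Thm 2.2}). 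These are precisely the nontrivial inputs proved here; without them your ``homotopical full faithfulness after localization'' is an assertion, not a computation.

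Two further soft points. The claim that cofibrations on both sides are the monomorphisms is false for the projective model structure on $[\mathfrak{C}[B],\sSet]$, so $\St_B$ preserving cofibrations is not formal (the paper simply quotes the Quillen adjunction from \cite{HTT}). Your second paragraph compresses what is here a separate theorem (Theorem~\ref{thm:main}): asserting that the $\mathcal{L}$-local equivalences coincide with the maps sent by $\Re$ to covariant equivalences is essentially the statement to be proved, and its hard half --- that for a $W$-local projectively fibrant $F$ the derived unit is a local equivalence --- is obtained only via the bisimplicial machinery of Section~\ref{sec:left fibns of ssSet} (strong horizontal Reedy left fibrations and Theorem~\ref{thm:diag of Rezk lf}); colimit preservation of $\Re$ and density of the simplices do not produce it by themselves. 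Your final identification of the composite with $\St_B$ on simplices is reasonable and parallels the argument given for $B=NC$ in Section~\ref{subsec:straightening for categories} (there carried out on the representable left fibrations $NC_{c/}$ via an explicit natural transformation checked on representables), but it too requires an actual natural comparison map rather than objectwise equivalences.
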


Here $\mathfrak{C}[B]$ is the simplicial category of Example 1.1.5.8 of \cite{HTT}; it is 
also studied in \cite{DS,Riehl}.  The 
functors $\St_B$ and $\Un_B$ are called the {\em straightening} and {\em unstraightening} 
functors respectively.  An object of $[\mathfrak{C}[B],\sSet]$, i.e.\ a 
simplicial functor $F\colon \mathfrak{C}[B]\to \sSet$, can be thought of as 
an assignment of a simplicial set $F_b$ to every vertex of $B$, and a 
simplicial map $F_b\to F_{b'}$ to every edge from $b$ to $b'$, together with 
coherence data for higher dimensional simplices of $B$. Thus  
$F$ is a kind of homotopy coherent diagram of simplicial sets on $B$.  The effect of the functor 
$\St_B$ is to `straighten out' a space over $B$ into a homotopy coherent diagram of spaces.  

Recall that the functor $\mathfrak{C}[-]\colon \sSet\to \SCat$ forms part 
of an adjoint pair 
\[
\mathfrak{C}[-]\colon \sSet\rightleftarrows \SCat\colon \sNerve 
\]
where $\sNerve$ is the {\em simplicial} or {\em homotopy coherent nerve} 
functor (see Definition 1.1.5.5 of \cite{HTT} and also \cite{Cordier}).  In \cite{HTT} the straightening 
theorem is applied to prove a key theorem of Joyal and Lurie 
(see Theorem 1.1.5.13 of \cite{HTT}) which asserts that the 
adjoint pair $(\mathfrak{C}[-],\sNerve)$ is a Quillen adjunction 
for the Joyal model structure on $\sSet$ and the {\em Bergner model structure} \cite{Bergner} 
on $\SCat$.  The straightening theorem occupies a central place 
in the theory of quasi-categories (often called $\infty$-categories).

In this paper, amongst other things, we shall give a new proof 
of the straightening theorem.  Our approach will be to reduce to the special case in which $B$ is 
the nerve of a category via simplicial localization both for simplicial categories 
and quasi-categories.

To this end, our first main observation is that the covariant model structure on $\sSet/B$ can 
be obtained as a certain localization of the projective model structure on 
$[(\Delta/B)^{\op},\sSet]$.  We let $W\subset \Delta/B$  
denote the wide subcategory whose maps have as their underlying maps 
in the simplex category $\Delta$ the {\em initial vertex} maps, i.e.\ the maps 
$u\colon [m]\to [n]$ such that $u(0) = 0$.  
Under the Yoneda embedding $y_{\Delta/B}\colon \Delta/B\to [(\Delta/B)^{\op},\sSet]$, 
the set of arrows of $W$ is mapped to a set of arrows in $[(\Delta/B)^{\op},\sSet]$ that we shall 
also denote by $W$.  Thus we may consider the left Bousfield localization 
$L_W[(\Delta/B)^{\op},\sSet]$ of the projective model structure with respect to $W$.       
In Section~\ref{subsec:loc proj model structure} we prove the following result.

\begin{restatable}{theorem}{theoremA}
\label{thm:main}%
The Quillen adjunction $(\Re,\Sing)$ descends to a Quillen equivalence 
\[
L_{W}[(\Delta/B)^{\op},\sSet]\rightleftarrows \sSet/B 
\]
between the localized projective model structure 
and the covariant model structure 
on $\sSet/B$.  
\end{restatable}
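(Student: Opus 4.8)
The plan is to upgrade the localised adjunction to a Quillen equivalence by verifying the two standard conditions on a Quillen pair that promote it to an equivalence.

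\emph{The localised adjunction is Quillen.} By the general theory of left Bousfield localisation, $(\Re,\Sing)$ descends to a Quillen adjunction $L_W[(\Delta/B)^{\op},\sSet]\rightleftarrows\sSet/B$ as soon as $\Re$ carries every generating arrow of $W$ to a covariant equivalence. Since $\Re$ is left Kan extension of $y/B$ along the Yoneda embedding, $\Re\bigl(y_{\Delta/B}(\sigma)\bigr)=(\Delta^n\xrightarrow{\sigma}B)$ for $\sigma\colon\Delta^n\to B$, so the arrow of $W$ determined by $u\colon[m]\to[n]$ with $u(0)=0$ is sent to the map $(\Delta^m\to B)\to(\Delta^n\to B)$ induced by $u$. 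Because $u(0)=0$, the composite $\{0\}\hookrightarrow\Delta^m\xrightarrow{u}\Delta^n$ is the initial-vertex inclusion of $\Delta^n$; both $\{0\}\hookrightarrow\Delta^m$ and $\{0\}\hookrightarrow\Delta^n$ are left anodyne, hence covariant equivalences over $B$, so two-out-of-three forces $\Delta^m\to\Delta^n$ to be one too. The theorem is then equivalent to the conjunction of: (i) $\bR\Sing$ reflects weak equivalences between fibrant objects; and (ii) for every cofibrant $F$ the composite $F\to\Sing\Re F\to\Sing(R\Re F)$ is a $W$-local equivalence, where $R\Re F$ is a covariant-fibrant replacement of $\Re F$.

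\emph{Proof of (i).} First, $\Sing$ sends every left fibration $X\to B$ to a fibrant object of $L_W[(\Delta/B)^{\op},\sSet]$: it is projectively fibrant because $(\Re,\Sing)$ was a Quillen pair before localising, and it is $W$-local because, writing $\Sing X(\sigma)=\map_{/B}\bigl((\Delta^n\xrightarrow{\sigma}B),X\bigr)$, restriction along the left-anodyne inclusion $\{0\}\hookrightarrow\Delta^n$ is a trivial Kan fibration onto $\map_{/B}\bigl((\Delta^0\xrightarrow{\sigma(0)}B),X\bigr)$, whence two-out-of-three shows $\Sing X$ inverts every arrow of $W$. Now let $f\colon X\to Y$ be a map of left fibrations over $B$ with $\Sing f$ a weak equivalence in $L_W[(\Delta/B)^{\op},\sSet]$; being a weak equivalence between fibrant objects of a left Bousfield localisation it is an objectwise weak equivalence, and evaluating at a vertex $b$ of $B$ identifies $\Sing X(b)$ with the fibre $X_b$, so $f$ is a fibrewise weak equivalence over the vertices of $B$. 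By the standard fact that such a map between left fibrations over $B$ is a covariant equivalence, $f$ is a covariant equivalence, proving (i).

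\emph{Proof of (ii); the main obstacle.} Since left Bousfield localisation does not alter the cofibrations, it suffices to treat cellular cofibrant $F$; using that each generating cell $y_{\Delta/B}(\sigma)\otimes\Delta^k$ is $W$-locally equivalent to the representable $y_{\Delta/B}(\sigma)$, one would like to reduce (ii) to the case of representables by checking that the cofibrant objects on which the derived unit is a $W$-local equivalence are closed under the homotopy colimits that assemble a cell complex. The left adjoint $\Re$ preserves these homotopy colimits automatically; the serious difficulty is that the right-derived functor of $\Sing$ need not. I would circumvent this by passing to the dual criterion — a Quillen pair whose right adjoint reflects weak equivalences between fibrant objects is a Quillen equivalence provided the derived counit $\bL\Re\,\bR\Sing X\to X$ is a covariant equivalence for every left fibration $X\to B$ — and proving directly that, identifying the left-hand side with the derived left Kan extension of $\Sing X$ along $y/B$, this canonical resolution of $X$ by the simplices lying over it is a covariant equivalence; the point is that for a left fibration the comparison map is built from left- and inner-anodyne maps over $B$, so it is a covariant and not merely a Kan--Quillen equivalence. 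For the base case $F=y_{\Delta/B}(\sigma)$ one has $\Re(y_{\Delta/B}(\sigma))=(\Delta^n\xrightarrow{\sigma}B)$, already cofibrant, and must show that $y_{\Delta/B}(\sigma)\to\Sing(R\Re F)$ is a $W$-local equivalence for a left-fibrant replacement $\Delta^n\hookrightarrow R\Re F$ over $B$; by naturality this reduces to the universal situation $B=\Delta^n$, $\sigma=\mathrm{id}$, where $\mathrm{id}\colon\Delta^n\to\Delta^n$ is already a left fibration, $y_{\Delta/\Delta^n}(\mathrm{id})$ is the terminal presheaf, and one need only verify that $\Sing(\mathrm{id}_{\Delta^n})$ is $W$-locally contractible, which holds because $\Delta^n$ has an initial vertex. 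A more structural alternative is to invoke the paper's results on localisations of simplicial categories to identify the simplicial localisation of $\Delta/B$ at $W$ with $\mathfrak{C}[B]$ (equivalently, the $\infty$-categorical localisation of the nerve $\mathrm{N}(\Delta/B)$ at $W$ with $B$), reducing the whole statement to the case in which $B$ is a simplex. Either way, the one genuine obstacle is controlling the interaction of the right adjoint $\Sing$ with homotopy colimits; the descent of the Quillen adjunction and the conservativity in (i) are formal given the basic properties of left anodyne maps and the fibrewise characterisation of covariant equivalences.
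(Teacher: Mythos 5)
Your reduction to (i) together with (ii), and your verification that the adjunction descends and that $\bR\Sing$ is conservative on fibrant objects, are fine and agree with what the paper needs. The genuine gap is in your treatment of (ii): the ``dual criterion'' you invoke --- that a Quillen pair is a Quillen equivalence provided the \emph{right} adjoint reflects weak equivalences between fibrant objects and the derived \emph{counit} is an equivalence on fibrant objects --- is not a valid criterion. The correct pairings are: right adjoint conservative on fibrants together with the derived \emph{unit} on cofibrants, or left adjoint conservative on \emph{cofibrants} together with the derived counit on fibrants. Your two conditions only say that the total right derived functor of $\Sing$ is fully faithful and conservative on homotopy categories; but a fully faithful functor automatically reflects isomorphisms, so together they amount to nothing more than a reflective embedding of $\ho(\sSet/B)$ into $\ho(L_W[(\Delta/B)^{\op},\sSet])$, exactly as for any left Bousfield localization that is not an equivalence. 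Essential surjectivity of the derived $\Sing$ --- equivalently the derived unit condition (ii) on $W$-local cofibrant objects --- is never addressed, and that is where the substance of the theorem lies.

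Concretely, what is missing is a proof that for a $W$-local cofibrant $F$ the map $F\to\Sing(R\,\Re F)$ is a projective (hence $W$-local) equivalence. In the paper this is done by viewing $F$ as a bisimplicial set over $B\Box 1$, factoring $F\to F'\to B\Box 1$ into a horizontal left anodyne map followed by a horizontal Reedy left fibration, using $W$-locality of $F$ to show $F'$ is a \emph{strong} horizontal Reedy left fibration (Lemma~\ref{lem:recognize str lf}), invoking Theorem~\ref{thm:diag of Rezk lf} to conclude that $dF'$ is a left fibration (so $dF'$ is a fibrant replacement of $\Re F$), and then checking that $F'\to d_*dF'$ is a column-wise weak equivalence; none of this is replaced by anything in your proposal. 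Your cell-induction sketch founders on exactly the obstruction you name (the derived right adjoint need not commute with the homotopy colimits assembling a cell complex, so the class of $F$ satisfying (ii) is not obviously closed under them), and the suggested appeal to the simplicial-localization results to reduce to $B$ a simplex is not developed and, within the logical structure of this paper, would be circular, since the comparison between $[\mathfrak{C}[B],\sSet]$ and $\sSet/B$ is obtained downstream of Theorem~\ref{thm:main}. Your outline for the derived counit (that $\Re\,Q\,\Sing X\to X$ is a covariant equivalence for $X\in\bL(B)$, cf.\ Remark~\ref{rem:cov equiv}) matches the first half of the paper's proof and is fine in spirit, but by itself it cannot close the argument.
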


This is the first of two Quillen equivalences that we obtain linking the covariant model 
structure on $\sSet/B$ with a localization of the projective model structure.  
The second of these Quillen equivalences arises as follows.  By composing with the forgetful functor 
$\sSet/B\to \sSet$, we may regard the 
functor $y/B\colon \Delta/B\to \sSet/B$ above as a simplicial diagram $y/B\colon \Delta/B\to \sSet$.  
The {\em simplicial replacement} of $y/B$ is then the bisimplicial set $s(y/B)$ whose $n$-th row is 
\[
s(y/B)_n = \bigsqcup_{\sigma_0\to \cdots \to \sigma_n} y/B(\sigma_0) 
\]
where the coproduct is taken over the set of $n$-simplices in the nerve of $\Delta/B$.  The 
bisimplicial set $s(y/B)$ comes equipped with a natural row augmentation $s(y/B)\to B$.  Given $X\in \sSet/B$ 
we may then form a bisimplicial set 
\[
s(y/B)\times_B X 
\]
which again has a natural row augmentation over $B$.  We may regard $s_!(X):= s(y/B)\times_B X$ as a 
simplicial presheaf on $\Delta/B$; this construction is functorial in $X$ and so defines a cocontinuous functor 
\[
s_!\colon \sSet/B\to [(\Delta/B)^{\op},\sSet]. 
\]
It follows that the functor $s_!$ forms part of an adjoint pair $(s_!,s^!)$.    
In Section~\ref{sec:presheaf straightening} we prove the following result.  

\begin{restatable}{theorem}{theoremB}
\label{thm:QE for tildeQ} 
Let $B$ be a simplicial set.  The adjoint pair 
\[
s_!\colon \sSet/B \rightleftarrows L_W[(\Delta/B)^{\op},\sSet] \colon s^! 
\]
is a Quillen equivalence for the 
covariant model structure on $\sSet/B$ and the localized projective model structure 
on $[(\Delta/B)^{\op},\sSet]$.     
\end{restatable}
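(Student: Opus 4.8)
The plan is to leverage Theorem~\ref{thm:main}, which already provides a Quillen equivalence $(\Re,\Sing)$ between $L_W[(\Delta/B)^{\op},\sSet]$ and $\sSet/B$. Since a left adjoint out of $L_W[(\Delta/B)^{\op},\sSet]$ into $\sSet/B$ is determined by its restriction along the Yoneda embedding, and similarly $s_!\colon \sSet/B\to[(\Delta/B)^{\op},\sSet]$ is a cocontinuous functor, the first task is to understand the composite $s_!\circ\Re\colon [(\Delta/B)^{\op},\sSet]\to[(\Delta/B)^{\op},\sSet]$ on representables. One computes $s_!(\Re(y_{\Delta/B}(\sigma))) = s_!(y/B(\sigma)) = s(y/B)\times_B (y/B(\sigma))$, which is the simplicial replacement bisimplicial set restricted over the simplex $\sigma\colon \Delta^n\to B$; the key point is that this is a cofibrant replacement of $y_{\Delta/B}(\sigma)$ in the \emph{localized} model structure $L_W[(\Delta/B)^{\op},\sSet]$. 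Indeed, row-wise the bisimplicial set $s(y/B)\times_B(y/B(\sigma))$ is a coproduct of representables, so $s_!(y/B(\sigma))$ is a projectively cofibrant simplicial presheaf, and the augmentation $s(y/B)\times_B(y/B(\sigma))\to y/B(\sigma)$, viewed as a map of simplicial presheaves $s_!(\Re(y_{\Delta/B}(\sigma)))\to y_{\Delta/B}(\sigma)$, should be a $W$-local equivalence: this is a Bousfield--Kan type statement that the simplicial replacement of a diagram computes its homotopy colimit, here carried out inside the localized category where the $W$-maps have been inverted.

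Granting this, the strategy is to show that $s_!$ and $\Re$ (with appropriate adjoint corrections) induce the same derived functor, or more precisely that there is a natural zig-zag of weak equivalences relating $s_!$ to a cofibrant-replacement-corrected version of $\Re$'s inverse. Concretely: since $(\Re,\Sing)$ is a Quillen equivalence, it suffices to check that $s_!\colon \sSet/B\to L_W[(\Delta/B)^{\op},\sSet]$ is a left Quillen functor whose composite with $\Re$ is naturally weakly equivalent (as a left Quillen functor) to a cofibrant replacement functor on $L_W[(\Delta/B)^{\op},\sSet]$, hence to the identity in the homotopy category. To see $s_!$ is left Quillen: its right adjoint $s^!$ sends a simplicial presheaf $F$ on $\Delta/B$ to the simplicial set over $B$ given by an end/limit formula $s^!(F) = \int_{\Delta/B}\map(s(y/B)_\bullet, F)$, and one checks directly that $s^!$ preserves fibrations and trivial fibrations for the \emph{covariant}-vs-\emph{localized-projective} pair. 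Since every object of $\sSet/B$ is cofibrant in the covariant model structure and $s_!$ preserves cofibrations and weak equivalences between cofibrant objects, the derived functor $\bL s_!$ is just $s_!$ itself, and the computation on representables above extends by cocontinuity and the fact that every $X\in\sSet/B$ is a homotopy colimit of its simplices (again via its own simplicial replacement) to show $\Re\circ s_!\simeq \mathrm{id}$ and $s_!\circ\Re\simeq \mathrm{id}$ on the homotopy categories. Therefore $s_!$ is a left adjoint realizing the same equivalence of homotopy categories as the inverse of $(\Re,\Sing)$, so it is a Quillen equivalence.

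I expect the main obstacle to be the identification of $s_!(y/B(\sigma))\to y_{\Delta/B}(\sigma)$ as a $W$-local weak equivalence, i.e.\ proving that the simplicial replacement construction computes the correct homotopy colimit \emph{after} Bousfield localizing at the initial-vertex maps $W$. The subtlety is that $s(y/B)$ is built from the \emph{over-category} diagram $\sigma\mapsto y/B(\sigma)$, whose values are themselves slices $\Delta/(\Delta^n\to B)$, and one must show that taking the diagonal of the bisimplicial set $s(y/B)\times_B(y/B(\sigma))$ and comparing with the constant-at-a-point replacement becomes an equivalence precisely because the nerve of each slice $\Delta/\sigma$ has a terminal object (the identity $n$-simplex), so it is "$W$-contractible" in the appropriate sense. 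Making this precise likely requires a careful cofinality or last-vertex-functor argument of the kind used in comparing $\Delta/B$ with $\mathfrak{C}[B]$, together with the standard fact that $W$-local equivalences in $L_W[(\Delta/B)^{\op},\sSet]$ are detected after applying $\Re$ (which follows from Theorem~\ref{thm:main}). Once this local-equivalence statement is in hand, the rest of the argument is a formal consequence of the two-out-of-three property for Quillen equivalences and cocontinuity of $s_!$.
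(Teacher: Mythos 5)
Your overall architecture is the same as the paper's: reduce everything to Theorem~\ref{thm:main}, and isolate as the crux the statement that the simplicial-replacement augmentation becomes a covariant equivalence after realization, i.e.\ that $ds(X)\simeq \Re\, s_!X\to X$ is a covariant equivalence, proved by a cofinality argument using that the relevant fibres have initial objects, together with the fact that $\Re$ detects $W$-local equivalences between cofibrant objects. That is precisely Proposition~\ref{prop:dtildeQX to X initial}, whose proof applies Theorem~\ref{thm:A} column-wise and then Proposition~\ref{prop:d of col wise initial map} to pass to the diagonal. Note that this argument works for an arbitrary $X\in\sSet/B$ in one stroke, so your detour through representables followed by a ``homotopy colimit of simplices'' extension is unnecessary (and in any case presupposes that $s_!$ is already known to be left Quillen, which is one of the things being established).

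The genuine gap is your verification that $(s_!,s^!)$ is a Quillen adjunction. You propose to ``check directly that $s^!$ preserves fibrations and trivial fibrations'' for the localized projective structure, but fibrations in a left Bousfield localization have no explicit description (only fibrant objects and fibrations between fibrant objects do), so there is nothing to check against; and the trivial-fibration half is equivalent to $s_!$ preserving cofibrations, which does not follow from ``levelwise a coproduct of representables'' alone --- one needs the degeneracy-free (split) structure of $s_!$ applied to a monomorphism, as in the paper's discussion preceding the theorem (Definition VII~1.10 and Example VII~1.15 of Goerss--Jardine). The workable route argues entirely on the left adjoint: $s_!$ preserves cofibrations by the degeneracy-free decomposition, and sends covariant equivalences to $W$-local equivalences because $\Re\, s_!\simeq ds(-)$, the map $ds(X)\to X$ is a natural covariant equivalence, and $\Re$ reflects equivalences between cofibrant objects by Theorem~\ref{thm:main}; since every object of $\sSet/B$ is cofibrant this yields left Quillen-ness. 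Finally, your concluding step (``$s_!$ realizes the inverse of $\bL\Re$, hence is a Quillen equivalence'') needs to be made precise: either as in the paper, by checking that $s_!$ reflects weak equivalences and that $s_!s^!F\to F$ is a $W$-local equivalence for $W$-local $F$ (the latter via Remark~\ref{rem:cov equiv} and a homotopy-category Yoneda argument applied to $\Re\, s_!Y\to Y$), or by observing that the natural equivalence $\Re\, s_!X\to X$ identifies $\bL s_!$ with the inverse of the equivalence $\bL\Re$ on homotopy categories, which together with the Quillen adjunction forces a Quillen equivalence.
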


To connect these theorems to the straightening theorem of Lurie, 
we need to relate the simplicial set $B$ to its simplex category $\Delta/B$.  
There is a well-known device which does this, namely the 
{\em last vertex} map.  This is a map $p_B\colon N(\Delta/B)\to B$  
defined as follows: since the domain and codomain of $p_B$ are cocontinuous functors 
of $B$, it suffices to define $p_B$ in the case where $B = \Delta[n]$ is a simplex.  In that 
case, $p_B$ is the nerve of the functor $\Delta/[n]\to [n]$ which sends $u\colon [m]\to [n]$ 
to $u(m)$.  We note the map $p_B$ has appeared in 
many places, see for instance Section 1.6 of \cite{Waldhausen}.     

This map $p_B$ is interesting in its own right; among other things it can be 
used to show that every simplicial set has the weak homotopy type of 
the nerve of a category.  We shall give a proof of an unpublished result 
of Joyal's (see 13.6 of \cite{Joyal-Notes}) asserting 
that the map $p_B$ exhibits the simplicial set $B$ as a {\em localization} 
of $N(\Delta/B)$ (this result also appears in \cite{TV2} in the context of Segal categories).  
To state the result, let us write $S$ for the set of {\em final vertex} 
maps in $\Delta/B$.  Thus a map $u\colon \Delta[m]\to \Delta[n]$ in 
$\Delta/B$ belongs to $S$ if and only if $u(m) = n$.  We then have   

\begin{restatable}[Joyal]{theorem}{flatteningthm} 
\label{thm:Joyals flattening thm}
Let $B$ be a simplicial set.  Then the canonical 
map $p_B\colon N(\Delta/B)\to B$ exhibits 
$B$ as a localization of $N(\Delta/B)$ with respect to the set $S$ of final 
vertex maps in $\Delta/B$.  In particular 
the induced map $L(N(\Delta/B),S)\to B$ 
is a weak categorical equivalence.  
\end{restatable}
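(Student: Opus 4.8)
The plan is to reduce the general statement to the case $B = \Delta[n]$ by a colimit argument, and then to handle that case by exhibiting a homotopy inverse to $p_B$ up to the localization. Both $B\mapsto N(\Delta/B)$ and $B\mapsto B$ are cocontinuous in $B$, and the assignment $B\mapsto S$ of final vertex maps is compatible with these colimits (a final vertex map of $\Delta/B$ lying over a simplex $\sigma\colon\Delta[n]\to B$ pulls back from a final vertex map of $\Delta/[n]$). Since the localization functor $L(-,-)$ preserves colimits and weak categorical equivalences are closed under the relevant homotopy colimits in the Joyal model structure, it suffices to prove that $L(N(\Delta/[n]), S_n)\to [n]$ is a weak categorical equivalence, where $S_n$ is the set of final vertex maps in $\Delta/[n]$. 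Here I would be careful to assemble the argument using that the simplices of $B$ form a diagram indexed by $\Delta/B$ itself, so that $N(\Delta/B) = \operatorname*{colim}_{\sigma\in\Delta/B} N(\Delta/[n])$ and similarly $B = \operatorname*{colim}_{\sigma\in\Delta/B}\Delta[n]$, both colimits being in fact homotopy colimits after passing to the marked/localized setting.

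For the case $B = \Delta[n]$, the key observation is that the category $\Delta/[n]$ has the final vertex functor $q\colon \Delta/[n]\to [n]$, $q(u\colon[m]\to[n]) = u(m)$, and this functor has a fully faithful right adjoint section $s\colon [n]\to \Delta/[n]$ sending $i$ to the map $\{0<1<\cdots<i\}\hookrightarrow[n]$ (or rather $i\mapsto (\Delta[i]\to\Delta[n])$), with $q\circ s = \mathrm{id}$. An adjunction induces a homotopy equivalence of nerves, so $N(\Delta/[n])\to [n]$ is already a weak homotopy equivalence; but we need a weak \emph{categorical} equivalence after inverting $S_n$. The point is that the unit $\mathrm{id}\Rightarrow s\circ q$ is a natural transformation whose components are exactly final vertex maps: for $u\colon[m]\to[n]$ the component is the map in $\Delta/[n]$ from $u$ to $s(q(u))=s(u(m))$ induced by the monotone surjection $[m]\to[u(m)]$ collapsing onto the image, and $u(m)$ is sent to $u(m)=n$ precisely when $u$ is a final vertex map — in general this is a final vertex map into the sub-simplex on $\{0,\dots,u(m)\}$, and I would verify that under the inclusion $\Delta/[0,\dots,u(m)]\hookrightarrow \Delta/[n]$ these are still final vertex maps of $\Delta/[n]$. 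Thus after localizing at $S_n$ the functor $q$ becomes an equivalence: $L(N(\Delta/[n]),S_n)\simeq L([n],q(S_n))$, and $q(S_n)$ consists of morphisms $i\to n$ in $[n]$, so a second localization argument (localizing $[n]$ at the maps into the terminal object collapses nothing essential since $n$ is already terminal, hence $[n]\to[n]$ is the identity up to categorical equivalence) — more precisely, the natural transformation above already shows $p_{\Delta[n]}$ composed with the section is the identity and the other composite is connected to the identity through maps in $S_n$, so $p_{\Delta[n]}$ is a localization in the sense required.

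The main obstacle I anticipate is the compatibility of the localization construction with colimits in $B$ at the level of the Joyal model structure: one must know that $L(-,S)$ can be computed as a homotopy colimit over $\Delta/B$ of the local models $L(N(\Delta/[n]),S_n)$, i.e.\ that forming the marked simplicial set $(N(\Delta/B),S)$ and then fibrantly replacing commutes appropriately with the colimit presentation. This requires either a Reedy/skeletal induction on $B$ — building $B$ up by attaching nondegenerate simplices and checking that each pushout $\partial\Delta[n]\to\Delta[n]$ induces a homotopy pushout of localizations, which reduces exactly to the $\Delta[n]$ case plus its boundary — or an appeal to the fact that marked simplicial sets with the cartesian/covariant-type model structure make $L$ a left Quillen functor. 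I would structure the proof so that the technical heart is isolated in a lemma stating that $(N(\Delta/-),S)\colon \Delta\to \mathbf{S}^+$ is a cofibrant diagram whose homotopy colimit computes $(N(\Delta/B),S)$, after which Theorem~\ref{thm:Joyals flattening thm} follows by combining the $\Delta[n]$ computation with the cocontinuity of $L$ and of $B\mapsto B$, using that weak categorical equivalences are stable under homotopy colimits of cofibrant diagrams.
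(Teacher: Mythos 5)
Your proposal is correct and follows essentially the same route as the paper: reduce to $B=\Delta[n]$ via cocontinuity of $B\mapsto N(\Delta/B)$ together with a skeletal-filtration/left-properness argument, then use the fully faithful section $r(i)=([i],[i]\hookrightarrow[n])$ of the last vertex functor and the unit $\mathrm{id}\Rightarrow rp$ whose components are final vertex maps. Two small touch-ups: the last vertex functor sends each final vertex map to an \emph{identity} of $[n]$ (not to a map $i\to n$), so no ``second localization'' of $[n]$ is needed, and your closing assertion that the section-plus-$S$-homotopy yields the universal property is exactly the point the paper still has to verify, by checking for an arbitrary quasi-category $Z$ that the induced homotopy on $\map_W(N(\Delta/[n]),Z)$ is a natural equivalence using Joyal's componentwise criterion.
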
 

Here {\em localization} is understood in the context of quasi-categories 
(see Definition~\ref{def:quasi-localzn}).  
One may think of this theorem as an analog for simplicial sets of  
Theorem 2.5 from \cite{DK2}; the category $\Delta/B$ plays the role of the {\em flattening} 
of a simplicial category.  This theorem will play a key role for us;  
one important application of it is the following.  If $B$ is a simplicial set 
then we shall see in Section~\ref{subsec:reversed straightening} that there is a map 
\[
B^{\op}\to \sNerve\bL(B) 
\]
from the opposite simplicial set into the simplicial nerve of the simplicial category 
$\bL(B)$ of left fibrations over $B$.  When $B$ is a quasi-category this map 
sends a vertex $b$ of $B$ to the left fibration $B_{b/}$ over $B$, while an edge 
$f\colon a\to b$ is sent to a map $f_!\colon B_{b/}\to B_{a/}$ together with 
coherency data for higher dimensional simplices.  Of course the existence 
of such a map is well known, what we offer is a fresh perspective on how to 
construct it.    

The adjoint of this map is a simplicial functor 
$\mathfrak{C}[B]^{\op}\to \bL(B)$; we shall use it, together 
with Theorem~\ref{thm:main}, to give a simple proof of the 
following theorem.  

\begin{restatable}{theorem}{revstr}
\label{thm:reversed straightening} 
Let $B$ be a simplicial set.  The map $\mathfrak{C}[B]^{\op}\to \bL(B)$ induces a 
Quillen adjunction 
\[
\phi_!\colon [\mathfrak{C}[B],\sSet]\rightleftarrows \sSet/B\colon \phi^! 
\]
for the covariant model structure on $\sSet/B$ and the projective model 
structure on $[\mathfrak{C}[B],\sSet]$.  Moreover this Quillen adjunction 
is a Quillen equivalence.  
\end{restatable}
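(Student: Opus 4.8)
The plan is to deduce Theorem~\ref{thm:reversed straightening} from the two Quillen equivalences already established, namely Theorem~\ref{thm:main} and Theorem~\ref{thm:QE for tildeQ}, together with Joyal's flattening theorem (Theorem~\ref{thm:Joyals flattening thm}) and the basic homotopy theory of the Bergner model structure on $\SCat$. The simplicial functor $\phi\colon \mathfrak{C}[B]^{\op}\to \bL(B)$ induces, by restriction along $\phi$ and left Kan extension, an adjoint pair $(\phi_!,\phi^!)$ between the projective model structures on $[\mathfrak{C}[B],\sSet]$ and $[\bL(B)^{\op},\sSet]$. Since $\phi^!$ sends projective fibrations and trivial fibrations to the same (they are defined objectwise and $\phi$ acts on objects), $(\phi_!,\phi^!)$ is automatically a Quillen adjunction; the real content is that $\bL(B)$ receives, via the representable-presheaf construction, the covariant model structure on $\sSet/B$ as a projective model structure on simplicial presheaves. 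So the first step is to make precise the identification: the Quillen equivalence of Theorem~\ref{thm:main} (or equivalently Theorem~\ref{thm:QE for tildeQ}) exhibits the covariant model structure on $\sSet/B$ as equivalent to $L_W[(\Delta/B)^{\op},\sSet]$, and one wants to see this localized presheaf category as the projective model category of simplicial presheaves on a simplicial category weakly equivalent to $\mathfrak{C}[B]^{\op}$.

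The key steps, in order, are as follows. First I would recall Dugger's theorem that $L_W[(\Delta/B)^{\op},\sSet]$ is the universal homotopy theory on $\Delta/B$ in which the arrows of $W$ (the initial-vertex maps) are inverted; by a standard argument this is Quillen equivalent to the projective model category of simplicial presheaves on the simplicial localization $L_H(\Delta/B, W)$ (hammock or Dwyer--Kan localization). Second, I would invoke Joyal's flattening theorem, Theorem~\ref{thm:Joyals flattening thm}, in its simplicial-category form: inverting the final-vertex maps $S$ in $N(\Delta/B)$ yields $B$, so inverting the opposite class — which is exactly $W$ after passing to opposites — yields $B^{\op}$; hence $L_H(\Delta/B,W)$ is weakly equivalent, in the Bergner model structure, to $\mathfrak{C}[B^{\op}] = \mathfrak{C}[B]^{\op}$. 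Third, a Dwyer--Kan equivalence $\mathcal{E}\to \mathcal{E}'$ of simplicial categories induces a Quillen equivalence $[\mathcal{E},\sSet]\rightleftarrows[\mathcal{E}',\sSet]$ for the projective model structures (this is standard; it uses that the equivalence is essentially surjective and fully faithful up to homotopy, plus cofibrant replacement of diagrams). Applying this to $\mathfrak{C}[B]^{\op}\to \bL(B)$, once one checks this latter functor is itself a Dwyer--Kan equivalence onto its essential image and that $\bL(B)$ is the right target, gives the desired chain of Quillen equivalences whose composite is $(\phi_!,\phi^!)$ up to the identifications above.

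The main obstacle I anticipate is verifying that the comparison map $\mathfrak{C}[B]^{\op}\to \bL(B)$ is a \emph{Dwyer--Kan equivalence} — i.e.\ that it is homotopically fully faithful and essentially surjective — and that it is compatible, under the dictionary of the previous theorems, with the structural functor $y/B$ out of $\Delta/B$. Essential surjectivity amounts to the statement that every left fibration over $B$ is, up to covariant equivalence, in the image of the overcategory construction $b\mapsto B_{b/}$ followed by the homotopy-coherent diagram structure — this is essentially the content of the straightening theorem we are trying to reprove, so care is needed to avoid circularity; the point is that here we only need it over the special building blocks $\Delta[n]$ and then bootstrap via the fact that both sides are cocontinuous (or accessible) in $B$ and that Theorem~\ref{thm:main} already handles general $B$. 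Homotopical full faithfulness reduces, via the identification of mapping spaces in $\bL(B)$ with derived mapping spaces of left fibrations and the known formula for mapping spaces in $\mathfrak{C}[B]$ (from \cite{DS,Riehl}), to a computation comparing $\map_{\mathfrak{C}[B]}(a,b)$ with the space of maps $B_{b/}\to B_{a/}$ over $B$; this last identification is exactly what Theorem~\ref{thm:main} delivers once one knows $B_{b/}$ corresponds under $(\Re,\Sing)$ to a representable-type presheaf. Once these two facts are in hand, the Quillen equivalence follows formally by assembling the chain of equivalences and checking that the composite adjunction agrees with $(\phi_!,\phi^!)$ on generators, which is a routine diagram chase with representables.
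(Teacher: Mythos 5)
Your toolkit (Theorem~\ref{thm:main}, the flattening theorem, Proposition~\ref{prop:DK simp loc and Lurie simp loc}, Proposition~\ref{prop:Riehl}, Theorem~\ref{thm:DK2 Thm 2.2}) is the right one — it is essentially what the paper uses — but the way you assemble it has two genuine gaps. First, the pivotal claim that $\mathfrak{C}[B]^{\op}\to \bL(B)$ should be checked to be a DK-equivalence is both false and off-target: $\bL(B)$ contains \emph{all} left fibrations over $B$, so the functor is nowhere near essentially surjective (already for $B=\Delta[0]$ its image is a point while $\bL(\Delta[0])$ contains every Kan complex), and homotopical full faithfulness of $\bar{\phi}$ is, up to translation, the content of the theorem itself, so taking it as an input is circular; your proposed bootstrap over simplices does not repair this, since $\bL(-)$ is not cocontinuous in the base and full faithfulness does not propagate along a skeletal induction. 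Moreover, even granting full faithfulness, a comparison of projective presheaf categories on $\mathfrak{C}[B]^{\op}$ and on $\bL(B)$ is not the statement to be proved: the right-hand side of the adjunction is $\sSet/B$ with the covariant model structure, not a diagram category on $\bL(B)$, and your opening description of $(\phi_!,\phi^!)$ reflects this misidentification.

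Second, the chain of abstract Quillen equivalences you describe (covariant $\sSet/B\simeq L_W[(\Delta/B)^{\op},\sSet]\simeq$ presheaves on a simplicial localization $\simeq$ presheaves on $\mathfrak{C}[B]^{\op}$) only shows that the two model categories are connected by a zig-zag of equivalences; it does not show that the \emph{specific} adjunction $(\phi_!,\phi^!)$ induced by $\bar{\phi}$ is one. This is not a ``routine diagram chase with representables'': the composite $\phi\circ p_B$ agrees with the tautological functor $\psi\colon N(\Delta/B)\to \sNerve\bL(B)$ only up to a $J$-homotopy, so one needs coherence data to compare $\phi_!$ with the known equivalence of Theorem~\ref{thm:main}. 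The paper supplies exactly this: the homotopy $h\colon N(\Delta/B)\times J\to \sNerve\bL(B)$, its adjoint $\bar{h}\colon \mathfrak{C}[N(\Delta/B)\times J]\to \bL(B)$, and the two commuting squares (A) and (B), together with the DK-equivalences $\mathfrak{C}[N(\Delta/B)]\to \Delta/B$ and $\mathfrak{C}[N(\Delta/B)]\to \mathfrak{C}[N(\Delta/B)\times J]$; after localizing at the initial-vertex maps, square (A) identifies $\phi_!$-adjacent data with the equivalence of Theorem~\ref{thm:main}, and square (B) reduces the claim to Theorem~\ref{thm:DK2 Thm 2.2} applied to $\mathfrak{C}[N(\Delta/B)]\to \mathfrak{C}[B]^{\op}$, whose localization is a DK-equivalence by the flattening theorem and Proposition~\ref{prop:DK simp loc and Lurie simp loc}. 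Without an interpolating object like $\mathfrak{C}[N(\Delta/B)\times J]$ your argument has no way to transport the abstract equivalences onto the particular functor $\bar{\phi}$.
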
      

We use this theorem, together with Theorem~\ref{thm:Joyals flattening thm} 
and Theorem 2.2 from \cite{DK2} to give a simple proof 
of the straightening theorem in Section~\ref{subsec:proof of straightening thm}.  
In fact, as we shall see (Remark~\ref{rem:simple straightening thm}), we do not need the full force of 
Theorem~\ref{thm:reversed straightening}, only a watered down version 
of it when $B = NC$ is the nerve of a category (this watered down version may 
be given a direct proof).

We shall also use Theorem~\ref{thm:Joyals flattening thm} to give a new model  
for the rigidification of a simplicial set $B$ into a simplicial category $\mathfrak{C}[B]$: 
we shall prove (Proposition~\ref{thm: comparison C[B] and deltaB}) that 
$\mathfrak{C}[B]$ is weakly equivalent in the homotopy category 
$\ho(\SCat)$ of the Bergner model structure to the hammock 
localization of $\Delta/B$ at a subcategory of initial or final 
vertex maps.

Along the way to proving all of these results, we prove some new results 
on simplicial localizations of categories, together with some new results on 
localizations of quasi-categories.

We comment on the relationship of our work to existing work by other authors.  Firstly, it goes without saying that 
this paper owes a tremendous debt to the foundational works of Joyal and Lurie, in particular the 
influence of Joyal's notes \cite{Joyal-Barcelona} and \cite{Joyal-Notes} will be clear.    
Secondly, after completing this work, we became aware of the very nice paper \cite{HM} which is closely related to 
our results.  For instance, the functor $s_!$ from our Theorem~\ref{thm:QE for tildeQ} is closely related to 
the rectification functor $r_!$ in Proposition B from \cite{HM}.  Our Theorems~\ref{thm:main} 
and~\ref{thm:QE for tildeQ} go beyond the work 
of \cite{HM} in that we allow more general base spaces than nerves of categories.  
Subsequent to the posting of the first version of the paper to the arXiv, 
the pre-print \cite{HM2} appeared which also gives a new proof of the straightening 
theorem as well as a result analogous to Theorem~\ref{thm:reversed straightening} 
above.  Our work complements \cite{HM2} in the following ways: firstly, our methods 
are quite different to those of \cite{HM2} and lead to what we believe is also a fairly conceptual 
proof of the straightening theorem; secondly, 
our work has the virtue of being self-contained, 
in particular we do not need to assume the full strength 
of the Quillen equivalence $(\mathfrak{C}[-],\sNerve)$ 
between the Joyal model structure and the 
Bergner model structure (we shall occasionally use the much 
more easily proved fact that $\mathfrak{C}[-]$ is left Quillen).  
The only result that we use but does not appear in 
\cite{HTT} is Proposition~\ref{prop:Riehl}; 
this is proven in \cite{Riehl} using the necklace technology of \cite{DS} (there 
is also an unpublished proof which does not depend on this technology).

We have 
tried to make the paper reasonably self contained, hopefully this does not make the paper even more 
tedious to read than it would be otherwise.  
We have assumed basic familiarity with standard material on 
quasi-categories and covariant model structures, for example 
the material in Chapters 1--2 of \cite{HTT} or the material in \cite{Joyal-Barcelona}.  We summarize 
the standard material on covariant model structures in Section~\ref{sec:cov model structure}, 
as well as providing proofs of some results which do not appear in these sources.  
In more detail, in Section~\ref{subsec:left anodyne maps} 
we recall some basic facts about left anodyne maps and prove Proposition~\ref{prop:contains left anodynes}, 
which gives a convenient way to determine whether a saturated class of monomorphisms in $\sSet$ 
contains the left anodyne maps.  In Section~\ref{subsec:cofinal} we prove Proposition~\ref{prop:joyal crit for cov eq} 
which gives a useful characterization (due to Joyal) of covariant equivalences; this in turn  
yields a useful characterization of cofinal maps (Theorem~\ref{thm:Joyals char of cov equiv}, due again to Joyal) 
which yields Quillen's Theorem A for quasi-categories as an easy corollary.   
We give a characterization (Proposition~\ref{prop:char of weak cat equivs}) of the class of those 
maps of simplicial sets which induce Quillen equivalences of covariant model structures by base change 
(this is a quasi-categorical analog of the main theorem from \cite{DK2}).  We also give an elementary 
proof of Theorem~\ref{thm:weak cat equivs and base change} 
(elementary in the sense that it only depends on the preceding standard facts about covariant model 
structures), asserting   
that weak categorical equivalences belong to this class.  

We begin Section~\ref{sec:bisimplicial sets} by recalling some basic facts about the category $\ssSet$ of 
bisimplicial sets, especially its structure as a simplicially enriched 
category.  In Section~\ref{sec:proj model structure}, for a fixed simplicial set $B$ we study the 
projective model structure on the slice category $\ssSet/B\Box 1$ 
and the horizontal Reedy model structure on $\ssSet/B\Box 1$ associated 
to the covariant model structure on $\sSet/B$.  We compare these two 
model structures in Proposition~\ref{prop:comparison}, using the realization and nerve functors 
$d$ and $d_*$.  

In Section~\ref{sec:left fibns of ssSet} we study the notion of a horizontal Reedy left fibration 
in $\ssSet$  and an enhanced version of this 
notion, the notion of a {\em strong} horizontal Reedy left fibration (Definition~\ref{def:horiz Reedy left fibn}).  
Associated to these notions is an allied concept of left anodyne map in $\ssSet$.  We study 
how these notions are related to left fibrations and left anodyne maps in $\sSet$ via the 
diagonal map $d\colon \ssSet\to \sSet$.  Our main result in this section is Theorem~\ref{thm:diag of Rezk lf}, which 
shows that the diagonal of a strong horizontal Reedy left fibration in $\ssSet$ is a left 
fibration in $\sSet$; a related result (Proposition~\ref{prop:d of col wise initial map}) 
shows that the diagonal of a level-wise cofinal 
map of bisimplicial sets is cofinal.  Theorem~\ref{thm:diag of Rezk lf} is a starting point 
for the development of a theory of covariant model structures for bisimplicial sets, 
which we will discuss in a future paper.  Theorem~\ref{thm:diag of Rezk lf} also plays a key role    
in Section~\ref{sec:main} where we establish the Quillen equivalences above, 
Theorem~\ref{thm:main} and Theorem~\ref{thm:QE for tildeQ} respectively.  

We turn our attention to localization of simplicial categories and quasi-categories in 
Section~\ref{sec:localization}.  We begin in Section~\ref{subsec:simp loc} by recalling a 
version of simplicial localization introduced by Lurie and show that this procedure 
gives simplicial categories DK-equivalent to the Dwyer-Kan simplicial localization introduced in 
\cite{DK1}.  In Section~\ref{subsec:qcat loc} we recall the definition of localization of 
quasi-categories due to Joyal and Lurie and relate this notion to Bousfield localizations 
of covariant model structures.  In Section~\ref{subsec:deloc} we prove Theorems~\ref{thm:Joyals flattening thm} 
and~\ref{thm: comparison C[B] and deltaB}, the latter giving the new model 
for the simplicial rigidification $\mathfrak{C}[B]$ mentioned previously.  
In Section~\ref{subsec:Lcofinal} we extend the notion of $L$-cofinal functor introduced in 
\cite{DK4} to the setting of quasi-categories and prove a generalization of Theorem (6.5) 
of \cite{DK4} (Theorem~\ref{thm:localization thm}).  
Finally, in Section~\ref{sec:straightening} we discuss the straightening theorem.

\medskip 

\noindent 
{\bf Notation}: With a few exceptions, we will use the notation from \cite{JT2} and 
\cite{Joyal-Barcelona}.  Thus we denote by $\sSet$ the category of simplicial sets.  The simplicial $n$-simplex in $\sSet$ 
is denoted by $\Delta[n]$.  We denote by $\emptyset$ the initial object of $\sSet$ and by 
$1$ the terminal object of $\sSet$ (i.e.\ the simplicial $0$-simplex $\Delta[0]$); sometimes we will 
denote both the category $[1]$ and the simplicial interval $\Delta[1]$ by $I$.  The groupoid completion of 
$[1]$ and its nerve will be denoted by $J$.  The fundamental category of a simplicial set 
$A$ will be denoted by $\tau_1(A)$.         

\section{The covariant model structure} 
\label{sec:cov model structure}
\subsection{The simplicial enrichment of $\sSet/B$}  
\label{subsec: simp enrichment of sSet/B}
Let $B$ be a simplicial set.  There is a canonical enrichment 
of the slice category $\sSet/B$ over the category of simplicial sets.  
If $\map(-,-)$ denotes the 
standard simplicial enrichment of $\sSet$, then for $X,Y\in \sSet/B$ 
the simplicial mapping space $\map_B(X,Y)\in \sSet$ 
is defined to be 
\[
\map_B(X,Y) = \map(X,Y)\times_{\map(X,B)} 1 
\]
where the map $1\to \map(X,B)$ is the structure map $X\to B$, regarded 
as a vertex of the simplicial set $\map(X,B)$.   
If $K\in \sSet$ and $X\in \sSet/B$ then the tensor $X\otimes K$ 
is defined by $X\otimes K = X\times K$, regarded as an object of $\sSet/B$ 
via the canonical map $X\times K\to X\to B$.  The cotensor $X^K$ 
is defined to be 
\[
X^K = \map(K,X)\times_{\map(K,B)} B, 
\]
where $B\to \map(K,B)$ is conjugate to the canonical map $B\times K\to B$.  
With these definitions we have the isomorphisms 
\[
\sSet/B(X\otimes K,Y) \simeq \sSet(K,\map_B(X,Y)) \simeq \sSet/B(X,Y^K), 
\]
natural in $X,Y\in \sSet/B$ and $K\in \sSet$.  

\subsection{The covariant model structure for simplicial sets} 
\label{subsec:cov model str}
In \cite{Joyal-Barcelona,HTT} it is proven that there is the structure of  
a simplicial model category on $\sSet/B$ for which the left fibrations over $B$ 
are the fibrant objects.  Recall that a map $p\colon X\to B$ is said to be a {\em left 
fibration} if it has the right lifting property (RLP) against the class of 
{\em left anodyne} maps in $\sSet$ (we review the concept of left anodyne map 
in more detail in the next section).  This model structure is called the {\em covariant} model structure on 
$\sSet/B$, it is described in the following theorem.    

\begin{theorem}[\cite{Joyal-Barcelona,HTT}] 
There is a structure of a left proper, combinatorial model category on 
$\sSet/B$ with respect to which a map $f\colon X\to Y$ in $\sSet/B$ is a 

\begin{itemize} 
\item cofibration if it is a monomorphism, 

\item weak equivalence if it is a covariant equivalence.  
\end{itemize} 

The fibrant objects for this model structure are precisely the left fibrations over $B$.  
The model structure is simplicial with respect to the simplicial enrichment above.  
\end{theorem}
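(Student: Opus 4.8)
The plan is to realize the covariant model structure as a left Bousfield localization of the sliced Joyal model structure, and then to establish simpliciality separately, since the Joyal model structure itself is not simplicial. Recall that $\sSet$ carries the Joyal model structure: a combinatorial model structure whose cofibrations are the monomorphisms, whose fibrant objects are the quasi-categories, and which is left proper because every object is cofibrant. Slicing over $B$ gives a combinatorial, left proper model structure on $\sSet/B$ whose cofibrations are the monomorphisms, whose weak equivalences are the maps with underlying weak categorical equivalence, and whose fibrant objects are the maps $p\colon X\to B$ that are categorical fibrations. Let $\Sigma$ be the set consisting, for each simplex $\sigma\colon\Delta[n]\to B$, of the left horn inclusions $\Lambda^k[n]\hookrightarrow\Delta[n]$ ($0\le k<n$) lying over $\sigma$; this is a set, and it generates, inside $\sSet/B$, the left anodyne maps. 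By the general existence theorem for left Bousfield localizations of left proper combinatorial model categories (see the appendix to \cite{HTT}), the localization $L_\Sigma(\sSet/B)$ exists; its cofibrations are still the monomorphisms, it is again combinatorial, and --- every object remaining cofibrant --- it is again left proper.

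The next step is to identify the fibrant objects of $L_\Sigma(\sSet/B)$; these are the categorical fibrations $p\colon X\to B$ that are $\Sigma$-local. Since $\sSet/B$ with the Joyal-sliced structure is cartesian closed but not simplicial, its homotopy function complexes are computed by the cores of the mapping quasi-categories $\map_B(-,-)$; this does no harm, as follows. For $(S\hookrightarrow T)$ in $\Sigma$ the map $\map_B(T,X)\to\map_B(S,X)$ is a categorical fibration of quasi-categories, so passing to cores gives a Kan fibration, and $\Sigma$-locality forces it to be a trivial Kan fibration; since taking cores does not change the set of vertices, this says precisely that $p$ has the right lifting property against $\Sigma$, hence against the whole saturated class it generates, i.e.\ against all left anodyne maps --- so $p$ is a left fibration. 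Conversely a left fibration over $B$ is an inner fibration along which equivalences lift, hence a categorical fibration; and since $(S\hookrightarrow T)\mathbin{\widehat{\times}}(\partial\Delta[j]\hookrightarrow\Delta[j])$ is again left anodyne for $(S\hookrightarrow T)$ in $\Sigma$, such a $p$ lifts against it, so (by the tensor--cotensor adjunctions of Section~\ref{subsec: simp enrichment of sSet/B}) $\map_B(T,X)\to\map_B(S,X)$ is a trivial Kan fibration; since both mapping spaces are already Kan complexes (again because $p$ is a left fibration), $p$ is $\Sigma$-local. Thus the fibrant objects of $L_\Sigma(\sSet/B)$ are exactly the left fibrations over $B$.

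The weak equivalences of $L_\Sigma(\sSet/B)$ are the $\Sigma$-local equivalences; testing against the fibrant local objects, these are the maps $f\colon X\to Y$ such that $\map_B(Y,Z)\to\map_B(X,Z)$ is a weak homotopy equivalence for every left fibration $Z\to B$ --- here using that $\map_B(X,Z)$ is then a Kan complex (because $\map(X,Z)\to\map(X,B)$ is a left fibration and left fibrations have Kan fibres), so that cores may again be ignored. These are the covariant equivalences, so $L_\Sigma(\sSet/B)$ has the stated cofibrations, weak equivalences and fibrant objects, and is left proper and combinatorial. Finally, simpliciality is the pushout-product axiom for the enrichment of Section~\ref{subsec: simp enrichment of sSet/B}: the pushout-product of two monomorphisms is a monomorphism, and for the triviality clauses one uses the adjunction $\map_B(A\otimes K,Z)\cong\map(K,\map_B(A,Z))$ together with the facts that $\map(K,-)$ preserves weak homotopy equivalences between Kan complexes and that $\map(-,M)$ carries anodyne maps to trivial Kan fibrations when $M$ is a Kan complex, to see that covariant equivalences are stable under tensoring with a fixed simplicial set and that $A\otimes(K\hookrightarrow L)$ is a covariant equivalence whenever $K\hookrightarrow L$ is anodyne; the left properness already established then gives the triviality clauses by two-out-of-three.

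The main obstacle is the identification of the fibrant objects with the left fibrations over $B$, which is the step carrying genuine content: it rests on the combinatorics of left anodyne maps recalled in Section~\ref{subsec:left anodyne maps} --- their closure under pushout-product with monomorphisms, and the adequacy of a generating set for testing a lifting property --- together with the elementary fact that $\map_B(X,Z)$ is a Kan complex for $Z\to B$ a left fibration. The existence and left properness of the localization are formal, and the pushout-product axiom, though requiring a short argument, is routine once left properness is available.
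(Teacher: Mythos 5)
Your route --- realizing the covariant model structure as the left Bousfield localization of the sliced Joyal model structure at the left horn inclusions over $B$ --- is a legitimate strategy and differs from the sources the paper cites (this theorem is not proved in the paper; \cite{Joyal-Barcelona} and \cite{HTT} construct the model structure directly, with the covariant equivalences defined via mapping spaces, rather than by localization). Most of your outline is sound: the existence, combinatoriality and left properness of the localization are formal, the ``local fibrant $\Rightarrow$ left fibration'' direction via surjectivity on vertices of a trivial Kan fibration of cores is correct (granting the standard identification of homotopy function complexes for the sliced Joyal structure with cores of the mapping quasi-categories $\map_B(-,-)$, which deserves a word of justification), and the SM7 argument is the usual one. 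You should also say a word reconciling the paper's definition of covariant equivalence (a bijection on $\pi_0\map_B(-,Z)$ for all left fibrations $Z$) with your characterization (a weak homotopy equivalence on $\map_B(-,Z)$); this follows from the cotensor adjunction $\map_B(X,Z^K)\cong\map(K,\map_B(X,Z))$ and the fact that $Z^K\to B$ is again a left fibration, but it is not literally the same condition.

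There is, however, one genuine gap: the converse identification of fibrant objects requires that every left fibration $p\colon X\to B$ be fibrant in the sliced Joyal model structure, i.e.\ a categorical fibration, and your justification --- ``an inner fibration along which equivalences lift, hence a categorical fibration'' --- is Joyal's criterion, which is valid only when the \emph{codomain} is a quasi-category. Here $B$ is an arbitrary simplicial set, so this one-liner does not apply. The statement is true (it is a special case of Proposition 3.3.1.7 of \cite{HTT}, that (co)Cartesian fibrations are categorical fibrations), but its proof there goes through the marked/Cartesian machinery and is a substantive input, comparable in weight to parts of the direct construction you are trying to bypass. Without it, your localization's fibrant objects are only identified as the left fibrations that happen to be categorical fibrations, and this infects the rest of the argument: the $\Sigma$-local equivalences are then only known to be the maps inverted by $\map_B(-,Z)$ for this possibly smaller class of $Z$, so you cannot yet conclude that the weak equivalences are exactly the covariant equivalences, nor that the fibrant objects are \emph{precisely} the left fibrations as the theorem asserts. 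To complete the proof along your lines you must either invoke that result of \cite{HTT} explicitly or supply an independent argument that left fibrations over an arbitrary base are categorical fibrations.
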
 

Recall that a map $f\colon X\to Y$ in $\sSet/B$ is a covariant equivalence if 
and only if the induced map on path components 
\[
\pi_0\map_B(f,Z)\colon \pi_0\map_B(Y,Z) \to \pi_0\map_B(X,Z) 
\]
is a bijection for every left fibration $Z\to B$ (see \cite{Joyal-Barcelona}).  
Following Joyal, let us write $\bL(B)$ for the full sub-category of $\sSet/B$ spanned 
by the left fibrations.  Note that $\bL(B)$ is enriched over $\Kan$, the category 
of Kan complexes. We will write $\Kan(B)$ for the full sub-category of $\sSet/B$ spanned by the 
Kan fibrations; note that $\Kan(B)\subset \bL(B)$.   

We note the following facts.  

\begin{theorem}[Joyal/Lurie] 
A map $X\to Y$ in $\bL(B)$ is a covariant equivalence if and only if it is a 
fiberwise weak homotopy equivalence in the sense that the induced map 
$X(b)\to Y(b)$ is a weak homotopy equivalence for all vertices $b$ of $B$.  
\end{theorem}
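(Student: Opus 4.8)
The plan is to prove both directions by exploiting the fact that the fibrant objects of the covariant model structure are precisely the left fibrations, so a covariant equivalence between fibrant objects is in particular a simplicial homotopy equivalence with respect to the simplicial enrichment, and conversely a fiberwise weak equivalence can be checked against an explicit class of test objects. First I would observe that one direction is easy: if $f\colon X\to Y$ in $\bL(B)$ is a covariant equivalence, then since $X$ and $Y$ are fibrant in a simplicial model category and $f$ is a weak equivalence between fibrant objects, $\map_B(Z,f)$ is a weak homotopy equivalence of Kan complexes for every cofibrant $Z$; applying this with $Z$ ranging over the objects $b\colon \Delta[0]\to B$ (which are cofibrant, being monomorphisms, and in fact such that $\map_B(b,W) \simeq W(b)$ for any $W\in\sSet/B$) shows that $X(b)\to Y(b)$ is a weak homotopy equivalence for every vertex $b$.

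For the converse — the substantive direction — suppose $f\colon X\to Y$ is a map in $\bL(B)$ which is a fiberwise weak homotopy equivalence. The plan is to show $f$ is a covariant equivalence by showing that $\map_B(f,Z)$ induces a bijection on $\pi_0$ for every left fibration $Z\to B$; equivalently, that $f$ is a simplicial homotopy equivalence in $\bL(B)$, using that everything in sight is fibrant-and-cofibrant and that $\bL(B)$ is enriched in Kan complexes. The key technical input should be a factorization/lifting argument: factor $f$ as a left anodyne map followed by a left fibration $X \to X' \to Y$ in $\sSet/B$; since $X$ and $Y$ are already left fibrations over $B$, the map $X' \to Y$ is a left fibration between left fibrations, hence $X\to X'$ is a left fibration as well, so $X\to X'$ is a trivial fibration (being left anodyne and a left fibration it has the RLP against all monomorphisms — or one argues it is a covariant equivalence and a left fibration, hence a trivial fibration as the covariant fibrations between fibrant objects that are also weak equivalences are exactly trivial fibrations). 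Then it suffices to treat the left fibration $p\colon X'\to Y$, which is now fiberwise a weak homotopy equivalence between Kan complexes over each vertex; here I would invoke the fact that a left fibration whose fibers are contractible is a trivial fibration — this is the heart of the matter.

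The main obstacle, then, is precisely that last assertion: \emph{a left fibration $p\colon E\to Y$ between left fibrations over $B$ (equivalently, with $Y$ arbitrary) whose fiber over every vertex $y$ of $Y$ is a contractible Kan complex is a trivial Kan fibration}. To handle this I would proceed by showing $p$ has the RLP against $\partial\Delta[n]\hookrightarrow\Delta[n]$ for all $n$; by adjunction and the usual reduction this comes down to showing the fibers of $p$ are contractible and that $p$ is a Kan fibration, or more directly to an induction on skeleta using that a left fibration restricted along any map $\Delta[n]\to Y$ is a left fibration over $\Delta[n]$, combined with the (standard, recorded in Chapter 2 of \cite{HTT}) fact that a left fibration over $\Delta[n]$ is determined up to fiberwise equivalence by its fiber over the initial vertex together with the observation that over a point a left fibration is just a Kan complex. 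One then transports: since $f$ factors as a trivial fibration followed by a left fibration with contractible fibers, both factors are trivial fibrations, hence covariant equivalences, hence so is $f$. An alternative, perhaps cleaner, route avoids the factorization entirely: use Joyal's characterization (Proposition~\ref{prop:joyal crit for cov eq} in the excerpt, which is available to us) that covariant equivalences are detected by a suitable class of maps, and check the fiberwise condition implies that criterion directly — I would try both and present whichever is shorter, but in either case the crux is the rigidity of left fibrations over simplices, which is where I expect to spend the real effort.
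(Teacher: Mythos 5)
The paper does not actually prove this statement: it is quoted as a known fact of Joyal/Lurie, and indeed the paper later \emph{uses} it (e.g.\ in the proof of Theorem~\ref{prop:joyal crit for cov eq}). So your proposal has to stand on its own, and its skeleton is the standard one and is essentially sound: the easy direction via $\map_B(\{b\},W)\simeq W(b)$ and the simplicial model structure, and the hard direction by factoring $f$ as a left anodyne map $i\colon X\to X'$ followed by a left fibration $p\colon X'\to Y$ and reducing to the rigidity statement that a left fibration with contractible fibers over all vertices is a trivial Kan fibration (Lemma 2.1.3.4 of \cite{HTT}; the paper itself invokes this fact as known, so citing it is legitimate). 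However, one step as written is wrong: from ``$X'\to Y$ is a left fibration between left fibrations'' you conclude that ``$X\to X'$ is a left fibration as well, so a trivial fibration.'' A left anodyne map between left fibrations over $B$ is not a left fibration in general (take $B=\Delta[0]$ and the inclusion of a vertex into $J$), and both of your justifications for $i$ being a trivial fibration rest on this false claim. The repair is easy and uses only what you already have: $i$ is a covariant equivalence simply because it is left anodyne, and it is a fiberwise weak equivalence either by your already-proved easy direction (it is a covariant equivalence between objects of $\bL(B)$, since $X'\to Y\to B$ is a left fibration) or by choosing a retraction of the trivial cofibration $i$ into the fibrant object $X$ together with a fiberwise homotopy. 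Then two-out-of-three on fibers over vertices $b$ of $B$ shows $X'(b)\to Y(b)$ is a weak equivalence; being a left fibration over the Kan complex $Y(b)$ it is a Kan fibration, hence a trivial Kan fibration, so the fibers of $p$ over all vertices of $Y$ (not of $B$ --- you conflate the two) are contractible, and the rigidity statement gives that $p$ is a trivial Kan fibration. Thus $f=pi$ is left anodyne followed by a trivial fibration, hence a covariant equivalence.

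Two further cautions. First, your ``alternative, cleaner route'' via Proposition~\ref{prop:joyal crit for cov eq} is circular in the context of this paper: the proof of that proposition ((3) implies (1)) replaces $X\to Y$ by left fibrant objects $X'\to Y'$ and then asserts it suffices to show $X'(b)\to Y'(b)$ is a weak homotopy equivalence --- i.e.\ it invokes exactly the hard direction of the present theorem. Second, your sketch of the crux (RLP against $\partial\Delta[n]\subset\Delta[n]$ by induction on skeleta, using rigidity of left fibrations over simplices) is only a sketch; as an independent proof it would need the actual deformation/lifting argument of \cite{HTT}, Section 2.1.3, so either carry that out or cite it explicitly rather than gesture at it.
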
 

Here $X(b)$ and $Y(b)$ denote the fibers of the maps $X\to B$ and $Y\to B$ 
at the vertex $b$; the next proposition shows that $X(b),Y(b)\in \Kan$.  

\begin{proposition}[Joyal/Lurie] 
If $B\in \Kan$ then $\bL(B) = \Kan(B)$.  
\end{proposition}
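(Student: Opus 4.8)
The containment $\Kan(B)\subseteq \bL(B)$ holds for every $B$, and I would dispose of it in one line: a Kan fibration has the right lifting property against all horn inclusions, in particular against the left-horn inclusions generating the left anodyne maps, so it is a left fibration. The content is the reverse containment. The plan is to fix a Kan complex $B$ and a left fibration $p\colon X\to B$ and show $p$ is a Kan fibration; my first move would be to invoke Joyal's observation that the left-horn and right-horn inclusions together generate the anodyne maps, so that it suffices to check that $p$ is also a \emph{right} fibration, i.e.\ that $p$ has the RLP against $\Lambda^n_n\hookrightarrow\Delta^n$ for $n\ge 1$.

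The idea is to trade a right-horn lifting problem for a left- or inner-horn one, exploiting that every edge of the quasi-category $B$ is an equivalence. I would work out $n=1$ first, which already carries the mechanism: given a vertex $x_1$ of $X$ over $b_1=p(x_1)$ and an edge $f\colon b_0\to b_1$ of $B$, choose (using that $B$ is a Kan complex) an inverse edge $g\colon b_1\to b_0$ together with a $2$-simplex $\alpha$ of $B$ witnessing $fg\simeq\mathrm{id}_{b_1}$; lift $g$ through the left fibration $p$ to an edge $\bar g\colon x_1\to x_0$ of $X$; observe that $\bar g$ and the degenerate edge at $x_1$ assemble into a $\Lambda^2_0$-horn of $X$ lying over $\alpha$; fill this left-anodyne horn and read off its remaining face $x_0\to x_1$, which lies over $f$ and ends at $x_1$ — the required lift. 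For general $n$ I would run the same argument on the final edge $\Delta^{\{n-1,n\}}$ of the given $n$-simplex of $B$, transporting the remaining faces of the horn across the chosen inverse; keeping the auxiliary simplices mutually compatible through the resulting induction is the one genuinely laborious point, which I would either push through directly or cite from Lemma~2.1.3.3 of~\cite{HTT}.

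An alternative, and perhaps cleaner, route — which I would also mention — packages this through isofibrations. The endpoint inclusion $\{0\}\hookrightarrow J$ into the free-living isomorphism is left anodyne, so $p$ is an isofibration; an isofibration reflects equivalences, so since every edge of the Kan complex $B$ is an equivalence, every edge of $X$ is an equivalence and $X$ is a Kan complex. Then $p$ is a fibration for the Joyal model structure between two Kan complexes, and since the Kan--Quillen model structure is a left Bousfield localization of the Joyal model structure with the same cofibrations, a Joyal fibration between Kan complexes is a Kan fibration. Either way the one non-formal ingredient is the same — the forced lifting against $\{0\}\hookrightarrow J$, equivalently the higher right-horn fillers — and that is the step I expect to be the main obstacle.
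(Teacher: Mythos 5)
The paper itself gives no proof of this proposition --- it is recalled as a standard fact of Joyal and Lurie (in \cite{HTT} it is, in substance, the Kan-base instance of Proposition 2.1.3.1) --- so your attempt has to stand on its own, and as written it does not. The easy containment, the reduction to lifting against the right outer horns $\Lambda^n[n]\subset\Delta[n]$, and your $n=1$ argument (invert $p(f)$ in the Kan complex $B$, lift the inverse along the left fibration, fill a $\Lambda^0[2]$-horn over the witnessing $2$-simplex and read off the missing face) are all correct. But for $n\ge 2$ the claim that it is ``the same argument on the final edge'' conceals the entire difficulty: what is needed there is precisely Joyal's special outer horn filling theorem (equivalently, that for a left fibration an edge lying over an equivalence behaves like a coCartesian edge), a substantive theorem rather than a bookkeeping induction; and the fallback citation you offer from that part of \cite{HTT} is, in substance, the very statement being proved, so route 1 beyond $n=1$ is either unfinished or circular.

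Your ``cleaner'' route contains a genuinely false step: an isofibration does not reflect equivalences. The projection $\Delta[1]\to\Delta[0]$ has the RLP against $\{0\}\hookrightarrow J$, yet the nondegenerate edge of $\Delta[1]$ lies over an equivalence without being one. What you need is conservativity of \emph{left} fibrations, which is not a formal consequence of lifting against $\{0\}\hookrightarrow J$; it can, however, be obtained by the same device as your $n=1$ case: given an edge $f\colon x\to y$ of $X$, fill in $B$ the $\Lambda^0[2]$-horn with faces $(d_1,d_2)=(\mathrm{id}_{p(x)},p(f))$, then lift the resulting $2$-simplex along $p$ starting from the horn $(s_0x,f)$ in $X$; the new face gives $g$ with $gf\simeq \mathrm{id}_x$ in $\ho(X)$, so every map of $\ho(X)$ has a left inverse, $\ho(X)$ is a groupoid, and $X$ is a Kan complex by Joyal's theorem. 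Granting in addition the fact you flag but do not prove, that $\{0\}\hookrightarrow J$ is left anodyne (true: it is a monomorphism which is right cofinal by Theorem~\ref{thm:A}, hence left anodyne by Lemma~\ref{lem:right cofinal for mono}), together with Joyal's identification of the fibrations of his model structure between quasi-categories, the rest of route 2 (a Joyal fibration between Kan complexes is a Kan fibration, by the localization argument) is a correct assembly of citable facts. The shortest honest argument along standard lines is simply: every edge of the Kan complex $B$ has a homotopy inverse, hence acts by homotopy equivalences on the fibers of $X$, and Proposition 2.1.3.1 of \cite{HTT} (which this paper uses elsewhere) then says that $p$ is a Kan fibration.
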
 

Recall the following important examples of left fibrations: if $B$ is a quasi-category 
and $b\in B$ is a vertex, then the projection $B_{b/}\to B$ from the upper slice $B_{b/}$ 
is a left fibration (see for example Corollary 2.1.2.2 of \cite{HTT}).  
Similarly the projection $B^{b/}\to B$ from the fat upper slice $B^{b/}$ 
is a left fibration (see for example Proposition 4.2.1.6 of \cite{HTT}).  There is a canonical comparison 
map $B_{b/}\to B^{b/}$; it is a covariant equivalence (in fact a weak categorical equivalence --- 
see Proposition 4.2.1.5 of \cite{HTT}),  
for the identity arrow $1_b$ is a terminal object of both $B_{b/}$ and $B^{b/}$, 
from which it follows that $B_{b/}\to B^{b/}$ is right cofinal (Definition~\ref{def:right cof map}) and 
hence is a covariant equivalence.  In particular it follows (as in Corollary 4.2.1.8 of \cite{HTT}) that 
the mapping spaces $\mathrm{Hom}^L_B(a,b)$ and $\mathrm{Hom}^R_B(a,b)$ have the same homotopy type 
for all objects $a$ and $b$ of $B$.  Recall (see Section 1.2.2 of \cite{HTT}) that $\mathrm{Hom}^L_B(a,b)$ is the 
space of left morphisms from $a$ to $b$; it is defined as the fiber of the left fibration $B_{a/}\to B$ 
over the vertex $b$.  Similarly $\mathrm{Hom}^R_B(a,b)$ is the space of right morphisms from 
$a$ to $b$; it is defined as the fiber of the right fibration $B_{/b}\to B$ over the vertex $a$.     

Let us also note the following consequence of Proposition 2.1.2.5 of \cite{HTT}; if 
$B$ is a quasi-category and $f\colon a\to b$ is an edge in $B$, then the canonical map 
$B_{f/}\to B_{b/}$ is a trivial Kan fibration.  Choosing a section of this map 
and composing with the canonical map $B_{f/}\to B_{a/}$ we obtain a map 
\[
f_!\colon B_{b/}\to B_{a/} 
\]
in $\bL(B)$ which is well defined up to a contractible space of choices.  
If $\sigma\colon \Delta[2]\to B$ 
is a 2-simplex then the diagram 

\[
\begin{tikzcd} 
& \arrow[ld,"(d_2\sigma)_!"'] B_{\sigma(1)/} & \\
 B_{\sigma(0)/} & & \arrow[ll,"(d_1\sigma)_!"] 
B_{\sigma(2)/} \arrow[lu,"(d_0\sigma)_!"']  
\end{tikzcd} 
\]

% \[
% \xymatrix{ 
% & \ar[dl]_-{(d_2\sigma)_!} B_{\sigma(1)/} & \\ 
% B_{\sigma(0)/} & & \ar[ll]^-{(d_1\sigma)_!} 
% B_{\sigma(2)/} \ar[ul]_-{(d_0\sigma)_!}  }% 
% \]
in $\bL(B)$ commutes up to homotopy.  Later (Section~\ref{subsec:reversed straightening}) we shall 
see that in fact this construction extends to define a map $B^{\op}\to \sNerve\bL(B)$ 
into the homotopy coherent nerve of $\bL(B)$.

\subsection{Left anodyne maps in $\sSet$} 
\label{subsec:left anodyne maps}
In \cite{Joyal-Barcelona,HTT} it is shown that every left anodyne map in $\sSet/B$ is a covariant 
equivalence, where a map in $\sSet/B$ is said to be left anodyne if the underlying 
map of simplicial sets is so.  
Recall that a monomorphism  
in $\sSet$ is said to be {\em left anodyne} if it belongs to the saturated class of 
monomorphisms generated by the horn inclusions $\Lambda^k[n]\subset \Delta[n]$ 
for $0\leq k<n$, $n\geq 1$. 
For example, the {\em initial vertex} maps $\delta_n\colon \Delta[0]\to \Delta[n]$, defined by 
$\delta_n(0) = 0$, are left 
anodyne for every $n\geq 1$ (we sometimes denote these maps by 
$0\colon \Delta[0]\to \Delta[n]$).  More generally, we have the following result 
due to Joyal.   
\begin{proposition}[\cite{Joyal-Barcelona} Proposition 2.12]  
If $S\subset [n-1]$ is non-empty, then the generalized horn inclusion 
$\Lambda^S[n]\subset \Delta[n]$ is left anodyne.  
\end{proposition}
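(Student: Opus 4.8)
The plan is to prove this by a single induction on $n+|S|$: when $|S|\ge2$ I peel off one codimension-one face, reducing $\Lambda^{S}[n]\subset\Delta[n]$ to a larger generalized horn in the same dimension together with a generalized horn in dimension $n-1$; the base case $|S|=1$ is an ordinary horn $\Lambda^{k}[n]\subset\Delta[n]$ with $k<n$, which is left anodyne by definition. Recall that for $S\subseteq[n]$ one sets $\Lambda^{S}[n]=\bigcup_{i\notin S}d_i\Delta[n-1]$, the union of the codimension-one faces of $\Delta[n]$ indexed by those $i\notin S$; thus $\Lambda^{\{k\}}[n]=\Lambda^{k}[n]$, and enlarging $S$ shrinks $\Lambda^{S}[n]$. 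The only general facts used are that the left anodyne maps form a saturated class, so are closed under composition and cobase change, and that a union of two simplicial subsets of $\Delta[n]$ is the pushout over their intersection.

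If $|S|=1$, say $S=\{k\}$ with $k\in[n-1]$, then $k<n$ and $\Lambda^{S}[n]\subset\Delta[n]$ is one of the generating left anodyne maps, so there is nothing to prove. Suppose $|S|\ge2$ and choose any $j\in S$; put $T=S\setminus\{j\}$, which is again a non-empty subset of $[n-1]$. From the defining unions one reads off $\Lambda^{T}[n]=\Lambda^{S}[n]\cup d_j\Delta[n-1]$, and this exhibits $\Lambda^{S}[n]\hookrightarrow\Lambda^{T}[n]$ as the cobase change, along $\Lambda^{S}[n]$, of $d_j\Delta[n-1]\cap\Lambda^{S}[n]\hookrightarrow d_j\Delta[n-1]$. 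The key computation is to identify this last map: distributivity gives $d_j\Delta[n-1]\cap\Lambda^{S}[n]=\bigcup_{i\notin S}\bigl(d_j\Delta[n-1]\cap d_i\Delta[n-1]\bigr)$, and — using that $j\in S$, so $i\ne j$ throughout — transporting along the order isomorphism $d_j\Delta[n-1]\cong\Delta[n-1]$ identifies the right-hand side with a generalized horn $\Lambda^{S'}[n-1]$, where $S'$ is the image of $T$. Since $T$ is non-empty so is $S'$, and since $n\notin S$ the top vertex $n$ does not lie in $T$, whence $n-1\notin S'$, i.e.\ $S'\subseteq[n-2]$. The inductive hypothesis now applies to $\Lambda^{S'}[n-1]\hookrightarrow\Delta[n-1]$, which is therefore left anodyne; hence $d_j\Delta[n-1]\cap\Lambda^{S}[n]\hookrightarrow d_j\Delta[n-1]$ is left anodyne, and by cobase change so is $\Lambda^{S}[n]\hookrightarrow\Lambda^{T}[n]$. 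Finally $|T|=|S|-1$, so the inductive hypothesis also gives that $\Lambda^{T}[n]\hookrightarrow\Delta[n]$ is left anodyne, and composing the two left anodyne maps completes the induction.

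The substantive part is this middle computation: verifying $\Lambda^{T}[n]=\Lambda^{S}[n]\cup d_j\Delta[n-1]$, recognizing the intersection with $d_j\Delta[n-1]$ as a generalized horn $\Lambda^{S'}[n-1]$, and tracking the reindexing under $d_j\Delta[n-1]\cong\Delta[n-1]$ carefully enough to pin down $S'$. It is exactly in controlling $S'$ that both hypotheses on $S$ are used: non-emptiness of $S$, hence of $T$ at each step, keeps $S'$ non-empty, while $S\subseteq[n-1]$, i.e.\ $n\notin S$, is precisely what forces $S'\subseteq[n-2]$, so that the induction on dimension closes; were $n\in S$ allowed, the top face would reappear and the argument would break. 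Everything else is formal manipulation with saturated classes and with pushouts of simplicial subsets.
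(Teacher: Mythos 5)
Your proof is correct: the well-founded induction on $n+|S|$, the identity $\Lambda^{T}[n]=\Lambda^{S}[n]\cup d^{j}\Delta[n-1]$ for $T=S\setminus\{j\}$, the identification of $d^{j}\Delta[n-1]\cap\Lambda^{S}[n]$ with $\Lambda^{S'}[n-1]$ where $S'=(d^{j})^{-1}(T)$, and the bookkeeping showing $S'$ is a non-empty subset of $[n-2]$ (so the dimension induction closes exactly because $n\notin S$) are all accurate, and you only invoke closure of left anodyne maps under pushout and composition. Note, however, that the paper does not prove this proposition at all --- it is quoted from Joyal's notes --- so there is no in-text argument to match; what the paper does contain is the closely related computation in its proof of Proposition~\ref{prop:contains left anodynes}, where faces are likewise peeled off one at a time via a pushout along a generalized horn one dimension down (the pullback formula $(d^{i})^{-1}\Lambda^{T}[n]=\Lambda^{U}[n-1]$). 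The difference is one of logical direction: there the generating maps are taken to be initial-vertex maps or $\Lambda^{0}[n]\subset\Delta[n]$ and the argument leans on the right cancellation property of the class $\mathcal{A}$, whereas your induction starts from all left horns $\Lambda^{k}[n]$, $k<n$, and so needs nothing beyond saturation. Your route is the standard direct proof of the quoted proposition and is self-contained; the paper's variant is tailored to the weaker hypotheses of its Proposition~\ref{prop:contains left anodynes}, for which right cancellation is genuinely needed.
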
 
Here if $S\subset [n-1]$ then the {\em generalized horn} $\Lambda^S[n]\subset \Delta[n]$ 
is defined to be 
\[
\Lambda^S[n] = \bigcup_{i\notin S} d^i\Delta[n-1],   
\]
where $d^i\colon \Delta[n-1]\to \Delta[n]$ denotes the inclusion of the $i$-th face.  

For later use we record the 
following extremely useful property of left anodyne morphisms.  
If $\cA$ is a class of monomorphisms in $\sSet$, then we say that 
$\cA$ satisfies the {\em right cancellation} property if whenever $u\colon A\to B$ and $v\colon B\to C$ are monomorphisms 
in $\sSet$ such that $vu,u\in \cA$, then $v\in \cA$.  

\begin{proposition}[\cite{Joyal-Barcelona} Corollary 8.15] 
\label{cancellation}
The class of left anodyne maps in $\sSet$ satisfies the right cancellation property.  
\end{proposition}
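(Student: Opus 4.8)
The plan is to exploit the standard fact that a monomorphism in $\sSet$ is left anodyne precisely when it has the left lifting property against every left fibration; this follows from the definition of a left fibration together with the usual factorization-and-retract argument built from the generating left horns. Thus, given monomorphisms $u\colon A\to B$ and $v\colon B\to C$ with $u$ and $vu$ left anodyne, it suffices to solve an arbitrary lifting problem of $v$ against a left fibration $p\colon X\to Y$, with top map $g\colon B\to X$ and bottom map $h\colon C\to Y$. The only ingredient beyond the properties of left anodyne maps recorded above is the pushout-product property: the pushout-product of a left anodyne map with a monomorphism is again left anodyne.

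First I would restrict the lifting problem along $u$. Since $vu$ is left anodyne, the resulting square (top map $gu$, left map $vu$, bottom map $h$) has a diagonal $\ell\colon C\to X$ with $p\ell = h$ and $\ell vu = gu$. This $\ell$ is a first approximation to the lift we want, but $\ell v$ need not equal $g$; the two maps $\ell v, g\colon B\to X$ agree only after precomposition with $u$, and satisfy $p(\ell v) = hv = pg$. The next step repairs this. Placing $\ell v$ and $g$ at the two ends of $\Delta[1]\times B$ and the constant homotopy at $gu$ along $\Delta[1]\times A$, one obtains a map out of $(\partial\Delta[1]\times B)\cup_{\partial\Delta[1]\times A}(\Delta[1]\times A)$; the inclusion of this object into $\Delta[1]\times B$ is exactly the pushout-product of the monomorphism $\partial\Delta[1]\hookrightarrow\Delta[1]$ with the \emph{left anodyne} map $u$, so it is left anodyne. (This is why $\Delta[1]\times A$ must be thrown in: $\partial\Delta[1]\times B\hookrightarrow\Delta[1]\times B$ alone is not anodyne.) Lifting against $p$, with bottom map $hv$ composed with the projection to $B$, produces a homotopy $H\colon\Delta[1]\times B\to X$ from $\ell v$ to $g$ with $pH = hv\circ\mathrm{pr}_B$ and $H$ constant along $A$.

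The third step transports $H$ across $v$. I would glue $\ell$ on $\{0\}\times C$ to $H$ on $\Delta[1]\times B$, which is consistent since $H|_{\{0\}\times B} = \ell v$, obtaining a map out of $(\{0\}\times C)\cup_{\{0\}\times B}(\Delta[1]\times B)$; its inclusion into $\Delta[1]\times C$ is the pushout-product of $\{0\}\hookrightarrow\Delta[1] = \Lambda^0[1]\hookrightarrow\Delta[1]$ with the monomorphism $v$, hence left anodyne. A lift against $p$, with bottom map $h$ composed with the projection to $C$, is a map $\Phi\colon\Delta[1]\times C\to X$, and its restriction $\ell' := \Phi|_{\{1\}\times C}$ satisfies $p\ell' = h$ and $\ell'v = \Phi|_{\{1\}\times B} = H|_{\{1\}\times B} = g$. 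So $\ell'$ solves the original lifting problem, and since $p$ was arbitrary, $v$ is left anodyne.

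I expect the substance of the argument to lie entirely in identifying the two pushout-products used in the last two steps, and in particular in noticing the asymmetry that forces the orientations: only $\{0\}\hookrightarrow\Delta[1]$ (and not $\{1\}\hookrightarrow\Delta[1]$) is left anodyne, so $\ell$ must be placed at the $0$-end, which in turn dictates that $H$ runs from $\ell v$ to $g$. Everything else --- well-definedness of the two gluings, and the identifications of the restrictions of $\Phi$ --- is routine bookkeeping with products and fibre products of simplicial sets. It should also be noted that the hypothesis that $v$ is a monomorphism is genuinely used, both to form the pushout-product in the third step and to reduce the conclusion to a lifting property; and that one could instead derive this from a more general cancellation principle, but the direct argument above is elementary and self-contained.
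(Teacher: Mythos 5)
Your argument is correct. Note first that the paper offers no proof of this statement at all: it is quoted from Joyal's notes (Corollary 8.15 of \cite{Joyal-Barcelona}), so there is no in-paper argument to compare against; what you have written is a complete, self-contained substitute. Your route — reduce to the lifting property against an arbitrary left fibration $p$, lift the outer square along $vu$, correct the discrepancy between $\ell v$ and $g$ by a fibrewise homotopy rel $A$ obtained from the pushout-product of $\partial\Delta[1]\hookrightarrow\Delta[1]$ with the left anodyne $u$, and then transport that homotopy across $v$ using the pushout-product of the left anodyne $\{0\}\hookrightarrow\Delta[1]$ with the monomorphism $v$ — is the standard combinatorial proof, and all the compatibility checks you gesture at do go through; your observation about the forced orientation ($\ell$ at the $0$-end, $H$ from $\ell v$ to $g$) is exactly the point where left-handedness enters. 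The only ingredient you invoke, the pushout-product property for left anodyne maps against monomorphisms, is available independently of the covariant model structure (Corollary 2.1.2.7 of \cite{HTT} or Theorem 2.17 of \cite{Joyal-Barcelona}), which is a genuine advantage here, since the paper uses the present proposition as an input to later results about covariant equivalences; an alternative, shorter derivation would identify left anodyne monomorphisms with right cofinal monomorphisms and appeal to two-out-of-three for covariant equivalences, but that would be circular at this stage of the paper's development. One small correction to your closing remark: the monomorphism hypothesis on $v$ is indeed needed for the pushout-product in the third step, but not to ``reduce the conclusion to a lifting property'' — any map with the left lifting property against all left fibrations is a retract of a left anodyne map and hence left anodyne (and automatically a monomorphism); this is cosmetic and does not affect the proof.
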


The next proposition gives a useful criterion to decide when a saturated 
class of monomorphisms in $\sSet$ contains the left anodyne morphisms.  
The proof that we give is based on the proof of Lemma 3.7 in \cite{JT2}, which 
gives a similar criterion for anodyne morphisms.    

\begin{proposition} 
\label{prop:contains left anodynes}
Let $\mathcal{A}$ be a saturated class of monomorphisms in $\sSet$ 
which satisfies the right cancellation property.  Then the following statements 
are equivalent: 
\begin{enumerate} 
\item $\mathcal{A}$ contains the class of left anodyne morphisms; 
\item $\mathcal{A}$ contains the initial vertex maps $\delta_n\colon \Delta[0]\to \Delta[n]$ for all $n\geq 1$; 
\item $\mathcal{A}$ contains the horn inclusions $h^0_n\colon \Lambda^0[n]\subset \Delta[n]$ for all $n\geq 1$.  
\end{enumerate} 
\end{proposition}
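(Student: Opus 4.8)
The plan is to prove the cycle of implications $(1)\Rightarrow(2)\Rightarrow(3)\Rightarrow(1)$, of which the first two are essentially trivial and the content is entirely in $(3)\Rightarrow(1)$. For $(1)\Rightarrow(2)$: the initial vertex maps $\delta_n\colon\Delta[0]\to\Delta[n]$ are left anodyne (they are generalized horn inclusions $\Lambda^{\{1,\dots,n\}}[n]\subset\Delta[n]$, or can be exhibited directly as composites of horn inclusions $\Lambda^0[k]\subset\Delta[k]$), so if $\mathcal{A}$ contains all left anodyne maps it certainly contains the $\delta_n$. For $(2)\Rightarrow(3)$: I would observe that the inclusion $\Lambda^0[n]\subset\Delta[n]$ fits into a commuting triangle with the initial vertex maps $\Delta[0]\to\Lambda^0[n]$ and $\Delta[0]\to\Delta[n]$ (the vertex $0$ lies in $\Lambda^0[n]$ for $n\geq 2$, and for $n=1$ the horn $\Lambda^0[1]$ is already the vertex $0$ itself, so $h^0_1 = \delta_1$); since $\mathcal{A}$ contains the two legs $\Delta[0]\to\Lambda^0[n]$ and $\Delta[0]=\delta_n$ and satisfies right cancellation, it contains $h^0_n$. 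One subtlety: to apply right cancellation one needs $\Delta[0]\to\Lambda^0[n]$ to itself lie in $\mathcal{A}$; this is again an initial-vertex-type map into a simplicial set built from copies of lower simplices, so I would verify it is in $\mathcal{A}$ by a small skeletal induction, or alternatively reorganize so this map is manifestly a composite of the given generators.

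The main work is $(3)\Rightarrow(1)$, where I want to show that a saturated class $\mathcal{A}$ closed under right cancellation and containing the horns $h^0_n$ contains every left anodyne map. Since the left anodyne maps are the smallest saturated class containing the horns $\Lambda^k[n]\subset\Delta[n]$ for $0\leq k<n$, and $\mathcal{A}$ is already saturated, it suffices to show $\Lambda^k[n]\subset\Delta[n]\in\mathcal{A}$ for all $0\leq k<n$. The strategy, following the pattern of Lemma 3.7 in \cite{JT2}, is to build $\Delta[n]$ up from $\Lambda^k[n]$ by attaching cells in an order that lets one repeatedly factor through intermediate subobjects and invoke right cancellation against $\mathcal{A}$-maps that are pushouts of the $h^0_m$. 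Concretely, I would interpose the subobject $\Lambda^0[n]\cup(\text{the }k\text{-th face})$ or work with a filtration $\Lambda^k[n]=X_0\subset X_1\subset\cdots\subset X_N=\Delta[n]$ where each $X_{i-1}\subset X_i$ is obtained by gluing on a single nondegenerate simplex along a horn, and arrange that each such gluing is (a pushout of) some $h^0_m$. The key combinatorial point is that a missing face can be filled by an inner-or-outer horn attachment whose shape, after a suitable relabeling of vertices, is $\Lambda^0$ rather than $\Lambda^k$ — one chooses the filtration so that the "new" vertex is always placed in position $0$. Then each inclusion in the filtration is in $\mathcal{A}$ because $\mathcal{A}$ is saturated and contains the $h^0_m$, hence the composite $\Lambda^k[n]\subset\Delta[n]$ is in $\mathcal{A}$; right cancellation is used to peel off the pieces one does not directly control.

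I expect the genuine obstacle to be the bookkeeping in this filtration: making precise the claim that one can always reduce the horn-filling step to an $\Lambda^0$-shaped attachment, and checking that the faces get attached in a consistent order (so that at each stage what one glues really is a horn inclusion, with all the other faces of the new simplex already present). This is exactly the kind of argument carried out in \cite{JT2} for ordinary anodyne maps, and the adaptation to the left/outer case amounts to being careful about which vertex is designated as the "$0$" vertex throughout; right cancellation is the tool that makes the induction go through even when an intermediate attachment is not literally one of the chosen generators. Once the filtration is set up correctly, the verification that each step lies in $\mathcal{A}$ is routine, and saturation of $\mathcal{A}$ closes the argument.
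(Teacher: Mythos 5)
Your cycle of implications is a reasonable plan and $(1)\Rightarrow(2)$ is fine, but neither of the two substantive implications is actually proved, and the concrete mechanism you propose for $(3)\Rightarrow(1)$ fails. Relative to $\Lambda^k[n]$, the only nondegenerate simplices of $\Delta[n]$ left to attach are the $k$-th face $d^k\Delta[n-1]$ and the top $n$-simplex, and each of these meets the preceding stage in its \emph{entire boundary} ($\partial\Delta[n-1]$, respectively $\partial\Delta[n]$), not in a horn; so a filtration $\Lambda^k[n]=X_0\subset\cdots\subset X_N=\Delta[n]$ in which each step glues one nondegenerate simplex along a ($\Lambda^0$-shaped) horn does not exist, and the obstacle is not bookkeeping about which vertex is labelled $0$. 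Consequently the proof cannot consist of presenting $\Lambda^k[n]\subset\Delta[n]$ as a composite of pushouts of the $h^0_m$ with cancellation used only to ``peel off'' stray pieces: right cancellation must enter structurally. The paper's argument (the Joyal--Tierney pattern you cite) does this by working with \emph{generalized} horns: one proves by induction on $n$ that $\Lambda^S[n]\subset\Delta[n]$ lies in $\mathcal{A}$ for every nonempty $S\subseteq[n-1]$, using that $\Lambda^T[n]\subset\Lambda^S[n]$ (for $T=S\cup\{i\}$) is a pushout of a lower-dimensional generalized horn $\Lambda^U[n-1]\subset\Delta[n-1]$, and invoking right cancellation twice: once to put the face inclusion $d^n\Delta[n-1]\subset\Delta[n]$ into $\mathcal{A}$ (from $\delta_{n-1}$ and $\delta_n$, or their analogues), and once to reduce $\Lambda^S[n]\subset\Delta[n]$ to $d^n\Delta[n-1]\subset\Lambda^S[n]$, which the induction handles. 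That inductive scheme is the whole content of the proposition and is absent from your sketch.

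The step $(2)\Rightarrow(3)$ has a hole of the same nature, which you flag but do not close: right cancellation only yields $h^0_n\in\mathcal{A}$ once you know $\Delta[0]\to\Lambda^0[n]$ is in $\mathcal{A}$, and this is not a small skeletal check. Building $\Lambda^0[n]$ from the face $d^n\Delta[n-1]$ attaches the remaining faces along generalized horns of $\Delta[n-1]$, so you need exactly the lower-dimensional instances of the hard implication; in the paper this work appears as the induction showing $\Delta[0]\to\Lambda^{\{0,\ldots,i\}}[n]$ lies in $\mathcal{A}$ (its proof of $(3)\Rightarrow(2)$), and the cheapest repair of your route is to deduce $(2)\Rightarrow(3)$ from $(2)\Rightarrow(1)$ rather than directly. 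By contrast, the direction that genuinely can be done by pure saturation is $(3)\Rightarrow(2)$: attaching the simplices of $\Delta[n]$ containing the vertex $0$ in order of increasing dimension, each along its $\Lambda^0$-horn, exhibits $\delta_n$ as a finite composite of pushouts of the $h^0_m$; no analogous cell presentation is available for $\Lambda^k[n]\subset\Delta[n]$, which is precisely why the generalized-horn induction with right cancellation is needed. As it stands, your proposal establishes only the trivial implications.
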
 

\begin{proof} 
Clearly (1) implies (3).  We prove that (2) implies (1). 
We slavishly follow the ingenious strategy of Joyal and Tierney.  We prove that every horn 
$h^k_n\colon \Lambda^k[n]\to \Delta[n]$, $0\leq k<n$, belongs to $\mathcal{A}$.  
More generally, 
we prove by induction on $n\geq 1$ that every generalized horn $\Lambda^S[n]\subset \Delta[n]$ belongs 
to $\mathcal{A}$, where $S$ is a proper, non-empty subset of $[n]$ such that 
$n\notin S$.  Recall that 
\[
\Lambda^S[n] = \bigcup_{i\notin S} d^i\Delta[n-1].  
\]
The statement is true when $n=1$.  Assume the statement is true for $n>1$.  
Since $\delta_n = d^n\delta_{n-1}$, we see by the right cancellation property of $\mathcal{A}$ that  the 
map $d^n\Delta[n-1]\subset \Delta[n]$ belongs to $\mathcal{A}$ and so it therefore
suffices by the right cancellation property again to show that 
\[
d^n\Delta[n-1]\subset \Lambda^S[n] 
\]
belongs to $\mathcal{A}$.  

It suffices to show that $\Lambda^T[n]\subset \Lambda^S[n]$ belongs to $\mathcal{A}$ 
for any proper non-empty subsets $S\subset T\subset [n]$ such that $n\notin T$ (for then the 
desired statement follows by taking $T = [n-1]$).  Without loss of generality 
we may suppose that $T = S\cup \{i\}$ where $i\notin S$.  Let $U = (d^i)^{-1}T\subset [n-1]$.  A 
calculation, using the fact that the squares 
\[
\begin{tikzcd} 
\left[n-2\right] \arrow[r,"d^i"] \arrow[d,"d^{j-1}"'] & \left[n-1\right] \arrow[d,"d^j"] \\ 
\left[n-1\right] \arrow[r,"d^i"'] & \left[n\right] 
\end{tikzcd}
\]
% \[
% \xymatrix{ 
% [n-2] \ar[r]^-{d^i} \ar[d]_-{d^{j-1}} & [n-1] \ar[d]^-{d^j} \\ 
% [n-1] \ar[r]_-{d^i} & [n] } 
% \]
are pullbacks for $i<j$, shows that $(d^i)^{-1}\Lambda^T[n] = \Lambda^U[n-1]$.  
The square 
\[
\begin{tikzcd} 
\Lambda^U\left[n\right] \arrow[r] \arrow[d] & \Lambda^T\left[n\right] \arrow[d] \\ 
\Delta\left[n-1\right] \arrow[r] & \Lambda^S\left[n\right] 
\end{tikzcd} 
\]
% \[
% \xymatrix{ 
%  \Lambda^U[n-1] \ar[r] \ar[d] & \Lambda^T[n] \ar[d] \\ 
% \Delta[n-1] \ar[r] & \Lambda^S[n] } 
% \]
is a pushout and it therefore suffices to show that $\Lambda^U[n-1] 
\to \Delta[n-1]$ is in $\mathcal{A}$.  But $n-1\notin U$ since $n\notin T$ and $d^i(n-1) = n$; 
hence $\Lambda^U[n-1]\to \Delta[n-1]$ 
belongs to $\mathcal{A}$ by the induction hypothesis.  

We prove that (3) implies (2).  More generally, we prove by induction on 
$n\geq 1$ that every initial vertex map $\Delta[0]\to \Lambda^{\set{0,\ldots,i}}[n]$ belongs to 
$\cA$ for all $0\leq i<n$.  Then the desired statement follows by taking 
$i=0$ and composing with $h^0_n\colon \Lambda^0[n]\to \Delta[n]$.  
The statement is true when $n=1$.  By the inductive hypothesis the statement 
is true when $i=n-1$.  Since $\cA$ is closed under composition it suffices to show 
that $\Lambda^{\set{0,\ldots,i}}[n]\subset \Lambda^{\set{0,\ldots,i-1}}[n]$ belongs to 
$\cA$ for all $1\leq i<n$.  Arguing as above, the square 
\[
\begin{tikzcd} 
\Lambda^{\set{0,\ldots,i-1}}\left[n-1\right] \arrow[d] \arrow[r] & \Lambda^{\set{0,\ldots,i}}\left[n\right] \arrow[d] \\ 
\Delta\left[n-1\right] \arrow[r] & \Lambda^{\set{0,\ldots,i-1}}\left[n\right]
\end{tikzcd} 
\]
% \[
% \xymatrix{ 
% \Lambda^{\set{0,\ldots,i-1}}[n-1] \ar[d] \ar[r] & \Lambda^{\set{0,\ldots,i}}[n] \ar[d] \\ 
% \Delta[n-1] \ar[r] & \Lambda^{\set{0,\ldots,i-1}}[n] } 
% \]
is a pushout.  By the inductive assumption and the right cancellation property, 
we see that the left-hand vertical map belongs to $\cA$, which 
implies that the right-hand vertical map belongs 
to $\cA$, as required.  
\end{proof} 

In fact, in the proof above it not really necessary that $\cA$ is 
saturated, all we have used is that $\cA$ contains all isomorphisms, 
and is closed under composition and forming pushouts along 
monomorphisms.  

\subsection{Covariant equivalences and cofinal maps} 
\label{subsec:cofinal} Most of the results that we state in this section can be found in the work of Joyal or Lurie, 
although proofs of some results have not yet been published.  An exception is 
Proposition~\ref{prop:char of weak cat equivs} which to the best 
of our knowledge is new (although the corresponding result for 
simplicial categories is well known \cite{DK2}).        

First let us recall the concept of a smooth map of simplicial sets (here we follow the terminology 
of Joyal in Definition 11.1 of \cite{Joyal-Barcelona} --- note that there is a disparity in the terminology 
of Joyal and Lurie regarding smooth maps).  A map $X\to B$ of simplicial sets 
is said to be {\em smooth} if the base-change functor $X\times_B(-)\colon \sSet/B\to \sSet/X$ sends 
left anodyne maps in $\sSet/B$ to left anodyne maps in $\sSet/X$.  Dually, 
$X\to B$ is said to be {\em proper} if $X\times_B(-)\colon \sSet/B\to \sSet/X$ sends 
right anodyne maps in $\sSet/B$ to right anodyne maps in $\sSet/X$.  It can be shown 
(Theorem 11.9 of \cite{Joyal-Barcelona} or Proposition 4.1.2.14 of \cite{HTT}) 
that every right fibration is smooth and that every left fibration is proper.  If 
$p\colon X\to B$ is a smooth map, then the adjoint pair 
\[
p^*\colon \sSet/B\rightleftarrows \sSet/X\colon p_* 
\]
is a Quillen adjunction for the covariant model structures on $\sSet/B$ and $\sSet/X$ 
(Theorem 11.2 of \cite{Joyal-Barcelona} or Proposition 4.1.2.7 of \cite{HTT}).

With these definitions understood, we have the following very useful characterization of 
covariant equivalences, due to Joyal (see \cite{Joyal-Notes}).  Let $\bR(B)$ denote the full subcategory of 
$\sSet/B$ spanned by the right fibrations on $B$.       

\begin{theorem}[Joyal] 
\label{prop:joyal crit for cov eq}
Let $B$ be a simplicial set.  The following statements are equivalent.  
\begin{enumerate} 
\item the map $X\to Y$ in $\sSet/B$ is a covariant equivalence; 
\item  the induced map 
$R\times_B X\to R\times_B Y$ is a weak homotopy equivalence for all $R\in \bR(B)$; 
\item the induced map $Rb\times_B X\to Rb\times_B Y$ is a weak homotopy equivalence for all 
$b\in B$, where $1\to Rb\to B$ is any factorization of $b\colon 1\to B$ into a right anodyne map 
$1\to Rb$, followed by a right fibration $Rb\to B$.
\end{enumerate}  
\end{theorem}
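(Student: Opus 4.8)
The plan is to prove the cycle of implications $(1)\Rightarrow(2)\Rightarrow(3)\Rightarrow(1)$, leaning on two standard facts recalled just above: that every left fibration is proper, and that a proper map $p\colon X\to B$ induces a Quillen adjunction $p^*\colon \sSet/B\rightleftarrows\sSet/X$ for the covariant model structures (the right-fibration analogues, obtained by passing to opposites). The key conceptual point is that for $R\to B$ a right fibration, the fibers of $R\times_B X\to R$ over vertices of $R$ agree with fibers of $X\to B$, and $R\times_B X$ is a covariant-fibrant object over $R$ precisely when $X\to B$ is; weak homotopy equivalence of total spaces will be detected fiberwise via the ``Joyal/Lurie'' theorem stated above.

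First I would prove $(1)\Rightarrow(2)$. Since $R\to B$ is a right fibration it is proper, so $p^*=R\times_B(-)\colon \sSet/B\to\sSet/R$ is left Quillen for the covariant model structures; being a left Quillen functor it preserves covariant equivalences between \emph{all} objects when combined with the fact that both structures are left proper and every object is cofibrant (every map is a monomorphism on a suitable factorization), so $R\times_BX\to R\times_BY$ is a covariant equivalence in $\sSet/R$. Now I invoke the key reduction: a covariant equivalence in $\sSet/R$ whose source and target become, after fibrant replacement, left fibrations over $R$, is detected fiberwise; but more directly, a covariant equivalence between \emph{arbitrary} objects of $\sSet/R$ has underlying map a weak homotopy equivalence of simplicial sets. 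This last fact is itself standard (the forgetful functor $\sSet/R\to\sSet$ is left Quillen from the covariant to the Kan--Quillen structure, since the terminal map $R\to 1$ is smooth), giving that $R\times_BX\to R\times_BY$ is a weak homotopy equivalence, which is $(2)$.

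Next, $(2)\Rightarrow(3)$ is the special case $R=Rb$; there is nothing to prove once we observe $Rb\to B$ is a right fibration by construction. The substantive implication is $(3)\Rightarrow(1)$. Here I would use the characterization of covariant equivalences by mapping into left fibrations: $X\to Y$ is a covariant equivalence iff $\pi_0\map_B(Y,Z)\to\pi_0\map_B(X,Z)$ is a bijection for every left fibration $Z\to B$; equivalently, using the simplicial model structure, iff $\map_B(Y,Z)\to\map_B(X,Z)$ is a weak homotopy equivalence for every fibrant $Z$. The strategy is to reduce testing against $Z$ to testing fiberwise: a left fibration $Z\to B$ is classified by its fibers $Z(b)=\mathrm{Hom}^R_B(b,-)$-type data, and one shows $\map_B(Rb\times_BX', Z)\simeq \map_{Rb}(X'\times_B Rb, Z\times_B Rb)$ together with the fact that $Z\times_B Rb\to Rb$ is a left fibration over $Rb$ whose space of sections computes the fiber $Z(b)$ up to homotopy (because $1\to Rb$ is right anodyne, hence a covariant equivalence over $Rb$, and evaluation of a left fibration along a covariant equivalence is a weak equivalence). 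Assembling this over all vertices $b$ via a descent/Bousfield-type argument — the covariant model structure on $\sSet/B$ is cofibrantly generated and one can check the equivalence on generators, or use that a map of left fibrations is a covariant equivalence iff fiberwise — yields that $X\to Y$ is a covariant equivalence.

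The main obstacle is the last step, $(3)\Rightarrow(1)$: turning the hypothesis ``fiberwise (after right-anodyne thickening) weak equivalence'' into the global statement ``covariant equivalence,'' since covariant equivalences are defined by a mapping-out condition into \emph{all} left fibrations, not a fiberwise condition on $X,Y$ themselves (which need not be fibrant). I expect to handle this by first reducing to the case where $X\to Y$ is a monomorphism (factor and use 2-out-of-3, noting a map is a covariant equivalence iff its mono part is, using that trivial cofibrations and their retracts behave well), then fibrantly replacing $X\hookrightarrow Y$ over $B$ to a map of left fibrations $\tilde X\to\tilde Y$ — but this replacement must be done compatibly with the base-change hypothesis, which is the delicate point, since one needs that $Rb\times_B(-)$ commutes with the relevant fibrant replacements up to covariant equivalence. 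That compatibility follows because $Rb\times_B(-)$ is left Quillen and hence preserves trivial cofibrations, so the hypothesis $(3)$ passes to the replacements, and then the ``Joyal/Lurie'' fiberwise criterion for maps of left fibrations closes the argument.
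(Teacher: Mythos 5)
Your proofs of $(1)\Rightarrow(2)$ and $(2)\Rightarrow(3)$ are correct and essentially the paper's own argument: pullback along a right fibration is left Quillen for the covariant structures (this is smoothness of right fibrations in the terminology of this paper --- note you call it ``proper'', which is Lurie's convention, and your opening paragraph swaps the two), and a covariant equivalence over $R$ is a weak homotopy equivalence of total spaces because the forgetful functor $\sSet/R\to\sSet$ is left Quillen into the Kan--Quillen structure. The problem is in $(3)\Rightarrow(1)$, which you yourself identify as the substantive step.

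Your final sketch gets as far as the paper does up to one point: replace $X\to Y$ by a map $\tilde{X}\to\tilde{Y}$ of left fibrations over $B$, and transfer hypothesis (3) to the replacements (correct, since $Rb\times_B(-)$ preserves covariant trivial cofibrations, so by two-out-of-three $Rb\times_B\tilde{X}\to Rb\times_B\tilde{Y}$ is a weak homotopy equivalence). But at that stage the Joyal/Lurie fiberwise criterion does not yet apply: it requires the \emph{fibers} $\tilde{X}(b)\to\tilde{Y}(b)$ to be weak homotopy equivalences, whereas you only control the ``thickened fibers'' $Rb\times_B\tilde{X}\to Rb\times_B\tilde{Y}$. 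The missing bridge is that $\tilde{X}(b)\to Rb\times_B\tilde{X}$ is a weak homotopy equivalence, and this is exactly where properness of left fibrations has to be deployed: $Rb\times_B\tilde{X}\to Rb$ is a left fibration, hence proper, so the pullback of the right anodyne map $1\to Rb$ along it is right anodyne, hence a weak homotopy equivalence; the same for $\tilde{Y}$, and two-out-of-three in the evident square then gives the fiberwise statement. You list properness as an ingredient but never use it, and where your middle paragraph gestures at this comparison the justification offered is false: $1\to Rb$ is right anodyne, hence a \emph{contravariant} equivalence over $Rb$, not a covariant one (the chosen vertex is terminal-like in $Rb$, not initial-like), and pulling a covariant equivalence back along a left fibration need not give a weak equivalence --- for instance the left fibration $\set{1}\to\Delta[1]$ pulls the left anodyne map $\set{0}\to\Delta[1]$ back to $\emptyset\to\set{1}$. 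Similarly, the proposed ``descent/Bousfield-type'' assembly via mapping spaces into all left fibrations is too vague to constitute an argument and is not needed. Once the properness step above is inserted, your outline becomes precisely the paper's proof.
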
 

We give a proof of this theorem below, since to the best of our knowledge one has not 
yet been given in the literature.  

\begin{proof} 
We prove that (1) implies (2).  Let $R\in \bR(B)$, then 
the composite functor $\sSet/B\to\sSet/R\to \sSet/1$ is left 
Quillen, where the functor $R\times_B (-)\colon \sSet/B\to \sSet/R$ is pullback 
along the smooth map $R\to B$, and where $\sSet/R\to \sSet/1$ is 
cobase change along the canonical map $R\to 1$.  Therefore, since a left Quillen functor preserves weak 
equivalences between cofibrant objects, and every object is cofibrant in the covariant model 
structure, it follows that $R\times_B X\to R\times_B Y$ is a covariant equivalence in $\sSet/1$.  But the 
covariant model structure on $\sSet/1$ coincides with the Quillen model structure on 
$\sSet$ and hence $R\times_B X\to R\times_BY$ is a weak homotopy equivalence. 

It is clear that (2) implies (3).  Therefore we need to prove that (3) implies (1).  
Suppose the hypotheses of (3) are satisfied; we need to show that $X\to Y$ is a covariant equivalence.  
There is a commutative diagram 
\[
\begin{tikzcd} 
X \arrow[r] \arrow[d] & Y \arrow[d] \\ 
X' \arrow[r] & Y' 
\end{tikzcd}
\]
% \[
% \xymatrix{ 
% X \ar[r] \ar[d] & Y \ar[d] \\ 
% X' \ar[r] & Y' } 
% \]
in $\sSet/B$, where the maps $X\to X'$, $Y\to Y'$ are left anodyne and 
$X',Y'\in \bL(B)$.  By the two-out-of-three property it is sufficient to show that $X'\to Y'$ is a covariant 
equivalence and hence to show that $X'(b)\to Y'(b)$ is a weak homotopy equivalence for 
all vertices $b\in B$, where $X'(b)$ (respectively $Y'(b)$) denotes the fiber of $X'\to B$ 
(respectively $Y'\to B$) over $b\in B$.   

Let $b\in B$ be a vertex and factor the map $b\colon 1\to B$ as $1\to Rb\to B$ where 
$1\to Rb$ is right anodyne and $Rb\to B$ is a right fibration.  In the diagram 
\[
\begin{tikzcd} 
Rb\times_B X \arrow[r] \arrow[d] & Rb\times_B Y \arrow[d] \\ 
Rb\times_B X' \arrow[r] & Rb\times_B Y'
\end{tikzcd}
\]
% \[
% \xymatrix{ 
% Rb\times_B X \ar[r] \ar[d] & Rb\times_B Y \ar[d] \\ 
% Rb\times_B X' \ar[r] & Rb\times_B Y' } 
% \]
the vertical maps are left anodyne (since $Rb\to B$ is smooth) and the map $Rb\times_B X\to Rb\times_B Y$ 
is a weak homotopy equivalence by hypothesis.  Therefore the map $Rb\times_B X'\to Rb\times_B Y'$ is 
a weak homotopy equivalence.  Consider the diagram 
\[
\begin{tikzcd} 
X'(b) \arrow[d] \arrow[r] &  Rb\times_B X' \arrow[d] \arrow[r] & X' \arrow[d] \\ 
1 \arrow[r] & Rb \arrow[r] & B
\end{tikzcd}
\]
% \[
% \xymatrix{ 
% X'(b) \ar[d] \ar[r] &  Rb\times_B X' \ar[d] \ar[r] & X' \ar[d] \\ 
% 1 \ar[r] & Rb \ar[r] & B } 
% \]
in which both squares are pullbacks.
Since $Rb\times_B X'\to Rb$ is proper, the map $X'(b)\to Rb\times_B X'$ is right 
anodyne, and hence is a weak homotopy equivalence.  Analogously, $Y'(b)\to Rb\times_B Y'$ 
is a weak homotopy equivalence.  It follows that $X'(b)\to Y'(b)$ is a weak homotopy equivalence.    
\end{proof}

We now discuss cofinal maps of simplicial sets.  We use the following definition from \cite{HA}.   

\begin{definition} 
\label{def:right cof map}
A map $u\colon A\to B$ of simplicial sets is said to be {\em left cofinal} if it admits a 
factorization $u = pi$, where $i\colon A\to B'$ is right anodyne and $p\colon B'\to B$ is a 
trivial Kan fibration.  Dually, we say that $u\colon A\to B$ is {\em right cofinal} if the 
opposite map $u^{\op}\colon A^{\op}\to B^{\op}$ is left cofinal.  
\end{definition} 

Thus a map $u\colon A\to B$ of simplicial sets is left cofinal if it is 
{\em cofinal} in the sense of Definition 4.1.1.1 of \cite{HTT} (in the terminology of Joyal such a 
map is said to be {\em final}). Note that $u\colon A\to B$ is right cofinal if and only if it 
admits a factorization $u = pi$, where $i$ is left anodyne and $p$ is a trivial Kan fibration.  

For later use we record the statements of some elementary results 
on cofinal maps due to Joyal and Lurie.  

\begin{lemma}[Proposition 4.1.1.3 \cite{HTT}] 
\label{lem:right cofinal for mono}
A monomorphism $i\colon A\to B$ is right cofinal if and only if $i$ is left anodyne.  
\end{lemma}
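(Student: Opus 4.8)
The plan is to derive both implications directly from the definition of right cofinal map, using only that a trivial Kan fibration has the right lifting property against all monomorphisms and that the class of left anodyne maps, being saturated, is closed under retracts. For the implication that left anodyne implies right cofinal: given a left anodyne monomorphism $i\colon A\to B$, the trivial factorization $i = \mathrm{id}_B\circ i$, in which $i$ is left anodyne and $\mathrm{id}_B$ is a trivial Kan fibration, already exhibits $i$ as right cofinal, so nothing remains to be checked.

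For the converse, suppose $i\colon A\to B$ is a right cofinal monomorphism and fix a factorization $i = p\circ j$ with $j\colon A\to B'$ left anodyne and $p\colon B'\to B$ a trivial Kan fibration. The first step is to produce a section of $p$ compatible with $j$: since $i$ is a monomorphism and $p$ is a trivial Kan fibration, the square with left edge $i$, top edge $j$, right edge $p$ and bottom edge $\mathrm{id}_B$ admits a diagonal filler $s\colon B\to B'$, so that $s\circ i = j$ and $p\circ s = \mathrm{id}_B$. The second step is to observe that these two identities say precisely that $i$ is a retract of $j$, via the diagram
\[
\begin{tikzcd}
A \arrow[r, equal] \arrow[d, "i"'] & A \arrow[r, equal] \arrow[d, "j"] & A \arrow[d, "i"] \\
B \arrow[r, "s"'] & B' \arrow[r, "p"'] & B
\end{tikzcd}
\]
whose squares commute ($s\circ i = j$ on the left, $p\circ j = i$ on the right) and whose horizontal composites are the respective identity maps. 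Since the left anodyne maps form a saturated class, hence are closed under retracts, and $j$ is left anodyne, it follows that $i$ is left anodyne.

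I do not anticipate a genuine obstacle here: the argument is a single lifting step followed by a retract diagram. The only points requiring any care are that a monomorphism of simplicial sets has the left lifting property against every trivial Kan fibration, so that the section $s$ exists, and the purely formal verification that the displayed diagram exhibits $i$ as a retract of $j$; both are immediate.
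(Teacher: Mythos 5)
Your proof is correct. The paper itself offers no proof of this lemma, simply citing Proposition 4.1.1.3 of \cite{HTT}, and your argument --- the trivial factorization through the identity in one direction, and in the other the lift of the identity of $B$ against the trivial Kan fibration followed by the retract argument, using that the saturated class of left anodyne maps is closed under retracts --- is precisely the standard proof given there, so nothing needs to be added.
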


\begin{lemma}[Joyal] 
The base change of a right cofinal map along a right fibration is right cofinal.  
\end{lemma}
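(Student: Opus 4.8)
The plan is to exploit the defining factorization of right cofinal maps together with the imported fact (Theorem 11.9 of \cite{Joyal-Barcelona}, Proposition 4.1.2.14 of \cite{HTT}) that every right fibration is smooth, i.e.\ that base change along a right fibration preserves left anodyne maps. Let $u\colon A\to B$ be right cofinal and let $q\colon E\to B$ be a right fibration; write $A_E = A\times_B E$ and let $u_E\colon A_E\to E$ be the base change of $u$ along $q$. First I would choose, by the very definition of right cofinality, a factorization $u = pi$ with $i\colon A\to B'$ left anodyne and $p\colon B'\to B$ a trivial Kan fibration.

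Pulling this factorization back along $q$ produces a factorization $u_E = p_E\, i_E$, where $p_E\colon B'\times_B E\to E$ is the base change of $p$ along $q$, and $i_E\colon A_E\to B'\times_B E$ is the base change of $i$ along the projection $r\colon B'\times_B E\to B'$; here one checks the routine identification $A\times_{B'}(B'\times_B E)\cong A\times_B E$, which is exactly what makes this the pulled-back factorization of $u$. Now $p_E$ is a trivial Kan fibration, since trivial Kan fibrations are characterized by the right lifting property against the boundary inclusions $\partial\Delta[n]\subset\Delta[n]$ and this class is stable under base change. For $i_E$: the map $r\colon B'\times_B E\to B'$ is the base change of the right fibration $q$ along $p$, hence is itself a right fibration, hence is smooth; therefore $r^*$ carries the left anodyne map $i$ to a left anodyne map, i.e.\ $i_E = r^*(i)$ is left anodyne. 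Thus $u_E$ factors as a left anodyne map followed by a trivial Kan fibration, so $u_E$ is right cofinal.

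There is no genuine obstacle here beyond bookkeeping: the entire content sits in the two black boxes already available in the excerpt — that right cofinal maps admit a (left anodyne)/(trivial Kan fibration) factorization, and that right fibrations are smooth — combined with the stability of right fibrations and of trivial Kan fibrations under base change. The only point demanding a moment's care is the correct identification of the iterated fiber product $A\times_{B'}(B'\times_B E)$ with $A\times_B E$, so that pulling back the chosen factorization of $u$ really does yield a factorization of $u_E$ of the required shape.
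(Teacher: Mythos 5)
Your proposal is correct and is essentially the paper's own (one-line) argument, spelled out: factor the right cofinal map as a left anodyne map followed by a trivial Kan fibration, and observe that the pullback along the right fibration preserves both classes — trivial Kan fibrations by stability under arbitrary base change, left anodyne maps by smoothness of right fibrations. The bookkeeping with the iterated pullback is handled correctly, so nothing is missing.
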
 

\begin{proof} 
This is clear, since left anodyne maps and trivial fibrations are preserved 
under base change by right fibrations.  
\end{proof} 

\begin{lemma}[Corollary 4.1.1.11 \cite{HTT}] 
\label{lem:left fibn + right cofinal => trivial Kan fibn} 
If $u\colon A\to B$ is a left fibration which is also right cofinal, then $u$ is a trivial Kan fibration.  
\end{lemma}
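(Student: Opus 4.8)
The plan is to reduce the statement to formal model-category considerations, using essentially nothing beyond the fact recorded earlier that every left anodyne map in $\sSet/B$ is a covariant equivalence. The starting observation is that, by the characterization of the fibrant objects of the covariant model structure, a left fibration $u\colon A\to B$ is \emph{exactly} a covariant fibration from the object $(A,u)$ to the terminal object $(B,\mathrm{id}_B)$ of $\sSet/B$. Hence it suffices to prove that, under our hypotheses, $u$ is moreover a covariant equivalence: a map that is simultaneously a covariant fibration and a covariant equivalence is a trivial fibration in the covariant model structure, so it has the right lifting property against every cofibration, i.e.\ against every monomorphism in $\sSet/B$. Since an arbitrary monomorphism of simplicial sets, together with a compatible map to $B$, determines a monomorphism in $\sSet/B$, this right lifting property is precisely the assertion that $u$ is a trivial Kan fibration.

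So the first real step is to exploit that $u$ is right cofinal. By the remark following Definition~\ref{def:right cof map}, there is a factorization $u = p\circ i$ with $i\colon A\to B'$ left anodyne and $p\colon B'\to B$ a trivial Kan fibration. Regard $i$ as a morphism $(A,u)\to(B',p)$ of $\sSet/B$; its underlying map of simplicial sets is left anodyne, so $i$ is a covariant equivalence. Regard $p$ as a morphism $(B',p)\to(B,\mathrm{id}_B)$ of $\sSet/B$; being a trivial Kan fibration it has the right lifting property against every monomorphism of simplicial sets, hence against every monomorphism of $\sSet/B$, in particular against every covariant trivial cofibration, so $p$ is a covariant fibration, and indeed (having the right lifting property against all cofibrations) a trivial one, hence a covariant equivalence. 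By two-out-of-three, $u = p\circ i$ is a covariant equivalence.

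Combining this with the opening observation, $u$ is a covariant fibration that is also a covariant equivalence, hence has the right lifting property against all monomorphisms of $\sSet/B$, and therefore against all monomorphisms of $\sSet$, i.e.\ $u$ is a trivial Kan fibration, as desired. The argument is essentially bookkeeping, so I do not expect a genuine obstacle; the only points warranting a moment's attention are the identification of left fibrations over $B$ with covariant fibrations into the terminal object of $\sSet/B$ (immediate from the description of the fibrant objects) and the decision to verify, rather than black-box, that trivial Kan fibrations are covariant equivalences. An alternative, more hands-on route would be to prove directly that a left fibration with weakly contractible fibers is a trivial Kan fibration and to observe that right cofinality forces the fibers of $u$ to be contractible, but the model-categorical argument above seems cleaner and entirely self-contained given the results already available.
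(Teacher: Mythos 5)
Your argument is correct, and it is worth noting that the paper itself offers no proof of this lemma at all: it is simply imported from \cite{HTT} (Corollary 4.1.1.11). Your route is a clean, self-contained derivation from facts already recorded in Section~\ref{sec:cov model structure}: the factorization of a right cofinal map as a left anodyne map followed by a trivial Kan fibration (the remark after Definition~\ref{def:right cof map}), the fact that left anodyne maps over $B$ are covariant equivalences, and the description of the covariant model structure (cofibrations are the monomorphisms, fibrant objects are the left fibrations, so the map from $(A,u)$ to the terminal object $(B,\mathrm{id}_B)$ is a covariant fibration). The lifting-problem transfers between $\sSet$ and $\sSet/B$ that you spell out are all valid, and the conclusion that a covariant fibration which is also a covariant equivalence has the right lifting property against all monomorphisms is exactly the model-category axiom you invoke. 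In effect you have re-proved, ahead of its statement, the relevant direction of Lemma~\ref{lem:cov equiv right cof} (right cofinal maps are covariant equivalences in $\sSet/B$); granting that lemma, your proof collapses to one line, namely that $u$ is a covariant equivalence with fibrant source, hence a trivial fibration in the covariant model structure, hence a trivial Kan fibration. So your approach buys a proof of the cited result from within the paper's own toolkit rather than appealing to Lurie's separate argument in \cite{HTT}; the only caveat is to keep the logical order straight, which you do, since you rely only on material appearing before the lemma.
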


\begin{lemma}[Proposition 4.1.2.5 \cite{HTT}]  
\label{lem:cov equiv right cof}
A map $u\colon A\to B$ in $\sSet$ is right cofinal if and only if it is a covariant equivalence 
in $\sSet/B$.  
\end{lemma}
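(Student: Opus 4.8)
The plan is to exploit two identifications that are essentially built into the covariant model structure, reducing the lemma to bookkeeping. First, since the fibrant objects of $\sSet/B$ are precisely the left fibrations over $B$, a map $q\colon A'\to B$ is a left fibration if and only if $q$, regarded as a morphism in $\sSet/B$ from $(A',q)$ to the terminal object $(B,\mathrm{id}_B)$, is a covariant fibration. Second, since the cofibrations of the covariant model structure are exactly the monomorphisms, the trivial fibrations of that model structure — maps with the right lifting property against all cofibrations of $\sSet/B$ — are exactly the trivial Kan fibrations: a lifting problem against a monomorphism over $B$ is nothing but a lifting problem against a monomorphism of simplicial sets, since in any such square the domain of the map being lifted inherits a structure map to $B$. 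Combining the two, any left fibration over $B$ which is also a covariant equivalence is automatically a trivial Kan fibration, and conversely every trivial Kan fibration is a trivial fibration of the covariant model structure, hence a covariant equivalence.

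For the direction that a right cofinal map is a covariant equivalence, suppose $u\colon A\to B$ is right cofinal and write $u=pi$ as in Definition~\ref{def:right cof map}, with $i$ left anodyne and $p$ a trivial Kan fibration. Viewing $i$ and $p$ as morphisms in $\sSet/B$ (the domains acquiring their structure maps from the evident composites with $u$ and $p$), the map $i$ is a covariant equivalence because every left anodyne map in $\sSet/B$ is one, and $p$ is a covariant equivalence by the second observation above. Hence $u=pi$ is a covariant equivalence.

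For the converse, suppose $u\colon A\to B$ is a covariant equivalence. Applying the small object argument to the set of left horn inclusions $\Lambda^k[n]\subset\Delta[n]$, $0\le k<n$, factor $u=qj$ with $j\colon A\to A'$ left anodyne and $q\colon A'\to B$ a left fibration. Then $j$ is a covariant equivalence, so by two-out-of-three $q$ is too; since $q$ is a left fibration, $(A',q)$ is fibrant and $q$ is a covariant fibration with terminal target, so being also a covariant equivalence it is a trivial fibration of the covariant model structure, hence a trivial Kan fibration. Thus $u=qj$ exhibits $u$ as a left anodyne map followed by a trivial Kan fibration, i.e.\ $u$ is right cofinal. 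The argument is short, and there is no real obstacle beyond the initial identifications in the first paragraph; the one point that genuinely needs to be checked is that right lifting against monomorphisms in $\sSet/B$ is no weaker than right lifting against monomorphisms in $\sSet$, so that ``trivial fibration of the covariant model structure'' and ``trivial Kan fibration'' coincide.
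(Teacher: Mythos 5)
Your proof is correct. The paper does not actually give a proof of this lemma — it is quoted from Lurie (HTT, Proposition 4.1.2.5) — but your argument is the standard one and is precisely what the author has in mind when later writing ``arguing as in the proof of Lemma~\ref{lem:cov equiv right cof}'' in the proof of Lemma~\ref{lem:right cofinal stable under filt colimits}: factor the map as a left anodyne map followed by a (left/trivial Kan) fibration and use that left anodyne maps are covariant equivalences, that fibrant objects of $\sSet/B$ are the left fibrations, and that the trivial fibrations of the covariant model structure are exactly the maps over $B$ whose underlying map is a trivial Kan fibration (since the cofibrations are the monomorphisms and slice lifting problems against monomorphisms agree with the underlying ones). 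The one point you flag as needing care is indeed the only point needing care, and you handle it correctly.
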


It follows easily from this result that right cofinal maps satisfy the right cancellation 
property; in other words if $u\colon A\to B$, $v\colon B\to C$ are maps such that $u$ 
and $vu$ are right cofinal, then so is $v$ (see Proposition 4.1.1.3 of \cite{HTT}).  

We shall also make use of the following related result: if $B$ is a simplicial set, then a map 
$X\to Y$ in $\bL(B)$ is a covariant equivalence if and only if it is right cofinal.  
For if such a map is a covariant equivalence then it clearly factors as a left 
anodyne map followed by a trivial Kan fibration.  The converse is clear.  

\begin{lemma} 
\label{lem:right cofinal stable under filt colimits}
Right cofinal maps in $\sSet$ are stable under filtered colimits.  
\end{lemma}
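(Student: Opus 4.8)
The plan is to bypass the factorization definition of right cofinality entirely and argue directly through covariant equivalences. By Lemma~\ref{lem:cov equiv right cof} a map $u\colon A\to B$ of simplicial sets is right cofinal if and only if it is a covariant equivalence in $\sSet/B$ (where $A$ lies over $B$ via $u$ and $B$ over itself via the identity), and by Theorem~\ref{prop:joyal crit for cov eq} this holds if and only if $R\times_B A\to R$ is a weak homotopy equivalence for every right fibration $R\to B$. So the statement reduces to a claim about weak homotopy equivalences, and these are well behaved under filtered colimits; the remaining input is that $\sSet$ is a presheaf topos, so filtered colimits commute with finite limits, in particular with pullbacks.

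Concretely, I would take a filtered diagram $i\mapsto (u_i\colon A_i\to B_i)$ in the arrow category of $\sSet$ with each $u_i$ right cofinal, and write $A=\varinjlim_i A_i$, $B=\varinjlim_i B_i$, $u=\varinjlim_i u_i$. Fix a right fibration $R\to B$ and set $R_i:=R\times_B B_i$; each projection $R_i\to B_i$ is a right fibration, being a base change of $R\to B$. Since the colimit defining $B$ is filtered, $R=R\times_B B\cong \varinjlim_i R_i$, and likewise $R\times_B A\cong \varinjlim_i (R\times_B A_i)\cong \varinjlim_i (R_i\times_{B_i} A_i)$, using that $A_i\to B$ factors through $B_i$; these identifications are compatible with the maps down to $R\cong\varinjlim_i R_i$. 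For each $i$, the map $u_i$ is a covariant equivalence in $\sSet/B_i$ by Lemma~\ref{lem:cov equiv right cof}, so Theorem~\ref{prop:joyal crit for cov eq} (applied over $B_i$, with $R_i\in\bR(B_i)$) gives that $R_i\times_{B_i}A_i\to R_i$ is a weak homotopy equivalence. Passing to the filtered colimit, $R\times_B A\to R$ is a filtered colimit of weak homotopy equivalences, hence a weak homotopy equivalence. Since $R$ was an arbitrary right fibration over $B$, Theorem~\ref{prop:joyal crit for cov eq} shows $u$ is a covariant equivalence in $\sSet/B$, and Lemma~\ref{lem:cov equiv right cof} then shows $u$ is right cofinal.

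I do not expect a genuine obstacle here. The only points needing care are the two colimit interchanges, $R\times_B(\varinjlim_i B_i)\cong\varinjlim_i(R\times_B B_i)$ and the analogous statement for $A$, which are exactly the exactness of filtered colimits in the topos $\sSet$ (equivalently, these colimits and the relevant pullbacks are all computed levelwise in $\Sets$); stability of right fibrations under base change and stability of weak homotopy equivalences under filtered colimits are standard facts that I would simply cite. The whole content is the observation that Joyal's criterion recasts right cofinality as a condition on weak homotopy equivalences after pullback to right fibrations, and that such conditions are automatically stable under filtered colimits.
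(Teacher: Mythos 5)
Your proof is correct, but it takes a different route from the paper's. The paper pushes the whole filtered system forward into the fixed slice $\sSet/B$: each $u_\alpha$ is a covariant equivalence there, covariant equivalences are asserted to be stable under filtered colimits, and then a factorization of $u$ as a left anodyne map followed by a left fibration (which must be a trivial Kan fibration) finishes the argument. You instead stay over the varying bases $B_i$ and use Theorem~\ref{prop:joyal crit for cov eq} twice: over each $B_i$ to see that $R_i\times_{B_i}A_i\to R_i$ is a weak homotopy equivalence, and over $B$ at the end, after identifying $R\times_B A\to R$ with the filtered colimit of these maps via the commutation of the relevant pullbacks with (filtered) colimits in $\sSet$; Lemma~\ref{lem:cov equiv right cof} then converts back to right cofinality without any explicit factorization. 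What the paper's argument buys is brevity, at the price of invoking without proof the stability of covariant equivalences under filtered colimits; what yours buys is self-containedness, since it reduces everything to the classical stability of weak homotopy equivalences under filtered colimits together with Joyal's criterion, which the paper has already proved---indeed your argument essentially supplies a proof of the stability statement the paper only asserts, at the cost of the extra bookkeeping with the pullbacks $R_i=R\times_B B_i$ and the colimit interchanges, all of which you justify correctly.
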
 

\begin{proof} 
Suppose $u\colon A\to B$ is a filtered colimit of a family of right cofinal 
maps $u_\alpha\colon A_\alpha\to B_\alpha$.  Then each map $u_\alpha\colon 
A_\alpha\to B_\alpha$ is a covariant equivalence in $\sSet/B$.  Hence $u\colon A\to B$ 
is a covariant equivalence in $\sSet/B$, since covariant equivalences are stable 
under filtered colimits.  Factor $u\colon A\to B$ as $u=pi$ where $i\colon A\to B'$ 
is left anodyne and $p\colon B'\to B$ is a left fibration.  Then, arguing as in the proof 
of Lemma~\ref{lem:cov equiv right cof} above, we see that $p$ is a trivial Kan fibration.  
Hence $u\colon A\to B$ is right cofinal.     
\end{proof} 

The following theorem, due to Joyal (see 8.1 of \cite{Joyal-Notes}), gives a very useful criterion to recognize 
when a map of simplicial sets is cofinal.  

\begin{theorem}[Joyal] 
\label{thm:Joyals char of cov equiv}
The following statements are equivalent.  
\begin{enumerate} 
\item The map $u\colon A\to B$ is right cofinal; 
\item The induced map $R\times_B A\to R$ is a weak homotopy equivalence for all $R\in \bR(B)$; 
\item The simplicial set $Rb\times_B A$ is weakly contractible for every $b\in B$, 
where $1\to Rb\to B$ is any factorization of $b\colon 1\to B$ into a right anodyne map 
$1\to Rb$, followed by a right fibration $Rb\to B$.
\end{enumerate}
\end{theorem}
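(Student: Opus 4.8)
The plan is to obtain the theorem as an immediate consequence of Lemma~\ref{lem:cov equiv right cof} together with Joyal's criterion for covariant equivalences (Theorem~\ref{prop:joyal crit for cov eq}); once these are in hand there is essentially nothing left beyond some bookkeeping in the slice category.

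First I would regard the map $u\colon A\to B$ as a morphism in $\sSet/B$, namely the morphism $(A,u)\to (B,\mathrm{id}_B)$ from $A$ equipped with its own structure map to $B$ equipped with the identity. By Lemma~\ref{lem:cov equiv right cof}, $u$ is right cofinal if and only if this morphism is a covariant equivalence in $\sSet/B$. Thus condition~(1) of the present theorem coincides with condition~(1) of Theorem~\ref{prop:joyal crit for cov eq} applied to the pair $X = (A,u)$, $Y = (B,\mathrm{id}_B)$.

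Next I would unwind what conditions~(2) and~(3) of Theorem~\ref{prop:joyal crit for cov eq} say for this particular pair. For any $R\in\bR(B)$ there is a canonical isomorphism $R\times_B B\cong R$ over $R$, and under it the map $R\times_B A\to R\times_B B$ induced by $u$ becomes the projection $R\times_B A\to R$; hence condition~(2) there is precisely condition~(2) here. Likewise, for a factorization $1\to Rb\to B$ of a vertex $b$ into a right anodyne map followed by a right fibration, $Rb\times_B B\cong Rb$, so condition~(3) of Theorem~\ref{prop:joyal crit for cov eq} becomes the assertion that $Rb\times_B A\to Rb$ is a weak homotopy equivalence for every $b$. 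Finally I would observe that $Rb$ is weakly contractible, since $1\to Rb$ is right anodyne and therefore a weak homotopy equivalence while $1$ is contractible; hence $Rb\times_B A\to Rb$ is a weak homotopy equivalence if and only if $Rb\times_B A$ is weakly contractible, which is exactly condition~(3) as stated here.

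There is no serious obstacle in this argument; the only thing requiring a little care is to make sure the morphism of $\sSet/B$ whose covariant equivalence we test is the one with target $(B,\mathrm{id}_B)$, so that the base-change functors $R\times_B(-)$ and $Rb\times_B(-)$ produce $R\times_B A$ and $Rb\times_B A$ on the source and the contractible objects $R$ and $Rb$ on the target. All of the substantive content has been absorbed into Lemma~\ref{lem:cov equiv right cof} and Theorem~\ref{prop:joyal crit for cov eq}.
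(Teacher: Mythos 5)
Your proposal is correct and follows exactly the route the paper takes: the paper's proof simply states that the theorem is immediate from Lemma~\ref{lem:cov equiv right cof} and Theorem~\ref{prop:joyal crit for cov eq}, and your write-up supplies precisely the bookkeeping (applying the criterion to $(A,u)\to(B,\mathrm{id}_B)$, using $R\times_B B\cong R$ and the weak contractibility of $Rb$) that the paper leaves implicit.
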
 

\begin{proof} 
The theorem follows immediately from Lemma~\ref{lem:cov equiv right cof} and 
Theorem~\ref{prop:joyal crit for cov eq}. 
\end{proof} 

As a direct corollary of Theorem~\ref{thm:Joyals char of cov equiv}, we obtain the following key result: Quillen's Theorem A for 
quasi-categories, due to Joyal and Lurie.  
\begin{theorem}[Theorem 4.1.3.1 \cite{HTT}] 
\label{thm:A}
If $B$ is a quasi-category, then a map $u\colon A\to B$ is right cofinal if and only if the simplicial set 
$A\times_B B_{/b}$ is weakly contractible for every vertex $b\in B$.  
\end{theorem}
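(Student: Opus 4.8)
The plan is to read this off directly from Theorem~\ref{thm:Joyals char of cov equiv} by making a judicious choice of the factorizations appearing in condition~(3) of that theorem. For a vertex $b$ of the quasi-category $B$, I would factor $b\colon 1\to B$ as $1\xrightarrow{\,1_b\,}B_{/b}\to B$, where the first map selects the identity edge $1_b$ regarded as an object of the slice $B_{/b}$ and the second is the canonical projection. Granting that this is a legitimate factorization of the required kind --- that is, that $1\to B_{/b}$ is right anodyne and that $B_{/b}\to B$ is a right fibration --- condition~(3) of Theorem~\ref{thm:Joyals char of cov equiv} says exactly that $u\colon A\to B$ is right cofinal if and only if $B_{/b}\times_B A$ is weakly contractible for every vertex $b$ of $B$. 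Since $B_{/b}\times_B A$ is canonically isomorphic to $A\times_B B_{/b}$, this is precisely the assertion of the theorem, and the whole proof thus comes down to justifying the chosen factorization.

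There are two points to check. First, $B_{/b}\to B$ is a right fibration; this holds for an arbitrary simplicial set $B$ and was already noted above --- it is the right fibration whose fibre over a vertex $a$ computes $\mathrm{Hom}^R_B(a,b)$, dually to Corollary~2.1.2.2 of \cite{HTT}. Second --- and this is the one place where the hypothesis that $B$ is a quasi-category actually enters --- the map $1\to B_{/b}$ selecting $1_b$ is right anodyne. For this one uses that, $B$ being a quasi-category, the slice $B_{/b}$ is again a quasi-category and $1_b$ is a terminal object of it, and that the inclusion of a terminal object of a quasi-category is right anodyne. Since $\{1_b\}\hookrightarrow B_{/b}$ is a monomorphism, this is the same, by the evident dual of Lemma~\ref{lem:right cofinal for mono}, as saying that it is left cofinal in the sense of Definition~\ref{def:right cof map}, which is the quasi-categorical counterpart of the elementary fact that a functor selecting a terminal object is final.

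I expect the only genuine obstacle to be this last input, the right anodyneness of $1\to B_{/b}$; it is a standard fact, but, unlike the rest of the argument, it is not quite immediate. It can be obtained by the pushout-product techniques used for slices in \cite{HTT}, the essential computation being that the inclusion $\Delta[0]\to\Delta[n]$ of the last vertex is right anodyne: writing $\Delta[n]=\Delta[n-1]\star\Delta[0]$ and filtering $\Delta[n-1]$ by its skeleta exhibits this inclusion as a transfinite composite of pushouts of horn inclusions $\Lambda^m[m]\subset\Delta[m]$. Alternatively one quotes the relevant cofinality statement from \cite{HTT} and appeals to Lemma~\ref{lem:right cofinal for mono}. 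With that lemma in hand the theorem follows from Theorem~\ref{thm:Joyals char of cov equiv} with no further work, exactly as described in the first paragraph.
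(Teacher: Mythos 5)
Your proof is correct and is exactly the argument the paper intends: Theorem~\ref{thm:A} is presented as a direct corollary of Theorem~\ref{thm:Joyals char of cov equiv}, obtained precisely by choosing $Rb=B_{/b}$ via the factorization $1\xrightarrow{1_b}B_{/b}\to B$, with $\{1_b\}\to B_{/b}$ right anodyne because $1_b$ is terminal and $B_{/b}\to B$ a right fibration. One minor slip in a side remark: $B_{/b}\to B$ being a right fibration also uses that $B$ is a quasi-category (it can fail for an arbitrary simplicial set), so the quasi-category hypothesis enters there as well as in the right anodyneness of $\{1_b\}\to B_{/b}$; this does not affect the argument, since $B$ is assumed to be a quasi-category.
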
 

The next result is obtained as an easy corollary of the Straightening Theorem in 
\cite{HTT} (see Remark 2.1.4.11 in \cite{HTT}).  In the absence of the latter theorem it is not so easy to prove.  
In \cite{HM} a proof is given by these authors using their Theorem C.  We 
give here another proof which only 
depends on the standard results in the theory of covariant and contravariant model 
structures presented in this section.  

\begin{theorem}[\cite{HTT}]
\label{thm:weak cat equivs and base change}
Let $f\colon A\to B$ be a weak categorical equivalence between simplicial sets.  
Then the Quillen adjunction 
\[
f_!\colon \sSet/A\rightleftarrows \sSet/B\colon f^*
\]
is a Quillen equivalence for the covariant model structures on $\sSet/A$ and $\sSet/B$.  
\end{theorem}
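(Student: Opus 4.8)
The plan is to show that the Quillen adjunction $(f_!, f^*)$ is a Quillen equivalence by verifying the standard criterion: since every object is cofibrant, it suffices to check that $f^*$ reflects weak equivalences between fibrant objects and that for every cofibrant $X \in \sSet/A$ the composite $X \to f^* f_! X \to f^* R f_! X$ (derived unit) is a covariant equivalence. Because $f_!$ is just composition with $f$ and $f^*$ is pullback along $f$, the unit $X \to f^* f_! X$ is an isomorphism, so the only issue is the fibrant replacement in $\sSet/B$; hence it is cleaner to check the other half of the criterion, namely that $f^*$ preserves and reflects covariant equivalences between fibrant objects (i.e. left fibrations over $B$) and that the derived counit is a covariant equivalence.

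First I would reduce to the case where $f$ is a trivial cofibration in the Joyal model structure, i.e. a left anodyne... no — a \emph{monomorphism} which is a weak categorical equivalence. Indeed, any weak categorical equivalence factors as a monomorphism that is a weak categorical equivalence followed by a trivial fibration (for the Joyal structure); trivial fibrations are in particular weak categorical equivalences of a very simple kind, and a retract argument plus two-out-of-three for Quillen equivalences reduces us to the monic case. Actually the truly clean reduction is: it suffices to treat (a) trivial fibrations $p \colon A \to B$ and (b) inner anodyne maps, since by the small object argument every weak categorical equivalence that is a monomorphism is a retract of a transfinite composite of pushouts of inner horn inclusions and the map $\{0\} \hookrightarrow J$ (the generators of the trivial cofibrations of the Joyal structure), and Quillen equivalences of slice covariant model structures compose and are stable under the relevant colimits and retracts. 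For a trivial fibration $p$, the functor $p^*$ is visibly a right Quillen functor whose left adjoint $p_!$ preserves and reflects covariant equivalences, and one checks directly that the unit and counit are covariant equivalences.

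The main work is therefore to show: if $f \colon A \to B$ is one of the generating trivial cofibrations of the Joyal model structure — a pushout of an inner horn inclusion $\Lambda^k[n] \hookrightarrow \Delta[n]$ with $0 < k < n$, or a pushout of $\{0\} \hookrightarrow J$ — then $f_! \colon \sSet/A \rightleftarrows \sSet/B \colon f^*$ is a Quillen equivalence. Here I would use the characterization of covariant equivalences and of right-cofinal maps established above, specifically Theorem~\ref{prop:joyal crit for cov eq} and Theorem~\ref{thm:Joyals char of cov equiv}. The key point is that an inner anodyne map, and also $\{0\}\hookrightarrow J$, is in particular a \emph{right cofinal} monomorphism — wait, $\{0\}\hookrightarrow J$ is not a monomorphism of the relevant anodyne type, but it \emph{is} both left cofinal and right cofinal (being a weak categorical equivalence with $J$ a contractible Kan complex-like object, and $\{0\}$ initial and terminal in $J$). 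Concretely, for $f$ right cofinal we know by Lemma~\ref{lem:cov equiv right cof} that $f$ is a covariant equivalence in $\sSet/B$; I would then show that $f^*$ detects covariant equivalences by reducing, via Theorem~\ref{prop:joyal crit for cov eq}(3), to comparing fibers $Rb \times_B X$, using that for each vertex $b$ of $B$ the pullback $A \times_B Rb \to Rb$ is again right cofinal (base change of a right cofinal map along the right fibration $Rb \to B$, by the lemma on base change of right cofinal maps), so $Rb \times_B X \simeq (A\times_B Rb)\times_A f^*X$, whence weak homotopy equivalences upstairs correspond to weak homotopy equivalences downstairs. Dually, using smoothness of left fibrations, one shows the derived counit $\mathbb{L}f_! f^* Z \to Z$ is a covariant equivalence for $Z \in \bL(B)$.

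The hard part will be the bookkeeping in passing from the generators to an arbitrary weak categorical equivalence: Quillen equivalences do not in general compose with the same base, so I must phrase everything as a statement about the class of maps $f$ for which $(f_!,f^*)$ is a Quillen equivalence of covariant model structures, show this class contains trivial fibrations and the generating inner/$J$-cofibrations, and that it is closed under transfinite composition, pushout (along cofibrations), retracts, and two-out-of-three — at which point the small object argument for the Joyal model structure finishes the proof. The closure under pushout is the delicate step, since one is pushing out along an arbitrary monomorphism $A \to A'$ and must argue that the induced $A' \to B'$ still gives a Quillen equivalence; here I would again invoke the fiberwise criterion and the stability of right cofinal maps under the relevant colimits (Lemma~\ref{lem:right cofinal stable under filt colimits} and the pushout stability of left anodyne maps inside monomorphisms), together with the fact that left fibrations are detected fiberwise on fibrant objects. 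This is precisely the quasi-categorical analog of the proof of the main theorem of \cite{DK2} referenced in the text, and I expect it to go through with the covariant-model-structure tools assembled in this section, giving the promised elementary proof.
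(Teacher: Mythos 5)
Your reduction rests on a claim that is not available: the inner horn inclusions together with $\{0\}\hookrightarrow J$ are \emph{not} a set of generating trivial cofibrations for the Joyal model structure, and no explicit generating set is known. Lifting against inner horns and $\{0\}\hookrightarrow J$ characterizes Joyal fibrations only when the codomain is a quasi-category, so a monomorphism which is a weak categorical equivalence need not be a retract of a relative cell complex built from those maps; the small object argument step at the heart of your plan therefore has no justification, and with it the whole passage from ``generators'' to an arbitrary weak categorical equivalence collapses. The reduction that actually works (and is the one used in the paper) is different: one proves the statement for \emph{inner anodyne} maps, and then passes to a general weak categorical equivalence $u\colon A\to B$ by embedding $A$ and $B$ into quasi-categories via inner anodyne maps and treating an equivalence of quasi-categories by means of a $J$-homotopy inverse (the ``standard argument'' of Lemma 7.1 of \cite{HM}); your instinct that $J$ must enter is right, but it enters through this homotopy-inverse argument, not through a generating set.

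Even for the inner anodyne case your sketch has a gap: right cofinality of $f$ cannot be the key point, because it does not imply that $(f_!,f^*)$ is a Quillen equivalence --- the inclusion $\{0\}\subset\Delta[1]$ is left anodyne, hence right cofinal by Lemma~\ref{lem:right cofinal for mono}, yet base change along it is visibly not a Quillen equivalence of covariant model structures (left fibrations over $\Delta[1]$ encode arrows of spaces, not spaces). Your fiber comparison $Rb\times_B X\simeq (A\times_B Rb)\times_A f^*X$ only helps if $A\times_B Rb$ is again a right fibrant replacement of a vertex of $A$, i.e.\ if $\{a\}\to A\times_B Rb$ is right cofinal, and this is exactly where specific combinatorics are needed: the paper works with the spine inclusions $I_n\subset\Delta[n]$, using $\Delta[n]_{/i}=\Delta[i]$, $I_n\times_{\Delta[n]}\Delta[i]=I_i$ and the right anodyne maps $\{i\}\to I_i$, together with the genuinely hard computation that $I_n\times_{\Delta[n]}X\to X$ is left anodyne for every $X\in\bL(\Delta[n])$ (via the slices $X_{x//y}$). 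One then propagates from spines to all inner anodynes not by closing up ``the class of $f$ inducing Quillen equivalences'' (which has no formal stability under pushout or transfinite composition), but by closing up the stronger, checkable conditions ``$u_!u^*X\to X$ is left anodyne and $(u^*)^R$ is essentially surjective'', where the pushout step requires extending left fibrations along the base using minimal left fibrations --- none of which follows from stability of right cofinal maps under filtered colimits or pushout stability of left anodynes, the tools you cite; finally Lemma 3.5 of \cite{JT2} (right cancellation plus the spines) yields all inner anodynes. So both the global reduction and the treatment of the generators need to be replaced along these lines.
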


\begin{proof}
We begin by proving this theorem in the special case that the map $f$ is the inclusion 
$I_n\subset \Delta[n]$ of the $n$-{\em chain} 
\[
I_n := \Delta^{\set{0,1}}\cup 
\cdots \cup \Delta^{\set{n-1,n}}
\]
into $\Delta[n]$.  Let us write $i_n$ for this inclusion.  Suppose that $X\in \bL(\Delta[n])$ 
with structure map $p\colon X\to \Delta[n]$.  
We prove that the canonical map $I_n\times_{\Delta[n]}X\to X$ is left anodyne, from which it 
follows that $(i_n^*)^R$ is fully faithful.  Using Theorem~\ref{thm:Joyals char of cov equiv}, it suffices to prove   
that for every vertex $y\in X$, the canonical map $I_n\times_{\Delta[n]}X_{/y}\to X_{/y}$ 
is a weak homotopy equivalence.  In particular, it suffices to prove that this canonical map 
is right anodyne, since a right anodyne map is a weak homotopy equivalence.  Therefore, applying 
Theorem~\ref{thm:Joyals char of cov equiv} again, we see that it suffices to prove that the simplicial 
set $I_n\times_{\Delta[n]}X_{x//y}$ is weakly contractible, for every vertex $u\colon x\to y$ of $X_{/y}$, 
and where we have written $X_{x//y} = (X_{/y})_{u/}$ for ease of notation.  

We claim that the simplicial sets $\Delta^{\set{i}}\times_{\Delta[n]}X_{x//y}$ and $\Delta^{\set{i,i+1}}\times_{\Delta[n]} 
X_{x//y}$ are either empty (if $p(x)>i$ or $p(y)<i$ and $p(x)>i+1$ or $p(y)<i$ respectively) or contractible Kan complexes.  
This is enough to prove the claim that $I_n\times_{\Delta[n]}X_{x//y}$ is weakly contractible.  

The statement regarding the conditions under which each of these simplicial sets are empty is clear.  We prove 
that $\Delta^{\set{i,i+1}}\times_{\Delta[n]}X_{x//y}$ is a contractible Kan complex if $p(x)\leq i\leq p(y)$.  The proof 
that $\Delta^{\set{i}}\times_{\Delta[n]}X_{x//y}$ is a contractible Kan complex if $p(x)\leq i\leq p(y)$ is analogous and is 
left to the reader.  
% We first observe that $\Delta^{\set{i,i+1}}\times_{\Delta[n]}X_{x//y}$ is isomorphic to $\Delta^{\set{i}}\times_{\Delta[n]} 
% X_{x//y}$ if $p(y) = i$ and to $\Delta^{\set{i+1}}\times_{\Delta[n]}X_{x//y}$ if $p(x)= i+1$.  Therefore we 
% may suppose without loss of generality that $p(x)\leq i<i+1\leq p(y)$.  
We observe that $X_{x//y}\simeq (X_{\Delta^{\set{p(x),\ldots,p(y)}}})_{x//y}$ where if $S\subset \Delta[n]$ is a 
subcomplex then we write $X_S = S\times_{\Delta[n]}X$.  Thus we have an isomorphism 
\[
\Delta^{\set{i,i+1}}\times_{\Delta[n]} X_{x//y} \simeq 
\Delta^{\set{i,i+1}}\times_{\Delta^{\set{p(x),\ldots,p(y)}}}(X_{\Delta^{\set{p(x),\ldots,p(y)}}})_{x//y}.  
\]
It follows that we may suppose without loss of generality that $p(x) = 0$ and $p(y) = n$.  

We prove that any map $\partial\Delta[m]\to \Delta^{\set{i,i+1}}\times_{\Delta[n]}X_{x//y}$ extends 
along the inclusion $\partial\Delta[m]\subset \Delta[m]$.  
Given such a map, the induced map $\partial\Delta[m]\to \Delta^{\set{i,i+1}}$ factors 
through $\Delta[m]$ so that we have a commutative diagram 
\[
\begin{tikzcd} 
\partial\Delta[m] \arrow[r] \arrow[d] & \Delta^{\set{i,i+1}}\arrow[dd] \\
\Delta[m] \arrow[ur] \arrow[d] & \\  
\Delta[0]\star \Delta[m] \arrow[r] & \Delta[n]  
\end{tikzcd}
\]
The canonical map $\partial\Delta[m]\to X_{x//y}$ induces a map $\Lambda^0[m+1]\to X_{/y}$ 
which forms part of a commutative diagram 
\[
\begin{tikzcd} 
\Lambda^0[m+1] \arrow[d] \arrow[r] & X_{/y} \arrow[r] & X \arrow[d,"p"] \\ 
\Delta[m+1] \arrow[rr] & & \Delta[n]  
\end{tikzcd} 
\]
where $X_{/y}\to X$ is the canonical projection.  Since $p\colon X\to \Delta[n]$ is a left fibration we may choose  
a diagonal filler $\Delta[m+1]\to X$ for this diagram so that we have a commutative square 
\[
\begin{tikzcd} 
\Lambda^0[m+1] \arrow[d] \arrow[r] & X_{/y} \arrow[d] \\ 
\Delta[m+1] \arrow[r] & X 
\end{tikzcd} 
\]
Since $p(y) = n$ we have an isomorphism $X\simeq X\times_{\Delta[n]}\Delta[n]_{/p(y)}$ and so it follows that the right 
hand vertical map in this commutative square is a left fibration (Proposition 2.1.2.1 of \cite{HTT}).  Hence there exists 
a diagonal filler for this diagram which induces a map $\Delta[m]\to \Delta^{\set{i,i+1}}\times_{\Delta[n]}X_{x//y}$ 
extending the given map $\partial\Delta[m]\to \Delta^{\set{i,i+1}}\times_{\Delta[n]}X_{x//y}$.  This completes the proof 
that $I_n\times_{\Delta[n]}X\to X$ is left anodyne.

Suppose now that $X\to Y$ is a map in $\sSet/I_n$ such that $(i_n)_!X\to (i_n)_!Y$ is a 
covariant equivalence in $\sSet/\Delta[n]$.  We prove that $X\to Y$ is a covariant equivalence 
in $\sSet/I_n$.  By Theorem~\ref{prop:joyal crit for cov eq}, since $\Delta[n]_{/i} = \Delta[i]$,   
\[
(i_n)!X\times_{\Delta[n]}\Delta[i]\to (i_n)_!Y\times_{\Delta[n]}\Delta[i]
\]
is a weak homotopy equivalence for all $0\leq i\leq n$.  In other words 
\[
X\times_{I_n}i_n^*\Delta[i]\to Y\times_{I_n}i_n^*\Delta[i]
\]
is a weak homotopy equivalence for all $0\leq i\leq n$.  The map 
\[
\set{i}\to i_n^*\Delta[i] = \Delta^{\set{0,1}}\cup \cdots \cup \Delta^{\set{i-1,i}}
\] 
is clearly right anodyne for every $0\leq i\leq n$.  It follows 
from Theorem~\ref{prop:joyal crit for cov eq} again that  
$X\to Y$ is a covariant equivalence in $\sSet/I_n$.    
Hence $((i_n)_!,i_n^*)$ is a Quillen equivalence and so $(i_n^*)^R$ 
is essentially surjective.  

Next we show that $(u_!,u^*)$ is a Quillen equivalence for every inner-anodyne map 
$u\colon A\to B$.  To this end, let $\cA$ denote the class of monomorphisms $u\colon A\to B$ 
in $\sSet$ such that $u_!u^*X\to X$ is left anodyne for every $X\in \bL(B)$ and 
$(u^*)^R$ is essentially surjective.  It is straightforward to see 
that $\cA$ contains all isomorphisms, and is closed under retracts, coproducts and transfinite 
composition.  To show that it is also closed under pushouts requires a little more work. Suppose that 
\[
\begin{tikzcd}
A \arrow[d,"u"'] \arrow[r] & C \arrow[d,"v"] \\ 
B \arrow[r] & D 
\end{tikzcd}
\]
% \[
% \xymatrix{ 
% A \ar[d]_-{u} \ar[r] & C \ar[d]^-{v} \\ 
% B \ar[r] & D }
% \]
is a pushout diagram, where $u\in \cA$.  We first show that $v_!v^*X\to X$ 
is left anodyne.  Let $X\in \bL(D)$.  Then the diagram 
\[
\begin{tikzcd} 
X|_{A} \arrow[d] \arrow[r] & X|_{C} \arrow[d] \\ 
X|_{B} \arrow[r] & X 
\end{tikzcd} 
\]
% \[
% \xymatrix{ 
% X|_{A} \ar[d] \ar[r] & X|_{C} \ar[d] \\ 
% X|_{B} \ar[r] & X }
% \]
is a pushout, and the left hand vertical map is left anodyne by hypothesis.  It follows that $v$ 
is also left anodyne.  To prove that $(v^*)^R$ is essentially surjective it suffices to show 
that if $X\in \bL(C)$, then we may find $Y\in \bL(B)$ such that 
$X|_A = u^*Y$.  For this we may adapt an argument of Joyal (see the proof of 
Lemma 2.2.4 of \cite{KLV}) replacing minimal Kan fibrations with 
minimal left fibrations; for the details we refer to the proof of 
Lemma 7.2 in \cite{HM}.  It follows that $\cA$ is saturated.  

We show that $\cA$ satisfies the right cancellation property.  
Suppose that $u\colon A\to B$ and $v\colon B\to C$ 
are monomorphisms such that $u$ and $vu$ belong to $\cA$.    
It is straightforward, using the right cancellation property for 
left anodyne maps, to see that $v_!v^*Z\to Z$ is left anodyne 
for all $Z\in \bL(C)$.  Let $X\in \bL(B)$.
Then there exists $Y\in \bL(C)$ together with a covariant equivalence 
$u^*v^*Y\to u^*X$ since $(u^*v^*)^R$ is essentially surjective.  
Note that $u^*v^*Y\to u^*X$ is right cofinal.  We may find a 
map $v^*Y\to X$ so that the 
diagram of simplicial sets
\[
\begin{tikzcd} 
u^*v^*Y\arrow[r] \arrow[d] & u^*X \arrow[d] \\ 
v^*Y\arrow[r] & X
\end{tikzcd} 
\]
% \[
% \xymatrix{ 
% u^*v^*Y\ar[r] \ar[d] & u^*X \ar[d] \\ 
% v^*Y\ar[r] & X}
% \]
commutes. Note that the vertical maps in this diagram are left anodyne.  It follows 
that $v^*Y\to X$ is right cofinal (right cofinal maps satisfy the right cancellation 
property --- see the remarks following 
Lemma~\ref{lem:cov equiv right cof}) and hence is a covariant equivalence in $\sSet/B$.  Therefore 
$(v^*)^R$ is essentially surjective.      

  Since 
$\cA$ contains the inclusions $I_n\subset \Delta[n]$ for all $n\geq 2$, it follows by 
Lemma 3.5 of \cite{JT2} that $\cA$ contains all the inner anodyne maps.  
It follows that $(u_!,u^*)$ is a Quillen equivalence for every inner anodyne map $u\colon A\to B$.  
A standard argument (see Lemma 7.1 of \cite{HM}) now shows that $(u_!,u^*)$ is a 
Quillen equivalence for every weak categorical equivalence $u\colon A\to B$ between 
simplicial sets $A$ and $B$.    
\end{proof}

A map $u\colon A\to B$ of simplicial sets is said to be fully faithful if 
a fibrant replacement $Ru\colon RA\to RB$ of $u$ is a fully faithful 
map between quasi-categories.  Likewise 
we say that a map $u\colon A\to B$ of simplicial sets is 
essentially surjective if $\tau_1(A)\to \tau_1(B)$ is essentially 
surjective.  In analogy with 1.3 (iii) of \cite{DK2}, 
let us say that a map $u\colon A\to B$ between simplicial sets is a 
{\em weak $r$-equivalence} if it is fully faithful, and if every object in 
$\tau_1(B)$ is a retract of an object in the image of $\tau_1(A)$. 
With this definition understood, we have the following analogue 
of Theorem 2.1 of \cite{DK2}.  

\begin{proposition}
\label{prop:char of weak cat equivs}
Let $u\colon A\to B$ be a map of simplicial sets.  Then 
$u$ is a weak $r$-equivalence if and only if the 
Quillen adjunction 
\[
u_!\colon \sSet/A\rightleftarrows \sSet/B\colon u^* 
\]
is a Quillen equivalence for the respective covariant 
model structures.  
\end{proposition}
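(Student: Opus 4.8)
The plan is to reduce, via Theorem~\ref{thm:weak cat equivs and base change}, to the case where $u$ is the inclusion of a full sub‑quasi‑category, and then to analyse the derived adjunction on homotopy categories. Since every object of a covariant model structure is cofibrant, $\bL u_!$ is computed by $u_!$ and $\bR u^*$ by $u^*$ followed by a left‑fibration replacement. For the reductions: choose Joyal‑fibrant replacements $A\to RA$, $B\to RB$ and a map $Ru\colon RA\to RB$ of quasi‑categories making the evident square commute. By Theorem~\ref{thm:weak cat equivs and base change} and two‑out‑of‑three for Quillen equivalences, $(u_!,u^*)$ is a Quillen equivalence if and only if $((Ru)_!,(Ru)^*)$ is, and $u$ is a weak $r$‑equivalence if and only if $Ru$ is (the definition refers only to fibrant replacements and to $\tau_1$, which is invariant under weak categorical equivalence). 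So assume $A$, $B$ are quasi‑categories and $u=Ru$. If $u$ is fully faithful it factors as $A\xrightarrow{j}A'\hookrightarrow B$ with $A'$ the full sub‑quasi‑category of $B$ on the vertices $u(A_0)$; then $j$ is fully faithful and surjective on objects, hence an equivalence of quasi‑categories, so one more application of Theorem~\ref{thm:weak cat equivs and base change} and two‑out‑of‑three reduces us to the inclusion $A'\hookrightarrow B$, for which being a weak $r$‑equivalence just says that every object of $B$ is a retract in $\tau_1(B)$ of an object of $A'$.

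\emph{Weak $r$‑equivalence $\Rightarrow$ Quillen equivalence.} Let $u\colon A\hookrightarrow B$ be a full sub‑quasi‑category inclusion with every object of $B$ a retract in $\tau_1(B)$ of an object of $A$; it suffices to show the derived counit $u_!u^*Y\to Y$ is a covariant equivalence for every left fibration $Y\to B$. Here $u^*Y=A\times_B Y$ is the full sub‑quasi‑category of the quasi‑category $Y$ spanned by the objects lying over $A$, and the counit is the inclusion $A\times_B Y\hookrightarrow Y$. Viewed as a map over $Y$ this is a monomorphism, so by Lemmas~\ref{lem:right cofinal for mono} and~\ref{lem:cov equiv right cof} it is a covariant equivalence in $\sSet/Y$ — and hence, after the left Quillen functor $\sSet/Y\to\sSet/B$, a covariant equivalence in $\sSet/B$ — provided it is right cofinal. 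By Quillen's Theorem~A for quasi‑categories (Theorem~\ref{thm:A}) it is enough to check that $(A\times_B Y)\times_Y Y_{/y}$ is weakly contractible for each vertex $y$ of $Y$. Given $y$ over $b\in B$, pick a retraction $b\xrightarrow{i}a_0\xrightarrow{r}b$ in $B$ with $a_0$ in $A$; lifting $i$ and then $r$ against the left fibration $Y\to B$ produces a vertex $y_1$ lying over $A$ together with edges $y\xrightarrow{\tilde i}y_1\xrightarrow{\tilde r}y_2$ whose composite lies over $ri$ and is therefore an equivalence in $Y$ (left fibrations reflect equivalences); so $y$ is a retract of $y_1$ in $\tau_1(Y)$, and the induced maps of slices exhibit $(A\times_B Y)\times_Y Y_{/y}$ as a homotopy retract of $(A\times_B Y)\times_Y Y_{/y_1}=(A\times_B Y)_{/y_1}$, which has a terminal object and is thus weakly contractible.

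\emph{Quillen equivalence $\Rightarrow$ weak $r$‑equivalence.} Now suppose $(u_!,u^*)$ is a Quillen equivalence, so $\bL u_!$ is an equivalence of homotopy categories. For vertices $a,a'$ of $A$ the object $1\xrightarrow{a}A$ has left‑fibration replacement $A_{a/}$, whence $\bR\map_A(a,a')\simeq\Hom^L_A(a',a)$, and similarly over $B$; since $\bL u_!$ carries $a$ to $ua$ and is fully faithful, $\Hom^L_A(a',a)\to\Hom^L_B(ua',ua)$ is a weak homotopy equivalence for all $a,a'$, i.e.\ $u$ is fully faithful. For the retract condition one argues, in analogy with \cite{DK2}, that each representable $1\xrightarrow{b}B$ is a homotopically compact and projective object of $\sSet/B$ — equivalently, the derived fibre functor $\bR\map_B(b,-)$ preserves all homotopy colimits — and that the homotopically compact–projective objects of $\sSet/B$ are exactly the retracts of finite coproducts of representables. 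Transporting this intrinsic property across the equivalence $\bL u_!$, the object $b$ corresponds to a retract of a finite coproduct $\bigsqcup_i a_i$ of representables over $A$; applying $\bL u_!$ gives $b$ as a retract of $\bigsqcup_i ua_i$, and since $b$ is indecomposable (its derived fibre functor preserves coproducts, including the empty one) it must be a retract of a single $ua_{i_0}$, that is, of an object in the image of $\tau_1(A)$.

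The forward implication is essentially self‑contained once Theorem~\ref{thm:weak cat equivs and base change} and Theorem~\ref{thm:A} are available. The main obstacle is the pair of homotopical facts used in the converse: that representables are homotopically compact and projective, and that compact–projective objects are retracts of finite coproducts of representables. Compactness is routine — filtered colimits of left fibrations are left fibrations and fibres commute with them — but projectivity, i.e.\ that the derived fibre over a vertex commutes with \emph{all} homotopy colimits, is not formal, since the fibre functor over a vertex is not left Quillen for the covariant structure (a vertex inclusion need not be smooth). I expect to obtain projectivity by presenting $\sSet/B$, up to Quillen equivalence, in a form in which evaluation is visibly exact — for $B$ a nerve of a category this is classical, and in general one can use the localized projective model structure of Theorem~\ref{thm:main}, taking care with the logical dependencies — or else by a direct argument exploiting the good behaviour of left anodyne maps and their pushout–products under the fibrewise pullbacks involved.
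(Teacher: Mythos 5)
Your reductions (to quasi-categories and, in the forward direction, to a full sub-quasi-category inclusion) are fine, and your Theorem~A/retract argument showing that $A\times_B Y\hookrightarrow Y$ is right cofinal for every left fibration $Y$ is sound and close in spirit to the paper's own retract trick with slices. But the converse implication has a genuine gap, and you say so yourself. The retract condition is made to rest on (i) each vertex $b$, viewed as an object of $\sSet/B$, being ``homotopically compact and projective'' (its derived fibre functor preserving all homotopy colimits), which you acknowledge is unproven --- and it is genuinely nontrivial here, since the fibre functor over a vertex is not left Quillen; moreover the routes you sketch are problematic: invoking straightening-type statements is circular in this paper's architecture (Proposition~\ref{prop:char of weak cat equivs} feeds, via Theorem~\ref{thm:loc and loc} and Remark~\ref{rem:surjectivity}, into the proof of the straightening theorem), and using Theorem~\ref{thm:main} still requires showing that evaluation at $b$ commutes with $W$-localization of homotopy colimits, which is precisely the non-formal point; and (ii) the characterization of compact--projective objects of $\sSet/B$ as retracts of finite coproducts of representables, which you assert without proof and which is itself a substantial theorem. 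The paper avoids all of this machinery: full faithfulness of $(u^*)^R$ makes $A\times_B B_{b/}\to B_{b/}$ a covariant equivalence in $\sSet/B$, and Joyal's fibrewise criterion (Theorem~\ref{prop:joyal crit for cov eq}) then shows $A\times_B (B_{b/})_{/1_b}$ is weakly contractible, in particular non-empty; a vertex of it is exactly a $2$-simplex exhibiting $b$ as a retract of some $u(a)$.

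A second, smaller gap: in the forward direction it does \emph{not} suffice to check that the derived counit $u_!u^*Y\to Y$ is a covariant equivalence for every left fibration $Y$ --- that only shows $(u^*)^R$ is fully faithful. You also need the derived unit $X\to u^*Ru_!X$ to be an equivalence for all $X\in\sSet/A$ (equivalently, $(u_!)^L$ fully faithful, or conservativity of the derived $u_!$). For a full sub-quasi-category inclusion this is true but requires an argument; it is exactly the implication ``$u$ fully faithful $\Rightarrow$ $(u_!)^L$ fully faithful,'' which the paper isolates explicitly (and leaves as an exercise), whereas your write-up does not flag it at all. Supplying it, for instance via Theorem~\ref{prop:joyal crit for cov eq} applied over $A$, is needed to complete the forward implication.
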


Note that the dual version of this proposition, with the covariant model 
structure replaced by the contravariant model structure, is also true.  
Note also that this result is similar to the well known fact in ordinary 
category theory that idempotent completion does not change the category of 
presheaves.    

\begin{proof}
First observe that we may suppose without loss of generality that 
$A$ and $B$ are quasi-categories.  We shall show first that $(u_!)^L$ is fully faithful 
if and only if $u$ is fully faithful.  Let $a\in A$ 
be a vertex.  If $(u_!)^L$ is fully faithful then 
$A_{a/}\to A\times_B B_{u(a)/}$ is a covariant equivalence in 
$\bL(A)$, since $u_!A_{a/}\to B_{u(a)/}$ is left anodyne 
by the right cancellation property of left anodyne maps (Proposition~\ref{cancellation}).  
It follows easily that $u$ is fully faithful.  We leave the converse 
as an exercise for the reader.  Note that, using the fact that 
$\mathrm{Hom}^L_S(x,y)$ and $\Hom^R_S(x,y)$ are homotopy equivalent 
for any quasi-category $S$ (see the remarks above in Section~\ref{subsec:cov model str}), 
it follows that $u$ is fully faithful if and only if $(u_!)^L$ is fully 
faithful for the covariant model structures, if and only if 
$(u_!)^L$ is fully faithful for the contravariant model structures.    

Suppose now that $(u_!,u^*)$ is a Quillen equivalence.  We show 
that every object in $\tau_1(B)$ is a retract of an object in the image
of $\tau_1(A)$.  Let 
$b\in B$  be a vertex.  Since $(u^*)^R$ is fully faithful, the map 
$A\times_B B_{b/}\to B_{b/}$ is a covariant equivalence in $\sSet/B$.  
Therefore, by Theorem~\ref{prop:joyal crit for cov eq}, the simplicial set $A\times_B (B_{b/})_{/1_b}$ 
is weakly contractible; in particular it is non-empty.  Therefore there 
is a vertex $a\in A$ and a 2-simplex 
\[
\begin{tikzcd} 
& u(a) \arrow[dr,"g"] & \\ 
b \arrow[ur,"f"] \arrow[rr,"1_b"'] & & b 
\end{tikzcd} 
\]
% \[
% \xymatrix{ 
% & u(a) \ar[dr]^-{g} & \\ 
% b \ar[ur]^-{f} \ar[rr]_-{1_b} & & b }
% \]
in $B$.  This exhibits $b$ as a retract of $u(a)$ in $\tau_1(A)$.  

Suppose now that $u$ is fully faithful and every object of $\tau_1(B)$ 
is a retract of an object in the image of $\tau_1(A)$.  To show that 
$(u_!,u^*)$ is a Quillen equivalence we need to show that 
$(u^*)^R$ is fully faithful.  We will show that $A\times_B B_{/b}\to B_{/b}$ 
is left cofinal for every vertex $b\in B$.  This suffices to complete the proof 
for then $X\times_B A\times_B B_{/b}\to X\times_B B_{/b}$ is a weak 
homotopy equivalence for every vertex $b\in B$ and every $X\in \bL(B)$ 
by Theorem~\ref{thm:Joyals char of cov equiv}; hence $u_!u^*X\to X$ 
is a covariant equivalence in $\sSet/B$ by Theorem~\ref{prop:joyal crit for cov eq}.  

Choose a 2-simplex $\sigma\colon \Delta[2]\to B$ as above.  One shows easily   
that the diagram 
\[
\begin{tikzcd} 
& B_{/u(a)} \arrow[dr,"g_!"] & \\ 
B_{/b} \arrow[rr,"\mathrm{id}"'] \arrow[ur,"f_!"] & & B_{/b} 
\end{tikzcd} 
\]
% \[
% \xymatrix{ 
% & B_{/u(a)} \ar[dr]^-{g_!} & \\ 
% B_{/b} \ar[rr]_-{\mathrm{id}} \ar[ur]^-{f_!} & & B_{/b} } 
% \]
in $\bR(B)$ commutes up to homotopy (compare with the remarks 
at the end of Section~\ref{subsec:cov model str}).  Choose a homotopy 
$H\colon \Delta[1]\times B_{/b}\to B_{/b}$ from $g_!f_!$ to $\mathrm{id}$.  Choose a factorization 
$B_{/u(a)}\to Y \to B_{/b}$ as a right anodyne map followed by a right fibration.  Since 
$\set{1}\times B_{/b}\to \Delta[1]\times B_{/b}$ is right anodyne we may choose a diagonal filler 
$\Delta[1]\times B_{/b}\to Y$ as indicated in the diagram  
\[
\begin{tikzcd} 
\set{1}\times B_{/b} \arrow[d] \arrow[r] & Y \arrow[d] \\ 
\Delta[1]\times B_{/b} \arrow[r] & B_{/b}.  
\end{tikzcd} 
\]
% \[
% \xymatrix{ 
% \set{1}\times B_{/b} \ar[d] \ar[r] & Y \ar[d] \\ 
% \Delta[1]\times B_{/b} \ar[r] & B_{/b}.  } 
% \]
Then $B_{/u(a)}\to Y$ is a right anodyne map in $\bR(B)$ and the 
composite map $\set{0}\times B_{/b}\to \Delta[1]\times B_{/b}\to Y$ gives a retract 
diagram 
$B_{/b}\to Y\to B_{/b}$ in $\bR(B)$.  We obtain an induced diagram 
\[
\begin{tikzcd} 
A\times_B B_{/b} \arrow[d] \arrow[r] & A\times_B Y \arrow[d] \arrow[r] & A\times_B B_{/b} \arrow[d] \\ 
B_{/b} \arrow[r] & Y \arrow[r] & B_{/b} 
\end{tikzcd} 
\]
% \[
% \xymatrix{ 
% A\times_B B_{/b} \ar[d] \ar[r] & A\times_B Y \ar[d] \ar[r] & A\times_B B_{/b} \ar[d] \\ 
% B_{/b} \ar[r] & Y \ar[r] & B_{/b} } 
% \]
exhibiting $A\times_B B_{/b}\to B_{/b}$ as a retract of 
$A\times_B Y\to Y$.  Therefore it suffices to prove that 
$A\times_B Y\to Y$ is left cofinal.  The induced map 
$A\times_B B_{/u(a)}\to A\times_B Y$ is left cofinal, since it is 
a contravariant equivalence in $\bR(A)$, on account of the fact that 
$B_{/u(a)}\to Y$ is a right anodyne map in $\bR(B)$.  Similarly, the right 
cancellation property of left cofinal maps, and the fact 
that $(u_!)^L$ is fully faithful (for the {\em contravariant} model structure), shows that 
the canonical map $A\times_B B_{/u(a)}\to B_{/u(a)}$ is 
left cofinal.  The commutativity of the diagram 
\[
\begin{tikzcd} 
A\times_B B_{/u(a)} \arrow[d] \arrow[r] & A\times_B Y \arrow[d] \\ 
 B_{/u(a)} \arrow[r] & Y
\end{tikzcd} 
\]
% \[
% \xymatrix{ 
% A\times_B B_{/u(a)} \ar[d] \ar[r] & A\times_B Y \ar[d] \\ 
% B_{/u(a)} \ar[r] & Y } 
% \]
and the right cancellation property of left cofinal maps shows that 
$A\times_B Y\to Y$ is left cofinal, which completes the proof.      
\end{proof}

\section{Bisimplicial sets} 
\label{sec:bisimplicial sets}

\subsection{The category of bisimplicial sets}
Let us write $\ssSet$ for the category of bisimplicial sets, that is, 
$\ssSet$ is the functor category $[\Delta^{\op}\times \Delta^{\op},\Sets]$.  If $X\in \ssSet$ then 
we say that $X_{m,n}:= X([m],[n])$ has {\em horizontal} degree $m$ and {\em vertical} 
degree $n$.  The {\em $m$-th column} of $X$ is the simplicial set $X_{m*}$ 
whose set of $n$-simplices is $(X_{m*})_n := X_{m,n}$.  The {\em $n$-th row} 
of $X$ is the simplicial set $X_{*n}$ whose set of $m$-simplices is 
$(X_{*n})_m := X_{m,n}$.   A map $X\to Y$ in $\ssSet$ is called a {\em row-wise} weak homotopy equivalence 
if the maps $X_{*n}\to Y_{*n}$ are weak homotopy equivalences for all $n\geq 0$;  
it is called a {\em column-wise} weak homotopy equivalence if the maps 
$X_{m*}\to Y_{m*}$ are weak homotopy equivalences for all $m\geq 0$.  
  
Recall that there is a canonical functor 
\[
(-)\Box(-)\colon \sSet\times \sSet\to \ssSet 
\]
which sends a pair of simplicial sets $K,L$ to their {\em box product} $K\Box L$.  
This is the bisimplicial set whose set of $(m,n)$-bisimplices is the set $K_m\times L_n$.  
Equivalently, $K\Box L = p_1^*K\times p_2^*L$, where $p_1^*,p_2^*\colon \sSet\to \ssSet$ 
are the functors induced by restriction along the two projections $p_1,p_2\colon \Delta\times \Delta\to \Delta$ 
respectively.  The bisimplicial set $\partial\Delta[m,n]$ is defined in terms of the box product as 
\[
\partial\Delta[m,n] = \partial\Delta[m]\Box \Delta[n]\cup \Delta[m]\Box \partial\Delta[n].  
\]
We recall the following fact from \cite{JT2}.  
\begin{proposition}[\cite{JT2}] 
The inclusions $\partial\Delta[m,n]\subset \Delta[m,n]$ for $m,n\geq0$ generate the monomorphisms 
in $\ssSet$ as a saturated class.  
\end{proposition}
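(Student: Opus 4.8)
The plan is to run the standard skeletal-filtration argument for presheaf categories in the shape $\Delta\times\Delta$. One inclusion is immediate: $\ssSet$ is a presheaf topos, so its monomorphisms are closed under pushout, transfinite composition, coproduct and retract; hence the saturated class generated by the boundary inclusions $\partial\Delta[m,n]\subset\Delta[m,n]$ consists entirely of monomorphisms. For the reverse inclusion, let $f\colon X\to Y$ be a monomorphism in $\ssSet$, which we may take to be the inclusion of a sub-bisimplicial set. For $k\geq -1$ put $Y_k = X\cup\sk_k Y$, where $\sk_k Y\subseteq Y$ is the sub-bisimplicial set generated by all bisimplices of total degree $\leq k$ (equivalently, the image of $\bigsqcup_{p+q\leq k}Y_{p,q}\times\Delta[p,q]\to Y$). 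Then $Y_{-1} = X$, the $Y_k$ form an increasing chain, and $Y = \operatorname{colim}_k Y_k$, so $f$ is the (countable) composite of the inclusions $Y_{k-1}\hookrightarrow Y_k$. It therefore suffices to exhibit each $Y_{k-1}\hookrightarrow Y_k$ as a pushout of a coproduct of boundary inclusions $\partial\Delta[m,n]\subset\Delta[m,n]$ with $m+n = k$.

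To do this I would use the two-variable Eilenberg--Zilber lemma: applying the usual Eilenberg--Zilber lemma for $\sSet$ in each variable, every bisimplex of any $Z\in\ssSet$ factors uniquely as $(s_h,s_v)^*w$ with $s_h,s_v$ surjections in $\Delta$ and $w$ a \emph{nondegenerate} bisimplex, i.e.\ one not in the image of any horizontal or vertical degeneracy. Let $\Sigma_k$ be the set of nondegenerate bisimplices of $Y$ of total degree exactly $k$ that do not lie in $X$; each $z\in\Sigma_k$ of bidegree $(m,n)$ is classified by a map $\bar z\colon\Delta[m,n]\to Y$. From the definition $\partial\Delta[m,n] = \partial\Delta[m]\Box\Delta[n]\cup\Delta[m]\Box\partial\Delta[n]$ one sees that every nondegenerate bisimplex of $\partial\Delta[m,n]$ has total degree at most $m+n-1 = k-1$, so the restriction of $\bar z$ to $\partial\Delta[m,n]$ factors through $\sk_{k-1}Y\subseteq Y_{k-1}$. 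I would then check that the square
\[
\begin{tikzcd}
\bigsqcup_{z\in\Sigma_k}\partial\Delta[m_z,n_z]\arrow[r]\arrow[d] & Y_{k-1}\arrow[d]\\
\bigsqcup_{z\in\Sigma_k}\Delta[m_z,n_z]\arrow[r] & Y_k
\end{tikzcd}
\]
is a pushout: computing in each bidegree $(p,q)$, a bisimplex of $Y_k$ either already lies in $Y_{k-1}$ or has its unique nondegeneration in $\Sigma_k$, and the uniqueness in the Eilenberg--Zilber factorization shows that the cells attached along distinct elements of $\Sigma_k$ overlap only inside $Y_{k-1}$. This exhibits $f$ as a transfinite composite of pushouts of coproducts of the maps $\partial\Delta[m,n]\subset\Delta[m,n]$, which completes the argument.

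The only real work is the pushout step, and the main point to pin down is the two-variable Eilenberg--Zilber lemma together with the resulting bidegree-by-bidegree bookkeeping (that the attaching maps genuinely land in $Y_{k-1}$ and that the square above is a pushout in every bidegree); the factorization statement holds because $\Delta\times\Delta$ is a product of Eilenberg--Zilber categories, equivalently an elegant Reedy category, with the total-degree function $(m,n)\mapsto m+n$ controlling the filtration. It is worth noting that $\partial\Delta[m,n]\subset\Delta[m,n]$ is precisely the $\Box$-pushout-product of $\partial\Delta[m]\subset\Delta[m]$ with $\partial\Delta[n]\subset\Delta[n]$ (its domain, the pushout of $\Delta[m]\Box\partial\Delta[n]$ and $\partial\Delta[m]\Box\Delta[n]$ along $\partial\Delta[m]\Box\partial\Delta[n]$, is exactly $\partial\Delta[m,n]$). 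From this vantage point the proposition is the natural two-variable analogue of the classical fact that $\{\partial\Delta[n]\subset\Delta[n]\}$ generates the monomorphisms of $\sSet$, and the filtration above is the common generalization of the standard proofs of both statements.
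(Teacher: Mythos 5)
Your proof is correct: the total-degree skeletal filtration $Y_k = X\cup\sk_kY$, the two-variable Eilenberg--Zilber decomposition (which holds since horizontal and vertical operators commute and surjections in $\Delta$ split, so nondegenerate roots of bisimplices in a subobject stay in that subobject), and the bidegree-wise verification of the pushout squares together give exactly the standard argument. The paper offers no proof of its own here --- it simply cites Joyal--Tierney \cite{JT2} --- and your argument is essentially the one underlying that cited fact, so there is nothing to reconcile.
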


Write $\delta\colon \Delta\to \Delta\times \Delta$ for the diagonal inclusion, so that 
$\delta([n]) = ([n],[n])$ for $[n]\in \Delta$.  Restriction along $\delta$ is the functor 
$d := \delta^*\colon \ssSet\to \sSet$ which sends a bisimplicial set $X$ to its 
{\em diagonal} $dX$.  Recall that $d$ has a left adjoint $\delta_!$ and a right adjoint 
$d_*:= \delta_*$.  The following result, due to Joyal and Tierney, is a direct 
consequence of Proposition B.0.17 of \cite{Joyal-Barcelona}.  

\begin{proposition}[Joyal-Tierney] 
\label{prop:delta!(mono)}
The functor $\delta_!\colon \sSet\to \ssSet$ sends monomorphisms of simplicial sets to monomorphisms 
of bisimplicial sets.  
\end{proposition}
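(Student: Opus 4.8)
The plan is to reduce to the boundary inclusions and then compute $\delta_!$ on them explicitly. Since $\delta_!$ is a left adjoint it is cocontinuous, so it preserves pushouts, transfinite composites and retracts, and therefore carries the saturated class generated by any set of maps into the saturated class generated by their images. The monomorphisms of $\sSet$ are the saturated class generated by the boundary inclusions $\partial\Delta[n]\subset\Delta[n]$, and the monomorphisms of $\ssSet$ form a saturated class; so it suffices to prove that $\delta_!(\partial\Delta[n]\subset\Delta[n])$ is a monomorphism of bisimplicial sets for every $n\geq 0$.

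Next I would make both sides explicit. The left adjoint of a restriction functor between presheaf categories carries representable presheaves to representable presheaves; applied to $\delta\colon\Delta\to\Delta\times\Delta$ this gives $\delta_!(\Delta[n]) = \Delta[n]\Box\Delta[n]$, the bisimplicial set represented by $([n],[n])$. For the boundary, one writes $\partial\Delta[n]$ as the colimit $\mathrm{colim}_{S\in P}\Delta^{S}$ of its faces, where $P$ is the poset of proper non-empty subsets $S\subsetneq[n]$, $\Delta^{S}\subset\Delta[n]$ is the corresponding sub-simplex, and the transition maps are the inclusions. Since $\delta_!$ is cocontinuous,
\[
\delta_!(\partial\Delta[n]) \;=\; \mathrm{colim}_{S\in P}\bigl(\Delta^{S}\Box\Delta^{S}\bigr),
\]
and the comparison map to $\Delta[n]\Box\Delta[n] = \delta_!(\Delta[n])$ is the one induced by the inclusions $\Delta^{S}\subset\Delta[n]$.

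It remains to show this comparison map is a monomorphism, which is where the real work is. The key observation is that the subobjects $\Delta^{S}\Box\Delta^{S}\subset\Delta[n]\Box\Delta[n]$, $S\in P$, are closed under pairwise intersection, with
\[
\bigl(\Delta^{S}\Box\Delta^{S}\bigr)\cap\bigl(\Delta^{T}\Box\Delta^{T}\bigr) \;=\; \Delta^{S\cap T}\Box\Delta^{S\cap T}
\]
(a routine verification on bisimplices: a $(p,q)$-bisimplex of $\Delta[n]\Box\Delta[n]$ is a pair of maps $\alpha\colon[p]\to[n]$, $\beta\colon[q]\to[n]$, and it lies in $\Delta^{S}\Box\Delta^{S}$ precisely when $\mathrm{im}(\alpha)\cup\mathrm{im}(\beta)\subseteq S$), with the convention $\Delta^{\emptyset}=\emptyset$. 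Hence for each bisimplex $x$ of $\Delta[n]\Box\Delta[n]$ the subposet $\{S\in P: x\in\Delta^{S}\Box\Delta^{S}\}$ is either empty or has a least element, and in particular is connected; the usual formula for a colimit of a poset-indexed diagram of sets then shows, bidegree by bidegree, that $\delta_!(\partial\Delta[n])\to\Delta[n]\Box\Delta[n]$ is injective, with image the union $\bigcup_{S\in P}\Delta^{S}\Box\Delta^{S}$. Equivalently and explicitly, the $(p,q)$-bisimplices of $\delta_!(\partial\Delta[n])$ are exactly the pairs $(\alpha\colon[p]\to[n],\ \beta\colon[q]\to[n])$ with $\mathrm{im}(\alpha)\cup\mathrm{im}(\beta)\subsetneq[n]$.

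The step requiring the most care is this last one: a colimit of a poset-indexed diagram of subobjects of a fixed object need not be a subobject in general, and here one genuinely uses the closure under intersection — equivalently, the connectivity of the fibres $\{S\in P: x\in\Delta^{S}\Box\Delta^{S}\}$ — to rule out unwanted identifications. Everything else (the reduction to boundary inclusions, the representability of $\delta_!(\Delta[n])$, and the presentation of $\partial\Delta[n]$ as the colimit of its faces) is formal.
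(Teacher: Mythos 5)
Your proof is correct, but it is not the paper's argument: the paper offers no proof at all for this proposition, simply deriving it as a direct consequence of Proposition B.0.17 of Joyal's Barcelona notes, so your write-up is a genuinely self-contained alternative to an external citation. Your reduction is the standard one and is sound: $\delta_!$ is cocontinuous, monomorphisms of $\sSet$ are the saturated class generated by the boundary inclusions, and monomorphisms of $\ssSet$ form a saturated class (pushouts along arbitrary maps, transfinite composites, retracts and coproducts of monos are monos in any presheaf category), so it suffices to treat $\partial\Delta[n]\subset\Delta[n]$. The identification $\delta_!(\Delta[n])=\Delta[n]\Box\Delta[n]$ is correct (left adjoints to restriction send representables to representables), and the delicate step — that $\mathrm{colim}_{S\in P}\,\Delta^{S}\Box\Delta^{S}\to\Delta[n]\Box\Delta[n]$ is injective — is exactly where a gap could have appeared; you close it correctly by observing that the subobjects $\Delta^{S}\Box\Delta^{S}$ are stable under intersection, so that for each bisimplex $(\alpha,\beta)$ the poset $\set{S\in P:\mathrm{im}(\alpha)\cup\mathrm{im}(\beta)\subseteq S}$ has a least element (namely $\mathrm{im}(\alpha)\cup\mathrm{im}(\beta)$, when proper) and is in particular connected, which is precisely the criterion for a poset-indexed colimit of subpresheaves to coincide with their union. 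As a byproduct you get the explicit description of $\delta_!(\partial\Delta[n])$ as the sub-bisimplicial set of pairs $(\alpha,\beta)$ with $\mathrm{im}(\alpha)\cup\mathrm{im}(\beta)\subsetneq[n]$, which is more information than the paper records; what the citation buys the paper is brevity, while your route buys an elementary, verifiable argument that uses nothing beyond cocontinuity and the colimit-of-subobjects criterion.
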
 

As an immediate corollary, using the fact that the Reedy model structure and injective model 
structure coincide on bisimplicial sets, we have the following useful result.  

\begin{corollary} 
\label{corr:d sends triv fibns to triv fibns}
The diagonal functor $d\colon \ssSet\to \sSet$ sends Reedy trivial fibrations in 
$\ssSet$ to trivial Kan fibrations in $\sSet$.  
\end{corollary}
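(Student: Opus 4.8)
The plan is to reduce the statement to a pure adjunction chase, using only two ingredients: the coincidence of the Reedy and injective model structures on $\ssSet$, and Proposition~\ref{prop:delta!(mono)}. Since these two model structures agree, a Reedy trivial fibration $p\colon X\to Y$ in $\ssSet$ is precisely a map having the right lifting property against \emph{every} monomorphism of bisimplicial sets (the trivial fibrations of the injective model structure are exactly the maps with RLP against the cofibrations, which are the monomorphisms). Dually, a map of simplicial sets is a trivial Kan fibration if and only if it has the right lifting property against every monomorphism of $\sSet$. So it suffices to verify this latter lifting property for $dp\colon dX\to dY$.

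First I would take an arbitrary commutative square
\[
\begin{tikzcd}
A \arrow[d,"i"'] \arrow[r] & dX \arrow[d,"dp"] \\
B \arrow[r] & dY
\end{tikzcd}
\]
in $\sSet$ with $i$ a monomorphism, and transpose it across the adjunction $\delta_!\dashv d$ to obtain a commutative square in $\ssSet$ with left-hand edge $\delta_! i\colon \delta_! A\to \delta_! B$. By Proposition~\ref{prop:delta!(mono)}, $\delta_! i$ is again a monomorphism, so the transposed square admits a diagonal filler, because $p$ is a Reedy (hence injective) trivial fibration and therefore has the RLP against all monomorphisms of $\ssSet$. Transposing the filler back across the adjunction yields the required diagonal filler of the original square. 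Since $i$ was arbitrary, $dp$ is a trivial Kan fibration, which is exactly what is claimed.

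I do not expect any real obstacle here. The only step needing a moment's care is the translation between the description of the Reedy model structure via fibrations and weak equivalences and its description via a lifting property against monomorphisms; but this is immediate once one invokes the stated fact that Reedy and injective coincide on $\ssSet$. One could equivalently test the lifting property of $dp$ only against the generating monomorphisms $\partial\Delta[n]\subset\Delta[n]$ and use that each $\delta_!\partial\Delta[n]\subset\delta_!\Delta[n]$ is a monomorphism, but there is nothing to gain by restricting to generators, so I would state the argument for all monomorphisms at once. Note also that we never need $d$ (or $\delta_!$) to interact with weak equivalences, since the corollary concerns only trivial fibrations.
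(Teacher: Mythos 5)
Your argument is correct and is exactly the paper's intended proof: the corollary is stated as an immediate consequence of the coincidence of the Reedy and injective model structures on $\ssSet$ together with Proposition~\ref{prop:delta!(mono)}, via precisely the adjunction transposition $\delta_!\dashv d$ that you spell out. Nothing is missing.
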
 

Finally, let us recall (see for instance \cite{JT2}) 
that for fixed simplicial sets $K$, $L$, the functors $K\Box (-)\colon \sSet
\to \ssSet$ and $(-)\Box L\colon \sSet\to \ssSet$ have right adjoints $K\backslash (-)\colon 
\ssSet\to \sSet$ and $(-)/L\colon \ssSet\to \sSet$ respectively.

\subsection{Simplicial enrichments} 

A {\em simplicial space} is a simplicial object in $\sSet$.  Therefore, 
each bisimplicial set $X$ may be regarded as a simplicial space in two different ways.  
On the one hand we may regard $X$ as a 
{\em horizontal} simplicial object in $\sSet$ whose $m$-th {\em column} is the simplicial set $X_{m*}$.  
On the other hand we may regard $X$ as a 
{\em vertical} simplicial object in $\sSet$ whose $n$-th {\em row} is the simplicial set $X_{*n}$.  

Corresponding to the canonical simplicial enrichment of $\sSet$, 
there are two simplicial enrichments of $\ssSet$ depending on whether 
we view bisimplicial sets as horizontal or vertical simplicial objects in $\sSet$.  

Fortunately, in this paper we will only have need to consider one of these 
simplicial enrichments, the {\em horizontal} simplicial enrichment, which is the natural 
enrichment when bisimplicial sets are viewed as horizontal simplicial spaces.  The tensor 
for this enrichment is defined to be $X\otimes K = X\times p_2^*K$ for $X\in \ssSet$ 
and $K\in \sSet$.  The simplicial mapping space is defined by the formula 
\[
\map(X,Y) = (p_2)_*Y^X, 
\]
for $X,Y\in \ssSet$, where $Y^X$ denotes the exponential in the cartesian closed 
category $\ssSet$ and $(p_2)_*\colon \ssSet\to \sSet$ is the functor which sends a bisimplicial 
set $Z$ to its first column $Z_{0*}$.  If $X\in \ssSet$ and $K\in \sSet$ then we write 
$X^K$ for their cotensor.

The proof of the following lemma is straightforward and is left to the reader.  
\begin{lemma} 
\label{lem:slice simp adjunction}
Let $K$ be a simplicial set.  Then there is an isomorphism of simplicial sets 
\[
\map(K\Box L,X)\simeq \map(L,K\backslash Y),  
\]
natural in $L\in \sSet$ and $Y\in \ssSet$.  In other words the adjunction $K\Box (-)\dashv K\backslash (-)$ 
above is a simplicial adjunction for the horizontal simplicial enrichment.  
\end{lemma}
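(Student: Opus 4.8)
The plan is to verify the claimed isomorphism degreewise and then observe that every step is an instance of a natural adjunction isomorphism, so the degreewise bijections assemble into an isomorphism of simplicial sets (hence a simplicial adjunction). We read the displayed formula as $\map(K\Box L, Y)\simeq\map(L, K\backslash Y)$, the $X$ in the statement being a slip for $Y$; on the right-hand side $\map$ is the standard simplicial enrichment of $\sSet$, which is what typechecks since $L$ and $K\backslash Y$ are simplicial sets.

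First I would unwind the left-hand mapping space. Using that $(p_2)_*Z = Z_{0*}$ and that $\Delta[0]\Box\Delta[n] = p_2^*\Delta[n]$, its set of $n$-simplices is
\[
\map(K\Box L, Y)_n = \bigl((p_2)_* Y^{K\Box L}\bigr)_n = \Hom_{\ssSet}\bigl(\Delta[0]\Box\Delta[n],\, Y^{K\Box L}\bigr),
\]
and the exponential adjunction in the cartesian closed category $\ssSet$ rewrites this as $\Hom_{\ssSet}\bigl((\Delta[0]\Box\Delta[n])\times(K\Box L),\, Y\bigr)$.

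The key computational step is the box-product identity
\[
(\Delta[0]\Box\Delta[n])\times(K\Box L)\;\cong\;(\Delta[0]\times K)\Box(\Delta[n]\times L)\;=\;K\Box(\Delta[n]\times L),
\]
which holds because $p_1^*$ and $p_2^*$ preserve products, so $K'\Box L' = p_1^*K'\times p_2^*L'$ is computed level-wise and the two product structures distribute over $\Box$ factorwise. Substituting this and then applying the adjunction $K\Box(-)\dashv K\backslash(-)$ gives
\[
\map(K\Box L, Y)_n \;\cong\; \Hom_{\ssSet}\bigl(K\Box(\Delta[n]\times L),\, Y\bigr) \;\cong\; \Hom_{\sSet}\bigl(\Delta[n]\times L,\, K\backslash Y\bigr) \;=\; \map(L, K\backslash Y)_n,
\]
the last equality after the evident swap $\Delta[n]\times L\cong L\times\Delta[n]$. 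I would then check that this chain of bijections is natural in $[n]\in\Delta$ (so that it is a map of simplicial sets, not just a level-wise bijection) and natural in $L\in\sSet$ and $Y\in\ssSet$; this is routine since each isomorphism used is either a naturally given adjunction iso or an instance of the functoriality of $\Box$, $p_i^*$, and binary products.

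I expect the only mild obstacle to be the bookkeeping of the two simplicial directions: one must confirm that the $p_2$-direction of $\ssSet$ along which $(p_2)_*$ extracts the mapping space matches, under the box-product identity, the simplicial direction of $\sSet$ in which $\map(L, K\backslash Y)$ is formed — i.e.\ that the $\Delta[n]$ factor travels through the argument on the correct side of every $\Box$. Once this is tracked carefully the chain closes, and the resulting isomorphism of simplicial sets is exactly the assertion that $K\Box(-)\dashv K\backslash(-)$ is a simplicial adjunction for the horizontal enrichment.
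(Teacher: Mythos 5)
Your proof is correct, and since the paper leaves this lemma to the reader it is exactly the intended straightforward argument: compute $\map(K\Box L,Y)_n=\Hom_{\ssSet}\bigl((K\Box L)\otimes\Delta[n],Y\bigr)$, use the level-wise identity $(K\Box L)\times p_2^*\Delta[n]\cong K\Box(L\times\Delta[n])$, and apply the adjunction $K\Box(-)\dashv K\backslash(-)$, with naturality in $[n]$, $L$, $Y$ coming from the naturality of each step (and $X$ in the statement is indeed a typo for $Y$).
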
     

Suppose now that 
$B\in \ssSet$.  Then the horizontal simplicial enrichment on $\ssSet$
induces a canonical simplicial enrichment on the slice category 
$\ssSet/B$, which we will sometimes refer to as the horizontal simplicial 
enrichment on $\ssSet/B$.  

If $X\in \ssSet/B$ and $K\in \sSet$, then the tensor $X\otimes K$ 
can be naturally regarded as an object of $\ssSet/B$ via the canonical 
map $X\otimes K\to X\to B$.  If $X,Y\in \ssSet/B$, then the 
simplicial mapping space, $\map_B(X,Y)$, is defined to be the fiber
\[
\map_B(X,Y) = \map(X,Y)\times_{\map(X,B)} 1 
\]
where the canonical map $X\to B$ is regarded as a vertex $1\to \map(X,B)$ 
of $\map(X,B)$.  Similarly we define the cotensor, $\map_B(K,X)$, for 
$X\in \ssSet/B$, $K\in \sSet$ to be 
\[
\map_B(K,X) = X^K\times_{B^K} B, 
\]
where $B\to B^K$ is the conjugate of the 
canonical map $B\otimes K\to B$.  With these definitions we have the 
sequence of isomorphisms 
\[
\ssSet/B(X\otimes K,Y) \simeq \sSet(K,\map_B(X,Y)) \simeq \ssSet/B(X,\map_B(K,Y)) 
\]
natural in $X,Y\in \ssSet/B$ and $K\in \sSet$.

\subsection{The projective model structure on $\ssSet$} 
\label{sec:proj model structure}
Let us identify the category $\ssSet$ with the category $s(\sSet)$ 
of horizontal simplicial spaces, so that $\ssSet = s(\sSet)$ 
is simplicially enriched with respect to the horizontal simplicial 
enrichment.  

Recall that the (horizontal) {\em projective model structure} on $\ssSet$ 
has as its weak equivalences the maps $X\to Y$ in $\ssSet$ which are 
column-wise weak homotopy equivalences, in other words $X_{m*}\to Y_{m*}$ 
is a weak homotopy equivalence for all $m\geq 0$.  The fibrations in this 
model structure are the column-wise Kan fibrations $X\to Y$ in $\ssSet$, i.e.\ 
$X_{m*}\to Y_{m*}$ is a Kan fibration for all $m\geq 0$.  Recall that the horizontal 
projective model structure is simplicial for the horizontal simplicial enrichment on 
$\ssSet$.  

Let $B$ be a simplicial set.  Then the (horizontal) projective model structure on $\ssSet$ 
induces a model structure on the slice category $\ssSet/B\Box 1$.  When 
there is no danger of confusion we will refer to this overcategory model structure 
as the (horizontal) projective model structure on $\ssSet/B\Box 1$.  The weak 
equivalences (respectively fibrations) in this model structure are the maps $X\to Y$ in $\ssSet/B\Box 1$ 
such that the underlying map is a column-wise weak homotopy equivalence 
(respectively Kan fibration) in $\ssSet$.  
The projective model structure on 
$\ssSet/B\Box 1$ is simplicial with respect to the simplicial enrichment on 
$\ssSet/B\Box 1$ induced by the horizontal simplicial enrichment on 
$\ssSet$.

The category $s(\sSet/B)$ of (vertical) simplicial 
objects in $\sSet/B$ may be identified with the category $\ssSet/B\Box 1$.  There is a 
canonical pair of adjoint functors   
\[
d\colon \ssSet/B\Box 1\rightleftarrows \sSet/B\colon d_* 
\]
defined as follows.  The functor $d$ sends an object $X\in \ssSet/B\Box 1$ to 
its diagonal simplicial set $dX$.  Note that since $d(K\Box L) = K\times L$ for 
all $K,L\in \sSet$, the simplicial set $dX$ is equipped with a canonical map 
$dX\to B$ and hence can be regarded as an object of $\sSet/B$.  The functor $d_*\colon \sSet/B\to 
\ssSet/B\Box 1$ is the functor which sends an object $X$ of $\sSet/B$ to the 
vertical simplicial object of $s\sSet/B$ whose $n$-th row is  
\[
(d_*X)_n  = (d_*X)_{*n}    := X^{\Delta[n]}.  
\]
Note that the functors $d$ and $d_*$ are simplicial with respect to the horizontal 
simplicial enrichment of $\ssSet/B\Box 1$.  
The following lemma is well known (see for instance VII Lemma 3.4 of \cite{GJ}).  
\begin{lemma} 
Let $X\in \bL(B)$.  Then $d_*X$ is Reedy fibrant for the 
(vertical) Reedy model structure on $s(\sSet/B) = \ssSet/B\Box 1$ associated to 
the covariant model structure on $\sSet/B$.  
\end{lemma}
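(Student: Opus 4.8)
The plan is to deduce the lemma from the general fact, valid in any simplicial model category, that cotensoring a fibrant object by the cosimplicial object $\Delta[\bullet]$ yields a Reedy fibrant simplicial object (this is the content of \cite{GJ}, VII, Lemma 3.4). Here the ambient simplicial model category is $\sSet/B$ with the covariant model structure and the simplicial enrichment of Section~\ref{subsec: simp enrichment of sSet/B}, and $X\in\bL(B)$ plays the role of the fibrant object, since the fibrant objects of the covariant model structure are exactly the left fibrations over $B$. So the work consists of two things: unwinding what Reedy fibrancy of $d_*X$ means in terms of $X$, and then invoking the pullback-cotensor axiom.

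First I would recall that, viewing $\Delta^{\op}$ as a Reedy category whose inverse subcategory is generated by the opposites of the face operators, a vertical simplicial object $Z$ in $\sSet/B$ is Reedy fibrant for the Reedy structure built on the covariant model structure precisely when the matching map $Z_n\to M_nZ$ is a covariant fibration for every $n\geq 0$, where $M_nZ$ is the usual matching object assembled from $Z_{n-1}$ and the face maps (for $n=0$ this reads $Z_0\to 1$, i.e.\ fibrancy of $Z_0$, since $1=B$ is the terminal object of $\sSet/B$). Then I would identify these matching objects for $Z=d_*X$. By definition $(d_*X)_n = X^{\Delta[n]}$, the cotensor in $\sSet/B$, and for fixed $X$ the functor $X^{(-)}\colon \sSet^{\op}\to \sSet/B$ carries colimits of simplicial sets to limits --- this is immediate from the adjunction isomorphisms $\sSet/B(A\otimes K,X)\cong \sSet(K,\map_B(A,X))$ recorded in Section~\ref{subsec: simp enrichment of sSet/B}. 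Feeding the standard presentation of $\partial\Delta[n]$ as the colimit of its faces $d^i\Delta[n-1]$ (glued along the codimension-two faces) through $X^{(-)}$ therefore produces exactly the limit defining the matching object, so that $M_n(d_*X)\cong X^{\partial\Delta[n]}$ and the matching map is the canonical map $X^{\Delta[n]}\to X^{\partial\Delta[n]}$ induced by $\partial\Delta[n]\hookrightarrow \Delta[n]$ (the case $n=0$ being covered by $X^{\partial\Delta[0]} = X^{\emptyset} = 1$).

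Finally I would invoke the simplicial model category axiom: since the covariant model structure on $\sSet/B$ is simplicial, the monomorphism $\partial\Delta[n]\hookrightarrow \Delta[n]$ together with the fibration $X\to 1$ (a fibration precisely because $X\in\bL(B)$ is fibrant) gives a covariant fibration
\[
X^{\Delta[n]}\longrightarrow X^{\partial\Delta[n]}\times_{1^{\partial\Delta[n]}} 1^{\Delta[n]} = X^{\partial\Delta[n]},
\]
where the equality uses that cotensoring the terminal object yields the terminal object. Thus every matching map of $d_*X$ is a covariant fibration, so $d_*X$ is Reedy fibrant. I do not expect any substantive obstacle: the argument is purely formal once one is in a simplicial model category, and the only point deserving a moment's care is the identification $M_n(d_*X)\cong X^{\partial\Delta[n]}$, which should be justified via the continuity of $X^{(-)}$ applied to the boundary-as-a-colimit decomposition rather than merely asserted.
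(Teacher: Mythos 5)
Your proposal is correct and matches the paper's treatment: the paper simply cites \cite{GJ}, VII Lemma 3.4, i.e.\ exactly the general fact that cotensoring a fibrant object of a simplicial model category with $\Delta[\bullet]$ gives a Reedy fibrant simplicial object, applied to the covariant model structure on $\sSet/B$. Your unwinding of the matching objects as $X^{\partial\Delta[n]}$ and the appeal to the pullback-cotensor axiom is precisely the standard argument behind that citation, so there is nothing to add.
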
 

Alternatively, we may consider $d_*X$ as a horizontal simplicial space.  
\begin{lemma} 
\label{lem:d_*X is W local}
Let $X\in \bL(B)$.  Then $d_*X$ is projectively fibrant for the horizontal 
projective model structure on $\ssSet/B\Box 1$.  Moreover, the maps 
\[
(d_*X)_{m*}\to (d_*X)_{0*}\times_{B_0} B_m 
\]
induced by the initial vertex maps $\delta_m\colon \Delta[0]\to \Delta[m]$ are weak homotopy equivalences 
for all $m\geq 0$.  
\end{lemma}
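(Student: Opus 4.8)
**Proof plan for Lemma~\ref{lem:d_*X is W local}.**

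The plan is to establish the two assertions in sequence. For the first — that $d_*X$ is projectively fibrant — I would verify that $(d_*X)_{m*}$ is a Kan complex for each $m \geq 0$ and that the structure map $d_*X \to B\Box 1$ is a column-wise Kan fibration. By definition $(d_*X)_{m*} = (d_*X)_n$ in vertical degree $n$ equals $X^{\Delta[m]}$ evaluated in degree $n$, i.e.\ the simplicial set $\map_B(\Delta[n], X^{\Delta[m]})$ rearranges; more cleanly, $(d_*X)_{m*}$ is the simplicial set whose $n$-simplices are maps $\Delta[m]\Box\Delta[n] \to X$ over $B\Box 1$, which by the box-product/hom adjunction recalled after Proposition~\ref{prop:delta!(mono)} and Lemma~\ref{lem:slice simp adjunction} is $\map_B(\Delta[m]\times\Delta[n], X)$ suitably interpreted — in any case it is $\map_B(\Delta[m], X)^{\Delta[n]}$-like, and since $X \to B$ is a left fibration, the cotensor and mapping-space constructions against the cofibration $\Delta[0]\to\Delta[m]$ land in Kan complexes by the standard covariant-model-structure SM7 axiom (the covariant model structure is simplicial, $X$ is fibrant, all objects are cofibrant). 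The column-wise Kan fibration condition over $B\Box 1$ reduces, again by the box-product adjunctions, to the assertion that $X^{\Delta[m]} \to B^{\Delta[m]}$ is a fibration relative to the appropriate latching-type comparison, which follows because the pushout-product of $\partial\Delta[m]\subset\Delta[m]$ with any horn inclusion is left anodyne and $X\to B$ has the right lifting property against left anodyne maps. (This is essentially the content of the cited Lemma VII.3.4 of \cite{GJ} transported from the vertical Reedy to the horizontal projective structure, using that $d_*X$ is Reedy fibrant by the preceding lemma.)

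For the second assertion, the key point is to identify the map $(d_*X)_{m*}\to (d_*X)_{0*}\times_{B_0} B_m$ with a mapping-space map induced by precomposition along the initial vertex map $\delta_m\colon\Delta[0]\to\Delta[m]$. Unwinding definitions, $(d_*X)_{m*}$ has $n$-simplices the maps $\Delta[m]\times\Delta[n]\to X$ over $B$ (via the projection to $\Delta[m]$ composed with the structure map of $X$... more precisely, over $B\Box 1$, so the $\Delta[n]$-factor is irrelevant to the base and what matters is a map $\Delta[m]\times\Delta[n]\to X$ lying over a fixed composite $\Delta[m]\times\Delta[n]\to\Delta[m]\to$ nothing — I will need to be careful here with exactly which structure maps to $B$ are in play, since $B$ appears only in horizontal degree). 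Granting the correct bookkeeping, the target $(d_*X)_{0*}\times_{B_0}B_m$ has $n$-simplices given by a map $\Delta[0]\times\Delta[n]\to X$ together with an $m$-simplex of $B$ agreeing on the relevant vertex; so the comparison map is exactly restriction of a map on $\Delta[m]\times\Delta[n]$ to $\{0\}\times\Delta[n]$, remembering the underlying $m$-simplex of $B$. Thus I must show this restriction map $\map_B(\Delta[m],X)\to \map_B(\Delta[0],X)\times_{B_0}B_m$ — the map induced by $\delta_m\colon\Delta[0]\to\Delta[m]$ regarded over $B$ — is a weak homotopy equivalence.

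To see that, I would argue that $\delta_m\colon \Delta[0]\to\Delta[m]$ is left anodyne (it is the initial vertex map, noted to be left anodyne right after the definition of left anodyne in Section~\ref{subsec:left anodyne maps}), hence a covariant equivalence in $\sSet/\Delta[m]$, and more to the point that the map in question is the effect on $X$-valued mapping spaces of this left anodyne inclusion. Concretely: $\map_B(\Delta[m],X) \cong \map_{\Delta[m]}(\Delta[m], \Delta[m]\times_B X)$ and $\map_B(\Delta[0],X)\times_{B_0}B_m \cong \map_{\Delta[m]}(\Delta[0], \Delta[m]\times_B X)$, where on the right the $\Delta[0]$ sits over $\Delta[m]$ via $\delta_m$; so the comparison is precisely $\delta_m^*$ applied to the left fibration $\Delta[m]\times_B X\to\Delta[m]$ (which is a left fibration since left fibrations are stable under base change). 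Now I invoke that $\delta_m$, being left anodyne, is a covariant (indeed contravariant-dual, i.e.\ right cofinal) equivalence, and that mapping into a fibrant object $\Delta[m]\times_B X$ sends covariant equivalences between cofibrant objects to weak homotopy equivalences of mapping spaces — this is the simplicial model category axiom applied in $\sSet/\Delta[m]$ with its covariant model structure. That yields the weak homotopy equivalence claimed. The main obstacle I anticipate is purely notational: correctly tracking how the base $B$ enters $(d_*X)_{m*}$ (it lives only in the horizontal direction, so $(d_*X)_{m*}$ is genuinely $\map_B(\Delta[m],X)$ with $\Delta[m]$ over $B$ via whatever simplex one is at, and the fiber product over $B_0$ in the target reflects the augmentation $d_*X\to B\Box 1$), and making sure the identifications above respect all structure maps; once the correct identification with $\delta_m^*$ on mapping spaces into a left fibration is in hand, the homotopical input is immediate from the simplicial model structure and the left-anodyne-ness of the initial vertex map.
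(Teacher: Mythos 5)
Your proposal is essentially the paper's argument: both parts come down to the fact that restriction along the left anodyne initial vertex map $\delta_m$ into a left fibration is a trivial Kan fibration (SM7 for the covariant structure, i.e.\ Corollary 2.1.2.9 of \cite{HTT}); the paper phrases this with the unsliced mapping spaces, exhibiting the comparison map as a pullback of $\map(\Delta[m],X)\to X\times_B\map(\Delta[m],B)$, while you phrase it fibrewise over each $\sigma\in B_m$ via $\map_{\Delta[m]}(-,\sigma^*X)$ and $\delta_m^*$ --- the same homotopical input in slightly different packaging. One assertion in your first step is false as stated: the pushout-product of $\partial\Delta[m]\subset\Delta[m]$ with an \emph{arbitrary} horn inclusion is not left anodyne (this fails for inner and right horns), so a bare lifting argument of that shape cannot by itself give the column-wise Kan fibration condition. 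Fortunately the claim is dispensable: the $m$-th column of $B\Box 1$ is the discrete simplicial set $B_m$, so $(d_*X)_{m*}\to B_m$ is a Kan fibration as soon as its fibres $\map_B((\Delta[m],\sigma),X)$ are Kan complexes, which is exactly what your SM7 remark already provides; this is also how the paper argues, identifying the column as the pullback of the left fibration $\map(\Delta[m],X)\to\map(\Delta[m],B)$ along the inclusion of the vertex set $B_m$. Likewise your opening identification of $(d_*X)_{m*}$ in degree $n$ with $X^{\Delta[m]}$ in degree $n$ is off (it is the $n$-th \emph{row}, not the $m$-th column, that equals $X^{\Delta[n]}$), but you immediately replace it with the correct description of the $n$-simplices, and the remaining bookkeeping and the fibrewise identification of the comparison map with $\delta_m^*$ are correct.
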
 

\begin{proof} 
We prove the first statement.  Recall that $d_*X$ is the vertical simplicial object in $\sSet/B$ whose $n$-th row  
is $X^{\Delta[n]}\in \sSet/B$.  It follows then that the $m$-th column $(d_*X)_{m*}$ 
fits into a pullback diagram 
\[
\begin{tikzcd} 
(d_*X)_{m*} \arrow[d] \arrow[r] & \map(\Delta[m],X) \arrow[d] \\ 
B_m \arrow[r] & \map(\Delta[m],B) 
\end{tikzcd} 
\]
% \[
% \xymatrix{ 
% (d_*X)_{m*} \ar[d] \ar[r] & \map(\Delta[m],X) \ar[d] \\ 
% B_m \ar[r] & \map(\Delta[m],B) } 
% \]
where the map $B_m\to \map(\Delta[m],B)$ is the inclusion of the set 
of vertices of $\map(\Delta[m],B)$.  
Since $\map(\Delta[m],X)\to \map(\Delta[m],B)$ is a left fibration it 
follows that $(d_*X)_{m*}\to B_m$ is a Kan fibration, as $B_m$ is discrete.  
Therefore $d_*X$ is fibrant in the horizontal projective model structure 
on $\ssSet/B\Box 1$.  

We prove the second statement.  It follows from the discussion in the preceding paragraph that the 
maps in question are the maps 
\[
\map(\Delta[m],X)\times_{\map(\Delta[m],B)}B_m\to 
X\times_B B_m, 
\]
induced by the initial vertex maps $\delta_m\colon \Delta[0]\to \Delta[m]$.  Thus these maps are pullbacks 
of the maps 
\[
\map(\Delta[m],X)\to X\times_B \map(\Delta[m],B) 
\]
induced by the initial vertex maps $\delta_m\colon \Delta[0]\to \Delta[m]$.  Since these initial vertex maps 
are left anodyne and $X\in \bL(B)$, it follows that the latter maps are trivial Kan fibrations 
(Corollary 2.1.2.9 of \cite{HTT}).  Hence the maps above are trivial Kan fibrations, in particular they are 
weak homotopy equivalences. 
% where the map $B_m\to B$ is the composite $B_m\to B_0\to B$ induced by 
% $0\colon [0]\to [m]$.  
% We will prove that these maps are trivial Kan fibrations.  By adjointness, the 
% existence of a diagonal filler for a diagram 
% \[
% \begin{tikzcd} 
% \partial\Delta[n] \arrow[r] \arrow[d] & \map(\Delta[m],X)\times_{\map(\Delta[m],B)}B_m \arrow[d] \\ 
% \Delta[n] \arrow[r] & X\times_B B_m 
% \end{tikzcd} 
% \]
% % \[
% % \xymatrix{ 
% % \partial\Delta[n] \ar[r] \ar[d] & \map(\Delta[m],X)\times_{\map(\Delta[m],B)}B_m \ar[d] \\ 
% % \Delta[n] \ar[r] & X\times_B B_m } 
% % \]
% is equivalent to the existence of a diagonal filler for the diagram 
% \[
% \begin{tikzcd}
% \Delta[0]\times\Delta[n] \cup \Delta[m]\times\partial\Delta[n] \arrow[d] \arrow[rr] & & X\arrow[d] \\ 
% \Delta[m]\times \Delta[n]\arrow[r] & \Delta[m] \arrow[r] & B. 
% \end{tikzcd}
% \]
% % \[
% % \xymatrix{ 
% % \Delta[0]\times\Delta[n] \cup \Delta[m]\times\partial\Delta[n] \ar[d] \ar[rr] & & X\ar[d] \\ 
% % \Delta[m]\times \Delta[n]\ar[r] & \Delta[m] \ar[r] & B. }
% % \]
% Since $X\in \bL(B)$ and the initial vertex map 
% $\delta_m\colon \Delta[0]\to \Delta[m]$ is left anodyne, it follows that such a diagonal filler exists.      
\end{proof} 

\begin{proposition} 
\label{prop:comparison}
The adjunction $d\colon \ssSet/B\Box 1\rightleftarrows \sSet/B\colon d_*$ is a Quillen adjunction 
for the horizontal projective model structure on $\ssSet/B\Box 1$ and the covariant model structure 
on $\sSet/B$.  
\end{proposition}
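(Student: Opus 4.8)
The plan is to verify that the left adjoint $d$ is a left Quillen functor, i.e.\ that it carries cofibrations to cofibrations and trivial cofibrations to trivial cofibrations; since the adjunction $(d,d_*)$ has already been exhibited, this is all that is needed.  The cofibration half is immediate: the cofibrations of the covariant model structure on $\sSet/B$ are precisely the monomorphisms, every cofibration of the horizontal projective model structure on $\ssSet/B\Box 1$ is in particular a monomorphism of bisimplicial sets, and $d$ preserves monomorphisms because $(dX)_k = X_{k,k}$, so a map that is injective in each bidegree remains injective after restriction to the diagonal.

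The real content is that $d$ preserves trivial cofibrations, and here I would argue via generators.  The horizontal projective model structure on $\ssSet$ is combinatorial, with generating trivial cofibrations the maps $\Delta[m]\Box\Lambda^k[n]\hookrightarrow\Delta[m]\Box\Delta[n]$ for $m\geq 0$, $n\geq 1$, $0\leq k\leq n$; hence the overcategory model structure on $\ssSet/B\Box 1$ has as generating trivial cofibrations the maps of this shape equipped with an arbitrary structure map $\Delta[m]\Box\Delta[n]\to B\Box 1$.  Since the trivial cofibrations of the covariant model structure are closed under pushout, transfinite composition and retracts, and $d$ preserves colimits, it is enough to check that $d$ sends each such generating trivial cofibration to a covariant trivial cofibration.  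Using $d(K\Box L)=K\times L$ and the cocontinuity of $d$, applying $d$ produces the monomorphism $\Delta[m]\times\Lambda^k[n]\hookrightarrow\Delta[m]\times\Delta[n]$ in $\sSet/B$, which is automatically a cofibration; so what must be shown is that it is a covariant equivalence.

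The key observation --- and the one point where a little care is required --- is that $B\Box 1 = p_1^*B$ is the restriction of $B$ along the first projection $p_1\colon\Delta\times\Delta\to\Delta$, so any structure map $\Delta[m]\Box\Delta[n]\to B\Box 1$ is constant in the vertical variable and therefore factors through the projection $\Delta[m]\Box\Delta[n]\to\Delta[m]\Box 1$.  Consequently the structure map over $B$ of the simplicial set $\Delta[m]\times\Delta[n]$ is a composite $\Delta[m]\times\Delta[n]\to\Delta[m]\xrightarrow{\ \alpha\ }B$ for some $\alpha$, with the first map the projection.  Now I would invoke Joyal's criterion (Theorem~\ref{prop:joyal crit for cov eq}): it suffices to show that for every right fibration $R\to B$ the base change $R\times_B(\Delta[m]\times\Lambda^k[n])\to R\times_B(\Delta[m]\times\Delta[n])$ is a weak homotopy equivalence.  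The factorization through $\Delta[m]$ yields a natural isomorphism $R\times_B(\Delta[m]\times\Delta[n])\cong(R\times_B\Delta[m])\times\Delta[n]$, under which this map becomes $(R\times_B\Delta[m])\times\Lambda^k[n]\to(R\times_B\Delta[m])\times\Delta[n]$; and this is a weak homotopy equivalence because $\Lambda^k[n]\hookrightarrow\Delta[n]$ is one and multiplication by the fixed simplicial set $R\times_B\Delta[m]$ preserves weak homotopy equivalences (the Kan--Quillen structure on $\sSet$ is cartesian and all of its objects are cofibrant).  This completes the argument.  I would stress that routing the vertical direction through Joyal's criterion, rather than through left anodyne maps, is exactly what lets the case $k=n$ --- where $\Lambda^n[n]\hookrightarrow\Delta[n]$ is merely right anodyne --- go through with no extra effort.
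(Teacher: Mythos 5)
Your proof is correct, but it runs in the opposite direction to the paper's. You verify that the left adjoint $d$ is left Quillen: cofibrations are handled by noting that projective cofibrations are monomorphisms and $d$ preserves monomorphisms, and for trivial cofibrations you reduce to the generators $\Delta[m]\Box\Lambda^k[n]\subset\Delta[m]\Box\Delta[n]$ (all $0\leq k\leq n$) over $B\Box 1$, observe that any structure map $\Delta[m]\Box\Delta[n]\to B\Box 1=p_1^*B$ factors through $\Delta[m]\Box 1$ (indeed $\Hom(\Delta[m]\Box\Delta[n],p_1^*B)\cong\Hom(\Delta[m],B)$, since $(p_1)_!$ sends the representable $([m],[n])$ to $[m]$), and then dispose of the diagonal inclusions $\Delta[m]\times\Lambda^k[n]\subset\Delta[m]\times\Delta[n]$ via Theorem~\ref{prop:joyal crit for cov eq}; the factorization through $\Delta[m]$ is exactly what rescues the outer horns $k=n$, whose diagonals are only right anodyne and need not be covariant equivalences over an arbitrary base, so your emphasis on that point is well placed. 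The paper instead checks the right adjoint: by the standard criterion that an adjunction is Quillen as soon as the right adjoint preserves trivial fibrations and fibrations between fibrant objects, it suffices to note, using the pullback description of the columns $(d_*X)_{m*}$ from the proof of Lemma~\ref{lem:d_*X is W local}, that a left fibration between left fibrations over the discrete simplicial set $B_m$ is a Kan fibration, and similarly for trivial fibrations. The paper's route is shorter because Lemma~\ref{lem:d_*X is W local} has already done the work; yours is self-contained on the left-adjoint side, avoids the fibrant-object criterion for Quillen adjunctions, and makes explicit why the vertical Kan--Quillen horns (including $k=n$) cause no trouble, namely the vertical constancy of $B\Box 1$.
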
 

\begin{proof} 
It is sufficient to show that $d_*\colon \sSet/B\to \ssSet/B\Box 1$ sends fibrations between fibrant 
objects to projective fibrations, and trivial fibrations to trivial projective fibrations.  Suppose $X\to Y$ 
is a covariant fibration in $\bL(B)$, so that $X\to Y$ is a left fibration.  From the proof of 
the previous lemma it follows that $(d_*X)_{m*}\to (d_*Y)_{m*}$ is a left fibration in $\bL(B_m)$ for every $m\geq 0$.  
Hence it is a left fibration between Kan complexes and hence is a Kan fibration.  
The proof that $d_*$ sends trivial fibrations in $\sSet/B$ to trivial projective fibrations is analogous.  
\end{proof}
 
\begin{lemma} 
\label{lem:diags of row-wise cov eqs are cov eqs}
Suppose that $X\to Y$ is a map in $\ssSet/B\Box 1$ such that $X_{*n}\to Y_{*n}$ 
is a covariant equivalence in $\sSet/B$ for all $n\geq 0$.  Then $dX\to dY$ is a 
covariant equivalence in $\sSet/B$.  
%Moreover, if $X_{*n}\to Y_{*n}$ is a 
%covariant equivalence in $\bL(B)$ for all $n\geq 0$, then the underlying map 
%$dX\to dY$ in $\sSet$ is a weak categorical equivalence.    
\end{lemma}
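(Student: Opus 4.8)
The plan is to test the map $dX\to dY$ against the criterion of Theorem~\ref{prop:joyal crit for cov eq}: a map in $\sSet/B$ is a covariant equivalence if and only if its base change along every right fibration $R\to B$ is a weak homotopy equivalence. So the goal reduces to showing that $R\times_B dX\to R\times_B dY$ is a weak homotopy equivalence for every $R\in\bR(B)$.

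The first step is to commute this base change past the diagonal. Since $d=\delta^*$ is a restriction functor between presheaf categories it preserves all limits, and $d(K\Box L)=K\times L$ for all $K,L\in\sSet$; hence there are natural isomorphisms $d\big((R\Box 1)\times_{B\Box 1}X\big)\cong R\times_B dX$ and likewise with $Y$ in place of $X$. Thus it suffices to prove that $d$ carries the map $(R\Box 1)\times_{B\Box 1}X\to (R\Box 1)\times_{B\Box 1}Y$ to a weak homotopy equivalence. Next I would identify the rows of these pullbacks: pullbacks of bisimplicial sets are computed levelwise, the $n$-th row of $R\Box 1$ is $R$ and that of $B\Box 1$ is $B$, so the $n$-th row of $(R\Box 1)\times_{B\Box 1}X$ is $R\times_B X_{*n}$, and similarly for $Y$.

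Now the hypothesis enters: each $X_{*n}\to Y_{*n}$ is a covariant equivalence in $\sSet/B$, so by the implication (1)$\Rightarrow$(2) of Theorem~\ref{prop:joyal crit for cov eq} the map $R\times_B X_{*n}\to R\times_B Y_{*n}$ is a weak homotopy equivalence for every $n\ge 0$ and every $R\in\bR(B)$. In other words $(R\Box 1)\times_{B\Box 1}X\to (R\Box 1)\times_{B\Box 1}Y$ is a row-wise weak homotopy equivalence of bisimplicial sets. Applying the realization lemma --- the diagonal functor $d$ sends row-wise weak homotopy equivalences of bisimplicial sets to weak homotopy equivalences (see e.g.\ \cite{GJ}; equivalently $d$ is left Quillen for the Reedy structure on $\ssSet$ regarded as vertical simplicial spaces, whose weak equivalences are the row-wise ones and in which every object is Reedy cofibrant, so $d$ preserves all weak equivalences) --- we conclude that $R\times_B dX\to R\times_B dY$ is a weak homotopy equivalence for every $R\in\bR(B)$, and hence, by Theorem~\ref{prop:joyal crit for cov eq} once more, that $dX\to dY$ is a covariant equivalence in $\sSet/B$.

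I do not expect a serious obstacle; the one point that warrants a sentence of justification is the realization lemma, i.e.\ that a row-wise weak homotopy equivalence of bisimplicial sets has a weak homotopy equivalence as diagonal, but this is classical. An alternative route, should one prefer to stay closer to the preceding material, is to argue via the simplicial adjunction $\map_B(dX,Z)\cong\map_B(X,d_*Z)$, using that $d_*Z$ is Reedy fibrant for the vertical Reedy structure when $Z\in\bL(B)$ and that every object of $\ssSet/B\Box 1$ is Reedy cofibrant for that structure (the cofibrations there are monomorphisms and latching maps of bisimplicial sets are monic); this gives $\map_B(dY,Z)\to\map_B(dX,Z)$ a weak homotopy equivalence for all $Z\in\bL(B)$, whence $dX\to dY$ is a covariant equivalence. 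The route through Theorem~\ref{prop:joyal crit for cov eq} seems the more direct of the two.
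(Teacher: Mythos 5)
Your main argument is correct, and it is a genuinely different route from the one in the paper. The paper's proof reduces the lemma in one step to showing that \emph{every} object of $\ssSet/B\Box 1$ is Reedy cofibrant for the vertical Reedy model structure on $s(\sSet/B)$ associated to the covariant model structure; it proves this by a saturation/lifting argument with the generating monomorphisms $\partial\Delta[m,n]\subset\Delta[m,n]$ (Proposition 2.2 of Joyal--Tierney), and then implicitly invokes the standard fact that the diagonal, being the realization of a vertical simplicial object, carries row-wise covariant equivalences between Reedy cofibrant objects to covariant equivalences. You instead localize the problem at right fibrations via Theorem~\ref{prop:joyal crit for cov eq}, use that $d=\delta^*$ preserves pullbacks and that $d(R\Box 1)=R$, identify the rows of $(R\Box 1)\times_{B\Box 1}X$ with $R\times_B X_{*n}$, and then only need the classical diagonal (realization) lemma for ordinary weak homotopy equivalences of bisimplicial sets. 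What each approach buys: yours avoids any discussion of the Reedy structure on $s(\sSet/B)$ and rests entirely on Theorem~\ref{prop:joyal crit for cov eq} (already proved in the paper) together with a textbook fact; the paper's argument isolates the Reedy cofibrancy statement, which is the real content and is reusable in the surrounding simplicial model category framework. Note also that your sketched ``alternative route'' (via $\map_B(dX,Z)\cong\map_B(X,d_*Z)$, Reedy fibrancy of $d_*Z$ and Reedy cofibrancy of all objects) is essentially the paper's proof, though there one should be a little careful about which of the two simplicial enrichments of $\ssSet/B\Box 1$ is compatible with the vertical Reedy structure; your primary argument sidesteps this issue entirely.
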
 

\begin{proof} 
It is sufficient to show that every object of $\ssSet/B\Box 1$ is Reedy cofibrant 
for the (vertical) Reedy model structure on $\ssSet/B\Box 1 = s(\sSet/B)$ associated 
to the covariant model structure on $\sSet/B$.  
Let $\cA$ be the class of monomorphisms in $\ssSet/B\Box 1$ which have the 
left lifting property (LLP) against all Reedy trivial fibrations.  Then $\cA$ is saturated, and clearly contains 
all of the canonical inclusions $\partial\Delta[m,n]\subset \Delta[m,n]$ 
in $\ssSet/B\Box 1$.  But then $\cA$ contains all monomorphisms in $\ssSet/B\Box 1$ 
by Proposition 2.2 from \cite{JT2}.  
%
%For the second statement, we observe that if $X_{*n}\to Y_{*n}$ is a covariant equivalence 
%in $\bL(B)$ for all $n\geq 0$, then the map $X\to Y$ in $\ssSet$ is a row-wise weak categorical equivalence 
%(Lemma~\ref{corr:cov eq in L(B) => wce}).  The statement then follows from the fact that every 
%bisimplicial set is Reedy fibrant for the Reedy model structure on $\ssSet$ associated to the Joyal model structure 
%on $\sSet$.    
\end{proof}

\subsection{Left fibrations of bisimplicial sets} 
\label{sec:left fibns of ssSet}
\begin{definition} 
\label{def:horiz Reedy left fibn}
A map $p\colon X\to Y$ in $\ssSet$ is said to be a horizontal {\em Reedy left fibration} 
if it has the RLP against all maps of the form 
\[
\partial\Delta[m]\Box \Delta[n]\cup \Delta[m]\Box \Lambda^k[n]\subset \Delta[m,n] 
\]
for $m\geq 0$, $0\leq k<n$, $n\geq 1$.  A horizontal Reedy left fibration $p\colon X\to Y$ 
is said to be {\em strong} if in addition the maps 
\[
X_{n*}\to X_{0*}\times_{Y_{0*}} Y_{n*} 
\]
induced by $0\colon \Delta[0]\to \Delta[n]$ are trivial Kan fibrations 
for every $n\geq 0$.    
\end{definition} 

The following observations are clear.  

\begin{lemma} 
\label{lem:stable under bc}
Horizontal Reedy left fibrations and strong horizontal Reedy left fibrations are stable under base change. 
\end{lemma}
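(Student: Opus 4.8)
The plan is to handle the two classes in turn, in both cases reducing matters to stability of a right lifting property (respectively of trivial Kan fibrations) under base change.

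For horizontal Reedy left fibrations there is essentially nothing to do beyond a general remark: by definition this is the class of maps in $\ssSet$ having the RLP against the fixed set of maps $\partial\Delta[m]\Box \Delta[n]\cup \Delta[m]\Box \Lambda^k[n]\subset \Delta[m,n]$, and any class of maps specified by a right lifting property against a given class of maps is closed under base change. So if $p\colon X\to Y$ is a horizontal Reedy left fibration and $g\colon Y'\to Y$ is arbitrary, then $p'\colon X':=X\times_Y Y'\to Y'$ is again a horizontal Reedy left fibration.

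For strong horizontal Reedy left fibrations, with $p$, $g$ and $p'$ as above, it remains by the previous paragraph only to verify that the maps $X'_{n*}\to X'_{0*}\times_{Y'_{0*}} Y'_{n*}$ induced by $0\colon \Delta[0]\to\Delta[n]$ are trivial Kan fibrations for all $n\geq 0$. The key point is that for each $n$ the $n$-th column functor $Z\mapsto Z_{n*}\colon \ssSet\to\sSet$ is evaluation at a representable and hence preserves limits; in particular $X'_{n*}=X_{n*}\times_{Y_{n*}}Y'_{n*}$ and $X'_{0*}=X_{0*}\times_{Y_{0*}}Y'_{0*}$. Using the naturality of the initial vertex maps one then rewrites the target as $X'_{0*}\times_{Y'_{0*}}Y'_{n*}\cong X_{0*}\times_{Y_{0*}}Y'_{n*}$, and, factoring the structure map $X_{n*}\to Y_{n*}$ through $X_{0*}\times_{Y_{0*}}Y_{n*}$ and applying the pasting lemma for pullback squares, identifies the map in question with the base change of the trivial Kan fibration $X_{n*}\to X_{0*}\times_{Y_{0*}}Y_{n*}$ along $X_{0*}\times_{Y_{0*}}Y'_{n*}\to X_{0*}\times_{Y_{0*}}Y_{n*}$. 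Since trivial Kan fibrations are stable under base change, this is again a trivial Kan fibration, as required.

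The only step requiring any care is the bookkeeping of the iterated fibre products in the last paragraph, but this is purely formal — a couple of applications of the pasting lemma together with the naturality of the initial vertex maps — so I do not anticipate a genuine obstacle.
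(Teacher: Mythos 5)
Your proposal is correct, and it is exactly the verification the paper has in mind: the paper records this lemma without proof (``The following observations are clear''), the intended argument being precisely yours — classes defined by a right lifting property are closed under base change, and for the strong condition one uses that the column functors preserve pullbacks together with the pasting lemma to identify $X'_{n*}\to X'_{0*}\times_{Y'_{0*}}Y'_{n*}$ with a base change of the trivial Kan fibration $X_{n*}\to X_{0*}\times_{Y_{0*}}Y_{n*}$.
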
 

\begin{lemma} 
\label{lem:left slice of horiz lf is lf}
Let $X\to Y$ be a horizontal Reedy left fibration in $\ssSet$ and let $A\to B$ 
be a monomorphism in $\sSet$.  Then the induced map 
\[
B\backslash X \to A\backslash X\times_{A\backslash Y} B\backslash Y 
\]
is a left fibration.  
\end{lemma}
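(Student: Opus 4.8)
The statement to prove is: if $p\colon X\to Y$ is a horizontal Reedy left fibration in $\ssSet$ and $A\to B$ is a monomorphism in $\sSet$, then the induced map $B\backslash X \to A\backslash X\times_{A\backslash Y} B\backslash Y$ is a left fibration in $\sSet$. The plan is to reduce this to a lifting problem that is solved directly by the defining right lifting property of $p$, using the adjunction $K\Box(-)\dashv K\backslash(-)$ together with a pushout–product (Leibniz) manipulation.

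First I would recall that a left fibration in $\sSet$ is characterized by the RLP against the horn inclusions $\Lambda^k[n]\subset\Delta[n]$ for $0\leq k<n$, $n\geq 1$. So it suffices to show the pushout–product map
\[
\bigl(\Lambda^k[n]\subset\Delta[n]\bigr)\,\hat\times\,\bigl(B\backslash X \to A\backslash X\times_{A\backslash Y} B\backslash Y\bigr)
\]
has a diagonal filler against any map from $\Delta[n]$; equivalently, that $B\backslash X \to A\backslash X\times_{A\backslash Y} B\backslash Y$ has the RLP against $\Lambda^k[n]\subset\Delta[n]$. Next I would transpose this lifting problem across the adjunction $\Delta[n]\Box(-)\dashv \Delta[n]\backslash(-)$ (using Lemma~\ref{lem:slice simp adjunction}, more precisely its non-sliced version for the simplicial enrichment, i.e.\ the plain adjointness of $K\Box(-)$ and $K\backslash(-)$). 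Under this transposition, a lifting problem for $B\backslash X \to A\backslash X\times_{A\backslash Y} B\backslash Y$ against $\Lambda^k[n]\subset\Delta[n]$ becomes a lifting problem for $p\colon X\to Y$ against the Leibniz product
\[
\bigl(A\subset B\bigr)\,\Box\,\bigl(\Lambda^k[n]\subset\Delta[n]\bigr)
= \partial(B\Box\Delta[n])\text{-type map},
\]
which by definition of the box product is exactly the inclusion
\[
A\Box\Delta[n]\cup B\Box\Lambda^k[n]\subset B\Box\Delta[n].
\]

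The crux is then to show that $p$ has the RLP against this last inclusion, given only that $p$ has the RLP against the generating maps $\partial\Delta[m]\Box\Delta[n]\cup\Delta[m]\Box\Lambda^k[n]\subset\Delta[m,n]$ from Definition~\ref{def:horiz Reedy left fibn}. This is a standard saturation argument in the horizontal variable: the class of monomorphisms $u\colon A\to B$ of simplicial sets such that $u\,\Box\,(\Lambda^k[n]\subset\Delta[n])$ is in the left class of the weak factorization system generated by those generators (equivalently, is sent by the pushout–product to a map with the LLP against $p$ for all such $p$) is saturated and contains the generating monomorphisms $\partial\Delta[m]\subset\Delta[m]$, hence contains all monomorphisms. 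Concretely, one uses that the boundary inclusions $\partial\Delta[m]\subset\Delta[m]$ generate the monomorphisms of $\sSet$ as a saturated class, that the pushout–product operation $(-)\Box(-)$ preserves colimits in each variable and sends a pair of monomorphisms to a monomorphism, and that $(\partial\Delta[m]\subset\Delta[m])\,\Box\,(\Lambda^k[n]\subset\Delta[n])$ is precisely the generator $\partial\Delta[m]\Box\Delta[n]\cup\Delta[m]\Box\Lambda^k[n]\subset\Delta[m,n]$ against which $p$ lifts by hypothesis. The main obstacle is bookkeeping the two transpositions correctly and identifying the Leibniz products with the stated inclusions; the lifting itself is then immediate from the definition of horizontal Reedy left fibration. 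I would conclude by unwinding the transposition to produce the desired diagonal filler $\Delta[n]\to B\backslash X$, completing the proof.
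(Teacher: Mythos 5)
Your argument is correct: the paper states this lemma without proof (as one of the ``clear'' observations), and the intended justification is exactly the standard Joyal--Tierney style transposition you give, using the adjunction $K\Box(-)\dashv K\backslash(-)$ to convert a horn-lifting problem for $B\backslash X \to A\backslash X\times_{A\backslash Y} B\backslash Y$ into a lifting problem for $p$ against $A\Box\Delta[n]\cup B\Box\Lambda^k[n]\subset B\Box\Delta[n]$, and then a saturation argument in the horizontal variable reducing to the generators $\partial\Delta[m]\Box\Delta[n]\cup\Delta[m]\Box\Lambda^k[n]\subset\Delta[m,n]$ of Definition~\ref{def:horiz Reedy left fibn}. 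So your proposal fills in precisely the argument the paper leaves implicit, with no gaps.
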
 

In particular $X_{m*}\to Y_{m*}$ is a left fibration for all $m\geq 0$.  

\begin{lemma} 
Let $X\to Y$ be a horizontal Reedy left fibration in $\ssSet$ and let $A\to B$ be a 
left anodyne map in $\sSet$.  Then the induced map 
\[
X/B \to X/A\times_{Y/A} Y/B 
\]
is a trivial Kan fibration.  
\end{lemma}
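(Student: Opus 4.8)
The plan is to prove this by a standard adjunction/lifting argument, reducing the statement about trivial Kan fibrations to a statement about generating monomorphisms. Recall that a map is a trivial Kan fibration precisely when it has the RLP against all boundary inclusions $\partial\Delta[k]\subset \Delta[k]$. So I want to show that the map
\[
X/B \to X/A\times_{Y/A} Y/B
\]
has the RLP against $\partial\Delta[k]\subset \Delta[k]$ for all $k\geq 0$. Using the adjunction $(-)\Box L\dashv (-)/L$ (for fixed $L$, the functor $K\mapsto K\Box L$ on $\sSet$ has right adjoint $(-)/L\colon \ssSet\to \sSet$, recalled at the end of Section~\ref{sec:bisimplicial sets}), together with the pullback-hom / pushout-product formalism, a lifting problem of $\partial\Delta[k]\subset \Delta[k]$ against the above pullback map is equivalent to a lifting problem against $p\colon X\to Y$ of the pushout-product map
\[
\partial\Delta[k]\Box B\cup \Delta[k]\Box A \subset \Delta[k]\Box B.
\]
Here I am using that $X/A\times_{Y/A} Y/B$ is the pullback-hom of the inclusion $A\subset B$ with $p$, which is a formal consequence of the two-variable adjunction $((-)\Box(-), (-)/(-), (-)\backslash(-))$ on $\sSet\times\sSet\to\ssSet$.

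So the whole statement reduces to: the pushout-product map $\partial\Delta[k]\Box B\cup\Delta[k]\Box A\subset \Delta[k]\Box B$, induced by $\partial\Delta[k]\subset\Delta[k]$ and the left anodyne map $A\subset B$, lies in the saturated class generated by the maps
\[
\partial\Delta[m]\Box\Delta[n]\cup\Delta[m]\Box\Lambda^j[n]\subset\Delta[m,n],\qquad m\geq 0,\ 0\leq j<n,\ n\geq 1
\]
against which $p$ has the RLP by hypothesis (Definition~\ref{def:horiz Reedy left fibn}). Call this saturated class $\mathcal{B}$. The key input is a right-cancellation-style argument in the spirit of Proposition~\ref{prop:contains left anodynes}: I would fix the boundary inclusion $\partial\Delta[k]\subset\Delta[k]$ and consider the class
\[
\mathcal{A}_k := \{\, j\colon A\to B \text{ mono in }\sSet : \partial\Delta[k]\Box B\cup\Delta[k]\Box A\subset\Delta[k]\Box B \text{ lies in }\mathcal{B}\,\}.
\]
This class is saturated (the box product $\Delta[k]\Box(-)$ preserves colimits and monomorphisms by Proposition~\ref{prop:delta!(mono)}-type reasoning, and pushout-products of saturated classes are saturated), so to show it contains all left anodyne maps it suffices, by Proposition~\ref{prop:contains left anodynes}, to check that it satisfies the right cancellation property and that it contains, say, the horn inclusions $h^0_n\colon\Lambda^0[n]\subset\Delta[n]$. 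For the generating horns, $\partial\Delta[k]\Box\Delta[n]\cup\Delta[k]\Box\Lambda^0[n]\subset\Delta[k]\Box\Delta[n]$ is by a standard argument (filtering by nondegenerate simplices of $\Delta[k]\times\Delta[n]$, i.e. shuffles, or simply noting that this is exactly the pushout-product of $\partial\Delta[k]\subset\Delta[k]$ with $h^0_n$) in the saturated class generated by the maps $\partial\Delta[m]\Box\Delta[n']\cup\Delta[m]\Box\Lambda^j[n']\subset\Delta[m,n']$; this is a purely combinatorial check on the simplex $\Delta[k]\times\Delta[n]$. For right cancellation, if $u\colon A\to B$ and $v\colon B\to C$ are monos with $u, vu\in\mathcal{A}_k$, one compares the three pushout-product inclusions and uses that $\mathcal{B}$ itself enjoys a right-cancellation property inherited from right-cancellation of the RLP situation.

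The main obstacle I expect is the combinatorial verification that the generating maps of $\mathcal{A}_k$ — the pushout-products $\partial\Delta[k]\Box\Lambda^0[n]\cup\dots$ — are built out of the Reedy-left-fibration generators, i.e. the explicit shuffle decomposition of $\Delta[k]\times\Delta[n]$ showing that $\partial\Delta[k]\Box\Delta[n]\cup\Delta[k]\Box\Lambda^0[n]\hookrightarrow\Delta[k]\Box\Delta[n]$ factors as a composite of pushouts of the generators. An alternative, perhaps cleaner route that sidesteps this is: first dispose of the case $A=\emptyset$, $B$ arbitrary (so one is showing $X/B\to Y/B$ is a trivial fibration whenever $B$ is a point, then bootstrapping), and then handle general left anodyne $j$ by the Proposition~\ref{prop:contains left anodynes} machinery applied to the class $\mathcal A := \{j : X/B\to X/A\times_{Y/A}Y/B \text{ is a trivial Kan fibration for every horizontal Reedy left fibration } p\colon X\to Y\}$. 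This $\mathcal A$ is saturated and satisfies right cancellation (trivial Kan fibrations are closed under composition and right-cancellable among monos-of-base arguments via the evident $2$-out-of-$3$), so one only needs to check it contains the initial vertex maps $\delta_n\colon\Delta[0]\to\Delta[n]$ — and that case is essentially the \emph{definition} of \emph{strong} horizontal Reedy left fibration recorded at the end of Definition~\ref{def:horiz Reedy left fibn} plus a short unwinding. I would present the argument in whichever of these two forms turns out to require the least bespoke simplicial bookkeeping, but in both the conceptual heart is the same: pushout-products against a generating set, combined with Proposition~\ref{prop:contains left anodynes}.
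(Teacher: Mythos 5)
The paper offers no proof of this lemma (it is listed among the ``clear'' observations), and the intended argument is exactly your first reduction: by the two-variable adjunction for $(-)\Box(-)$ and $(-)/(-)$, the map $X/B\to X/A\times_{Y/A}Y/B$ has the RLP against $\partial\Delta[k]\subset\Delta[k]$ if and only if $X\to Y$ has the RLP against the pushout-product $\partial\Delta[k]\Box B\cup\Delta[k]\Box A\subset\Delta[k]\Box B$, so it suffices that this pushout-product be horizontally left anodyne whenever $A\to B$ is left anodyne. Where you go astray is in how you close this off. The pushout-product of $\partial\Delta[k]\subset\Delta[k]$ with a generating horn $\Lambda^j[n]\subset\Delta[n]$ ($0\leq j<n$) under the \emph{box} product is not something that needs a shuffle filtration: it is \emph{literally} the generating map $\partial\Delta[k]\Box\Delta[n]\cup\Delta[k]\Box\Lambda^j[n]\subset\Delta[k,n]$ of Definition~\ref{def:horiz Reedy left fibn}, with $m=k$. (The shuffle-type combinatorics you anticipate only enters for the cartesian product $\Delta[k]\times\Delta[n]$, i.e.\ after applying the diagonal, as in Lemma~\ref{lem:diag of la map}; you appear to be conflating $\Box$ with $\times$.) Consequently your saturated class $\mathcal{A}_k$ contains \emph{all} the generating left horns, not just $h^0_n$, and since it is saturated it contains all left anodyne maps outright. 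Proposition~\ref{prop:contains left anodynes} and any right-cancellation argument are unnecessary.

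This matters because the steps you lean on to avoid that (trivial) observation are not justified. The right cancellation property of $\mathcal{A}_k$ is asserted to be ``inherited from the RLP situation,'' but left classes of weak factorization systems do not satisfy right cancellation in general --- for left anodyne maps it is a genuine theorem (Proposition~\ref{cancellation}) --- so as written this is a gap in your main route. Your alternative route has a further error: membership of the initial vertex maps $\delta_n$ in your class $\mathcal{A}$ is \emph{not} ``essentially the definition of strong horizontal Reedy left fibration.'' The strong condition concerns the columns $X_{n*}=\Delta[n]\backslash X$, i.e.\ initial vertex maps in the \emph{horizontal} variable, whereas $X/\Delta[n]$ lives in the other variable; moreover the lemma assumes only a horizontal Reedy left fibration, not a strong one, so this appeal is a non sequitur. (Also, the claimed right cancellation of that $\mathcal{A}$ only yields that the relevant map is a weak equivalence, not that it is a trivial Kan fibration, unless you separately know it is a fibration.) The repair is immediate: keep your adjunction reduction, note that the generating pushout-products are the generators of Definition~\ref{def:horiz Reedy left fibn} themselves, and conclude by saturation --- the same pattern the paper uses in the proof of Lemma~\ref{lem:otimes and hla}.
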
 

The following lemma gives a useful way to recognize strong horizontal Reedy left fibrations.  

\begin{lemma} 
\label{lem:recognize str lf}
Let $B$ be a simplicial set.  Suppose that $X\to B\Box 1$ is a horizontal Reedy left fibration 
such that the maps 
\[
X_{n*}\to X_{0*}\times_{B_0} B_n 
\]
induced by the initial vertex maps $0\colon [0]\to [n]$ are weak homotopy equivalences 
for all $n\geq 0$.  Then $X\to B\Box 1$ is a strong horizontal Reedy left fibration.  
\end{lemma}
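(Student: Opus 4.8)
The plan is to reduce the strong condition --- that the maps $X_{n*}\to X_{0*}\times_{B_0}B_n$ induced by $0\colon[0]\to[n]$ be \emph{trivial} Kan fibrations --- to the two hypotheses we are given: that these maps are weak homotopy equivalences, and that $X\to B\Box 1$ is a horizontal Reedy left fibration. The key observation is that Lemma~\ref{lem:left slice of horiz lf is lf}, applied to the monomorphism $0\colon\Delta[0]\to\Delta[n]$ in $\sSet$, already tells us that the map $\Delta[n]\backslash X\to \Delta[0]\backslash X\times_{\Delta[0]\backslash(B\Box 1)}\Delta[n]\backslash(B\Box 1)$ is a \emph{left fibration}. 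First I would unwind the adjunction $\Delta[n]\Box(-)\dashv \Delta[n]\backslash(-)$ to identify $\Delta[n]\backslash X$ with $X_{n*}$ and $\Delta[n]\backslash(B\Box 1)$ with $(B\Box 1)_{n*}$; since $d(\Delta[n]\Box\Delta[k])=\Delta[n]\times\Delta[k]$, one checks that $(B\Box 1)_{n*}\cong B_n$ as a discrete simplicial set, and similarly $\Delta[0]\backslash X = X_{0*}$, $\Delta[0]\backslash(B\Box 1)=B_0$. Hence the comparison map is exactly the map $X_{n*}\to X_{0*}\times_{B_0}B_n$ from the statement, and Lemma~\ref{lem:left slice of horiz lf is lf} shows it is a left fibration.

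Next I would combine this with the hypothesis. A left fibration whose codomain I need to understand: the target $X_{0*}\times_{B_0}B_n$ is a coproduct, indexed by the vertices $b$ of $B_n$, of copies of fibers of $X_{0*}\to B_0$, so in particular each component is a Kan complex (it is a left fibration over a point, hence a Kan complex by the Joyal/Lurie proposition recalled in Section~\ref{subsec:cov model str}). Likewise $X_{n*}$ is a left fibration over the discrete set $B_n$ and so is a Kan complex. Thus the map $X_{n*}\to X_{0*}\times_{B_0}B_n$ is a left fibration \emph{between Kan complexes}, hence a Kan fibration; and by hypothesis it is a weak homotopy equivalence. A Kan fibration which is a weak homotopy equivalence is a trivial Kan fibration. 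Applying this for every $n\geq 0$ gives precisely the defining condition for $X\to B\Box 1$ to be a strong horizontal Reedy left fibration, since $X\to B\Box 1$ is a horizontal Reedy left fibration by hypothesis.

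The only point requiring a little care --- and the step I expect to be the main obstacle, though it is more bookkeeping than difficulty --- is the identification of the slice objects $\Delta[n]\backslash X$ and $\Delta[n]\backslash(B\Box 1)$ and of the comparison map from Lemma~\ref{lem:left slice of horiz lf is lf} with the map named in the statement: one must verify that the two natural maps (the one induced by $0\colon[0]\to[n]$ directly on columns, and the one coming out of the adjoint-functoriality of $(-)\backslash X$ in Lemma~\ref{lem:left slice of horiz lf is lf}) genuinely coincide. This is a routine naturality check using that $\Delta[n]\backslash(-)$ is right adjoint to $\Delta[n]\Box(-)$ and that the $m$-simplices of $\Delta[n]\backslash X$ are the maps $\Delta[n]\Box\Delta[m]\to X$. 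Once this identification is in hand, the rest is immediate from the cited lemmas.
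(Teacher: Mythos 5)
Your proposal is correct and follows essentially the same route as the paper: identify the comparison map of Lemma~\ref{lem:left slice of horiz lf is lf} (for the monomorphism $0\colon\Delta[0]\to\Delta[n]$ over $B\Box 1$) with the map $X_{n*}\to X_{0*}\times_{B_0}B_n$, observe that its target is a Kan complex because $X_{0*}\to B_0$ is a left fibration over a discrete simplicial set, conclude it is a Kan fibration, and then use the hypothesis that it is a weak homotopy equivalence to get a trivial Kan fibration. The extra naturality bookkeeping you flag is harmless but not an obstacle; the paper treats that identification as immediate.
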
 

\begin{proof} 
We need to show that the induced maps $X_{n*}\to X_{0*}\times_{B_0} B_n$ are trivial 
Kan fibrations for all $n\geq 0$.  But $X\to B\Box 1$ 
is a horizontal Reedy left fibration, and hence the map above 
is a left fibration (Lemma~\ref{lem:left slice of horiz lf is lf}).  
Now $X_{0*}\to B_0$ is also a left fibration (Lemma~\ref{lem:left slice of horiz lf is lf} again), and hence $X_{0*}$ is a Kan 
complex since $B_0$ is discrete.  Therefore $B_n\times_{B_0}X_{0*}$ 
is a Kan complex and hence $X_{n*}\to B_n\times_{B_0} X_{0*}$ is a 
Kan fibration.  Therefore it is a trivial Kan fibration; in other words, 
$X\to B\Box 1$ is a strong horizontal Reedy left fibration.   
\end{proof} 
 
\begin{definition} 
A monomorphism $i\colon A\to B$ in $\ssSet$ is said to be (horizontal) 
{\em left anodyne} if it belongs to the saturated class of monomorphisms 
generated by the maps of the form 
\[
\partial\Delta[m]\Box \Delta[n]\cup \Delta[m]\Box \Lambda^k[n]\subset \Delta[m,n] 
\]
for $m\geq 0$, $0\leq k<n$, $n\geq 1$.  
\end{definition} 

\begin{definition} 
We will say that a map $i\colon A\to B$ in $\ssSet$ is {\em column-wise left anodyne} 
if $i_m\colon A_{m*}\to B_{m*}$ is a left anodyne map in $\sSet$ for all $m\geq 0$.  
\end{definition} 

It is easy to see that if $K\to L$ is a left anodyne map in $\sSet$ then the induced map 
$J\Box K\to J\Box L$ in $\ssSet$ is column-wise left anodyne for any simplicial set $J$.  
Observe that column-wise left anodyne maps in $\ssSet$ form a saturated class of 
monomorphisms.    

\begin{lemma} 
\label{lem:horiz la implies col-wise la}
Suppose that $i\colon A\to B$ is a horizontal left anodyne map in $\ssSet$.  
Then $i$ is column-wise left anodyne.  
\end{lemma}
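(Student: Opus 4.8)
The plan is to show that the class of horizontal left anodyne maps is contained in the class of column-wise left anodyne maps, and since both classes are saturated, it suffices to verify that each generating horizontal left anodyne map is column-wise left anodyne. So I would take a generator
\[
j\colon \partial\Delta[m]\Box \Delta[n]\cup \Delta[m]\Box \Lambda^k[n]\subset \Delta[m,n],
\]
with $m\geq 0$, $0\leq k<n$, $n\geq 1$, and analyze its columns. Fix a horizontal degree $p\geq 0$. Taking $p$-simplices in the horizontal direction, the inclusion $\partial\Delta[m]\Box\Delta[n]\subset\Delta[m]\Box\Delta[n]$ contributes, in column $p$, a coproduct of copies of $\Delta[n]$ indexed by the $p$-simplices of $\partial\Delta[m]$, sitting inside the coproduct of copies of $\Delta[n]$ indexed by $p$-simplices of $\Delta[m]$; the inclusion $\Delta[m]\Box\Lambda^k[n]\subset\Delta[m]\Box\Delta[n]$ contributes, in column $p$, a coproduct of copies of $\Lambda^k[n]\subset\Delta[n]$ indexed by the $p$-simplices of $\Delta[m]$. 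So the $p$-th column of $j$ is the inclusion
\[
\Bigl(\bigsqcup_{\sigma\in\partial\Delta[m]_p}\Delta[n]\Bigr)\cup\Bigl(\bigsqcup_{\sigma\in\Delta[m]_p}\Lambda^k[n]\Bigr)\hookrightarrow \bigsqcup_{\sigma\in\Delta[m]_p}\Delta[n],
\]
the union being taken inside the right-hand coproduct. Indexing the right-hand coproduct by the $p$-simplices of $\Delta[m]$ and splitting them into those lying in $\partial\Delta[m]$ and those not (the latter being exactly the degeneracies of the top nondegenerate simplex when $p\geq m$, and empty when $p<m$), this inclusion is the identity on the summands indexed by $\partial\Delta[m]_p$ and is $\Lambda^k[n]\subset\Delta[n]$ on each remaining summand. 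A finite (or arbitrary) coproduct of left anodyne maps is left anodyne since left anodyne maps form a saturated class and hence are closed under coproducts, and $\Lambda^k[n]\subset\Delta[n]$ is left anodyne for $0\leq k<n$. Therefore the $p$-th column of $j$ is left anodyne for every $p$, i.e.\ $j$ is column-wise left anodyne.

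Finally I would invoke that the column-wise left anodyne maps form a saturated class of monomorphisms in $\ssSet$ (as noted in the paragraph preceding the lemma), together with the fact that the functors $X\mapsto X_{m*}$ are colimit-preserving, so that the class of maps in $\ssSet$ whose every column is left anodyne is closed under pushouts, transfinite composition, and retracts. Since this class contains the generators of the horizontal left anodyne maps, it contains all horizontal left anodyne maps, which is the claim.

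I do not expect any serious obstacle here; the only mildly delicate point is the bookkeeping in identifying the $p$-th column of the pushout-union defining the generator, i.e.\ checking that taking $p$-simplices in the horizontal variable genuinely turns $\partial\Delta[m]\Box\Delta[n]\cup\Delta[m]\Box\Lambda^k[n]$ into the asserted union of coproducts inside $\bigsqcup_{\Delta[m]_p}\Delta[n]$. This is a routine computation with box products, using that $(-)\Box\Delta[n]$ and $(-)\Box\Lambda^k[n]$ preserve the relevant colimits and that unions are computed degreewise; one should just be careful that the two pieces overlap exactly in $\partial\Delta[m]\Box\Lambda^k[n]$ so that no summand gets counted twice.
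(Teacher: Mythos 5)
Your proof is correct, and its overall skeleton (verify the generating maps, then conclude by saturation using that the column functors $\Delta[m]\backslash(-)$ preserve colimits) matches the paper's. The one step you handle differently is the verification that a generator $\partial\Delta[m]\Box\Delta[n]\cup\Delta[m]\Box\Lambda^k[n]\subset\Delta[m,n]$ is column-wise left anodyne: you compute its $p$-th column explicitly, identifying it as a coproduct of identities on $\Delta[n]$ (over the summands indexed by $\partial\Delta[m]_p$) and copies of the horn inclusion $\Lambda^k[n]\subset\Delta[n]$ (over the remaining $p$-simplices of $\Delta[m]$), which is left anodyne since left anodyne maps are closed under coproducts and contain isomorphisms. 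The paper instead avoids this bookkeeping: it observes that $\Delta[m]\Box\Lambda^k[n]\to\partial\Delta[m]\Box\Delta[n]\cup\Delta[m]\Box\Lambda^k[n]$ is column-wise left anodyne (being a pushout of $\partial\Delta[m]\Box\Lambda^k[n]\to\partial\Delta[m]\Box\Delta[n]$), that the composite $\Delta[m]\Box\Lambda^k[n]\to\Delta[m,n]$ is column-wise left anodyne, and then invokes the right cancellation property of left anodyne maps in $\sSet$ (Proposition~\ref{cancellation}) column by column to conclude. Your route is more elementary in that it does not use right cancellation at all, at the cost of the explicit degreewise identification of the union; your identification is correct (the two pieces of the union meet exactly in $\partial\Delta[m]\Box\Lambda^k[n]$, and unions of subobjects are computed degreewise), and the parenthetical remark about degeneracies of the top simplex, while slightly loose at $p=m$, is not needed for the argument. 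Either approach gives the lemma.
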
 

\begin{proof} 
For any $m\geq 0$ and $0\leq k<n$, $n\geq 1$, the maps $\partial\Delta[m]\Box \Lambda^k[n] 
\to \partial\Delta[m]\Box \Delta[n]$ and $\Delta[m]\Box \Lambda^k[n] \to 
\Delta[m,n]$ are both column-wise left anodyne maps.  Therefore 
the canonical map 
\[
\Delta[m]\Box \Lambda^k[n] \to \partial\Delta[m]\Box \Delta[n] \cup \Delta[m]\Box \Lambda^k[n] 
\]
is a column-wise left anodyne map, since it is the pushout of one.  But now in the composite 
\[
\Delta[m]\Box \Lambda^k[n] \to \partial\Delta[m]\Box \Delta[n] \cup \Delta[m]\Box \Lambda^k[n] 
\to \Delta[m,n] 
\]
the first map and the composite are column-wise left anodyne maps.  Therefore, by the 
right cancellation property of left anodyne maps in $\sSet$ (Proposition~\ref{cancellation}) it follows 
that $\partial\Delta[m]\Box \Delta[n] \cup \Delta[m]\Box \Lambda^k[n] \to \Delta[m,n]$ is 
column-wise left anodyne.  

To complete the proof, observe that the functor $\Delta[m]\setminus -\colon \ssSet\to \sSet$ which 
sends a bisimplicial set $X$ to its $m$-th column $X_{m*}$ has a right adjoint and hence sends 
saturated classes to saturated classes.   
\end{proof} 

It follows that the class of horizontal left anodyne maps in $\ssSet$ is equal 
to the class of column-wise left anodyne maps, but we will not need this.  

\begin{lemma} 
\label{lem:diag of la map}
Suppose that $i\colon A\to B$ is a horizontal left anodyne map in $\ssSet$.  Then 
the diagonal $di\colon dA\to dB$ is a left anodyne map in $\sSet$.  
\end{lemma}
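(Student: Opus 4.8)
The strategy is to reduce the statement about the diagonal of a horizontal left anodyne map to the generating maps, using that the diagonal functor $d$ preserves colimits (it is a left adjoint) and that left anodyne maps in $\sSet$ form a saturated class. Since horizontal left anodyne maps are generated as a saturated class by the maps
\[
\partial\Delta[m]\Box \Delta[n]\cup \Delta[m]\Box \Lambda^k[n]\subset \Delta[m,n],\qquad m\geq 0,\ 0\leq k<n,\ n\geq 1,
\]
and $d$ carries monomorphisms to monomorphisms (by Proposition~\ref{prop:delta!(mono)}, or directly since $d$ is applied to level-wise injections), it suffices to check that $d$ sends each generating map to a left anodyne map in $\sSet$; the general case then follows because the image under $d$ of the saturated class generated by these maps is contained in any saturated class of monomorphisms containing the images of the generators, in particular in the class of left anodyne maps.

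So the crux is to identify the diagonal $d\bigl(\partial\Delta[m]\Box \Delta[n]\cup \Delta[m]\Box \Lambda^k[n]\bigr)\subset d(\Delta[m,n]) = \Delta[m]\times\Delta[n]$ and show it is left anodyne. Computing diagonals of box products, $d(\Delta[m]\Box\Delta[n]) = \Delta[m]\times\Delta[n]$, $d(\partial\Delta[m]\Box\Delta[n]) = \partial\Delta[m]\times\Delta[n]$, and $d(\Delta[m]\Box\Lambda^k[n]) = \Delta[m]\times\Lambda^k[n]$; since $d$ preserves the pushout computing the union, the diagonal of the generating map is the inclusion
\[
\partial\Delta[m]\times\Delta[n]\cup \Delta[m]\times\Lambda^k[n]\ \subset\ \Delta[m]\times\Delta[n].
\]
This is a pushout-product of the monomorphism $\partial\Delta[m]\subset\Delta[m]$ with the horn inclusion $\Lambda^k[n]\subset\Delta[n]$. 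The pushout-product of any monomorphism with a left anodyne map is left anodyne (this is the standard closure property of left anodyne maps under pushout-product with cofibrations, which follows from the Joyal--Cisinski-type analysis of left anodyne maps, and is available since $\Lambda^k[n]\subset\Delta[n]$ with $k<n$ is a generating left anodyne map); hence the diagonal of each generator is left anodyne.

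The main obstacle, such as it is, is simply making the identification of diagonals precise and invoking the right pushout-product closure property: one needs that left anodyne maps in $\sSet$ are closed under pushout-product with arbitrary monomorphisms, not merely that left anodyne composed with monomorphism behaves well. If one does not wish to appeal to that closure property directly, one can instead filter $\partial\Delta[m]\subset\Delta[m]$ by its skeleta and attach nondegenerate simplices one at a time, reducing to pushout-products of the form $\partial\Delta[p]\times\Lambda^k[n]\cup\Delta[p]\times\Lambda^k[n]$... — but the clean route is the pushout-product statement. Either way, combining the generator computation with saturation and cocontinuity of $d$ completes the proof.
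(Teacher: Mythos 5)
Your proof is correct and follows essentially the same route as the paper: reduce to the generating inclusions $\partial\Delta[m]\Box \Delta[n]\cup \Delta[m]\Box \Lambda^k[n]\subset \Delta[m,n]$ using cocontinuity of $d$ and saturation of left anodyne maps, identify the diagonal of a generator with the pushout-product $\partial\Delta[m]\times \Delta[n]\cup \Delta[m]\times \Lambda^k[n]\subset \Delta[m]\times\Delta[n]$, and invoke the standard fact (Corollary 2.1.2.7 of \cite{HTT} or Theorem 2.17 of \cite{Joyal-Barcelona}) that the pushout-product of a monomorphism with a left anodyne map is left anodyne.
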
 

\begin{proof} 
It suffices to prove the statement when $i$ is the inclusion $\partial\Delta[m]\Box \Delta[n]\cup 
\Delta[m]\Box \Lambda^k[n]\subset \Delta[m,n]$ with $0\leq k<n$.  But then $di$ is the map 
\[
\partial\Delta[m]\times \Delta[n]\cup \Delta[m]\times \Lambda^k[n]\subset \Delta[m]\times \Delta[n]
\]
which is left anodyne by Corollary 2.1.2.7 of \cite{HTT} or Theorem 2.17 of \cite{Joyal-Barcelona}.
\end{proof} 

The next lemma has a more general formulation, but the following version will be 
sufficient for our purposes.  

\begin{lemma} 
\label{lem:otimes and hla}
If $i\colon A\to B$ is a monomorphism in $\ssSet$ and $j\colon K\to L$ is left anodyne then the 
induced map 
\[
A\otimes L\cup B\otimes K\to B\otimes L 
\]
is horizontally left anodyne.  
\end{lemma}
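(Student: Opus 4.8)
The plan is to reduce the statement to a pushout–product fact for ordinary left anodyne maps and apply the results already established about the interplay between horizontal left anodyne maps and column-wise left anodyne maps. Recall that $A\otimes L = A\times p_2^*L$ and that $d(A\otimes L) = dA\times L$, so the diagonal of the map in question is the familiar pushout–product map
\[
dA\times L\cup_{dA\times K} dB\times K\to dB\times L,
\]
which we know is left anodyne whenever $j$ is left anodyne, by the standard pushout–product axiom for the covariant model structure (Corollary 2.1.2.7 of \cite{HTT}). However, the statement we must prove is genuinely about $\ssSet$, not about diagonals, so this only serves as orientation. The real work is to show the claimed map is \emph{horizontally} left anodyne.

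First I would fix the saturation strategy: let $\cA$ be the class of monomorphisms $j\colon K\to L$ in $\sSet$ such that for every monomorphism $i\colon A\to B$ in $\ssSet$ the induced map $A\otimes L\cup B\otimes K\to B\otimes L$ is horizontally left anodyne. A routine check (using that horizontal left anodyne maps form a saturated class, and that $(-)\otimes(-)$ and the pushout construction commute with colimits in the $\sSet$-variable) shows $\cA$ is a saturated class of monomorphisms in $\sSet$. So it suffices to verify that $\cA$ contains the horn inclusions $\Lambda^k[n]\subset \Delta[n]$ for $0\le k<n$. By Proposition~\ref{prop:contains left anodynes} — noting the remark after its proof that saturation is not needed, only closure under pushout along monomorphisms and composition, both of which $\cA$ enjoys — it would in fact be enough to check that $\cA$ contains the initial vertex maps $\delta_n\colon \Delta[0]\to\Delta[n]$, or the horns $h^0_n$; but since we will reduce to generating monomorphisms $i$ anyway it is cleanest to handle the horn inclusions $j$ directly.

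Next I would reduce $i$ to a generating monomorphism. Since horizontal left anodyne maps are saturated and the pushout–product of $j$ with a composite/pushout/coproduct of $i$'s is built from the pushout–products with the pieces, a standard saturation argument in the $i$-variable reduces us to the case $i = (\partial\Delta[p,q]\subset\Delta[p,q])$, i.e. $i = (\partial\Delta[p]\Box\Delta[q]\cup\Delta[p]\Box\partial\Delta[q]\subset\Delta[p,q])$, while $j = (\Lambda^k[n]\subset\Delta[n])$ with $0\le k<n$. At this point the map we must show is horizontal left anodyne is a concrete inclusion of box-product-built bisimplicial sets. Here I would invoke Lemma~\ref{lem:horiz la implies col-wise la}'s companion observation (stated just before it) that it suffices to exhibit the map as column-wise left anodyne together with the fact — remarked after Lemma~\ref{lem:horiz la implies col-wise la} — that column-wise left anodyne maps and horizontal left anodyne maps coincide on generating-type inclusions; concretely, the $m$-th column of $A\otimes L\cup B\otimes K\to B\otimes L$ is $A_{m*}\times L\cup B_{m*}\times K\to B_{m*}\times L$, which is the ordinary pushout–product of the monomorphism $A_{m*}\subset B_{m*}$ with the left anodyne map $K\subset L$, hence left anodyne in $\sSet$ by Corollary 2.1.2.7 of \cite{HTT}. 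Thus the map is column-wise left anodyne; since the pushout–product of two monomorphisms of box-product type is again of box-product (generating) type, column-wise left anodyne forces horizontal left anodyne, completing the proof.

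The main obstacle is the last step: one must be careful that "column-wise left anodyne implies horizontal left anodyne" is only available for the specific class of maps at hand, since in general the paper only proves the reverse implication (Lemma~\ref{lem:horiz la implies col-wise la}) and merely remarks that the two classes coincide without proof. The cleanest way around this, if one does not wish to rely on that unproved coincidence, is to not pass through column-wise anodyne at all but instead run the double saturation argument purely inside the class of horizontal left anodyne maps: reduce $j$ to horns via Proposition~\ref{prop:contains left anodynes}-style reasoning applied to the class $\cA$ above, and for the base case $i=(\partial\Delta[p,q]\subset\Delta[p,q])$, $j=(\Lambda^k[n]\subset\Delta[n])$ recognize the resulting map directly as an iterated pushout of the generating horizontal-left-anodyne inclusions $\partial\Delta[m]\Box\Delta[r]\cup\Delta[m]\Box\Lambda^\ell[r]\subset\Delta[m,r]$ by a filtration argument analogous to the classical proof that the pushout–product of $\partial\Delta[p]\subset\Delta[p]$ with $\Lambda^k[n]\subset\Delta[n]$ is anodyne. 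This filtration bookkeeping is the genuinely technical point, but it is entirely parallel to the simplicial-set case and presents no new conceptual difficulty.
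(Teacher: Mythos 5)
Your overall skeleton (saturate in each variable, reduce to $i=(\partial\Delta[p,q]\subset\Delta[p,q])$ and $j=(\Lambda^k[n]\subset\Delta[n])$) is the same reduction the paper makes, but the step you use to close the base case is not available. Lemma~\ref{lem:horiz la implies col-wise la} proves only that horizontal left anodyne maps are column-wise left anodyne; the remark following it, that the two classes coincide, is asserted without proof and is explicitly not used in the paper, and there is no statement anywhere that they ``coincide on generating-type inclusions''. Moreover the map you arrive at in the base case is \emph{not} a generating inclusion: it is the box pushout-product of $\partial\Delta[p]\subset\Delta[p]$ with the left anodyne (but non-horn) map $\partial\Delta[q]\times\Delta[n]\cup\Delta[q]\times\Lambda^k[n]\to\Delta[q]\times\Delta[n]$. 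So your column computation (which is correct, and shows the map is column-wise left anodyne) does not yield the conclusion; you are invoking the unproved converse of Lemma~\ref{lem:horiz la implies col-wise la}, which is a genuine gap. Your fallback avoids this, but it defers the only substantive point --- membership of the base-case map in the saturation of the generators --- to an unexecuted ``filtration'' argument.

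The short way to finish (essentially what the paper does, citing Corollary 2.1.2.7 of \cite{HTT}) is to use the identity $(\Delta[p]\Box\Delta[q])\otimes K=\Delta[p]\Box(\Delta[q]\times K)$: the pushout-product of the bi-generators is then identified with
\[
\partial\Delta[p]\Box(\Delta[q]\times\Delta[n])\;\cup\;\Delta[p]\Box\bigl(\partial\Delta[q]\times\Delta[n]\cup\Delta[q]\times\Lambda^k[n]\bigr)\longrightarrow \Delta[p]\Box(\Delta[q]\times\Delta[n]),
\]
that is, the box pushout-product of $\partial\Delta[p]\subset\Delta[p]$ with an ordinary pushout-product which is left anodyne in $\sSet$ by Corollary 2.1.2.7 of \cite{HTT}. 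Now fix $p$ and consider the class of monomorphisms $w\colon V\to W$ in $\sSet$ such that $\partial\Delta[p]\Box W\cup\Delta[p]\Box V\to\Delta[p]\Box W$ is horizontally left anodyne: this class is saturated (the box pushout-product with a fixed map carries pushouts, transfinite compositions and retracts of arrows to the same), and it contains the horn inclusions $\Lambda^k[n]\subset\Delta[n]$ because those pushout-products are precisely the generating horizontal left anodynes. Hence it contains all left anodyne maps, in particular the map displayed above, and the proof is complete --- no bisimplicial filtration bookkeeping is needed.
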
 

\begin{proof} 
Since both the domain and codomain are cocontinuous functors of $i$ and $j$, 
it suffices to prove the statement in the special case that $i$ is the inclusion $\partial\Delta[m,n]\subset 
\Delta[n]$ and $j$ is the horn inclusion $\Lambda^k[p]\subset \Delta[p]$ for $0\leq k<p$.  In this 
case the statement follows as in the proof of the previous lemma using Corollary 2.1.2.7 of \cite{HTT} 
or Theorem 2.17 of \cite{Joyal-Barcelona}.   
\end{proof} 

Our next goal is to prove the following theorem.   

\begin{theorem} 
\label{thm:diag of Rezk lf}
Let $p\colon X\to Y$ be a strong Reedy left fibration.  Then  
the diagonal $dp\colon dX\to dY$ is a left fibration in $\sSet$.  
\end{theorem}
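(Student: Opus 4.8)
Recall that $d=\delta^*$ has the left adjoint $\delta_!$.  The left anodyne maps form the saturated class generated by the horn inclusions $\Lambda^k[n]\hookrightarrow\Delta[n]$ with $0\le k<n$, so $dp$ is a left fibration if and only if it has the right lifting property against each such horn, which by adjunction is the same as saying that $p$ has the right lifting property against $\delta_!(\Lambda^k[n]\hookrightarrow\Delta[n])$.  Since $\delta_!$ is the cocontinuous extension of the diagonal $\delta\colon\Delta\to\Delta\times\Delta$ we have $\delta_!\Delta[m]=\Delta[m]\Box\Delta[m]$; and since $\delta_!$ preserves colimits and monomorphisms (the latter by Proposition~\ref{prop:delta!(mono)}), it sends the horn inclusion to the inclusion of the \emph{doubled horn}
\[
\delta_!\Lambda^k[n]=\bigcup_{i\ne k}\bigl(d^i\Delta[n-1]\Box d^i\Delta[n-1]\bigr)\ \hookrightarrow\ \Delta[n]\Box\Delta[n].
\]
Hence it suffices to show that every strong horizontal Reedy left fibration has the right lifting property against all doubled-horn inclusions.

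\textbf{Rewriting the hypotheses.}  By the two-variable adjunction attached to the box product (the right adjoints $K\backslash(-)$ of $K\Box(-)$), a map $p\colon X\to Y$ is a horizontal Reedy left fibration precisely when it has the right lifting property against all pushout--product maps $(\partial\Delta[m]\hookrightarrow\Delta[m])\mathbin{\hat\Box}(\Lambda^k[n]\hookrightarrow\Delta[n])$, $m\ge0$, $0\le k<n$; and the supplementary condition in Definition~\ref{def:horiz Reedy left fibn} that $p$ be \emph{strong} translates under the same adjunction into the statement that $p$ has the right lifting property against all maps $(\Delta[0]\xrightarrow{\,0\,}\Delta[m])\mathbin{\hat\Box}(\partial\Delta[p]\hookrightarrow\Delta[p])$, $m\ge1$, $p\ge0$ (this last RLP says exactly that each $X_{m*}\to X_{0*}\times_{Y_{0*}}Y_{m*}$ is a trivial Kan fibration).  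Let $\mathcal H$ be the saturated class of monomorphisms of $\ssSet$ generated by these two families.  Then a map is a strong horizontal Reedy left fibration if and only if it has the right lifting property against $\mathcal H$, and so the theorem reduces to the purely combinatorial statement that the doubled-horn inclusion $\delta_!\Lambda^k[n]\hookrightarrow\Delta[n]\Box\Delta[n]$ lies in $\mathcal H$ for all $0\le k<n$.

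\textbf{Finishing via Proposition~\ref{prop:contains left anodynes}.}  Consider the class $\mathcal A=\{\,u : \text{monomorphism of }\sSet,\ \delta_!u\in\mathcal H\,\}$.  Because $\delta_!$ is cocontinuous and mono-preserving and $\mathcal H$ is saturated, $\mathcal A$ is saturated, and it contains the initial vertex maps $\delta_m\colon\Delta[0]\hookrightarrow\Delta[m]$: indeed $\delta_!\delta_m=\delta_m\Box\delta_m$ factors as
\[
\Delta[0]\Box\Delta[0]\xrightarrow{\ \delta_m\Box\mathrm{id}\ }\Delta[m]\Box\Delta[0]\xrightarrow{\ \mathrm{id}\Box\delta_m\ }\Delta[m]\Box\Delta[m],
\]
where the first map is the strong generator $(\Delta[0]\xrightarrow{0}\Delta[m])\mathbin{\hat\Box}(\emptyset\hookrightarrow\Delta[0])$ and the second is horizontal left anodyne by Lemma~\ref{lem:otimes and hla} (with $\delta_m$ in the simplicial slot and the monomorphism $\emptyset\hookrightarrow\Delta[m]\Box\Delta[0]$); both therefore lie in $\mathcal H$.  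Granting that $\mathcal H$ has the right cancellation property, so does $\mathcal A$, and Proposition~\ref{prop:contains left anodynes} then shows that $\mathcal A$ contains every left anodyne map, in particular every horn inclusion $\Lambda^k[n]\hookrightarrow\Delta[n]$, as required.  (That the ``outer'' part $\Lambda^k[n]\Box\Delta[n]\cup\Delta[n]\Box\Lambda^k[n]\hookrightarrow\Delta[n]\Box\Delta[n]$ of the doubled-horn inclusion lies in $\mathcal H$ is already immediate from Lemma~\ref{lem:otimes and hla}, applied with the left anodyne map $\Lambda^k[n]\hookrightarrow\Delta[n]$ in the simplicial slot; the genuine work is inserting the doubled horn into this union.)

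\textbf{Main obstacle.}  The sticking point is the right cancellation property of $\mathcal H$ --- or, what is essentially the same, an explicit cellular filtration of $\Delta[n]\Box\Delta[n]$ over the doubled horn.  Its nondegenerate bisimplices are indexed by pairs of nonempty subsets $(A,B)$ of $[n]$, and one must adjoin those missing from the doubled horn --- those with $A\cup B$ equal to $[n]$ or to $[n]\setminus\{k\}$ --- in an order refining the bidegree, arranging that each attaching map is a pushout of a Reedy generator (when the attachment enlarges only the vertical coordinate, for which the relevant vertical horn inclusions are left anodyne) or of a strong generator (when it enlarges the horizontal coordinate, which forces one to organise the filtration so that the new horizontal simplex always contains its initial vertex, making the initial-vertex generator suffice in place of a general horizontal face).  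I expect this bookkeeping to be the technical heart of the argument --- a shuffle-type filtration analogous to, but more elaborate than, the one showing that $\partial\Delta[m]\times\Delta[n]\cup\Delta[m]\times\Lambda^k[n]\hookrightarrow\Delta[m]\times\Delta[n]$ is left anodyne (compare the proof of Lemma~\ref{lem:diag of la map}).
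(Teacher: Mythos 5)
Your reduction is set up correctly as far as it goes: transposing along $\delta_!\dashv d$, encoding the strongness condition as the right lifting property against the pushout-products $(\Delta[0]\xrightarrow{0}\Delta[m])\mathbin{\hat\Box}(\partial\Delta[p]\subset\Delta[p])$, invoking Proposition~\ref{prop:contains left anodynes}, and factoring $\delta_!\delta_m$ as $\Delta[0]\Box\Delta[0]\to\Delta[m]\Box\Delta[0]\to\Delta[m]\Box\Delta[m]$ is exactly the skeleton of the paper's argument. The genuine gap is the clause ``granting that $\mathcal H$ has the right cancellation property'': this is the whole content of the theorem in your formulation, and you neither prove it nor carry out the substitute shuffle filtration of $\Delta[n]\Box\Delta[n]$ over the doubled horn that you sketch in your last paragraph. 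Nothing in the paper (and nothing elementary) hands you cancellation for the saturated class $\mathcal H$; proving it combinatorially would be at least as hard as the theorem itself, so as written the proof is incomplete precisely at its crux.

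The paper avoids this by making the class depend on the fixed map $p$ rather than being purely combinatorial: it sets $\cA=\{\,i \text{ mono in } \sSet : \langle\delta_!i,p\rangle \text{ is a trivial Kan fibration}\,\}$, an enriched strengthening of the lifting property (which suffices, since a trivial Kan fibration is surjective on vertices). Right cancellation of this $\cA$ is then soft: for any monomorphism $v$ the map $\langle\delta_!v,p\rangle$ is automatically a left fibration (Proposition~\ref{prop:delta!(mono)} together with Lemma~\ref{lem:otimes and hla} and the Reedy hypothesis on $p$), and a left fibration is a trivial Kan fibration as soon as its fibers are weakly contractible, which follows because $\langle\delta_!(vu),p\rangle$ and $\langle\delta_!u,p\rangle$ are trivial Kan fibrations. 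Saturation of $\cA$ is checked by hand (trivial Kan fibrations are closed under products, pullbacks, retracts and inverse limits of towers), and the initial vertex maps are handled exactly by your two-step factorization: $\langle-,p\rangle$ applied to $\Delta[n,0]\to\Delta[n,n]$ is a trivial Kan fibration because $X_{n*}\to Y_{n*}$ is a left fibration and $\delta_n$ is left anodyne (Lemmas~\ref{lem:slice simp adjunction} and~\ref{lem:left slice of horiz lf is lf}), while for $\Delta[0,0]\to\Delta[n,0]$ it is a trivial Kan fibration by the strongness hypothesis. If you replace your combinatorial class $\mathcal A=\{u:\delta_!u\in\mathcal H\}$ by this $p$-relative class, your outline goes through verbatim and no explicit filtration of the doubled horn is needed.
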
 

Before we give the proof of this theorem let us borrow and abuse some 
convenient notation from Joyal and Tierney.  If $i\colon A\to B$ 
and $p\colon X\to Y$ are maps in $\ssSet$ let us write 
\[
\langle i,p\rangle \colon \map(B,X)\to 
\map(A,X)\times_{\map(A,Y)} \map(B,Y) 
\]
for the canonical map in $\sSet$ induced from the commutative diagram 
\[
\begin{tikzcd} 
\map(B,X) \arrow[r] \arrow[d] & \map(A,X) \arrow[d] \\ 
\map(B,Y) \arrow[r] & \map(A,Y) 
\end{tikzcd} 
\]
% \[
% \xymatrix{ 
% \map(B,X) \ar[r] \ar[d] & \map(A,X) \ar[d] \\ 
% \map(B,Y) \ar[r] & \map(A,Y) } 
% \]
in $\ssSet$.  

\begin{proof} 
Let $\cA$ be the class of all monomorphisms $i$ in $\sSet$ such that 
$\langle \delta_!(i),p\rangle$ is a trivial Kan fibration, where $\delta_!\colon \sSet\to \ssSet$ 
denotes the left adjoint to the diagonal functor $d\colon \ssSet\to \sSet$.  Since a 
trivial Kan fibration is surjective on vertices, to prove the theorem it is sufficient to prove 
that every left anodyne map in $\sSet$ is contained in $\cA$.  
Therefore, by Proposition~\ref{prop:contains left anodynes}, it is sufficient to 
prove that $\cA$ is saturated, $\cA$ satisfies the right cancellation property, 
and that the initial vertex maps $\delta_n\colon \Delta[0]\to \Delta[n]$ are contained in 
$\cA$ for every $n\geq 0$.  

We show that $\cA$ is saturated.  The class of trivial Kan fibrations 
is closed under arbitrary products, pullbacks and retracts.  We show that 
it is closed under sequential composition, which amounts to showing that given a sequence 
$\cdots \to X_n\to X_{n-1}\to \cdots \to X_0$ of trivial Kan fibrations, 
the canonical map $X \to X_0$ 
is a trivial Kan fibration, where $X = \varprojlim X_n$.  Recall that the inverse limit functor 
$\varprojlim \colon \sSet^{\NN}\to \sSet$ is right Quillen for the injective 
model structure on $\sSet^{\NN}$, and that a map $X\to Y$ in $\sSet^{\NN}$ is a 
trivial fibration in the injective model structure if and only if $X_0\to Y_0$ is a trivial Kan fibration, and 
$X_{n+1}\to Y_{n+1}\times_{Y_n} X_n$ is a trivial Kan fibration for all $n\geq 0$ 
(see for example \cite{GJ} VI Proposition 1.3).  These conditions are trivially 
satisfied in our case, hence the result.

We show that $\cA$ satisfies the right cancellation property.  Suppose $u\colon A\to B$ 
and $v\colon B\to C$ are monomorphisms in $\sSet$ such that $vu,u\in \cA$.  
Since $\delta_!\colon \sSet\to \ssSet$ sends monomorphisms in $\sSet$ to 
monomorphisms in $\ssSet$ (Proposition~\ref{prop:delta!(mono)}) 
and $p$ is a horizontal Reedy left fibration it follows from Lemma~\ref{lem:otimes and hla} that 
$\langle \delta_!v,p\rangle$ is a left fibration.  Observe that $\langle \delta_!(vu),p\rangle 
= \langle \delta_!u,p\rangle \langle \delta_!v,p\rangle$ and that $\langle \delta_!(vu),p\rangle$, 
$\langle \delta_!u,p\rangle$ are trivial Kan fibrations by hypothesis.  It follows easily, using the fact that 
a left fibration is a trivial Kan fibration if and only if its fibers are weakly contractible, that 
$\langle \delta_!v,p\rangle$ is a trivial Kan fibration. 

%We claim that $\langle \delta_!v,p\rangle$ 
%is also a trivial Kan fibration.  
%
%More generally, we consider left fibrations $p\colon X\to Y$ and $q\colon Y\to S$ such that 
%$qp$ and $q$ are trivial Kan fibrations.  Let $y$ be a vertex of $y$ and consider the 
%commutative diagram 
%\[
%\xymatrix{ 
%X_y \ar[r] \ar[d] & X_{q(y)} \ar[d] \ar[r] & X \ar[d]^-p \\ 
%1 \ar[r]_-y  & Y_{q(y)} \ar[d] \ar[r] & Y \ar[d]^-q \\ 
%& 1 \ar[r]_-{q(y)} & S } 
%\]
%in which every square is a pullback.  Then $X_{q(y)}$ and $Y_{q(y)}$ are contractible 
%Kan complexes since $qp$ and $q$ are trivial Kan fibrations.  It follows that the induced 
%map $X_{q(y)}\to Y_{q(y)}$ is a Kan fibration and hence is a trivial Kan fibration since 
%the domain and codomain are contractible.  Therefore $X_y$ is a contractible Kan complex 
%and hence $p\colon X\to Y$ is a trivial Kan fibration.    

Let $n\geq 0$; we show that $\delta_n\colon \Delta[0]\to \Delta[n]$ belongs to 
$\cA$.  This amounts to showing that the induced map $\delta_!(\delta_n)\colon \Delta[0,0]
\to \Delta[n,n]$ is such that $\langle \delta_!(\delta_n),p\rangle$ is a trivial 
Kan fibration.  But $\delta_!(\delta_n)$ factors as 
$\Delta[0,0]\xrightarrow{i} \Delta[n,0]\xrightarrow{j} \Delta[n,n]$.  Therefore it suffices 
to show that $\langle i,p\rangle$ and $\langle j,p\rangle$ are trivial 
Kan fibrations.  By Lemma~\ref{lem:slice simp adjunction}, the map $\langle j,p\rangle$ 
is isomorphic to the map 
\[
\map(\Delta[n],X_{n*}) \to 
\map(\Delta[n],Y_{n*})\times_{\map(\Delta[0],Y_{n*})}\map(\Delta[0],X_{n*}) 
\]
%
%\[
%\map(\Delta[n],\Delta[n]\backslash X) \to 
%\map(\Delta[n],\Delta[n]\backslash Y)\times_{\map(\Delta[0],\Delta[n]\backslash Y)}\map(\Delta[0],\Delta[n]\backslash X) 
%\]
Since $\delta_n\colon \Delta[0]\to \Delta[n]$ 
is left anodyne and $X_{n*}\to Y_{n*}$ is a 
left fibration (Lemma~\ref{lem:left slice of horiz lf is lf}) it follows that $\langle j,p\rangle$ is a trivial Kan fibration.  
Also, 
by hypothesis, the induced maps $X_{n*}\to X_{0*}\times_{Y_{0*}}Y_{n*}$ are 
trivial Kan fibrations for all $n\geq 0$ and therefore Lemma~\ref{lem:slice simp adjunction} 
implies that $\langle i,p\rangle$ 
is a trivial Kan fibration which completes the proof.    
\end{proof} 

Next we show that the diagonal functor $d$ sends column-wise cofinal maps to 
cofinal maps of simplicial sets.  

\begin{proposition} 
\label{prop:d of col wise initial map}
Let $f\colon X\to Y$ be a map of bisimplicial sets which is column-wise right cofinal.  
Then the diagonal $df\colon dX\to dY$ is a right cofinal map of simplicial sets.  
\end{proposition}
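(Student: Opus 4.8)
The plan is to reduce the statement to a pointwise check using the characterization of right cofinal maps from Theorem~\ref{thm:Joyals char of cov equiv}, namely that $df\colon dX\to dY$ is right cofinal if and only if $Rb\times_{dY} dX$ is weakly contractible for every vertex $b$ of $dY$, where $1\to Rb\to dY$ factors $b$ into a right anodyne followed by a right fibration. By Lemma~\ref{lem:cov equiv right cof}, this is equivalent to showing that $df$ is a covariant equivalence in $\sSet/dY$, so one may instead try to exhibit $df$ as right cofinal directly by a factorization argument. First I would record that being column-wise right cofinal means each $f_{m*}\colon X_{m*}\to Y_{m*}$ factors (functorially, after passing to a suitable replacement) as a left anodyne map followed by a trivial Kan fibration; the aim is to assemble these columnwise factorizations into a factorization of $f$ through a bisimplicial set $Z$ with $X\to Z$ columnwise left anodyne and $Z\to Y$ a columnwise trivial Kan fibration.

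The cleanest route is the following. Factor $f$ in $\ssSet$ as $X\xrightarrow{j} Z\xrightarrow{q} Y$ where $j$ is a (horizontal) left anodyne map and $q$ is a horizontal Reedy left fibration; this exists by the small object argument applied to the generating horizontal left anodyne maps. By Lemma~\ref{lem:horiz la implies col-wise la}, $j$ is column-wise left anodyne, hence by hypothesis and the right cancellation property of right cofinal maps (see the remarks following Lemma~\ref{lem:cov equiv right cof}), $q$ is column-wise right cofinal; but $q$ is also column-wise a left fibration by Lemma~\ref{lem:left slice of horiz lf is lf}, so by Lemma~\ref{lem:left fibn + right cofinal => trivial Kan fibn} each $q_{m*}$ is a trivial Kan fibration. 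In other words $q$ is a horizontal Reedy left fibration which is column-wise a trivial Kan fibration; I would check this forces $q$ to be a \emph{strong} horizontal Reedy left fibration, since the comparison maps $Z_{n*}\to Z_{0*}\times_{Y_{0*}} Y_{n*}$ are then left fibrations (Lemma~\ref{lem:left slice of horiz lf is lf}) between Kan complexes which are weak homotopy equivalences (by two-out-of-three, all the maps in sight being trivial Kan fibrations), hence trivial Kan fibrations. Now apply Theorem~\ref{thm:diag of Rezk lf}: $dq\colon dZ\to dY$ is a left fibration in $\sSet$. Meanwhile $dj\colon dX\to dZ$ is left anodyne by Lemma~\ref{lem:diag of la map}. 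Thus $df = (dq)(dj)$ factors as a left anodyne map followed by a left fibration; since $dj$ is in particular a covariant equivalence in $\sSet/dZ$, by two-out-of-three $df$ is a covariant equivalence in $\sSet/dY$, and then $dq$ is too, so by the remarks before Lemma~\ref{lem:right cofinal stable under filt colimits} (a covariant equivalence between left fibrations factors as a left anodyne followed by a trivial Kan fibration) $dq$ is right cofinal; composing with the left anodyne $dj$ shows $df$ is right cofinal.

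I expect the main obstacle to be verifying carefully that the horizontal Reedy left fibration $q$ produced by the factorization is \emph{strong}, i.e.\ pinning down that ``column-wise trivial Kan fibration $+$ horizontal Reedy left fibration'' implies the strong condition of Definition~\ref{def:horiz Reedy left fibn}; this needs the observation that the relevant comparison maps are left fibrations between Kan complexes, together with a clean two-out-of-three argument identifying them as weak equivalences. A secondary point requiring a little care is the assembly of the factorization itself — one should use the functorial small object argument in $\ssSet$ rather than trying to splice together columnwise factorizations by hand, so that $j$ is genuinely horizontal left anodyne (not merely column-wise so) in order to apply Lemma~\ref{lem:diag of la map}. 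Everything else is a bookkeeping exercise with the stability and cancellation properties of left anodyne and right cofinal maps established earlier in the paper.
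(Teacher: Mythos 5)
Your first half follows the paper's own proof exactly: factor $f$ as a horizontal left anodyne map $j\colon X\to Z$ followed by a horizontal Reedy left fibration $q\colon Z\to Y$, and use Lemma~\ref{lem:horiz la implies col-wise la}, right cancellation of right cofinal maps, Lemma~\ref{lem:left slice of horiz lf is lf} and Lemma~\ref{lem:left fibn + right cofinal => trivial Kan fibn} to see that each $q_{m*}$ is a trivial Kan fibration. The gap is in the endgame. After invoking Theorem~\ref{thm:diag of Rezk lf} you only know that $dq\colon dZ\to dY$ is a \emph{left fibration}, and ``left anodyne followed by a left fibration'' does not exhibit $df$ as right cofinal: Definition~\ref{def:right cof map} demands a trivial Kan fibration. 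Your two-out-of-three step is circular: in the triangle $dX\to dZ\to dY$ over $dY$ the only map known to be a covariant equivalence is $dj$, so you cannot conclude that $df$ (equivalently $dq$) is one --- that is precisely what has to be proved; and a left fibration which is a weak homotopy equivalence need not be a trivial Kan fibration (e.g.\ $\{1\}\to \Delta[1]$), so nothing cheaper closes the gap. The paper sidesteps this by proving the stronger statement that $q$ is a \emph{trivial} horizontal Reedy fibration (this is where the real work lies: an induction over skeleta showing $A\backslash Z\to A\backslash Y$ is a trivial Kan fibration for every simplicial set $A$), whence $dq$ is a trivial Kan fibration by Corollary~\ref{corr:d sends triv fibns to triv fibns} and $df$ is right cofinal by definition. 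Your route could be repaired by showing directly that the fibers of the left fibration $dq$ are weakly contractible: the fiber over a vertex of $dY$ is the diagonal of the bisimplicial set of column-wise fibers, and these columns are contractible Kan complexes since each $q_{m*}$ is a trivial Kan fibration, so the diagonal is weakly contractible and $dq$ is a trivial Kan fibration --- but this step is absent from your write-up.

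A secondary flaw is the justification that $q$ is strong. The comparison maps $Z_{n*}\to Z_{0*}\times_{Y_{0*}}Y_{n*}$ are \emph{not} maps between Kan complexes in general: $Y$ is an arbitrary bisimplicial set, so its columns, and hence the columns of $Z$ and the pullback, need not be Kan, and ``left fibration $+$ weak homotopy equivalence'' alone does not yield a trivial Kan fibration. The conclusion is nevertheless correct, but it needs the fiberwise argument the paper itself uses in its concluding observation: both $Z_{n*}$ and $Z_{0*}\times_{Y_{0*}}Y_{n*}$ are trivial Kan fibrations over $Y_{n*}$, so over each vertex of $Y_{n*}$ the comparison map is a left fibration between contractible Kan complexes, hence a trivial Kan fibration there; its fibers are therefore contractible, and a left fibration with contractible fibers is a trivial Kan fibration.
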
 

\begin{proof} 
Factor $f$ as $X\to Z\to Y$, where $X\to Z$ is horizontally left anodyne and $Z\to Y$ is a 
horizontal Reedy left fibration.  We will prove below that $Z\to Y$ is in fact a trivial Reedy fibration.  
The diagonal $dX\to dZ$ is left anodyne by Lemma~\ref{lem:diag of la map} and the diagonal $dZ\to dY$ is a 
trivial Kan fibration by Corollary~\ref{corr:d sends triv fibns to triv fibns}.  
Therefore the composite map $dX\to dY$ is right cofinal 
(Definition~\ref{def:right cof map}).  

We prove that $Z\to Y$ is a trivial Reedy fibration.  Therefore we need to prove that the map 
\[
p\colon Z_{m*}\to \partial\Delta[m]\backslash Z\times_{\partial\Delta[m]\backslash Y}Y_{m*} 
\]
is a trivial Kan fibration for all $m\geq 0$.  By hypothesis $p$ is a left fibration.  Therefore it suffices 
to prove that its fibers are contractible.  Observe that $Z_{m*}\to Y_{m*}$ is right cofinal
for all $m\geq 0$, since the class of right cofinal maps satisfies the right cancellation property.  
Therefore $Z_{m*}\to Y_{m*}$ is a trivial Kan fibration, since a left fibration 
which is also right cofinal is a trivial Kan fibration (Lemma~\ref{lem:left fibn + right cofinal => trivial Kan fibn}).  

It suffices to prove that the map $\partial\Delta[m]\backslash Z\to \partial\Delta[m]\backslash Y$ 
is a trivial Kan fibration, since a standard argument shows that the fibers of $p$ are contractible.  
%then an argument analgous to that used in the proof of 
%Theorem~\ref{thm:diag of Rezk lf} above shows that the fibers of $p$ are contractible.  
More generally we prove that $A\backslash Z\to A\backslash Y$ is a trivial Kan fibration 
for any simplicial set $A$.  

To prove this statement we first use the fact that the functors $(-)\backslash Z\colon \sSet^{\op}\to \sSet$ and 
$(-)\backslash Y\colon \sSet^{\op}\to \sSet$ are continuous, to prove by induction on $n$ that 
$\sk_nA\backslash Z\to \sk_nA\backslash Y$ is a trivial Kan fibration for any simplicial 
set $A$, where $\sk_nA$ denotes the $n$-skeleton.  The initial case $n=0$ is easy, and the 
inductive step follows from a combination of Lemma~\ref{lem:left slice of horiz lf is lf}, 
the fact that $\partial\Delta[n] = \sk_{n-1}\Delta[n]$, and the following trivial observation: suppose that 
\[
\begin{tikzcd} 
W\ar[r] \arrow[d,"p"'] & Y \arrow[d,"q"] \\ 
Z\arrow[r] & X 
\end{tikzcd} 
\]
% \[
% \xymatrix{ 
% W\ar[r] \ar[d]_-p & Y \ar[d]^-q \\ 
% Z\ar[r] & X } 
% \]
is a commutative diagram of simplicial sets in which $p$ and $q$ are trivial Kan fibrations, then the 
canonical map $W\to Z\times_X Y$ is a  trivial Kan fibration if it is a left fibration (the hypotheses on $p$ 
and $q$ imply that it is a fiberwise weak homotopy equivalence).  

To complete the proof it suffices to show that
\[
\sk_{n+1}A\backslash Z\to sk_{n+1}A\backslash Y\times_{\sk_nA\backslash Y} 
\sk_nA \backslash Z 
\]
is a trivial Kan fibration for every $n\geq 0$ (compare 
with the proof of Theorem~\ref{thm:diag of Rezk lf} above).  
But this follows from Lemma~\ref{lem:left slice of horiz lf is lf} 
and the observation above.     
\end{proof}

\section{Covariant model structures and simplicial presheaves} 
\label{sec:main}
\subsection{The projective model structure on $[(\Delta/B)^{\op},\sSet]$} 
\label{subsec:submain1}
Let $B$ be a simplicial set.  The Yoneda embedding induces an embedding 
$y/B\colon \Delta/B\to \sSet/B$
and hence we obtain 
an adjoint pair 
\[
(y/B)_!\colon [(\Delta/B)^{\op},\Sets] \rightleftarrows \sSet/B\colon (y/B)_* 
\]
in which the left adjoint functor $(y/B)_!$ is the left Kan extension 
along $y/B$.  It is well known that the pair $((y/B)_!,(y/B)_*)$ 
is an adjoint equivalence.  Clearly there is an induced adjoint equivalence 
\[
(y/B)_!\colon [(\Delta/B)^{\op},\sSet]\rightleftarrows \ssSet/B\Box 1 \colon (y/B)_*
\]
between the corresponding categories of simplicial objects.  If $X\in \ssSet/B\Box 1$ then 
$(y/B)_*(X)$ is the simplicial presheaf on $\Delta/B$ whose simplicial set of sections over 
$\sigma\colon \Delta[m]\to B$ is isomorphic to 
\[
X_{m*}\times_{B_m}\set{\sigma}.  
\]  
It follows easily that $(y/B)_*$ 
sends horizontal projective fibrations (respectively trivial fibrations) in $\ssSet/B\Box 1$ 
to projective fibrations (respectively trivial fibrations) in $[(\Delta/B)^{\op},\sSet]$.  Therefore we have the following 
result.  

\begin{theorem} 
\label{thm:equiv of spshvs with ssSets}
Let $B$ be a simplicial set.  Then the adjunction 
\[
(y/B)_!\colon [(\Delta/B)^{\op},\sSet]\rightleftarrows \ssSet/B\Box 1\colon (y/B)_* 
\]
is a Quillen equivalence for the projective model structure on $[(\Delta/B)^{\op},\sSet]$ 
and the horizontal projective model structure on $\ssSet/B\Box 1$.  
\end{theorem}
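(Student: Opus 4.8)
The plan is to verify that the adjunction $((y/B)_!,(y/B)_*)$ is a Quillen equivalence by checking that it is a Quillen adjunction whose right adjoint detects weak equivalences, and whose derived counit is an isomorphism; in fact, since the underlying adjunction on presheaves of sets is an equivalence of categories, the stronger claim is that this is a Quillen equivalence witnessed by an actual (non-derived) equivalence, so all the work reduces to matching up the two model structures. First I would recall that $(y/B)_!\colon [(\Delta/B)^{\op},\Sets]\to \sSet/B$ is an equivalence of categories, and that levelwise it induces the equivalence of categories $(y/B)_!\colon [(\Delta/B)^{\op},\sSet]\rightleftarrows \ssSet/B\Box 1\colon(y/B)_*$ asserted in the statement; this is a purely formal consequence of the fact that $\sSet/B$ is the presheaf category on $\Delta/B$ and $\ssSet/B\Box 1$ is the category of simplicial objects in it. Under this identification, the point is simply that the projective model structure on $[(\Delta/B)^{\op},\sSet]$ is \emph{transported} to exactly the horizontal projective model structure on $\ssSet/B\Box 1$.

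The key computation, which is already done in the excerpt, is the description of $(y/B)_*$ on an object $X\in\ssSet/B\Box 1$: its value at $\sigma\colon\Delta[m]\to B$ is $X_{m*}\times_{B_m}\{\sigma\}$, i.e.\ the fiber of the $m$-th column $X_{m*}$ over the vertex $\sigma\in B_m$. Consequently, for a map $f\colon X\to Y$ in $\ssSet/B\Box 1$, the map $(y/B)_*(f)$ evaluated at $\sigma$ is a Kan fibration (respectively trivial Kan fibration, respectively weak homotopy equivalence) exactly when the fiber of $f_{m*}$ over $\sigma$ is. Since $B_m$ is discrete, $X_{m*}\to B_m$ is a disjoint union of its fibers over the vertices, so $f_{m*}\colon X_{m*}\to Y_{m*}$ is a Kan fibration (resp.\ trivial Kan fibration, resp.\ weak homotopy equivalence) if and only if each of these fiber maps is, ranging over all $\sigma\in B_m$. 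Ranging over all $m$ and all $\sigma$ now identifies: $f$ is a column-wise Kan fibration if and only if $(y/B)_*(f)$ is a projective fibration; $f$ is a column-wise trivial fibration if and only if $(y/B)_*(f)$ is a projective trivial fibration; and $f$ is a column-wise weak homotopy equivalence if and only if $(y/B)_*(f)$ is a projective weak equivalence. Thus $(y/B)_*$ creates all three classes, which — given that it is one half of an equivalence of categories — shows that the horizontal projective model structure on $\ssSet/B\Box 1$ is precisely the one transported along the equivalence from the projective model structure on $[(\Delta/B)^{\op},\sSet]$.

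With the two model structures identified along an equivalence of categories, the conclusion is immediate: $(y/B)_!$ and $(y/B)_*$ are inverse equivalences sending cofibrations to cofibrations and (trivial) fibrations to (trivial) fibrations, hence form a Quillen adjunction; and an equivalence of categories which is a Quillen adjunction is automatically a Quillen equivalence (the unit and counit are isomorphisms, so in particular are weak equivalences on all objects). I expect the only step requiring any care — though it is still routine — is the explicit verification of the formula for $(y/B)_*$ and the attendant bookkeeping that ``fiberwise over each vertex'' matches ``sectionwise over each object of $\Delta/B$''; the excerpt already asserts this (``It follows easily that $(y/B)_*$ sends horizontal projective fibrations \ldots''), so in the write-up I would simply record the formula, note the discreteness of $B_m$, and invoke that an equivalence of categories which is left and right Quillen is a Quillen equivalence.
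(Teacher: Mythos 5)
Your proposal is correct and matches the paper's own argument: the paper likewise observes that $((y/B)_!,(y/B)_*)$ is an adjoint equivalence of categories, uses the formula $(y/B)_*(X)(\sigma)\simeq X_{m*}\times_{B_m}\set{\sigma}$ together with discreteness of $B_m$ to match up (trivial) fibrations, and concludes the Quillen equivalence. Your write-up just makes explicit the creation of weak equivalences and the transport-of-model-structure step that the paper leaves as "it follows easily".
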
 

The adjunction $((y/B)_!,(y/B)_*)$ is simplicial for the horizontal simplicial enrichment on 
$\ssSet/B\Box 1$ and the canonical simplicial enrichment on $[(\Delta/B)^{\op},\sSet]$ for which 
the tensor $F\otimes K$ for a simplicial presheaf $F$ and a simplicial set $K$ is the simplicial 
presheaf $F\otimes K$ defined by 
\[
(F\otimes K)(\sigma) = F(\sigma)\times K.  
\]
 
The category $[(\Delta/B)^{\op},\sSet]$, equipped 
with the projective model structure, is what is called in \cite{Dugger} the {\em universal 
homotopy theory} built from $(\Delta/B)^{\op}$.  It follows from Proposition 2.3 of \cite{Dugger} 
that the embedding $y/B\colon \Delta/B\to \sSet/B$ induces a Quillen adjunction 
\[
\Re_B\colon [(\Delta/B)^{\op},\sSet]\rightleftarrows \sSet/B\colon \Sing_B 
\]
for the covariant model structure on $\sSet/B$.  When $B$ is understood we will simply write 
$\Re$ and $\Sing$.  The adjoint pair $(\Re,\Sing)$ 
factors through the adjoint pair $(d,d_*)$ of Proposition~\ref{prop:comparison} 
in the sense that there are natural isomorphisms $\Sing  \simeq (y/B)_*d_*$ and 
$\Re \simeq d(y/B)_!$.  
Hence in the diagram 
% \[
% \begin{tikzcd}
%  \protect{[}(\Delta/B)^{\op},\sSet\protect{]} \arrow[r,shift right,"(y/B)_{!}"] & \ssSet/B\Box 1 \arrow[l,shift right,"(y/B)_{*}"] 
% \arrow[r,shift right,"d"] & \sSet/B \arrow[l,shift right,"d_{*}"] 
% \end{tikzcd} 
% \]
\[
\xymatrix{
[(\Delta/B)^{\op},\sSet] \ar@<0.5ex>[r]^-{(y/B)_!} & \ssSet/B\Box 1 \ar@<0.5ex>[l]^-{(y/B)_*} 
\ar@<0.5ex>[r]^-{d} & \sSet/B \ar@<0.5ex>[l]^-{d_*} } 
\]
the composite horizontal right pointing arrow is naturally isomorphic to 
$\Re\colon [(\Delta/B)^{\op},\sSet]\to \sSet/B$ and the composite 
horizontal left pointing arrow is naturally isomorphic to 
$\Sing\colon \sSet/B\to [(\Delta/B)^{\op},\sSet]$.

\subsection{The localized projective model structure on $[(\Delta/B)^{\op},\sSet]$}
\label{subsec:loc proj model structure}
We denote by $W_B$ the 
wide subcategory of the simplex category $\Delta/B$ of $B$ whose maps are the initial 
vertex maps in $\Delta$.  When $B$ is clear from the context and no confusion is likely we will 
simply write $W$ instead of $W_B$.  

It is clear that each map 
in $W$ is sent to a left anodyne map in $\sSet/B$ under the embedding 
$\Delta/B\to \sSet/B$.  It follows that the Quillen adjunction $(\Re,\Sing)$ descends to a 
Quillen adjunction  
\[
\Re\colon L_W[(\Delta/B)^{\op},\sSet]\rightleftarrows \sSet/B\colon \Sing,
\] 
where $L_W[(\Delta/B)^{\op},\sSet]$ denotes the left Bousfield localization of the 
projective model structure on $[(\Delta/B)^{\op},\sSet]$ along the image of $W$ under the 
Yoneda embedding $y\colon \Delta/B\to [(\Delta/B)^{\op},\sSet]$.    
In this subsection we will prove that in fact this is a Quillen equivalence.   

Before we proceed to the proof of this fact, let us spell out what it means for a 
simplicial presheaf $F\colon (\Delta/B)^{\op}\to \sSet$ to be $W$-local.  By definition, $F$ is 
$W$-local if and only if $F$ is projectively fibrant and the induced map  
\[
u^*\colon \map(\sigma',F)\to \map(\sigma,F) 
\]
is a weak homotopy equivalence for all maps $u\colon \sigma\to \sigma'$ in $W$, where 
$\map(-,-)$ denotes the simplicial enrichment in $[(\Delta/B)^{\op},\sSet]$.  Since the canonical 
initial vertex map $\sigma(0) = \sigma'(0) \to \sigma'$ factors through $u$ via the initial vertex map 
$\sigma(0)\to \sigma$, we see that a projectively fibrant simplicial presheaf $F$ is 
$W$-local if and only if the map 
\[
\map(\sigma,F)\to \map(\sigma(0),F) 
\]
induced by the initial vertex map $\sigma(0)\to \sigma$ is a weak homotopy equivalence for 
all objects $\sigma\in \Delta/B$ . Alternatively, if we 
identify $F$ with an object $F\in \ssSet/B\Box 1$ by means of the equivalence 
of Theorem~\ref{thm:equiv of spshvs with ssSets}, i.e.\ if we identify 
$F\in [(\Delta/B)^{\op},\sSet]$ with $(y/B)_!(F)\in \ssSet/B\Box 1$, then $F$ is $W$-local 
if and only if $F$ is projectively fibrant in $\ssSet/B\Box 1$, and the maps 
\[
F_{n*}\to F_{0*}\times_{B_0} B_n 
\]
induced by $0\colon \Delta[0]\to \Delta[n]$ are weak homotopy equivalences 
for all $n\geq 0$.  We will now prove Theorem~\ref{thm:main} from the introduction.  
Recall the statement of this theorem.    

\theoremA*

\begin{proof}
We begin by proving that if $X\in \bL(B)$, then the map 
$\Re\,  Q\, \Sing(X)\to X$ is a covariant equivalence in $\sSet/B$, where 
$Q\, \Sing(X)\to \Sing(X)$ is a cofibrant replacement in $[(\Delta/B)^{\op},\sSet]$.  
Equivalently, by Theorem~\ref{thm:equiv of spshvs with ssSets}, we may prove that the map $dQd_*X\to X$ is a 
covariant equivalence in $\sSet/B$, where $Qd_*X\to d_*X$ is a cofibrant replacement 
in $\ssSet/B\Box 1$.  But $dQd_*X\to dd_*X$ 
is a covariant equivalence (Proposition~\ref{prop:comparison}), and 
hence it suffices to prove that $dd_*X\to X$ is a covariant equivalence.  
Observe that there is a 
factorization 
\[
X\to dd_*X\to X 
\]
of the identity map of $X$, where the map $X\to dd_*X$ is the 
diagonal of the canonical map of bisimplicial sets $X\to d_*X$, which in degree 
$n$ is the map $X\to (d_*X)_{*n}$ induced by the unique map 
$[n]\to [0]$.  Therefore it is 
sufficient to prove that $X\to dd_*X$ is a covariant equivalence.    
By Lemma~\ref{lem:diags of row-wise cov eqs are cov eqs} it 
is sufficient to show that the maps $X\to (d_*X)_{*n}$ are 
covariant equivalences in $\sSet/B$ for all $n$.  But for every $n\geq 0$ the  
map $X\to (d_*X)_{*n}$ is left inverse to the map 
\[
X^{\Delta[n]}\to X 
\]
in $\bL(B)$ induced by the map $0\colon [0]\to [n]$.  This last map is a trivial 
fibration since $0\colon \Delta[0]\to \Delta[n]$ is left anodyne and $X\in \bL(B)$. Hence $X\to (d_*X)_{*n}$ 
is a covariant equivalence.

To complete the proof we show that for any projectively cofibrant 
$F\in [(\Delta/B)^{\op},\sSet]$, and for a suitable choice of fibrant 
replacement $\Re(F)\to R\, \Re(F)$, the map 
\[
F\to \Sing(R\, \Re(F)) 
\]
in $[(\Delta/B)^{\op},\sSet]$ is a $W$-local equivalence.  
Regard $F$ as an object of $\ssSet/B\Box 1$.  
Without loss of generality we may suppose that $F$ is $W$-local.  
We may factor the structure map $F\to B\Box 1$ as $F\to F'\to B\Box 1$, 
where $F\to F'$ is a horizontal left anodyne map and $F'\to B\Box 1$ 
is a horizontal Reedy left fibration.  Since $F$ is $W$-local the maps 
\[
F_{n*}\to B_n\times_{B_0} F_{0*} 
\]
induced by $0\colon \Delta[0]\to \Delta[n]$ are weak homotopy equivalences 
for all $n\geq 0$.  Therefore, since $F\to F'$ is column-wise left anodyne (Lemma~\ref{lem:horiz la implies col-wise la}), 
the vertical maps in the commutative diagram 
\[
\begin{tikzcd} 
F_{n*} \arrow[r] \arrow[d] & B_n\times_{B_0} F_{0*} \arrow[d] \\ 
F'_{n*} \arrow[r] & B_n\times_{B_0} F'_{0*} 
\end{tikzcd} 
\]
% \[
% \xymatrix{ 
% F_{n*} \ar[r] \ar[d] & B_n\times_{B_0} F_{0*} \ar[d] \\ 
% F'_{n*} \ar[r] & B_n\times_{B_0} F'_{0*} } 
% \]
are weak homotopy equivalences, and hence 
\[
F'_{n*}\to B_n\times_{B_0} F'_{0*} 
\]
is a weak homotopy equivalence for every $n\geq 0$.  Therefore we 
may apply Lemma~\ref{lem:recognize str lf} to deduce that 
$F'\to B\Box 1$ is a strong horizontal Reedy left fibration.  

We have a commutative diagram 
of the form 
\[
\begin{tikzcd}
F\arrow[d] \arrow[r] & F' \arrow[d] \\ 
d_*dF \arrow[r] & d_*dF' 
\end{tikzcd} 
\]
% \[
% \xymatrix{ 
% F\ar[d] \ar[r] & F' \ar[d] \\ 
% d_*dF \ar[r] & d_*dF' } 
% \]
in $\ssSet/B$.  We will prove the following statements are true: 
\begin{enumerate} 
\item $dF\to dF'$ is a covariant equivalence in $\sSet/B$; 
\item $dF'\in \bL(B)$ (therefore $dF'$ is a fibrant replacement of $dF$ in $\sSet/B$); 
\item $F'\to d_*dF'$ is a column-wise weak homotopy equivalence.  
\end{enumerate} 
These statements suffice to prove that $F\to \Sing(R\, \Re F)$ is a $W$-local equivalence.  
Statement (1) follows from Lemma~\ref{lem:diag of la map}.  
Statement (2) follows from Theorem~\ref{thm:diag of Rezk lf}.  We need 
to prove Statement (3).  Note that $F'$ is $W$-local, and 
also $d_*dF'$ is $W$-local (Lemma~\ref{lem:d_*X is W local}).  Therefore, it is 
sufficient to prove that $F'_{0*}\to (d_*dF')_{0*}$ is a weak homotopy equivalence.  
Recall from the proof of Lemma~\ref{lem:d_*X is W local} that $(d_*dF')_{0*}$ 
forms part of a pullback diagram 
\[
\begin{tikzcd}
(d_*dF')_{0*} \arrow[r] \arrow[d] & dF' \arrow[d] \\ 
B_0 \arrow[r] & B 
\end{tikzcd} 
\]
% \[
% \xymatrix{ 
% (d_*dF')_{0*} \ar[r] \ar[d] & dF' \ar[d] \\ 
% B_0 \ar[r] & B } 
% \]
and so $(d_*dF')_{0*}$ is the diagonal of the bisimplicial set whose 
$m$-th row is 
\[
B_0\times_{B_m} F'_{m*}
\]
and the map $F'_{0*}\to (d_*dF)_{0*}$ is the diagonal of the map of 
bisimplicial sets which on $m$-th rows is the 
map 
\[
F'_{0*}\to B_0\times_{B_m} F'_{m*} 
\]
induced by the canonical map $[m]\to [0]$.  Clearly it is sufficient to prove 
that each of these maps on rows is a weak homotopy equivalence.  But the 
composite 
\[
F'_{0*}\to B_0\times_{B_m} F'_{m*} \to F'_{0*} 
\]
is the identity, and the second map is a weak homotopy equivalence since 
$F'$ is $W$-local, hence the result.  
\end{proof} 

\begin{remark}
\label{rem:cov equiv}
Notice that one outcome of this proof is that the counit map $\Re\, \Sing(X)\to X$ 
is a covariant equivalence for any $X\in \sSet/B$, which is a stronger statement than what 
one would usually expect.  
\end{remark}
  
A dual version of the {\em mapping simplex} $M(\phi)$ of a map $\phi\colon [n]\to \sSet$ (see 
Section 3.2.2 of \cite{HTT}) can be understood in terms of the functor $\Re$ in the case 
where $B = \Delta[n]$.  If $q_{\Delta[n]}\colon (\Delta/[n])^{\op}\to [n]$ denotes the 
map from introduction defined by $q_{\Delta[n]}(u) = u(0)$ for $u\colon [m]\to [n]$, then 
the (dual) of the mapping simplex $M(\phi)$ of a map $\phi\colon [n]\to \sSet$ 
is $M(\phi) = \Re(q_{\Delta[n]}^*\phi)$.  This observation, together with Theorem~\ref{thm:main}, 
can be used to prove a version of Proposition 3.2.2.7 of \cite{HTT} for left fibrations. 

\begin{remark}
\label{rem:naturality of main thm}
If $f\colon A\to B$ is a map of simplicial sets then $f$ induces a Quillen adjunction 
$f_!\colon \sSet/A\rightleftarrows \sSet/B\colon f^*$ between the respective covariant 
model structures and (since $f_!\colon \Delta/A\to \Delta/B$ maps the initial vertex maps 
in $\Delta/A$ to the initial vertex maps in $\Delta/B$) a Quillen adjunction 
$f_!\colon L_{W_A}[(\Delta/A)^{\op},\sSet]\rightleftarrows L_{W_B}[(\Delta/B)^{\op},\sSet]\colon f^*$ 
between the localized projective model structures.  Moreover, the diagram 
\[
\begin{tikzcd} 
L_{W_A}[(\Delta/A)^{\op},\sSet] \arrow[d,"f_!"'] \arrow[r] & \sSet/A \arrow[d,"f_!"] \\ 
L_{W_B}[(\Delta/B)^{\op},\sSet] \arrow[r] & \sSet/B 
\end{tikzcd} 
\]
% \[
% \xymatrix{ 
% L_{W_A}[(\Delta/A)^{\op},\sSet] \ar[d]_-{f_!} \ar[r] & \sSet/A \ar[d]^-{f_!} \\ 
% L_{W_B}[(\Delta/B)^{\op},\sSet] \ar[r] & \sSet/B } 
% \]
of left Quillen functors commutes up to natural isomorphism (the horizontal maps 
are the left Quillen functors from Theorem~\ref{thm:main}).  
\end{remark}

\subsection{Another Quillen equivalence} 
\label{sec:presheaf straightening}  
By composing the embedding $y/B$ with the forgetful functor we obtain a simplicial diagram $y/B\colon \Delta/B\to \sSet$ 
on $\Delta/B$.  As such we may form its {\em simplicial replacement} $s(y/B)$; this is a bisimplicial 
set whose $n$-th row is 
\[
s(y/B)_n = \bigsqcup_{\sigma_0\to \cdots \to \sigma_n} y/B(\sigma_0) 
\]
where the coproduct is taken over the set of $n$-simplices in the nerve $N(\Delta/B)$.  
The various maps $\sigma_0\colon \Delta[m_0]\to B$ define a row augmentation 
$s(y/B)\to B\Box 1$.  If $X\in \sSet/B$ then we can form the bisimplicial set 
\[
s(X):= s(y/B)\times_{B\Box 1}X\Box 1,
\]
which is again row-augmented over $B$.  The construction $s(X)$ defines a functor 
\[
s\colon \sSet/B\to \ssSet/B\Box 1;
\]
since $s(X)$ is constructed as a fiber product it follows that the 
functor $s$ is cocontinuous.  The bisimplicial set $s(X)$ is canonically isomorphic to the bisimplicial 
set whose $n$-th row is 
\[
\bigsqcup_{y/B(\sigma_0)\to \cdots \to y/B(\sigma_n) \to X} (y/B)_!(y(\sigma_0)) 
\]
where $(y/B)_!$ is the functor from Section~\ref{subsec:submain1}.
Let 
\[
s_!\colon \sSet/B\to [(\Delta/B)^{\op},\sSet] 
\]
denote the functor which sends $X\in \sSet/B$ to the simplicial presheaf $s_!X$ on 
$\Delta/B$ whose presheaf of $n$-simplices is 
\[
s_!(X)_n  = \bigsqcup_{y(\sigma_0)\to \cdots \to y(\sigma_n) \to (y/B)_*X} y(\sigma_0). 
\]
Then $s_!$ forms part of an adjoint pair $(s_!,s^!)$ and moreover the composite functor 
$(y/B)_!s_!$ is naturally isomorphic to the functor $s\colon \sSet/B\to \ssSet/B\Box 1$ described above.

Observe that for any simplicial set $X\in \sSet/B$ there is a canonical map 
\[
ds(X)\to X, 
\]
natural in $X$.  This map is obtained as the diagonal of the canonical map 
$s(X)\to X$ of bisimplicial sets (here $X$ is regarded as a horizontally 
constant bisimplicial set), which on $n$-th rows is the map 
\[
\bigsqcup_{\sigma_0\to \cdots \to \sigma_n} y/B(\sigma_0) \to X 
\]
induced by the map $\sigma_0\colon \Delta[m_0]\to X$, 
where the coproduct is indexed over the set of $n$-simplices of the nerve 
of the category $\Delta/X$ of simplices of $X$.  

We have the following result.  

\begin{proposition} 
\label{prop:dtildeQX to X initial}
Let $B$ be a simplicial set.  Then the canonical map $ds(X)\to X$ is a 
covariant equivalence in $\sSet/B$ for any $X\in \sSet/B$.  
\end{proposition}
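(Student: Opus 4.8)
The plan is to reduce the statement to a cofinality assertion about a map of bisimplicial sets, and then invoke Proposition~\ref{prop:d of col wise initial map} which says that the diagonal of a column-wise right cofinal map is right cofinal. Specifically, the map $ds(X)\to X$ is the diagonal of the map of bisimplicial sets $s(X)\to X$ (with $X$ regarded as horizontally constant), so by Proposition~\ref{prop:d of col wise initial map} together with Lemma~\ref{lem:cov equiv right cof} (which identifies right cofinal maps with covariant equivalences over the codomain), it suffices to show that $s(X)\to X$ is column-wise right cofinal, i.e.\ that for each $m\geq 0$ the map $s(X)_{m*}\to X$ of simplicial sets is right cofinal.

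First I would unwind what $s(X)_{m*}$ is. By construction $s(X) = s(y/B)\times_{B\Box 1} X\Box 1$, and $s(y/B)_m = \bigsqcup_{\sigma_0\to\cdots\to\sigma_m} y/B(\sigma_0)$ where the coproduct runs over $m$-simplices of $N(\Delta/B)$. Pulling back along $X\to B$ replaces each $y/B(\sigma_0) = \Delta[m_0]$ (mapping to $B$ via $\sigma_0$) by $\Delta[m_0]\times_B X$; since the structure map is $\sigma_0\colon\Delta[m_0]\to B$, an $m_0$-simplex of $X$ lifting $\sigma_0$ is precisely a factorization of $\sigma_0$ through $X$. Hence $s(X)_{m*}$ is naturally the nerve of (equivalently, indexed by chains in) the category whose objects are chains $\sigma_0\to\cdots\to\sigma_m$ in $\Delta/B$ together with a lift of $\sigma_0$ to $X$, i.e.\ $s(X)_{m*} = \bigsqcup y/B(\sigma_0)$ over chains $y(\sigma_0)\to\cdots\to y(\sigma_m)\to(y/B)_*X$. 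The key observation is that the map $s(X)_{m*}\to X$ only depends, up to the relevant homotopy, on the initial object $\sigma_0$ of each chain: there is an obvious retraction-type structure, the simplices of $X$ appearing are exactly pairs (simplex $\tau$ of $X$, chain of length $m$ in $\Delta/B$ starting at the simplex underlying $\tau$).

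To prove right cofinality of $s(X)_{m*}\to X$ I would use the criterion of Theorem~\ref{thm:Joyals char of cov equiv}: it is enough to show that for every vertex $x$ of $X$ (and a chosen right-anodyne/right-fibration factorization of $x\colon 1\to X$) the fiber product is weakly contractible; equivalently, since these are nerves of ordinary categories, that the comma-type category of ``chains in $\Delta/B$ of length $m$ starting below $x$'' has contractible nerve. But that category has an initial object, namely the constant chain at the vertex $x$ itself (viewed as a $0$-simplex $\Delta[0]\to B$), with the unique degeneracies giving the chain $x\to x\to\cdots\to x$; any chain $\sigma_0\to\cdots\to\sigma_m$ with a map from $x$ receives a canonical comparison from this constant chain. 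A category with an initial object has contractible nerve, so the fiber is weakly contractible. Running this for each $m$ gives column-wise right cofinality, and Proposition~\ref{prop:d of col wise initial map} then yields that $ds(X)\to X$ is right cofinal, hence by Lemma~\ref{lem:cov equiv right cof} a covariant equivalence in $\sSet/B$.

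The main obstacle I expect is making the identification of $s(X)_{m*}$ with the nerve of an explicit category precise enough that the ``initial object'' argument is airtight — in particular, handling degenerate chains and the fiber-product-over-$B$ bookkeeping carefully, and checking that the map to $X$ really is the evaluation-at-$\sigma_0$ map so that the comma category is as described. An alternative, if the direct combinatorial argument gets unwieldy, is to factor $s(X)\to X\Box 1$ through a horizontal Reedy left fibration and argue as in the proof of Proposition~\ref{prop:d of col wise initial map} that the resulting map is a trivial Reedy fibration; but I expect the explicit ``initial object in each column's indexing category'' argument above to be the cleanest route, since it is essentially the standard fact that the last-vertex/simplicial-replacement construction computes a homotopy colimit that recovers $X$.
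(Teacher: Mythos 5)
Your overall strategy coincides with the paper's: reduce the statement to column-wise right cofinality of the canonical map of bisimplicial sets, apply Proposition~\ref{prop:d of col wise initial map} together with Lemma~\ref{lem:cov equiv right cof}, identify each column as the nerve of an explicit category, and conclude by producing an initial object in the fibers. However, the execution of the key step has concrete errors. First, the codomain of the column map is misidentified: the bisimplicial set you are mapping to is $X\Box 1$, whose $m$-th column is the \emph{discrete} simplicial set $X_m$, not $X$; there is no natural map $s(X)_{m*}\to X$ at all. Indeed an $n$-simplex of $s(X)_{m*}$ consists of a chain $\sigma_0\to\cdots\to\sigma_n$ of length $n$ in $\Delta/B$ together with $u\colon [m]\to[m_0]$ and an $m$-simplex of $X$ lying over $\sigma_0\circ u$ (this last datum is what the fiber product $\Delta[m_0]\times_B X$ records -- it is not a lift of the whole simplex $\sigma_0$ to $X$, as you assert), and the only natural output is that $m$-simplex, which is constant along the column. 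Relatedly, you have swapped the two simplicial directions: the chain length is the nerve direction $n$ of the $m$-th column, so the objects of the indexing category are not ``chains of length $m$''.

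Second, since the correct target is the discrete simplicial set $X_m$, right cofinality amounts to weak contractibility of the fiber over each $m$-simplex $x\in X_m$, not over each vertex of $X$, and your proposed initial object (the constant chain of degeneracies on a vertex $x$) is not the right one; for $m>0$ and $x$ nondegenerate it does not even lie in the relevant fiber. The correct initial object of the fiber over $x$ is the pair $(x\colon \Delta[m]\to X,\ \mathrm{id}_{[m]})$ given by the simplex $x$ itself together with the identity (nondegenerate) $m$-simplex of $\Delta[m]$: any object $(\sigma,u)$ of the fiber receives exactly one map from it, namely $u$. With these corrections your argument becomes the paper's proof (the paper invokes Theorem~\ref{thm:A}, though over a discrete target this reduces to the fact that a map to a discrete simplicial set is right cofinal if and only if all its fibers are weakly contractible), so the gap is one of bookkeeping in the decisive step rather than of overall strategy, but as written the map whose cofinality you assert does not exist and the contractibility argument does not apply to the fibers that actually occur.
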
 

\begin{proof} 
We prove that the map on $m$-th columns 
\[
s(X)_{m*} \to X_m 
\]
is right cofinal for every $m\geq 0$.  The result then follows by applying 
Proposition~\ref{prop:d of col wise initial map} and the fact that a right cofinal map 
in $\sSet/B$ is a covariant equivalence (Lemma~\ref{lem:cov equiv right cof}).  

Observe that we may regard $X_m$ as a discrete category and that 
$s(X)_{m*}$ is the nerve of a category whose objects 
are pairs $(\sigma,x)$ where $\sigma\colon \Delta[n_\sigma]\to X$ is a simplex of $X$  
 and $x\colon \Delta[m]\to \Delta[n_\sigma]$ 
is an $m$-simplex.  A morphism $(\sigma,x)\to (\tau,y)$ between such pairs consists of a 
map $u\colon \Delta[n_\sigma]\to \Delta[n_\tau]$ in the simplex category $\Delta/B$ such that 
$ux=y$.  

We use Theorem~\ref{thm:A}.  It suffices to show that for every $m$-simplex 
$x\in X_m$, the fiber $\pi^{-1}(x)$ of the map $\pi\colon r(X)_{m*}\to X_m$ over $x$ is weakly contractible.  
But $(x\colon \Delta[m]\to X,\mathrm{id})$ is an initial object of $\pi^{-1}(x)$, where $\mathrm{id}\colon \Delta[m]\to 
\Delta[m]$ denotes the unique non-degenerate $m$-simplex.   The result follows.  
\end{proof} 
  
Our next aim is to show that the  functor $s_!\colon \sSet/B\to [(\Delta/B)^{\op},\sSet]$ is a left Quillen functor 
for the covariant model structure on $\sSet/B$ and the projective model structure on $[(\Delta/B)^{\op},\sSet]$.  
In fact, if $X\subset Y$ is a monomorphism in $\sSet/B$, 
then $s_!X\to s_!Y$ is a {\em degeneracy free} morphism (Definition VII 1.10 of \cite{GJ}) with a decomposition 
$s_!Y_n = s_!X_n\sqcup Z_n$, where  
\[
Z_n = \bigsqcup_{\sigma_0 \to \cdots \to \sigma_n} \Delta[n_0] \subset s_!Y_n,  
\]
and where for each summand at least one $\sigma_i\colon \Delta[n_i]\to Y$ is a simplex of $Y$ which does not belong to 
$X$.  It follows that the functor $s_!\colon \sSet/B\to L_W[(\Delta/B)^{\op},\sSet]$ sends covariant 
cofibrations to cofibrations in the localized projective model structure on $[(\Delta/B)^{\op},\sSet]$ 
(see the discussion in Example VII 1.15 from \cite{GJ}).    
We will now prove 

\theoremB*

\begin{proof} 
We prove first that $(s_!,s^!)$ is a Quillen adjunction.  
By the remarks above, it suffices to prove that if $X\to Y$ is a covariant equivalence in $\sSet/B$ then 
$s(X)\to s(Y)$ is a $W$-local equivalence in $[(\Delta/B)^{\op},\sSet]$.  Equivalently, we need 
to show that $ds(X)\to ds(Y)$ is a covariant equivalence, since $d\colon L_W[(\Delta/B)^{\op},\sSet]\to \sSet/B$ 
reflects weak equivalences between cofibrant objects by Theorem~\ref{thm:main}.  The result then follows by 
Proposition~\ref{prop:dtildeQX to X initial}.    

Now we prove that $(s_!,s^!)$ is a Quillen equivalence.  
We show first that $s_!$ reflects weak equivalences between 
cofibrant objects.  Equivalently, by 
Theorem~\ref{thm:equiv of spshvs with ssSets}, we may prove that $s\colon \sSet/B\to \ssSet/B\Box 1$ 
reflects weak equivalences between cofibrant objects.  
Suppose that $X\to Y$ is a map in $\sSet/B$ such that 
$s(X)\to s(Y)$ is a $W$-local equivalence in $[(\Delta/B)^{\op},\sSet]$.  Then 
$ds(X)\to ds(Y)$ is a covariant equivalence, from which it follows that 
$X\to Y$ is a covariant equivalence by Proposition~\ref{prop:dtildeQX to X initial}.  

Let $F\in [(\Delta/B)^{\op},\sSet]$ and suppose that $F$ is $W$-local.  We 
will prove that $s_!s^!F\to F$ is a $W$-local equivalence.  It suffices to 
prove that $s_!s^!\Sing(X)\to \Sing(X)$ is a $W$-local equivalence for any 
$X\in \bL(B)$.  Therefore, by Remark~\ref{rem:cov equiv}, it suffices to prove that $\Re\, s_!s^!\Sing(X) \to  \Re\, \Sing(X)$ 
is a covariant equivalence.  By Proposition~\ref{prop:dtildeQX to X initial} it suffices to prove that 
the map $X\to s^!\Sing(X)$, conjugate to 
$\Re\, s_!X\to X$, is a covariant equivalence for any $X\in \bL(B)$.  
To do this it is sufficient to prove that for every $Y\in \sSet/B$ the induced map 
\[
\ho(\sSet/B)(Y,X)\to \ho(\sSet/B)(Y,s^!\Sing(X)) 
\]
is an isomorphism.  But this map is isomorphic to the map 
\[
\ho(\sSet/B)(Y,X)\to \ho(\sSet/B)(\Re\, s_!Y,X) 
\]
induced by $\Re\, s_!Y\to Y$.  The result then follows from 
Proposition~\ref{prop:dtildeQX to X initial}.    
\end{proof}

\section{Localization of simplicial categories and quasi-categories} 
\label{sec:localization}

\subsection{Simplicial localization} 
\label{subsec:simp loc}
Write $\SCat$ for the category of simplicial categories and simplicial functors 
between them; if $O$ is a set then we will write $\SCat(O)$ for the subcategory 
of $\SCat$ on the simplicial categories $C$ such that $\Ob(C) = O$.  The morphisms in $\SCat(O)$ are the functors 
which are the identity on objects.  
Recall (see Proposition 7.2 of \cite{DK1}) that the category $\SCat(O)$ has 
the structure of a simplicial model category for which the weak equivalences 
are the maps $A\to B$ in $\SCat(O)$ such that $A(x,y)\to B(x,y)$ is a weak 
homotopy equivalence for all $x,y\in O$.  More generally in the {\em Bergner 
model structure}  \cite{Bergner} on $\SCat$  the weak equivalences are the {\em DK-equivalences}, 
i.e.\ the maps $f\colon A\to B$ in $\SCat$ such that (i) $A(x,y)\to B(f(x),f(y))$ is a weak homotopy 
equivalence for all objects $x,y$ of $A$ and (ii) $\pi_0 f\colon \pi_0A\to \pi_0B$ is an 
equivalence of categories (see \cite{Bergner} for more details).

We recall some of the details of the simplicial localization functor described 
in \cite{DK1}.  

\begin{definition} 
If $C\in \SCat(O)$ and $W\subset C$ is a subcategory in 
$\SCat(O)$ (so that $\Ob(W) = O$) then the {\em simplicial localization} $L^S(C,W)$  is defined as follows.  
In each degree $n\geq 0$ form the free simplicial $O$-categories $F_*W_n$ 
and $F_*C_n$ ((2.5) of \cite{DK1}).  Thus we have simplicial objects $F_*W$ and $F_*C$ in $\SCat(O)$.  
Then $L^S(C,W)$ is defined to be 
\[
L^S(C,W) = d(F_*C[F_*W^{-1}]), 
\]
where $d\colon s\SCat(O)\to \SCat(O)$ denotes the diagonal functor.  
\end{definition}

Here if $A\subset B$ is a simplicial subcategory of $B$ in $\SCat(O)$ then we write 
$B[A^{-1}]$ for the simplicial category in $\SCat(O)$ defined by the pushout diagram 
\[
\begin{tikzcd} 
A \arrow[r] \arrow[d] & B \arrow[d] \\ 
A[A^{-1}] \arrow[r] & B[A^{-1}] 
\end{tikzcd} 
\]
% \[
% \xymatrix{ 
% A \ar[r] \ar[d] & B \ar[d] \\ 
% A[A^{-1}] \ar[r] & B[A^{-1}] } 
% \]
in $\SCat(O)$, where $A\to A[A^{-1}]$ is the unit of the adjunction

\[
(-)[(-)^{-1}]\colon \SCat(O)\rightleftarrows \SGpd(O)\colon i,
\]
in which 
$i\colon \SGpd(O)\subset \SCat(O)$ denotes the inclusion of the category $\SGpd(O)$ 
of simplicial groupoids into the category $\SCat(O)$.

We shall also make use of the {\em hammock localization} $L^H(C,W)$ defined in \cite{DK5}; we refer 
the reader to this paper for the details of this construction as these details will not play an important role for us.     
We recall that there is a zig-zag of DK-equivalences relating the simplicial localization 
$L^S(C,W)$ and the hammock localization $L^H(C,W)$: 
\[
L^H(C,W) \leftarrow \mathrm{diag}L^H(F_*C,F_*W) \to L^S(C,W) 
\]
(see Proposition 2.2 of \cite{DK5}).    

If $W$ is a collection of arrows in a simplicial category $C$, then a different version of the simplicial 
localization of $C$ along $W$ is defined by Lurie in Section A.3.5 of \cite{HTT} 
as follows.  

\begin{definition}[Lurie]
\label{def:luries simp loc}
If $W$ is a collection of arrows in a simplicial category $C$,  then the simplicial localization 
$L(C,W)$ of $C$ along $W$ may be 
taken to be the 
pushout 
\[
\begin{tikzcd}
\bigsqcup_{w\in W} I \arrow[d] \arrow[r] & C \arrow[d] \\ 
\bigsqcup_{w\in W} \mathfrak{C}[J] \arrow[r] & L(C,W) 
\end{tikzcd} 
\]
% \[
% \xymatrix{ 
% \bigsqcup_{w\in W} I \ar[d] \ar[r] & C \ar[d] \\ 
% \bigsqcup_{w\in W} \mathfrak{C}[J] \ar[r] & L(C,W) } 
% \]
where the map $I\to \mathfrak{C}[J]$ is the cofibration obtained by applying the 
functor $\mathfrak{C}[-]\colon \sSet\to \SCat$ to the canonical inclusion 
of simplicial sets $I\to J$.    
\end{definition}

Recall in the definition above that $J$ denotes the groupoid interval, i.e.\ the nerve of the groupoid with 
two objects and one isomorphism between them.   

The simplicial category $\mathfrak{C}[S]$ is difficult to describe explicitly 
for arbitrary simplicial sets; in the definition above we have used the easily 
proved fact that the canonical map 
$\mathfrak{C}[I]\to I$ is an isomorphism, where $I$ is regarded as a simplicial category 
whose mapping spaces are discrete simplicial sets.  We shall also need the 
following result, which describes $\mathfrak{C}[S]$ when $S$ is the nerve 
of a small category.  

\begin{proposition}[\cite{Cordier,Riehl}] 
\label{prop:Riehl}
Let $A$ be a small category.  Then there is an isomorphism $\mathfrak{C}[NA] = F_*(A)$.  
In particular the canonical map $\pi_0\colon \mathfrak{C}[NA]\to A$ is a DK-equivalence.  
\end{proposition}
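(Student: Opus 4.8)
The plan is to prove the isomorphism $\mathfrak{C}[NA]\cong F_*(A)$ directly and then read off the statement about $\pi_0$. Since both $\mathfrak{C}[NA]$ and $F_*(A)$ are objects of $\SCat(\Ob A)$ --- the object sets being $(NA)_0=\Ob(A)$ and, by construction, $\Ob(A)$ --- it suffices to produce, for each pair $a,b\in\Ob(A)$, a bijection between the simplices of the mapping spaces $\mathfrak{C}[NA](a,b)$ and $F_*(A)(a,b)$ which is compatible with faces, degeneracies, composition and identities, and natural in $A$. I stress that what is asserted is a genuine isomorphism of simplicial categories, not merely a DK-equivalence, so the two simplicial structures must be matched on the nose.

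First I would recall the necklace presentation of the rigidification functor from \cite{DS}: for a simplicial set $X$ and $a,b\in X_0$, the simplicial set $\mathfrak{C}[X](a,b)$ is the colimit, over the category of necklaces $T$ equipped with a map $T\to X$ sending the two endpoints of $T$ to $a$ and $b$, of the explicitly computable simplicial sets $\mathfrak{C}[T]$ --- each a finite product of copies of $\Delta[1]$, i.e.\ the nerve of a Boolean lattice. Specialising to $X=NA$: a map into $NA$ from a necklace $T=\Delta[n_1]\vee\cdots\vee\Delta[n_r]$ is the same datum as a string of composable morphisms $a=a_0\to a_1\to\cdots\to a_\ell=b$ of $A$ together with a choice of joint vertices, i.e.\ a subset $J\subseteq\{0,\dots,\ell\}$ with $0,\ell\in J$, and morphisms of such necklaces over $NA$ correspond to refining the bead decomposition. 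Computing the colimit --- using that $\mathfrak{C}[\Delta[n]](0,n)$ is the nerve of the poset of subsets of $\{0,\dots,n\}$ containing $0$ and $n$ ordered by reverse inclusion --- identifies $\mathfrak{C}[NA](a,b)$ with the nerve of the poset whose objects are a composable string of arrows of $A$ from $a$ to $b$ together with a chain of joint subsets, i.e.\ the poset of ``bracketings'' of composable strings. (One can also bypass necklaces and compute $\mathfrak{C}[NA]=\operatorname{colim}_{\Delta/NA}\mathfrak{C}[\Delta[n]]$ over the simplex category of $NA$ using the cube description of $\mathfrak{C}[\Delta[n]]$; this is the same computation organised less efficiently.)

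Next I would unwind the comonad resolution defining $F_*(A)$: $F_*(A)$ is the simplicial category with object set $\Ob(A)$ and $F_*(A)(a,b)_n=\Hom_{F^{n+1}A}(a,b)$, where $F$ is the comonad ``free category on the underlying $\Ob(A)$-graph'' on the category of small categories with object set $\Ob(A)$ and identity-on-objects functors, the faces being the iterated counits and the degeneracies the iterated comultiplications. Thus an $n$-simplex of $F_*(A)(a,b)$ is a depth-$(n+1)$ iterated bracketing of a composable string of morphisms of $A$ from $a$ to $b$; the $i$-th face contracts the $i$-th level of brackets (composition inside a free category being concatenation) and the $i$-th degeneracy inserts a layer of singleton brackets. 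A depth-$(n+1)$ bracketing of a fixed string is precisely a chain of length $n$ of subsets of the set of internal gaps of the string, so $F_*(A)(a,b)$ is the nerve of the poset of bracketings of composable strings from $a$ to $b$ --- exactly the model obtained in the previous paragraph for $\mathfrak{C}[NA](a,b)$. Matching the two (checking that faces correspond to bracket-contractions, degeneracies to singleton-insertions, composition to concatenation, identities to empty strings) and confirming naturality in $A$ yields the isomorphism. I expect the main obstacle to lie in exactly this bookkeeping: pinning down the order conventions so that the face maps of the cube-nerves produced by the necklace computation literally coincide with the comonad faces, and treating degenerate beads / identity morphisms carefully enough that the two indexing posets genuinely agree rather than merely being equivalent.

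Finally, for the statement about $\pi_0$: under the isomorphism just constructed, the canonical functor $\mathfrak{C}[NA]\to A$ (the counit of $\mathfrak{C}\dashv\sNerve$, using $\sNerve(A)=NA$ for an ordinary category $A$) corresponds to the augmentation $F_*(A)\to A$, which is the identity on objects. On each mapping space the augmented simplicial set $F_*(A)(a,b)_\bullet\to A(a,b)$ admits extra degeneracies --- supplied by the unit of the free/forgetful adjunction between categories and $\Ob(A)$-graphs, i.e.\ the standard contraction of a comonad resolution after passing to underlying graphs --- hence is a simplicial homotopy equivalence onto the discrete simplicial set $A(a,b)$, in particular a weak homotopy equivalence. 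Therefore $\pi_0 F_*(A)(a,b)\to A(a,b)$ is a bijection, $\pi_0\mathfrak{C}[NA]\to A$ is an isomorphism of categories, and $\mathfrak{C}[NA]\to A$ is a DK-equivalence, as claimed.
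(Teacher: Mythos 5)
The paper does not prove this proposition at all: it is quoted from Cordier and Riehl, and the introduction explicitly defers to Riehl's necklace argument for a proof. So your sketch is not an alternative route to anything in the paper — it is an outline of precisely the cited proof (necklace presentation of $\mathfrak{C}[-]$, identification of both mapping spaces with composable strings equipped with nested bracketings, and the extra-degeneracy argument for the augmentation $F_*(A)(a,b)\to A(a,b)$, which correctly yields the DK-equivalence statement).

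Two points in the middle of your sketch are not mere bookkeeping, and they sit exactly where the on-the-nose isomorphism lives. First, with the convention you state — $F$ the free-category comonad on plain $\Ob(A)$-graphs, so that the identity arrows of $A$ are generators — the asserted isomorphism is false: already for $A=[1]$ one has $\mathfrak{C}[\Delta[1]](0,1)=\Delta[0]$, while $F[1](0,1)$ is infinite (words padded with identity letters), so the two sides are not even levelwise isomorphic, only DK-equivalent. The statement needs the reflexive-graph version of the comonad (free categories on the non-identity arrows), and correspondingly on the necklace side one must verify that every necklace over $NA$ factors through a unique reduced one with no degenerate beads; this is the ``identity morphisms'' issue you flag and defer, but resolving it is the substance of the proof, not a finishing touch. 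Second, the intermediate assertion that $\mathfrak{C}[NA](a,b)$ is the nerve of a \emph{poset} of bracketings is wrong in general: for a monoid with a non-identity idempotent $e$, the elements $((e))((e)(e))$ and $((e)(e))((e))$ are distinct $1$-simplices with the same faces, so the mapping space is not a poset nerve. The chain-of-cut-sets description of $n$-simplices that you actually use to match the two sides is the correct invariant; note also that morphisms of necklaces over $NA$ are not only bead refinements (there are collapses and degeneracies as well), which is exactly what forces the reduction step above. With these repairs your argument becomes Riehl's proof, which is what the paper relies on.
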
        

We refer to \cite{Riehl} for a particularly simple proof of this fact using the technology of `necklaces' 
from \cite{DS}.    

The construction $L(C,W)$ has a number of very useful properties; $L(C,W)$ is given by a homotopy pushout in $\SCat$,  
it can be characterized by a universal property 
(see Proposition A.3.5.5 of \cite{HTT}) and   
$L(C,W)$ is compatible with colimits in the pair $(C,W)$.  

% We record here the following simple observation.  

% \begin{lemma} 
% \label{lem:loc preserving cofibs}
% Suppose that $i\colon A\to A'$ is a cofibration in $\SCat$, and $W\subset A$, $W'\subset A'$ are 
% sets of arrows such that $i(W)\subset W'$.  Then the induced map $L(A,W)\to 
% L(A',W')$ is a cofibration.  
% \end{lemma} 

% \begin{proof} 
% It is clear that the induced map $L(A,W)\to L(A',W)$ is a cofibration.  
% Therefore we may suppose without loss of generality that $A = A'$.  In this case 
% the result follows from the fact that in the diagram 
% \[
% \xymatrix{ 
% \displaystyle{\bigsqcup_{w\in W}} I \ar[d] \ar[r] &\displaystyle{\bigsqcup_{w'\in W'}} I \ar[d] \\ 
% \displaystyle{\bigsqcup_{w\in W}} \mathfrak{C}[J] \ar[r] & \displaystyle{\bigsqcup_{w'\in W'}} \mathfrak{C}[J] } 
% \]
% the map from the pushout to the bottom right hand corner is a cofibration (this follows from the fact 
% that $I\to \mathfrak{C}[J]$ is a cofibration).  
% \end{proof}   

Suppose that $W\subset C$ is a discrete subcategory of $C$ in $\SCat(O)$, so that 
$W(x,y)$ is a discrete simplicial set for all $x,y\in O$.  Then we may consider the 
localization $L(C,W)$ as in Definition~\ref{def:luries simp loc} above (where we have 
abused notation and written $W$ for the set of arrows of the category $W$) 
and compare it with the Dwyer-Kan simplicial localization 
$L^S(C,W)$ as defined in \cite{DK1}.  In the next proposition we show, as one would 
expect, that $L(C,W)$ and $L^S(C,W)$ are weakly equivalent.  

\begin{proposition} 
\label{prop:DK simp loc and Lurie simp loc}
Suppose that $W\subset C$ is a discrete subcategory of $C$ in $\SCat(O)$.  Then 
the simplicial categories $L(C,W)$ and $L^S(C,W)$ are weakly equivalent.  
\end{proposition}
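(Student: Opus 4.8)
The plan is to identify both $L(C,W)$ and $L^S(C,W)$ with the same homotopy colimit in $\SCat(O)$, namely the homotopy pushout of the span $\coprod_{w\in W}\mathfrak{C}[J]\leftarrow\coprod_{w\in W}\mathfrak{C}[I]\to C$ in which the right-hand map sends the $w$-th copy of $\mathfrak{C}[I]=I$ to the arrow $w$ of $W\subset C$. For Lurie's construction this is essentially Definition~\ref{def:luries simp loc}: $L(C,W)$ is literally this pushout, and since $\mathfrak{C}[-]$ is left Quillen and $I\subset J$ is a cofibration between cofibrant objects, $\coprod_w\mathfrak{C}[I]\to\coprod_w\mathfrak{C}[J]$ is a cofibration between cofibrant objects, so (after a cofibrant replacement of $C$, or by left properness) the pushout computes the homotopy pushout. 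The real work is to show that $L^S(C,W)=d\big(F_*C[F_*W^{-1}]\big)$ computes the same homotopy pushout, and the key tool is Proposition~\ref{prop:Riehl}.

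First I would reduce to the case $C=W$. The structure map $\coprod_w\mathfrak{C}[I]\to C$ factors through $W$, so by the pasting law for homotopy pushouts $L(C,W)\simeq C\sqcup^{h}_{W}L(W,W)$; on the Dwyer--Kan side, $F_*C[F_*W^{-1}]$ is by construction the levelwise pushout of $F_*C\leftarrow F_*W\to F_*W[F_*W^{-1}]$ in $s\SCat(O)$, and since the diagonal $d$ preserves colimits this gives $L^S(C,W)\cong dF_*C\sqcup_{dF_*W}L^S(W,W)$. As $F_*W\to F_*C$ is a levelwise cofibration between Reedy--cofibrant simplicial objects while $dF_*C\simeq C$ and $dF_*W\simeq W$, this too is a homotopy pushout. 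Hence it suffices to treat $C=W$.

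Second, for $C=W$ I would use that each $F_*W_n$ is the free category on a set $E_n$ of edges, so that the localization $F_*W_n[F_*W_n^{-1}]$ is the free groupoid on $E_n$; this can be presented as the pushout $F_*W_n\sqcup_{\coprod_{E_n}\mathfrak{C}[I]}\coprod_{E_n}\mathbb{I}$ with $\mathbb{I}=I[I^{-1}]$ the walking isomorphism, exactly Lurie's construction applied to $(F_*W_n,E_n)$. By Proposition~\ref{prop:Riehl} applied to $A=\mathbb{I}$ we have $\mathfrak{C}[J]=F_*(\mathbb{I})$ with $\mathfrak{C}[J]\to\mathbb{I}$ a DK-equivalence, whence the free groupoid on $E_n$ is equally modelled by the \emph{cofibrant} object $F_*W_n\sqcup_{\coprod_{E_n}\mathfrak{C}[I]}\coprod_{E_n}\mathfrak{C}[J]$. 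Assembling these identifications over $[n]\in\Delta^{\op}$, applying $d$, and comparing coends, one matches $L^S(W,W)=d\big(F_*W[F_*W^{-1}]\big)$ with the homotopy pushout presenting $L(W,W)$; compatibility with the augmentations from $W$ is automatic because every map involved is natural in the pair $(C,W)$.

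The main obstacle is homotopy-theoretic bookkeeping rather than any single hard idea: one must check that the various \emph{levelwise} pushouts and coproducts become \emph{homotopy} pushouts and coproducts after applying the diagonal, and in particular that the comparison survives the fact that $\mathbb{I}=I[I^{-1}]$ — and hence each $F_*W_n[F_*W_n^{-1}]$ — is not cofibrant in $\SCat(O)$, so that $d(F_*W[F_*W^{-1}])$ is not a priori a homotopy colimit. This is exactly the gap Proposition~\ref{prop:Riehl} is designed to close, letting one replace the naive groupoid completion by the cofibrant $\mathfrak{C}[J]$ throughout. (A shorter-looking alternative is to combine the zig-zag of DK-equivalences $L^H(C,W)\leftarrow\mathrm{diag}\,L^H(F_*C,F_*W)\to L^S(C,W)$ recalled above with the universal property of $L(C,W)$ from Proposition A.3.5.5 of \cite{HTT} and that of the hammock localization, so that $L(C,W)$ and $L^S(C,W)$ corepresent the same functor on $\ho(\SCat)$; but making this genuinely rigorous needs the same homotopical care, so I would keep the direct computation as the principal argument.)
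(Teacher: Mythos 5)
Your outline stalls exactly at the step you dismiss as bookkeeping, and Proposition~\ref{prop:Riehl} does not close that gap. The levelwise claim you need is that the comparison map $F_*W_n\sqcup_{\coprod_{E_n}\mathfrak{C}[I]}\coprod_{E_n}\mathfrak{C}[J]\to F_*W_n\sqcup_{\coprod_{E_n}\mathfrak{C}[I]}\coprod_{E_n}\mathbb{I}=F_*W_n[F_*W_n^{-1}]$ is a DK-equivalence. Knowing that $\mathfrak{C}[J]\to\mathbb{I}$ is a DK-equivalence (which is what Riehl's result gives) only lets you compare the two pushouts if \emph{both} are homotopy pushouts; the left-hand one is (its leg $\coprod\mathfrak{C}[I]\to\coprod\mathfrak{C}[J]$ is a cofibration and the Bergner structure is left proper), but the right-hand one is pushout along $I\to\mathbb{I}$, which is not a cofibration ($\mathbb{I}$ is not even cofibrant, not being a retract of a free category), so it is not a priori a homotopy pushout. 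Whether strict groupoid completion of a free (simplicial) category has the correct homotopy type is precisely the mathematical heart of the statement, not routine checking: it is supplied in the paper by Proposition 9.5 of \cite{DK1} (a cofibrant simplicial category with groupoid $\pi_0$ maps by a DK-equivalence to its groupoid completion) together with an argument that the relevant square of simplicial groupoids is a homotopy pushout, using that $(-)[(-)^{-1}]\colon\SCat\to\SGpd$ is left Quillen; equivalently, by the Homotopy Lemma of \cite{DK1} as invoked in Lemma~\ref{lem:comparison of loc 1}. Without some such input your "whence" is unjustified, and your parenthetical alternative (corepresentability of both localizations on $\ho(\SCat)$) quietly assumes the universal property of the hammock localization in a form that is itself a nontrivial theorem beyond the paper's toolkit.

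There is also a secondary problem in the assembly step: the cofibrant models $F_*W_n\sqcup_{\coprod_{E_n}\mathfrak{C}[I]}\coprod_{E_n}\mathfrak{C}[J]$ are indexed by the generating sets $E_n$, and the simplicial face maps of $F_*W$ send generators to \emph{composites} of generators, so these pushouts do not form a simplicial object in $\SCat(O)$ in any evident way; you cannot simply "assemble over $[n]\in\Delta^{\op}$ and apply $d$" without either fixing this functoriality or restructuring the argument (the paper avoids the issue by working with $\mathfrak{C}[W]=F_*W$ all at once, factoring $\mathfrak{C}[W]\to C$ as a cofibration followed by a trivial fibration, and comparing $L(C',W)$, $C'[\mathfrak{C}[W]^{-1}]$ and $L^S(C',\mathfrak{C}[W])$ by a zig-zag, with Corollary 6.3 of \cite{DK1} handling the passage back to $L^S(C,W)$). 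Your reduction to $C=W$ and the homotopy-pushout framing are reasonable, but as written the proof is missing its key lemma.
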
 

\begin{proof} 
There is a canonical map $\bigsqcup_{w\in W}
I\to W$; on applying the functor $\mathfrak{C}[-]$ to the corresponding map of 
simplicial sets we obtain a factorization 
\[
\bigsqcup_{w\in W} I\to \mathfrak{C}[W]\to W 
\]
since, as remarked earlier, there is an isomorphism $\mathfrak{C}[I] = I$.  
Factor the resulting map $\mathfrak{C}[W]\to C$ in 
$\SCat(O)$ as a cofibration $\mathfrak{C}[W]\to C'$ 
followed by a trivial DK-fibration $C'\to C$, so that we 
have a commutative diagram 
\[
\begin{tikzcd} 
\bigsqcup_{w\in W} I \arrow[r] \arrow[dr] & \mathfrak{C}[W] \arrow[d] \arrow[r] & C' \arrow[d] \\ 
& W \arrow[r] & C 
\end{tikzcd} 
\]% \[
% \xymatrix{ 
% \bigsqcup_{w\in W} I \ar[r] \ar[dr] & \mathfrak{C}[W] \ar[d] \ar[r] & C' \ar[d] \\ 
% & W \ar[r] & C } 
% \]
Note that, since the Bergner model structure on $\SCat$ is left proper (see Proposition A.3.2.4 of 
\cite{HTT}), the canonical map $L(C',W)\to L(C,W)$ is a DK-equivalence.  
We have a commutative diagram 
\[
\begin{tikzcd} 
\bigsqcup_{w\in W} I \arrow[d] \arrow[r] & \mathfrak{C}[W] \arrow[r] \arrow[d] & C' \arrow[d] \\ 
\bigsqcup_{w\in W}\mathfrak{C}[J] \arrow[d] \arrow[r] & L(\mathfrak{C}[W],W) \arrow[r] \arrow[d] & L(C',W) \arrow[d] \\ 
\bigsqcup_{w\in W} J \arrow[r] & \mathfrak{C}[W][\mathfrak{C}[W]^{-1}] \arrow[r] & 
C'[\mathfrak{C}[W]^{-1}] 
\end{tikzcd} 
\]
% \[
% \xymatrix{ 
% \bigsqcup_{w\in W} I \ar[d] \ar[r] & \mathfrak{C}[W] \ar[r] \ar[d] & C' \ar[d] \\ 
% \bigsqcup_{w\in W}\mathfrak{C}[J] \ar[d] \ar[r] & L(\mathfrak{C}[W],W) \ar[r] \ar[d] & L(C',W) \ar[d] \\ 
% \bigsqcup_{w\in W} J \ar[r] & \mathfrak{C}[W][\mathfrak{C}[W]^{-1}] \ar[r] & 
% C'[\mathfrak{C}[W]^{-1}] } 
% \]
in which all squares except for the bottom left hand square are pushouts, and 
where the map 
\[
\bigsqcup_{w\in W} J\to \mathfrak{C}[W][\mathfrak{C}[W]^{-1}]
\]
is obtained by applying the groupoid completion functor 
to the map $\sqcup_{w\in W}I\to \mathfrak{C}[W]$.    
The map $L(\mathfrak{C}[W],W)\to L(C',W)$ is a cofibration since it 
is a pushout of the cofibration $\mathfrak{C}[W]\to C'$; therefore to prove that the 
map $L(C',W)\to C'[\mathfrak{C}[W]^{-1}]$ is a 
DK-equivalence it suffices to prove that  $L(\mathfrak{C}[W],W)\to \mathfrak{C}[W] 
[\mathfrak{C}[W]^{-1}]$ is a DK-equivalence.  Since 
$\mathfrak{C}[W][\mathfrak{C}[W]^{-1}]$ is a simplicial groupoid, it follows that   
this last map factors through the groupoid completion of $L(\mathfrak{C}[W],W)$ as 
\[
L(\mathfrak{C}[W],W) \to L(\mathfrak{C}[W],W) 
[L(\mathfrak{C}[W],W)^{-1}] \to 
\mathfrak{C}[W]
[\mathfrak{C}[W]^{-1}]
\]
Observe that $\pi_0L(\mathfrak{C}[W],W) = W[W^{-1}]$ is a 
groupoid; since $L(\mathfrak{C}[W],W)$ is cofibrant it follows that 
the canonical map 
\[
L(\mathfrak{C}[W],W) \to L(\mathfrak{C}[W],W) 
[L(\mathfrak{C}[W],W)^{-1}] 
\]
is a DK-equivalence (Proposition 9.5 of \cite{DK1}).  We consider the second map 
\[
p\colon L(\mathfrak{C}[W],W) 
[L(\mathfrak{C}[W],W)^{-1}] \to 
\mathfrak{C}[W][\mathfrak{C}[W]^{-1}]
\]
in the composite map above.  This map is left inverse to the map 
\[
q\colon \mathfrak{C}[W][\mathfrak{C}[W]^{-1}]\to L(\mathfrak{C}[W],W)[L(\mathfrak{C}[W],W)^{-1}] 
\]
obtained by applying the groupoid completion functor to the map 
$\mathfrak{C}[W]\to L(\mathfrak{C}[W],W)$.  Therefore it suffices to prove 
that $q$ is a DK-equivalence.  The map $q$ forms part of the pushout diagram 
\[
\begin{tikzcd} 
\displaystyle{\bigsqcup_{w\in W}} J \arrow[r] \arrow[d,"i"'] & \mathfrak{C}[W] [\mathfrak{C}[W]^{-1}] \arrow[d,"q"] \\ 
\displaystyle{\bigsqcup_{w\in W}} \mathfrak{C}[J] [\mathfrak{C}[J]^{-1}]  \arrow[r] & 
L(\mathfrak{C}[W],W) [L(\mathfrak{C}[W],W)^{-1}] 
\end{tikzcd} 
\]
% \[
% \xymatrix{ 
% \displaystyle{\bigsqcup_{w\in W}} J \ar[r] \ar[d]_-i & \mathfrak{C}[W] [\mathfrak{C}[W]^{-1}] \ar[d]^-q \\ 
% \displaystyle{\bigsqcup_{w\in W}} \mathfrak{C}[J] [\mathfrak{C}[J]^{-1}]  \ar[r] & 
% L(\mathfrak{C}[W],W) [L(\mathfrak{C}[W],W)^{-1}] } 
% \]
in $\SGpd$; we show first that this is a homotopy 
pushout (for the model structure on $\SGpd$ introduced in 
\cite{DK3}).   The map $i$ 
is obtained by applying the groupoid completion functor 
$(-)[(-)^{-1}]$ to the cofibration 
\[
\bigsqcup_{w\in W} I \to \bigsqcup_{w\in W} \mathfrak{C}[J] 
\]
in $\SCat$ and hence is an cofibration in $\SGpd$, since $(-)[(-)^{-1}]\colon 
\SCat\to \SGpd$ is left Quillen.  To complete the proof that $q$ is a DK-equivalence, 
it suffices to show that the left hand vertical map in the diagram above is a DK-equivalence, which is 
straightforward.  
%But the map $J\to \mathfrak{C}[J][\mathfrak{C}[J]^{-1}]$ is a DK-equivalence: it is right inverse to 
%the canonical map $\mathfrak{C}[J][\mathfrak{C}[J]^{-1}]\to J$ which is a DK-equivalence since the 
%composite $\mathfrak{C}[J]\to \mathfrak{C}[J][\mathfrak{C}[J]^{-1}]\to J$ is a DK-equivalence, as is 
%$\mathfrak{C}[J]\to \mathfrak{C}[J][\mathfrak{C}[J]^{-1}]$.    

Next, observe that there is a canonical map $L^S(C',\mathfrak{C}[W])\to 
C'[\mathfrak{C}[W]^{-1}]$ and a canonical map 
$L^S(C',\mathfrak{C}[W])\to L^S(C,W)$.  The latter map is a DK-equivalence by 
Corollary 6.3 of \cite{DK1}, since $C'\to C$ is a DK-equivalence which 
restricts to a DK-equivalence $\mathfrak{C}[W]\to W$.  
It follows from Lemma~\ref{lem:comparison of loc 1}, to 
be proved shortly, that 
$L^S(C',\mathfrak{C}[W])\to 
C'[\mathfrak{C}[W]^{-1}]$ is a DK-equivalence.  
Therefore we have established that all the maps in the diagram 
\[
L(C,W) \leftarrow L(C',W) \to C'[\mathfrak{C}[W]^{-1}] \leftarrow 
L^S(C',\mathfrak{C}[W])\to L^S(C,W)   
\]
are DK-equivalences, which completes the proof of the proposition.  
\end{proof}    

\begin{lemma} 
\label{lem:comparison of loc 1}
Suppose $W\to C$ is a DK cofibration in $\SCat(O)$, where $W$ and $C$ are 
DK cofibrant.  Then the canonical map 
\[
L^S(C,W) \to C[W^{-1}] 
\]
is a DK equivalence.  
\end{lemma}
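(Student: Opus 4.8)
The plan is to realize $L^S(C,W)$ as an explicit pushout in $\SCat(O)$ and then to transport a DK-equivalence to $C[W^{-1}]$ by two successive appeals to left properness, rather than through the gluing lemma (which is not available here, for reasons explained below). Write $g$ for the localization functor $(-)[(-)^{-1}]\colon \SCat(O)\to \SGpd(O)$, so that by definition $B[A^{-1}] = B\sqcup_A g(A)$. Since $g$ is a left adjoint and the diagonal $d$ and the formation of simplicial levels both preserve colimits, applying the defining pushout levelwise to $F_*W\subseteq F_*C$ and then applying $d$ yields a natural isomorphism
\[
L^S(C,W) \;=\; dF_*C \;\sqcup_{dF_*W}\; g(dF_*W),
\]
where one uses that $g$, being a simplicial left adjoint, commutes with the diagonal. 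The augmentations $dF_*C\to C$ and $dF_*W\to W$ are DK-equivalences between cofibrant objects by the standard properties of the free simplicial resolution of \cite{DK1}, and they induce the canonical comparison map $L^S(C,W)\to C\sqcup_W g(W) = C[W^{-1}]$.

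Two facts are then needed. First, $dF_*W\to dF_*C$ is a cofibration: it is the diagonal of the Reedy cofibration obtained by applying the functorial free resolution of \cite{DK1} to the cofibration $W\to C$. Second, $g(dF_*W)\to g(W)$ is a DK-equivalence: $g$ is left Quillen for the DK model structures, $dF_*W$ and $W$ are cofibrant (the latter by hypothesis), and $dF_*W\to W$ is a DK-equivalence, so $g$ sends it to a weak equivalence in $\SGpd(O)$, hence to a DK-equivalence in $\SCat(O)$.

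Now the argument proceeds in two steps. In Step A, I regard $L^S(C,W) = g(dF_*W)\sqcup_{dF_*W}dF_*C$ as an object under $g(dF_*W)$; the structure map $g(dF_*W)\to L^S(C,W)$ is a pushout of the cofibration $dF_*W\to dF_*C$, hence a cofibration. Pushing the DK-equivalence $g(dF_*W)\to g(W)$ out along it and invoking left properness of $\SCat(O)$ (Proposition A.3.2.4 of \cite{HTT}) shows that the induced map $L^S(C,W)\to L^S(C,W)\sqcup_{g(dF_*W)}g(W) = dF_*C\sqcup_{dF_*W}g(W)$ is a DK-equivalence. In Step B, I rewrite $dF_*C\sqcup_{dF_*W}g(W) = (W\sqcup_{dF_*W}dF_*C)\sqcup_W g(W)$, using that $dF_*W\to g(W)$ factors through $W$. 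The map $W\to W\sqcup_{dF_*W}dF_*C$ is a cofibration (pushout of $dF_*W\to dF_*C$), and $W\sqcup_{dF_*W}dF_*C\to C$ is a DK-equivalence — by left properness it is the pushout of the DK-equivalence $dF_*W\to W$ along the cofibration $dF_*W\to dF_*C$, so $dF_*C\to W\sqcup_{dF_*W}dF_*C$ is a DK-equivalence, and two-out-of-three with $dF_*C\to C$ gives the claim. Thus $W\sqcup_{dF_*W}dF_*C\to C$ is a DK-equivalence between objects cofibrant in $W/\SCat(O)$ — and it is precisely here that the hypothesis that $W\to C$ is a cofibration is used, to make $(C, W\to C)$ cofibrant. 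Since cobase change along $W\to g(W)$ is a left Quillen functor $W/\SCat(O)\to g(W)/\SCat(O)$, it preserves this DK-equivalence, so $(W\sqcup_{dF_*W}dF_*C)\sqcup_W g(W)\to C\sqcup_W g(W) = C[W^{-1}]$ is a DK-equivalence. Composing Steps A and B, and checking that the composite is the canonical map, completes the proof.

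I expect the main obstacle to be conceptual rather than computational: the groupoid completions $g(dF_*W)$ and $g(W)$ are in general not cofibrant in $\SCat(O)$ (already $I = [1]\to J$ exhibits the groupoid completion of a free simplicial category as non-cofibrant), so the gluing lemma cannot be applied directly to the evident map of pushout squares. Routing through left properness twice is the way around this, with the cofibration hypothesis on $W\to C$ doing essential work in Step B. The remaining points — that $d$ commutes with the levelwise localization, that the functorial resolution $dF_*$ preserves the relevant cofibrations, and that all the pushout rearrangements are the obvious ones — are routine and I would verify them in passing.
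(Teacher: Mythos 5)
Your argument is correct, but it takes a genuinely different route from the paper. The paper first uses a retract trick (pushing out along a free category $U$ containing $W$ as a retract) to reduce to the case where $W\to C$ is a free map between free simplicial categories, and then simply invokes the Homotopy Lemma (6.2) of \cite{DK1} applied to the augmentation from the pair $(dF_*W\subset dF_*C)$ to $(W\subset C)$. You instead unwind both sides as pushouts along groupoid completion, $L^S(C,W)=dF_*C\sqcup_{dF_*W}g(dF_*W)$ and $C[W^{-1}]=C\sqcup_W g(W)$, and compare them using the left Quillen property of $(-)[(-)^{-1}]$ together with two applications of left properness of $\SCat(O)$; this trades DK's Homotopy Lemma for model-categorical inputs (Bergner/HTT A.3.2.4) that the surrounding proof of Proposition~\ref{prop:DK simp loc and Lurie simp loc} uses anyway, and it makes the role of the hypothesis that $W\to C$ is a cofibration more transparent (cofibrancy of $(C,W\to C)$ in $W/\SCat(O)$, resp.\ the bottom pushout being a homotopy pushout). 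Two small points to tighten: your justification that $dF_*W\to dF_*C$ is a cofibration (``diagonal of the Reedy cofibration'') is loose as stated; the clean argument is that a DK cofibration is a retract of a free map, hence injective on morphisms, so $F_*$ of it is levelwise a generator-preserving inclusion and the diagonal is a free map in the sense of (7.4) of \cite{DK1} --- essentially the same freeness bookkeeping that the paper's retract reduction handles. Also, your stated reason for avoiding the gluing lemma is not really an obstruction: in a left proper model category the gluing lemma for pushouts along cofibrations holds without cofibrancy of the third corner (such pushouts are homotopy pushouts), and your two-step left-properness argument is in effect a proof of exactly that statement in this instance, so the detour is harmless but not forced.
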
 

\begin{proof} 
We may assume without loss of generality that $W\to C$ is a free map 
((7.4) of \cite{DK1}) between free categories ((4.5) of \cite{DK1}).  To see 
this, observe that since $W$ is DK cofibrant, it is a retract of a free category 
$U$ ((7.6) of \cite{DK1}).  Define $C' = U\cup_W C$ so that we have a 
commutative diagram 
\[
\begin{tikzcd} 
W \arrow[d] \arrow[r] & U \arrow[d] \arrow[r] & W \arrow[d] \\ 
C \arrow[r] & C' \arrow[r] & C 
\end{tikzcd} 
\]
% \[
% \xymatrix{ 
% W \ar[d] \ar[r] & U \ar[d] \ar[r] & W \ar[d] \\ 
% C \ar[r] & C' \ar[r] & C } 
% \]
which exhibits $W\to C$ as a retract of $U\to C'$, where $C'\to C$ is the 
canonical map.  Then $U\to C'$ is a DK-cofibration and hence is a retract 
of a free map ((7.6) of \cite{DK1}).  Hence, since $U$ is free, we may suppose without loss 
of generality that $W$ and $C$ are free, and that $W\to C$ is a free map.   
The Homotopy Lemma (6.2) of \cite{DK1} applied to the pairs $dF_*W\subset dF_*C$ and 
$W\subset C$ and the canonical map $dF_*C\to C$ then implies that the map 
$L^S(C,W)\to C[W^{-1}]$ is a DK equivalence, as required.  
%
% We have 
% \[
% L^S(C,W) = dF_*C[F_*W^{-1}]. 
% \]
% In every degree $n$, the canonical map 
% \[
% F_*C_n[ F_*W_n^{-1}] \to C_n[W_n^{-1}] 
% \]
% is a DK-equivalence by the Homotopy Lemma (6.2) of \cite{DK1}.  
% Therefore
% \[
% L^S(C,W) = dF_*C[ F_*W^{-1}]  \to C[ W^{-1}] 
% \]
% is a DK-equivalence, as required.  
\end{proof} 

We finish this section by describing Theorem 2.2 of \cite{DK2} dealing with certain  
localizations of projective model structures.  Following Section 2.3.2 of \cite{TV} we 
reformulate this theorem in the language of Bousfield localizations.  

Suppose that $f\colon A\to B$ is a simplicial functor between simplicial 
categories.  Recall 
(see for instance Proposition A.3.3.7 of \cite{HTT}) that the underlying functor 
$f\colon A\to B$ induces a Quillen adjunction 
\[
f_!\colon [A,\sSet]\rightleftarrows [B,\sSet]\colon f^* 
\]
for the projective model structures on $[A,\sSet]$ and $[B,\sSet]$.  Moreover, we have 
 
\begin{proposition} 
\label{prop:DK equiv induces Quillen equiv}
Let $f\colon A\to B$ be a simplicial functor between simplicial categories.  If $f\colon A\to B$ 
is a DK-equivalence, then the Quillen adjunction $(f_!,f^*)$ is a Quillen equivalence 
for the projective model structures on $[A,\sSet]$ and $[B,\sSet]$.  
\end{proposition}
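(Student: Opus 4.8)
The plan is to verify directly --- without decomposing $f$ --- a standard criterion for a Quillen equivalence: that for every projectively cofibrant $F\in[A,\sSet]$ the derived unit $F\to f^*\bR f_!F$ is a weak equivalence, and that $f^*$ reflects weak equivalences between projectively fibrant objects. Since $(f_!,f^*)$ is already a Quillen adjunction (recalled just above, cf.\ Proposition A.3.3.7 of \cite{HTT}), these two facts imply the conclusion. One observes first that, because $f^*$ preserves all weak equivalences (they are detected objectwise and $f^*$ is ``evaluation at $f$''), the derived unit is a weak equivalence if and only if the ordinary unit $F\to f^*f_!F$ is; so it is this ordinary unit that I would analyse. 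The two defining properties of a DK-equivalence enter separately: homotopical full faithfulness ($A(x,y)\to B(fx,fy)$ a weak homotopy equivalence for all $x,y$) will handle the unit, and essential surjectivity of $\pi_0f\colon\pi_0A\to\pi_0B$ will handle the reflection statement.

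For the unit I would begin with the representables. The coYoneda lemma gives $f_!\big(A(a,-)\big)\cong B(fa,-)$, and the unit $A(a,-)\to f^*f_!\big(A(a,-)\big)$ evaluated at $a'\in\Ob A$ is the map $A(a,a')\to B(fa,fa')$ induced by $f$, which is a weak homotopy equivalence by homotopical full faithfulness; hence the unit is an objectwise weak equivalence on every representable. To propagate this to all projectively cofibrant $F$: such an $F$ is a retract of a cell complex built by transfinitely attaching generating projective cofibrations $A(a,-)\otimes\partial\Delta[n]\hookrightarrow A(a,-)\otimes\Delta[n]$; the functor $f^*f_!$ carries these attaching maps to objectwise monomorphisms (it sends $A(a,-)\otimes K$ to the presheaf $a'\mapsto B(fa,fa')\times K$); and in $[A,\sSet]$ with the projective model structure a pushout or transfinite composite along objectwise monomorphisms is a homotopy colimit, because weak equivalences are detected objectwise and $\sSet$ is left proper. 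Thus the class of $F$ for which the unit is a weak equivalence is closed under the operations building cell complexes and contains the representables, hence contains all projectively cofibrant objects by a routine induction.

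For the reflection condition, let $\phi\colon G\to G'$ be a map of projectively fibrant presheaves on $B$ with $f^*\phi$ a weak equivalence, and fix $b\in\Ob B$. By essential surjectivity of $\pi_0f$ there is $a\in\Ob A$ and a $0$-simplex $u\in B(fa,b)_0$ whose class is invertible in $\pi_0B$. The point is the elementary lemma that any simplicial functor $G\colon B\to\sSet$ sends such a $u$ to a simplicial homotopy equivalence $G(fa)\to G(b)$: choosing $v\in B(b,fa)_0$ lifting the inverse of $[u]$, the paths in $B(fa,fa)$ and $B(b,b)$ witnessing $[vu]=[\mathrm{id}]$ and $[uv]=[\mathrm{id}]$ are carried by $G$ to simplicial homotopies $G(v)G(u)\simeq\mathrm{id}$ and $G(u)G(v)\simeq\mathrm{id}$, and a simplicial homotopy equivalence is a weak homotopy equivalence. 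Applying this to both $G$ and $G'$ and using the naturality square of $\phi$ along $u$ --- in which the two horizontal maps are now weak equivalences and $\phi_{fa}=(f^*\phi)_a$ is a weak equivalence by hypothesis --- two-out-of-three gives that $\phi_b$ is a weak equivalence. Hence $f^*$ reflects weak equivalences between fibrant objects, and together with the previous paragraph this proves the proposition.

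The main obstacle is one of bookkeeping and of choosing the right formulation: $f^*$ does \emph{not} reflect weak equivalences in general (only between fibrant objects), so one must use the fibrant-object form of the Quillen-equivalence criterion and exploit essential surjectivity of $\pi_0f$ precisely there; and the propagation of the unit statement from representables to all cofibrant objects, while standard, needs the observation that $f^*f_!$ preserves objectwise monomorphisms (rather than the stronger, false, claim that $f^*$ is left Quillen). The small lemma that simplicial functors preserve homotopy equivalences between their values is what makes the argument go through.
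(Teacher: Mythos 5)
The paper offers no proof of Proposition~\ref{prop:DK equiv induces Quillen equiv}: it is recalled as a standard fact (it is essentially Proposition A.3.3.8 of \cite{HTT}, the simplicially enriched counterpart of the Dwyer--Kan results of \cite{DK2}), so there is no argument of the paper's to compare yours against. Your proposal is a correct, self-contained proof along the standard lines: the fibrant/cofibrant criterion for Quillen equivalences, the identification $f_!\bigl(A(a,-)\bigr)\cong B(fa,-)$ showing the (underived, which suffices since $f^*$ preserves all objectwise weak equivalences) unit is a weak equivalence on tensors of representables, propagation to arbitrary projectively cofibrant objects by cellular induction using that $f^*f_!$ preserves the relevant pushouts and (filtered) colimits and carries generating projective cofibrations to objectwise monomorphisms, and reflection of weak equivalences between projectively fibrant objects via essential surjectivity of $\pi_0 f$. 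Two small points of hygiene, neither a gap: the witnesses of $[vu]=[\mathrm{id}]$ and $[uv]=[\mathrm{id}]$ are in general zig-zags of edges of $B(fa,fa)$ and $B(b,b)$ rather than single edges, so what you get directly is that $G(v)G(u)$ and $G(u)G(v)$ agree with the identities in the homotopy category of simplicial sets, which is all you need (and since you apply the lemma only to projectively fibrant $G,G'$, whose values are Kan complexes, your stronger ``simplicial homotopy equivalence'' phrasing is also justified); and in the induction you should record that the unit is a weak equivalence not just on $A(a,-)$ but on $A(a,-)\otimes K$ for every simplicial set $K$, which is immediate because there it is $\eta_{A(a,-)}\times\mathrm{id}_K$ and $\sSet$ is cartesian with all objects cofibrant.
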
     

Suppose now that $f$ is a functor $f\colon (A,U)\to (B,V)$ between 
{\em pairs} of simplicial categories $(A,U)$ and $(B,V)$.  
Here by a pair of simplicial categories $(A,U)$ it is understood that 
$U\subset A$ is a subcategory of $A$ and similarly for $(B,V)$; it is also understood that 
all categories have the same sets of objects.   
Let $L_U[A,\sSet]$ denote the left Bousfield localization of the projective model 
structure on $[A,\sSet]$ with respect to the set of morphisms which is the  
image of $U$ under the Yoneda embedding; denote similarly $L_V[B,\sSet]$.   
Since $f(U)\subset V$ it follows that the adjoint pair $(f_!,f^*)$ descends 
to a Quillen adjunction 
\[
f_!\colon L_{U}[A,\sSet]\rightleftarrows L_{V}[B,\sSet]\colon f^* 
\]
between the localized projective model structures.  
A fibrant restricted diagram $F\colon A\to \sSet$ in the sense of \cite{DK2} is then 
precisely a $U$-local object in $[A,\sSet]$.  We then have the following reformulation 
of Theorem 2.2 from \cite{DK2}.    

\begin{theorem}[\cite{DK2,TV}] 
\label{thm:DK2 Thm 2.2}
Suppose that $f\colon (A,U)\to (B,V)$ is a simplicial functor between pairs 
of simplicial categories.  If $Lf\colon L^S(A,U)\to L^S(B,V)$ is a DK-equivalence then the induced adjunction 
\[
f_!\colon L_U[A,\sSet]\rightleftarrows L_V[B,\sSet]\colon f^* 
\]
is a Quillen equivalence. 
\end{theorem}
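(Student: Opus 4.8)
The plan is to reduce the statement to the ``absolute'' case, in which one takes for $f$ the canonical map $\ell_A\colon A\to L^S(A,U)$ into the Dwyer--Kan simplicial localization and shows that it induces a Quillen equivalence from the localized projective model structure $L_U[A,\sSet]$ onto the ordinary projective model structure on $[L^S(A,U),\sSet]$. Granting the absolute case, the general statement follows by naturality: the maps $\ell_A$ and $\ell_B\colon B\to L^S(B,V)$ are natural in the pair, so $\ell_B\circ f = Lf\circ\ell_A$ up to natural isomorphism, and passing to left Kan extensions yields a square of left Quillen functors
\[
(\ell_B)_!\circ f_!\;\cong\;(Lf)_!\circ(\ell_A)_!\colon\ L_U[A,\sSet]\longrightarrow[L^S(B,V),\sSet]
\]
commuting up to natural isomorphism. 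Here $(Lf)_!$ is a Quillen equivalence by Proposition~\ref{prop:DK equiv induces Quillen equiv}, since $Lf$ is a DK-equivalence by hypothesis, while $(\ell_A)_!$ and $(\ell_B)_!$ are Quillen equivalences by the absolute case. Hence $(\ell_B)_!\circ f_!$, being a composite of Quillen equivalences, is a Quillen equivalence, and since $(\ell_B)_!$ is one as well, two-out-of-three for Quillen equivalences forces $f_!$ to be a Quillen equivalence. Thus everything reduces to the absolute case.

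For the absolute case I would first observe that the adjunction $((\ell_A)_!,(\ell_A)^*)$ genuinely descends to a Quillen adjunction $(\ell_A)_!\colon L_U[A,\sSet]\rightleftarrows[L^S(A,U),\sSet]\colon(\ell_A)^*$ between the \emph{localized} structure on the left and the \emph{ordinary} projective structure on the right. Since left Bousfield localization does not change the cofibrations, it suffices to check that $(\ell_A)_!$ sends each of the generating localizing maps to a weak equivalence; but $(\ell_A)_!$ carries the representable at an object $a$ to the representable at $\ell_A(a)$, hence carries the map $A(a',-)\to A(a,-)$ attached to an arrow $u\colon a\to a'$ of $U$ to the map of representables induced by $\ell_A(u)$, and $\ell_A(u)$ is a homotopy equivalence in $L^S(A,U)$, so this map of representables is an objectwise homotopy equivalence. (If one prefers, one may work throughout with Lurie's localization $L(A,U)$ in place of $L^S(A,U)$, using Proposition~\ref{prop:DK simp loc and Lurie simp loc} and Proposition~\ref{prop:DK equiv induces Quillen equiv} to pass between the two; the pushout presentation of $L(A,U)$ in Definition~\ref{def:luries simp loc} makes the homotopy-equivalence claim transparent.) It then remains to see that this Quillen adjunction is a Quillen equivalence. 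The functor $(\ell_A)^*$ preserves weak equivalences between fibrant objects and carries every projectively fibrant $G$ to a $U$-local presheaf on $A$ (immediate, since $\ell_A$ inverts $U$ up to homotopy), so it is enough to show that the derived counit $\mathbb{L}(\ell_A)_!(\ell_A)^*G\to G$ is a weak equivalence for every projectively fibrant $G\colon L^S(A,U)\to\sSet$; evaluated at an object $x$ this becomes a homotopy-colimit statement over the comma construction $\ell_A\downarrow x$, and the required cofinality is precisely what the Homotopy Lemma of \cite{DK1} and the analysis of \cite{DK2} provide. Alternatively one may invoke the universal property of the simplicial localization, Proposition A.3.5.5 of \cite{HTT}, which yields the same conclusion.

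The main obstacle is the absolute case itself --- equivalently, the assertion of \cite{DK2} that restriction along $\ell_A$ induces an equivalence of homotopy categories $\ho([L^S(A,U),\sSet])\xrightarrow{\ \sim\ }\ho(L_U[A,\sSet])$; once this is in hand, the fact that we have already exhibited $((\ell_A)_!,(\ell_A)^*)$ as a Quillen adjunction between the localized and projective structures upgrades it automatically to a Quillen equivalence. Two subsidiary points deserve care. First, when $U$ is not a discrete subcategory one should reduce to a discrete (indeed free) model of the pair $(A,U)$ before applying Proposition~\ref{prop:DK simp loc and Lurie simp loc}; this is done just as in the proofs of Proposition~\ref{prop:DK simp loc and Lurie simp loc} and Lemma~\ref{lem:comparison of loc 1}, by replacing $(A,U)$ with a DK-cofibrant free pair and transporting the conclusion along Proposition~\ref{prop:DK equiv induces Quillen equiv}. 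Second, for the naturality square above to make sense one needs $\ell_A$, $\ell_B$ and $Lf$ chosen compatibly, which is guaranteed by the functoriality of the Dwyer--Kan construction on pairs. In the applications in this paper $U$ and $V$ are ordinary discrete categories, so these complications do not actually arise.
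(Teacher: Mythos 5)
You should first be aware that the paper does not prove this statement at all: it is presented as a reformulation (following Section 2.3.2 of \cite{TV}) of Theorem 2.2 of \cite{DK2} and is simply cited, so there is no in-paper argument to compare against and your proposal has to stand on its own. Its formal outer layer is fine: granting the ``absolute'' case, the square $(\ell_B)_!\circ f_!\cong (Lf)_!\circ(\ell_A)_!$ together with Proposition~\ref{prop:DK equiv induces Quillen equiv} and two-out-of-three for Quillen equivalences does give the general statement. Even here there is a technical wrinkle you gloss over: for the Dwyer--Kan localization $L^S(A,U)=d(F_*A[F_*U^{-1}])$ there is no canonical simplicial functor $\ell_A\colon A\to L^S(A,U)$, only a natural zigzag through $dF_*A$; to have an honest commuting square you must work with the hammock localization or with Lurie's pushout model of Definition~\ref{def:luries simp loc}, and then the passage back to $L^S$ via Proposition~\ref{prop:DK simp loc and Lurie simp loc} requires $U$ discrete and involves a non-obviously natural zigzag of DK-equivalences. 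These points are repairable, but they are not free.

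The genuine gap is the absolute case, which you yourself identify as ``the main obstacle'' and then do not prove: it is exactly the content of Dwyer--Kan's theorem, and the citations you offer in its place do not deliver it. The Homotopy Lemma (6.2) of \cite{DK1} compares simplicial localizations of weakly equivalent pairs; it says nothing about the cofinality of the comma construction $\ell_A\downarrow x$ that you need in order to identify the derived counit with a homotopy colimit comparison, and ``the analysis of \cite{DK2}'' is precisely the statement under proof. Proposition A.3.5.5 of \cite{HTT} gives the universal property of $L(C,W)$ inside $\SCat$; extracting from it an equivalence between $L_U[A,\sSet]$ and the projective structure on diagrams on the localization requires a rectification statement of straightening type, which would be circular in this paper, since Theorem~\ref{thm:DK2 Thm 2.2} is used on the way to the straightening theorem. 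Finally, even your sketch of the absolute case uses an invalid Quillen-equivalence criterion: knowing that the derived counit $\mathbb{L}(\ell_A)_!(\ell_A)^*G\to G$ is an equivalence for all fibrant $G$, together with the facts that $(\ell_A)^*$ preserves weak equivalences between fibrant objects and lands in $U$-local objects, only shows that the derived right adjoint is fully faithful; you must still verify the derived unit condition (equivalently, that every $U$-local fibrant object of $[A,\sSet]$ is weakly equivalent to some $(\ell_A)^*G$), or that $(\ell_A)_!$ reflects weak equivalences between cofibrant objects. As written, then, the proposal amounts to reducing the theorem to its essential special case and referring that case back to \cite{DK2} --- which is no more than the citation the paper itself gives.
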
 

\subsection{Localization of quasi-categories} 
\label{subsec:qcat loc}
In this subsection we study an analog of Dwyer-Kan simplicial localization for quasi-categories.  
The following definition is due to Joyal (under the name {\em homotopy localization} or {\em quasi-localization}--- see 
page 168 of \cite{Joyal-Barcelona}) and Lurie (see Definition 1.3.4.1 of \cite{HA}); further discussion of this 
concept of localization appears in \cite{Hin}.    

\begin{definition} 
\label{def:quasi-localzn}
Let $X$ be a simplicial set and let $S$ be a set of arrows in $X$.  A map $X\to Y$ in $\sSet$  
is said to exhibit $Y$ as a {\em localization} of $X$ with respect to $S$ if it satisfies the 
following universal property: for any quasi-category $Z$ the induced map $Z^Y\to Z^X$ 
is fully faithful and the essential image consists of all functors $X\to Z$ which send every 
map in $S$ to an equivalence in $Z$.  
\end{definition} 

This universal property determines $Y$ up to isomorphism in the homotopy category $\ho(\sSet)$ for the  
Joyal model structure.  We can 
take as a model for a localization $Y$ the simplicial 
set $L(X,S)$ defined by the pushout diagram 
\[
\begin{tikzcd} 
\bigsqcup_{s\in S} I \arrow[r] \arrow[d] & X \arrow[d] \\ 
\bigsqcup_{s\in S} J \arrow[r] & L(X,S).  
\end{tikzcd} 
\]
% \[
% \xymatrix{ 
% \bigsqcup_{s\in S} I \ar[r] \ar[d] & X \ar[d] \\ 
% \bigsqcup_{s\in S} J \ar[r] & L(X,S).  } 
% \]
To see that $L(X,S)$ is a model for $Y$ we argue as follows.  Let $Z$ be a quasi-category. 
It is clear that the essential image of the map $Z^{L(X,S)}\to Z^X$ consists of all functors 
$X\to Z$ which send every map in $S$ to an equivalence in $Z$.  To show that the 
map $Z^{L(X,S)}\to Z^X$ is fully faithful it suffices 
%see 19.12.2015 notebook page 87
to show that the diagram 
\[
\begin{tikzcd}
Z^{L(X,S)\times I} \arrow[d] \arrow[r] & Z^{X\times I} \arrow[d] \\ 
Z^{L(X,S)\times \partial I} \arrow[r]  & Z^{X\times \partial I} 
\end{tikzcd} 
\]
% \[
% \xymatrix{ 
% Z^{L(X,S)\times I} \ar[d] \ar[r] & Z^{X\times I} \ar[d] \\ 
% Z^{L(X,S)\times \partial I} \ar[r]  & Z^{X\times \partial I} } 
% \]
is a homotopy pullback for the Joyal model structure on $\sSet$, where 
the vertical maps are induced by the canonical map $\partial I\to I$.       
Therefore, since the Joyal model structure is cartesian, 
it suffices to show that the diagram 
\[
\begin{tikzcd} 
X\times \partial I \arrow[r] \arrow[d] & X\times I \arrow[d] \\ 
L(X,S)\times \partial I \arrow[r] & L(X,S)\times I 
\end{tikzcd}  
\]
% \[
% \xymatrix{ 
% X\times \partial I \ar[r] \ar[d] & X\times I \ar[d] \\ 
% L(X,S)\times \partial I \ar[r] & L(X,S)\times I } 
% \]
is a homotopy pushout for the Joyal model structure.  A straightforward argument 
reduces this to the problem of proving that the 
square 
\[
\begin{tikzcd} 
I\times \partial I \arrow[r] \arrow[d] & I\times I \arrow[d] \\ 
J\times \partial I \arrow[r] & J\times I 
\end{tikzcd} 
\]
% \[
% \xymatrix{ 
% I\times \partial I \ar[r] \ar[d] & I\times I \ar[d] \\ 
% J\times \partial I \ar[r] & J\times I } 
% \]
is a homotopy pushout in the Joyal model structure, which is clear.  

Note that the construction $L(X,S)$ is functorial in the pair $(X,S)$; hence we have a functor 
$L\colon \sSet^+\to \sSet$, where $\sSet^+$ denotes the category of {\em marked} simplicial sets 
(Section 3.1 of \cite{HTT}).  In fact a localization of $X$ with respect to $S$ may be 
represented by a fibrant replacement of $(X,S)$ in the model structure of marked 
simplicial sets (see Remark 1.3.4.2 of \cite{HA}).  Note also that there is a canonical isomorphism 
of simplicial sets $L(X,S)^{\op}\simeq L(X^{\op},S^{\op})$.    

We have the following obvious result.  

\begin{lemma} 
\label{lem:comparison of L(X,W)}
Let $X$ be a simplicial set and let $S\subset X_1$ be a set of arrows.  Then there is an 
isomorphism $\mathfrak{C}[L(X,S)] = L(\mathfrak{C}[X],S)$, where 
$L(\mathfrak{C}[X],S)$ is the simplicial category from Definition~\ref{def:luries simp loc} above.
\end{lemma}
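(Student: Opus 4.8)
The plan is to exploit the fact that $\mathfrak{C}[-]\colon \sSet\to \SCat$ is a left adjoint, hence preserves colimits, together with the two explicit pushout presentations on display: the one defining $L(X,S)$ in $\sSet$ (pushout of $\bigsqcup_{s\in S}I\to X$ along $\bigsqcup_{s\in S}I\to\bigsqcup_{s\in S}J$) and the one defining $L(\mathfrak{C}[X],S)$ in $\SCat$ (Definition~\ref{def:luries simp loc}, the pushout of $\bigsqcup_{w}I\to \mathfrak{C}[X]$ along $\bigsqcup_{w}I\to\bigsqcup_{w}\mathfrak{C}[J]$). First I would apply $\mathfrak{C}[-]$ to the $\sSet$-pushout defining $L(X,S)$; since $\mathfrak{C}[-]$ is cocontinuous, the resulting square
\[
\begin{tikzcd}
\mathfrak{C}[\bigsqcup_{s\in S}I] \arrow[r] \arrow[d] & \mathfrak{C}[X] \arrow[d] \\
\mathfrak{C}[\bigsqcup_{s\in S}J] \arrow[r] & \mathfrak{C}[L(X,S)]
\end{tikzcd}
\]
is a pushout in $\SCat$. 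Because $\mathfrak{C}[-]$ also preserves coproducts, $\mathfrak{C}[\bigsqcup_{s\in S}I]\cong\bigsqcup_{s\in S}\mathfrak{C}[I]$ and likewise for $J$; and by the easily proved fact (used already in Definition~\ref{def:luries simp loc}) that $\mathfrak{C}[I]\to I$ is an isomorphism, the top-left corner becomes $\bigsqcup_{s\in S}I$ and the left-hand map becomes exactly the map $\bigsqcup_{s\in S}I\to\bigsqcup_{s\in S}\mathfrak{C}[J]$ appearing in Definition~\ref{def:luries simp loc}.

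Second, I would check that the top horizontal map $\bigsqcup_{s\in S}\mathfrak{C}[I]\to\mathfrak{C}[X]$ obtained this way agrees, under the identification $\mathfrak{C}[I]=I$, with the structure map $\bigsqcup_{w\in W}I\to\mathfrak{C}[X]$ used in Definition~\ref{def:luries simp loc} — this is just the statement that the arrow of $\mathfrak{C}[X]$ picked out by applying $\mathfrak{C}[-]$ to $s\colon I\to X$ is the image of $s$ in $\mathfrak{C}[X]$, which is immediate from the definitions (here $W$ is literally the set $S$ regarded as arrows of $\mathfrak{C}[X]$). Once these two identifications are in place, the pushout square we derived for $\mathfrak{C}[L(X,S)]$ is the very square defining $L(\mathfrak{C}[X],S)$, so by uniqueness of pushouts we get the canonical isomorphism $\mathfrak{C}[L(X,S)]=L(\mathfrak{C}[X],S)$. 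Naturality of all the maps involved in $(X,S)$ shows the isomorphism is the canonical one.

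This lemma is genuinely routine — the only thing to be slightly careful about is the bookkeeping identifying the two structure maps out of $\bigsqcup I$, i.e.\ that applying $\mathfrak{C}[-]$ to the edge $s$ of $X$ and then using $\mathfrak{C}[I]\cong I$ recovers the chosen arrow of $\mathfrak{C}[X]$ labelled by $s$; this is exactly the compatibility of the counit $\mathfrak{C}[\mathrm{N}(-)]\to(-)$ with edges, or more elementarily just unwinding that $\mathfrak{C}[\Delta[1]]$ has its two objects $0,1$ with a single generating $1$-morphism $0\to1$. So I do not expect any real obstacle; the proof is a three-line application of cocontinuity of $\mathfrak{C}[-]$ plus the isomorphism $\mathfrak{C}[I]\cong I$, which is presumably why the paper labels it an "obvious result."
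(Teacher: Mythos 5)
Your argument is correct and is exactly the intended one: the paper states this lemma without proof (as an "obvious result"), and the implicit justification is precisely your observation that $\mathfrak{C}[-]$, being a left adjoint to $\sNerve$, preserves the defining pushout and coproducts, while $\mathfrak{C}[I]\cong I$ identifies the resulting square with the one in Definition~\ref{def:luries simp loc}. Your extra care in matching the structure maps out of $\bigsqcup_{s\in S} I$ is the only bookkeeping needed, and you handle it correctly.
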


Suppose that $u\colon A\to B$ exhibits $B$ as a localization of $A$ at a set of 
arrows $S\subset A_1$.  Let $\cS$ denote the quasi-category of spaces 
(Definition 1.2.16.1 of \cite{HTT}).   A map $A\to \cS$ corresponding to a left fibration 
$X\in \bL(A)$ via Theorem 2.2.1.2 of \cite{HTT} lies in the essential image of $u^*\colon \cS^B\to \cS^A$ if and only 
if the induced map $f_!\colon X_a\to X_b$ (see Lemma 2.1.1.4 of \cite{HTT}) 
is a homotopy equivalence for all $f\colon a\to b$ in $S$ (here $X_a$ and $X_b$ 
denote the fibers of $X$ over $a$ and $b$ respectively).  This observation 
motivates the following definition.      

\begin{definition} 
Let $A$ be a simplicial set and let $S\subset A_1$ be a set of arrows 
in $A$.  A left fibration $X\in \bL(A)$ is said to 
be {\em $S$-local} if the induced map $f_!\colon X_a\to X_b$ is a homotopy equivalence 
for all $f\colon a\to b$ in $S$. 
\end{definition} 
  
Recall (see Lemma 2.1.1.4 of \cite{HTT}) that if $X\in \bL(A)$ then 
the map $f_!\colon X_a\to X_b$ is defined up to equivalence by choosing 
a section $s$ of the trivial Kan fibration $\map_A(I,X)\to \map_A(\set{0},X)$ and defining 
$f_!$ to be the composite map 
\[
\map_A(\set{0},X)\xrightarrow{s} \map_A(I,X)\to \map_A(\set{1},X).   
\]
Therefore the left fibration $X$ is $S$-local if and only if the Kan fibration  
$\map_A(I,X)\to \map_A(\set{1},X)$ induced by the inclusion 
\[
\begin{tikzcd} 
\set{1}\arrow[dr,"b"'] \arrow[rr,hook] & &  I \arrow[dl,"f"] \\ 
& A 
\end{tikzcd} 
\]
in $\sSet/A$ is trivial for all $f\in S$.  Hence we have the following characterization 
of $S$-local left fibrations: $X\in \bL(A)$ is $S$-local if and only if 
it is a fibrant object in the left Bousfield localization $L_S\sSet/A$ of the covariant model structure $(\sSet/A,\bL(A))$  
at the set $S$ of maps $(\set{1},b)\to (I,f)$ in $\sSet/A$ 
with $f\colon a\to b$ an arrow belonging to the set $S\subset A_1$.

With this description of $S$-local left fibrations in hand, we shall prove the following 
result relating the localization $L_S\sSet/A$ of the covariant model structure on 
$\sSet/A$ and the covariant model structure on the quasi-localization $L(S,A)$.  

\begin{proposition} 
Let $A$ be a simplicial set and let $S\subset A_1$ be a set of arrows in $S$.  Let $v\colon 
A\to L(A,S)$ be the canonical map.  Then the Quillen adjunction $(v_!,v^*)$ for the 
covariant model structures on $\sSet/A$ and $\sSet/L(A,S)$ descends to a Quillen adjunction 
\[
v_!\colon L_S \sSet/A\rightleftarrows \sSet/L(A,S)\colon v^* 
\]
where $L_S\sSet/A$ denotes the left Bousfield localization as described above.  
Moreover this Quillen adjunction is a Quillen equivalence.  
\end{proposition}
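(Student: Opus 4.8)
The plan is to check three things in turn: (i) that the Quillen adjunction $(v_!,v^*)$ descends to the localized model structures; (ii) that $v^*$ detects weak equivalences between fibrant objects; and (iii) — the heart of the matter — that the derived counit of the resulting adjunction is a weak equivalence.

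For (i), since $v_!\colon \sSet/A\to\sSet/L(A,S)$ is cocontinuous and commutes with tensoring by simplicial sets, it is enough to check that $v_!$ carries each generating localizing map $(\{1\},b)\to(I,f)$, with $f\colon a\to b$ in $S$, to a covariant equivalence in $\sSet/L(A,S)$. That image is $(\{1\},v(b))\to(I,v(f))$, and by construction $v(f)$ extends along $I\to J$ to a map $J\to L(A,S)$. Since $J$ is a contractible Kan complex, Theorem~\ref{thm:A} together with Lemma~\ref{lem:right cofinal for mono} shows that both $\{1\}\hookrightarrow J$ and $I\hookrightarrow J$ are left anodyne (the pullback of $J_{/c}\to J$ along $\{1\}\hookrightarrow J$ is the contractible space $\mathrm{Hom}^R_J(1,c)$, and along $I\hookrightarrow J$ it is a right fibration over $\Delta[1]$ whose fibres, and hence total space, are weakly contractible), hence both are covariant equivalences in $\sSet/L(A,S)$; two-out-of-three then gives (i). The same computation, applied to the generators $\bigsqcup_S(I\hookrightarrow J)$ whose cobase change is $v$, shows moreover that $v\colon A\to L(A,S)$ is itself left anodyne.

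For (ii), observe that $v$ is a bijection on vertices, because $L(A,S)_0$ is the pushout $A_0\sqcup_{(\bigsqcup_S I)_0}(\bigsqcup_S J)_0$ and $I\hookrightarrow J$ is the identity on vertices. If $Y\to L(A,S)$ is a left fibration, then $v^*Y\to A$ is again a left fibration, and for $g\colon a\to b$ in $S$ its transition map $g_!$ is $v(g)_!$, a homotopy equivalence since $v(g)$ is an equivalence; thus $v^*Y$ is $S$-local, i.e. $v^*$ carries fibrant objects to fibrant objects. A map of left fibrations over $L(A,S)$ is a covariant equivalence exactly when it is a fibrewise weak homotopy equivalence over the vertices of $L(A,S)$, and via the vertex bijection this is equivalent to its image under $v^*$ being a fibrewise weak homotopy equivalence over $A$; hence $v^*$ preserves and reflects weak equivalences between fibrant objects. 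By the standard criterion (the right adjoint detects weak equivalences between fibrant objects and the derived counit is a weak equivalence), it remains only to prove (iii): for every left fibration $Y\to L(A,S)$ the counit $Y\times_{L(A,S)}A\to Y$ is a covariant equivalence in $\sSet/L(A,S)$.

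Step (iii) is the main obstacle, and it genuinely uses that $L(A,S)$ is the localization: it cannot be deduced formally from $v$ being left anodyne, since base change of a covariant equivalence along a left fibration need not be a covariant equivalence. I would obtain it from the straightening theorem (Theorem 2.2.1.2 of \cite{HTT}) together with the universal property of $L(A,S)$ (Definition~\ref{def:quasi-localzn}) applied to the quasi-category $\cS$ of spaces: under straightening, left fibrations over $L(A,S)$ correspond to functors $L(A,S)\to\cS$, which by the universal property correspond to functors $A\to\cS$ inverting $S$, which by the discussion preceding the proposition are precisely the $S$-local left fibrations over $A$; and this correspondence is implemented by $v^*$, since restriction along $v$ straightens to base change along $v$. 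Thus $Rv^*$ is an equivalence of homotopy categories, which together with (ii) yields the Quillen equivalence. Alternatively one can avoid invoking straightening and instead pass to the simplicial-category side, using Lemma~\ref{lem:comparison of L(X,W)} to identify $\mathfrak{C}[L(A,S)]$ with $L(\mathfrak{C}[A],S)$, Proposition~\ref{prop:DK simp loc and Lurie simp loc} to compare the latter with the Dwyer--Kan localization, and Theorem~\ref{thm:DK2 Thm 2.2}, so that (iii) becomes the Dwyer--Kan theorem on localizations of projective model structures on simplicial presheaves; everything outside (iii) is then formal.
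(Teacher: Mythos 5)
Your steps (i) and (ii) are fine (including the observation that $v$ is left anodyne), but the crux is not handled correctly. First, the reduction you state before (iii) is not a valid criterion for a Quillen equivalence: ``the right adjoint detects weak equivalences between fibrant objects and the derived counit is a weak equivalence'' amounts to no more than full faithfulness of $(v^*)^R$ (a fully faithful functor already reflects isomorphisms), and that does not imply a Quillen equivalence --- the identity adjunction from a model category to any nontrivial left Bousfield localization of it satisfies both conditions. The correct pairings are (derived unit on cofibrant objects $+$ right adjoint reflects weak equivalences between fibrant objects) or (derived counit on fibrant objects $+$ left adjoint reflects weak equivalences between cofibrant objects). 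What is genuinely missing, and what is the hard content of the proposition, is essential surjectivity: every $S$-local left fibration over $A$ must be covariantly equivalent to $v^*Y$ for some $Y\in\bL(L(A,S))$. Your execution of (iii) does assert this stronger statement, but only by invoking Lurie's straightening theorem for the quasi-category $\cS$ together with the naturality of unstraightening. As an external citation this can be made to work, but it runs directly against the architecture of this paper: the present proposition feeds, via Theorem~\ref{thm:loc and loc} and Remark~\ref{rem:surjectivity}, into the paper's new proof of the straightening theorem in Section~\ref{subsec:proof of straightening thm}, so within the paper your main route is circular. Your proposed alternative does not repair this: Theorem~\ref{thm:DK2 Thm 2.2} (with Lemma~\ref{lem:comparison of L(X,W)} and Proposition~\ref{prop:DK simp loc and Lurie simp loc}) compares the localized \emph{projective} model structures on $[\mathfrak{C}[A],\sSet]$ and $[\mathfrak{C}[L(A,S)],\sSet]$, and transporting that statement to the covariant model structures on $\sSet/A$ and $\sSet/L(A,S)$ requires exactly a straightening-type equivalence (Theorem~\ref{thm:reversed straightening} or Lurie's theorem), which is neither ``formal'' nor available at this point of the paper.

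For contrast, the paper proves the missing essential surjectivity directly and elementarily. Given an $S$-local $X\in\bL(A)$, pull back along $S\times I\to A$; by Proposition 2.1.3.1 of \cite{HTT} the result is a Kan fibration over $S\times I$; by minimal fibration theory \cite{GZ} it differs by a trivial Kan fibration from a product $S\times I\times M$, which extends to a minimal Kan fibration over $S\times J$, the trivial fibration being handled by Joyal's lemma (Lemma 2.2.5 of \cite{KLV}); gluing along the pushout defining $L(A,S)$ then produces $Y\in\bL(L(A,S))$ with a covariant equivalence $X\to v^*Y$. This characterization of $S$-local left fibrations simultaneously gives the descent of the adjunction and, together with full faithfulness of $(v^*)^R$, the Quillen equivalence, with no appeal to straightening. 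If you want a salvageable version of your argument, you should replace the appeal to straightening by an argument of this kind (or prove the derived unit condition directly); as written, the key step either imports the theorem the paper is ultimately trying to reprove or rests on an insufficient criterion.
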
 

\begin{proof}
Let $v\colon A\to L(A,S)$ denote the canonical map.  
Suppose that $X\to A$ is an $S$-local left fibration.  Proposition 2.1.3.1 in \cite{HTT} 
implies that the pullback $(S\times I)\times_A X\to S\times I$ is a Kan fibration.  
For ease of notation let us write $X' = (S\times I)\times_A X$.  We may factor the 
Kan fibration $X'\to S\times I$ as $X' \to X''\to S\times I$ where $X'\to X''$ is a 
trivial Kan fibration and $X''\to S\times I$ is a minimal Kan fibration.  It is a 
classical result (paragraph 5.4 of \cite{GZ}) that $X''\to S\times I$ is trivial, that is, there is an isomorphism $X''\simeq S\times I\times M$ 
for some (minimal) Kan complex $M$.  It follows easily that there is a minimal Kan fibration 
$Z'\to S\times J$ (explicitly we may take $Z' = S\times J\times M$) and a pullback diagram 
\[
\begin{tikzcd} 
X'' \arrow[r,"\phi"] \arrow[d] & Z' \arrow[d] \\ 
S\times I \arrow[r] & S\times J 
\end{tikzcd} 
\]
% \[
% \xymatrix{ 
% X'' \ar[r] \ar[d] & Z'\ar[d] \\ 
% S\times I \ar[r] & S\times J. } 
% \]
An argument due to Joyal (see Lemma 2.2.5 of \cite{KLV}) shows that we may find a trivial Kan fibration 
$Z\to Z'$ and an isomorphism $X' \simeq \phi^*Z$ over $X''$.  Thus we have a commutative diagram  
\[
\begin{tikzcd} 
X \arrow[d] & \arrow[l]  (S\times I)\times_A X \arrow[r] \arrow[d] & Z \arrow[d] \\ 
A & \arrow[l] S\times I \arrow[r] & S\times J
\end{tikzcd} 
\]
% \[
% \xymatrix{ 
% X\ar[d] & \ar[l] (S\times I)\times_A X \ar[r] \ar[d] & Z \ar[d] \\ 
% A & \ar[l] S\times I \ar[r] & S\times J }
% \]
in which the right hand square is a pullback and the map $Z\to S\times J$ is a Kan fibration.  Define 
\[
Y:= X\cup_{(S\times I)\times_A X}Z.  
\]   
Then the canonical map $Y\to L(A,S)$ is a left fibration since its pullback 
under the surjective map $(S\times J)\sqcup A\to L(A,S)$ is the left fibration 
\[
Z\sqcup X\to (S\times J)\sqcup A.  
\]
We have $X\simeq v^*Y$ and so we obtain another characterization of $S$-local 
fibrations: $X\in \bL(A)$ is $S$-local if and only if there exists $Y\in \bL(L(A,S))$ 
and a covariant equivalence $X\to v^*Y$.  It follows that a map 
$X\to Y$ in $\sSet/A$ is an $S$-local equivalence if and only if 
$v_!X\to v_!Y$ is a covariant equivalence in $\sSet/L(A,S)$.  Therefore we have shown that the 
Quillen adjunction $v_!\colon \sSet/A\rightleftarrows \sSet/L(A,S)\colon v^*$ 
descends to a Quillen adjunction $v_!\colon L_S\sSet/A\rightleftarrows \sSet/L(A,S)\colon v^*$.  
It is straightforward to show that 
$(v^*)^R$ is fully faithful, from which we see that this last 
Quillen adjunction is a Quillen equivalence.    
\end{proof}

If $B$ is a simplicial set then an arrow $f$ in $B$ induces an arrow 
in $\tau_1(B)$ in a canonical way.  We will say that $f$ is an {\em equivalence} in 
$B$ if its image in $\tau_1(B)$ is an isomorphism.  

\begin{lemma} 
\label{lem:descent of cov model str}
Suppose that $u\colon A\to B$ is a map of simplicial sets which sends 
every arrow in $S\subset A_1$ to an equivalence in $B$.  Then 
the Quillen adjunction $(u_!,u^*)$ between the respective covariant model structures 
descends to define a Quillen adjunction 
\[
u_!\colon L_S \sSet/A\rightleftarrows \sSet/B\colon u^*.
\]
\end{lemma}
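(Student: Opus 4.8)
The plan is to verify the standard criterion for a Quillen adjunction to descend to a left Bousfield localization. Since the covariant model structure on $\sSet/A$ is left proper and combinatorial, $L_S\sSet/A$ exists, and $(u_!,u^*)$ will descend to a Quillen adjunction $L_S\sSet/A\rightleftarrows\sSet/B$ as soon as $u_!$ sends each generator of the localization to a covariant equivalence in $\sSet/B$ --- here every object is cofibrant, so $u_!$ is its own left derived functor and no cofibrant replacement is needed. The generators are the inclusions $j_f\colon(\{1\},b)\to(I,f)$ attached to arrows $f\colon a\to b$ in $S$, and $u_!(j_f)$ is the inclusion $\{1\}\hookrightarrow I$ in $\sSet/B$, with $I$ lying over $B$ via $u(f)$ and $\{1\}$ via its target $u(f)(1)$. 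As $u(f)$ is an equivalence in $B$ by hypothesis, the lemma comes down to the following claim: \emph{if $e\colon I\to B$ is an equivalence in $B$, then $\{1\}\hookrightarrow I$ over $B$ (with $I$ mapped by $e$) is a covariant equivalence in $\sSet/B$.}

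First I would reduce this claim to the case of a quasi-category. Choose a weak categorical equivalence $r\colon B\to RB$ with $RB$ a quasi-category; by Theorem~\ref{thm:weak cat equivs and base change} the pair $(r_!,r^*)$ is a Quillen equivalence of covariant model structures, and a left Quillen equivalence reflects weak equivalences between cofibrant objects (all objects here being cofibrant). Since $r_!$ carries $\{1\}\hookrightarrow I$ over $B$ to $\{1\}\hookrightarrow I$ over $RB$ via the edge $r\circ e$, which is again an equivalence, it suffices to prove the claim for $RB$. So assume $B$ is a quasi-category and $e\colon a\to b$ is an equivalence; then $e$ extends to a map $\bar e\colon J\to B$ from the groupoid interval $J$, restricting to $e$ on $I$ (the standard characterisation of equivalences in a quasi-category, see \cite{HTT}).

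Now I would invoke Joyal's criterion, Theorem~\ref{prop:joyal crit for cov eq}: it is enough to show that $R\times_B\{1\}\to R\times_B I$ is a weak homotopy equivalence for every right fibration $R\in\bR(B)$. Pulling $R$ back along $\bar e$ produces a right fibration $R\times_B J\to J$; since $J$ is a contractible Kan complex this is a Kan fibration (the opposite of the Joyal--Lurie fact that $\bL(K)=\Kan(K)$ for $K$ a Kan complex), so by right properness of the classical model structure on $\sSet$ the inclusion of each fibre $R\times_B\{v\}\hookrightarrow R\times_B J$, for $v$ a vertex of $J$, is a weak homotopy equivalence. On the other hand, $R\to B$ is smooth (right fibrations are smooth) and $\{0\}\hookrightarrow I$ is left anodyne, so $R\times_B\{0\}\hookrightarrow R\times_B I$ is left anodyne, hence a weak homotopy equivalence. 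Applying two-out-of-three first to $R\times_B\{0\}\to R\times_B I\to R\times_B J$ and then to $R\times_B\{1\}\to R\times_B I\to R\times_B J$ yields that $R\times_B\{1\}\to R\times_B I$ is a weak homotopy equivalence, which proves the claim and hence the lemma.

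The genuinely non-formal part is the claim about a single equivalence $e\colon I\to B$; everything around it is bookkeeping with the localization machinery. Within the claim, the one input that is not immediate is the reduction to $B$ a quasi-category, which rests on Theorem~\ref{thm:weak cat equivs and base change} together with the extendability of an equivalence in a quasi-category over $I\hookrightarrow J$; after that, contractibility of $J$ and right properness finish things off. I expect this reduction to be the step requiring the most care.
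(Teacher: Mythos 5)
Your proof is correct and follows essentially the same route as the paper's: reduce to the case where the target is a quasi-category via a weak categorical equivalence together with Theorem~\ref{thm:weak cat equivs and base change} (the paper composes with an inner anodyne map $B\to B'$ and uses that $i_!$ reflects covariant equivalences), and then exploit the extension of the equivalence over $J$. The only difference is that the paper declares the quasi-category case ``clear,'' whereas you substantiate it with Joyal's criterion (Theorem~\ref{prop:joyal crit for cov eq}), smoothness of right fibrations, and right properness over the contractible Kan complex $J$ --- a correct filling-in of that step.
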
 

\begin{proof} 
We must show that every arrow $(\set{1},b)\to (I,f)$ in $\sSet/A$ is mapped 
to a covariant equivalence in $\sSet/B$.  If $B$ is a quasi-category then 
this is clear, for then $uf\colon I\to B$ factors through $J$ and $(\set{1},u(b))\to 
(J,uf)$ is a covariant equivalence in $\sSet/B$.  In general, we may compose 
with an inner anodyne $i\colon B\to B'$, where $B'$ is a quasi-category.  The 
argument just given shows that $i_!u_!$ sends every arrow in $S$ to a covariant 
equivalence in $\sSet/B'$.  But we have seen that $i_!$ reflects covariant equivalences 
(Theorem~\ref{thm:weak cat equivs and base change}).  
\end{proof} 

As an instance of this lemma, suppose that $u\colon A\to B$ is a map of simplicial sets which factors 
as 
\[
A\xrightarrow{v} L(A,S)\xrightarrow{w} B.  
\]
Then clearly every arrow in $S$ is mapped to an equivalence in $B$ and so Lemma~\ref{lem:descent of cov model str} 
implies that the Quillen adjunction $(u_!,u^*)$ between the covariant model 
structures descends to a Quillen adjunction $u_!\colon L_S \sSet/A\rightleftarrows 
\sSet/B$ from the localization $L_S\sSet/A$.  

% Now suppose that $B$ is a quasi-category, and $u\colon A\to B$ is a 
% map which sends every arrow in $S$ to an equivalence in $B$.  It follows that 
% the map $u$ factors as 
% \[
% A\xrightarrow{v} L(A,S)\xrightarrow{w} B  
% \]
% and that the Quillen adjunction $u_!\colon \sSet/A\rightleftarrows \sSet/B\colon u^*$ 
% descends to a Quillen adjunction $u_!\colon L_S\sSet/A\rightleftarrows \sSet/B\colon u^*$.  
% {\color{red} explain that if such a factorization exists then $u$ sends every arrow in 
% $S$ to an equivalence in $B$}

We have the following theorem.    

\begin{theorem} 
\label{thm:loc and loc}
Let $A$ be a simplicial set and let $S\subset A_1$ be a set of arrows in $A$. 
Suppose that $u\colon A\to B$ is a map of simplicial sets which factors 
as 
\[
A\xrightarrow{v} L(A,S) \xrightarrow{w} B.  
\]
Then the Quillen adjunction 
\[
u_!\colon L_S\sSet/A\rightleftarrows \sSet/B \colon u^* 
\]
is a Quillen equivalence if and only if $w\colon L(A,S)\to B$ is a 
weak $r$-equivalence.       
\end{theorem}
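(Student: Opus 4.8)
The plan is to factor the question through the Quillen adjunction $(v_!, v^*)$ associated to $v\colon A\to L(A,S)$, which the preceding proposition shows is a Quillen equivalence $v_!\colon L_S\sSet/A\rightleftarrows \sSet/L(A,S)\colon v^*$. Since $u = wv$, the Quillen adjunction $(u_!,u^*)$ factors (up to natural isomorphism of left Quillen functors) as the composite
\[
L_S\sSet/A \xrightarrow{\ v_!\ } \sSet/L(A,S) \xrightarrow{\ w_!\ } \sSet/B,
\]
with right adjoints $u^* \simeq v^* w^*$. By the two-out-of-three property for Quillen equivalences, and because $(v_!,v^*)$ is already known to be a Quillen equivalence, $(u_!,u^*)$ is a Quillen equivalence if and only if $(w_!,w^*)$ is a Quillen equivalence for the covariant model structures on $\sSet/L(A,S)$ and $\sSet/B$.

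It then remains to identify when $(w_!,w^*)\colon \sSet/L(A,S)\rightleftarrows \sSet/B$ is a Quillen equivalence. For this I would invoke Proposition~\ref{prop:char of weak cat equivs}, which states precisely that for a map $g$ of simplicial sets the base-change adjunction $(g_!,g^*)$ is a Quillen equivalence for the covariant model structures if and only if $g$ is a weak $r$-equivalence. Applying this with $g = w\colon L(A,S)\to B$ gives immediately: $(w_!,w^*)$ is a Quillen equivalence if and only if $w$ is a weak $r$-equivalence. Combining with the previous paragraph yields the theorem: $(u_!,u^*)$ is a Quillen equivalence if and only if $w\colon L(A,S)\to B$ is a weak $r$-equivalence.

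The only point requiring a modicum of care is the factorization of $(u_!,u^*)$ as the composite of $(v_!,v^*)$ and $(w_!,w^*)$: one must check that $u_! \simeq w_! v_!$ as left Quillen functors and hence $u^* \simeq v^* w^*$, but this is immediate since $u_!$, $v_!$, $w_!$ are given by composition with the structure maps and $u = wv$, so $u_!(X\to A) = (X\to A\xrightarrow{u} B)$ agrees with first pushing forward along $v$ and then along $w$. Likewise $v^* w^*$ is pullback along $w$ followed by pullback along $v$, which is pullback along $wv=u$. Thus the main content of the theorem is entirely supplied by the earlier proposition on $v$ and by Proposition~\ref{prop:char of weak cat equivs}; there is no genuine obstacle beyond assembling these two inputs via the two-out-of-three property for Quillen equivalences.
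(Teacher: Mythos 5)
Your proposal is correct and follows exactly the paper's own argument: factor $(u_!,u^*)$ through the Quillen equivalence $(v_!,v^*)$ of the preceding proposition, apply two-out-of-three for Quillen equivalences, and then characterize when $(w_!,w^*)$ is a Quillen equivalence via Proposition~\ref{prop:char of weak cat equivs}. Nothing further is needed.
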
 

\begin{proof} 
We have proven above that $v_!\colon L_S\sSet/A\rightleftarrows \sSet/L(A,S)\colon v^*$ is a 
Quillen equivalence.  Therefore $u_!\colon L_S\sSet/A\rightleftarrows \sSet/B\colon u^*$ is a 
Quillen equivalence if and only if $w_!\colon \sSet/L(A,S)\rightleftarrows \sSet/B\colon w^*$ 
is a Quillen equivalence, if and only if $w\colon L(A,S)\to B$ is a weak $r$-equivalence 
(Proposition~\ref{prop:char of weak cat equivs}).  
% Suppose that $w$ is a 
% weak $r$-equivalence.  Therefore, by Proposition~\ref{prop:char of weak cat equivs},    
% we see that the Quillen adjunction $w_!\colon \sSet/L(A,S)\rightleftarrows \sSet/B\colon w^*$ is a 
% Quillen equivalence.  It follows by what we have proven above that 
% $u_!\colon L_S\sSet/A\rightleftarrows \sSet/B\colon u^*$ is also a Quillen equivalence.  

% Suppose now that the Quillen adjunction  
% $u_!\colon L_S\sSet/A\rightleftarrows \sSet/B\colon u^*$ 
% is a Quillen equivalence.  It follows that 
% $w_!\colon \sSet/L(A,S)\rightleftarrows \sSet/B\colon w^*$ is a Quillen equivalence.    
% Hence $w$ is a weak $r$-equivalence by 
% Proposition~\ref{prop:char of weak cat equivs}.   
% We have already observed that the left derived functor $(u_!)^L$ reflects weak equivalences.  
% Therefore it suffices to prove that $(u_!)^L(u^*)^RX\to X$ is a covariant equivalence in 
% $\sSet/B$ for every $X\in \bL(B)$.  Since $u\colon A\to B$ exhibits $B$ as a localization 
% of $A$, it follows from the remarks above that the map $(u^*)^R\colon \ho(\sSet/B)\to \ho(\sSet/A)$ 
% is fully faithful.  This is equivalent to the condition that 
% $(u_!)^L(u^*)^RX\to X$ is a covariant equivalence in 
% $\sSet/B$.  
\end{proof} 

\begin{remark} 
\label{rem:surjectivity}
It follows that if in addition to the hypotheses of Theorem~\ref{thm:loc and loc} above, 
$u$ is essentially surjective, then $u\colon A\to B$ 
exhibits $B$ as a localization of $A$ at $S$ if and only if the Quillen adjunction in the statement of Theorem~\ref{thm:loc and loc}  
is a Quillen equivalence.  
\end{remark}

\begin{remark} 
\label{rem:obtaining lv map}
Let $B$ be a simplicial set and let $p_B\colon N(\Delta/B)\to B$ be the last vertex map.  Let $S$ denote the set of  
final vertex maps $n\colon \Delta[0]\to \Delta[n]$ in $N(\Delta/B)$.  Then $p_B$ sends each map 
in $S$ to an identity arrow in $B$.  It follows that the composite $\bigsqcup_{s\in S}I \to N(\Delta/B)\xrightarrow{p_B} B$ 
factors through $\bigsqcup_{s\in S}\Delta[0]$ and hence through $\bigsqcup_{s\in S}J$.  Therefore there is a canonical 
map $L(N(\Delta/B),S)\to B$ and the map $p_B$ factors as the composite $N(\Delta/B)\to L(N(\Delta/B),S)\to B$.    
\end{remark}

\subsection{The delocalization theorem}
\label{subsec:deloc}
Recall from the introduction the definition of the 
last vertex map $p_B\colon N(\Delta/B)\to B$.  
Recall also that we write $S$ for the 
set of final vertex maps in $\Delta/B$.  When we want to 
emphasize the simplicial set $B$ we will sometimes write 
$S_B$ for $S$.  

Our next goal is the following key theorem from the introduction.   

\flatteningthm* 

Here the map $L(N(\Delta/B),S)\to B$ in the statement of the theorem is obtained as 
in Remark~\ref{rem:obtaining lv map}.  
As stated in the introduction, we view this theorem as an analog for 
simplicial sets of the delocalization theorem of \cite{DK2} (Theorem 2.5 of 
that paper).  
% Since Theorem~\ref{thm:Joyals flattening thm} is a key result, we 
% shall give two proofs of it.  The first proof is very short, but it 
% depends on the material on localization of quasi-categories that 
% we have developed in the previous section.  The second proof, while 
% longer, is more direct and self contained.  

% \begin{proof}[First Proof]
% Proposition~\ref{prop:localised (p_!,p^*) Quillen equiv} and Theorem~\ref{thm:loc and loc}  
% show that $L(N(\Delta/B),S)\to B$ is a weak $r$-equivalence.  Since it is surjective 
% it is a weak categorical equivalence (Remark~\ref{rem:surjectivity}).  
% \end{proof}

\begin{proof}%[Second Proof]
It will be slightly more convenient to replace the set $S$ with the 
set of arrows of the wide subcategory $W_B$ of $\Delta/B$ consisting of 
those arrows $u\colon \Delta[m]\to \Delta[n]$ in $\Delta/B$ for which 
$u(m) = n$.  It is straightforward to check that the canonical map 
$L(N(\Delta/B),S)\to L(N(\Delta/B),W_B)$ is a weak categorical equivalence.  

Let $F\colon \sSet\to \sSet$ denote the functor which sends a simplicial set 
$B$ to $L(N(\Delta/B),W_B)$.  Then $F$ is cocontinuous, since   
$N(\Delta/B)$ and $W_B$ are cocontinuous functors of $B$ (a proof 
of the first statement may be found in \cite{Latch} and the proof of 
the second statement is similar).
%   For 
% $N(\Delta/B)$ one may see this by observing that for every $n\geq 0$ there is a canonical isomorphism, 
% natural in $B$, 
% \[
% N(\Delta/B)_{n+1}\simeq \bigsqcup_{[m]\in \Delta} N(\Delta/[m])_n\times B_m
% \]
% and an identification 
% \[
% N(\Delta/B)_0 = \bigsqcup_{[m]\in \Delta} B_m.  
% \]
% It follows easily that the functor $N(\Delta/-)_n$ is cocontinuous.  A similar argument shows 
% that $W_- = N(\Delta_i/-)_1$ is cocontinuous.  
Observe that if $B\subset B'$ then $\Delta/B$ is a subcategory of 
$\Delta/B'$; it follows that $F(B)\subset F(B')$.  Since the Joyal model structure on $\sSet$ 
is left proper, a standard argument using the skeletal filtration (see for example \cite{JT3}) shows 
that we may reduce to the case in which $B = \Delta[n]$ is a simplex.    

Thus our problem is to show that $L(N(\Delta/[n]),W_{\Delta[n]})\to \Delta[n]$ is a weak categorical 
equivalence.  Equivalently, we may prove that the map $p_{\Delta[n]}\colon N(\Delta/[n])\to \Delta[n]$ is a localization 
with respect to $W_{\Delta[n]}\subset N(\Delta/[n])$.  Let $Z$ be a quasi-category, and write 
$\map_W(N(\Delta/[n]),Z)$ for the full subcategory of $\map(N(\Delta/[n]),Z)$ spanned by the 
maps $\phi\colon N(\Delta/[n])\to Z$ which send arrows in $W = W_{\Delta[n]}$ to equivalences 
in $Z$.

There is a functor 
$r\colon [n]\to (\Delta/[n])$, defined on objects by the formula 
\[
r(i) = ([i],[i]\hookrightarrow [n]), 
\]
where the map $[i]\hookrightarrow [n]$ is the canonical one corresponding to the 
inclusion $\set{0,\ldots, i}\subset \set{0,\ldots, n}$.  

Clearly we have $pr = \mathrm{id}$ (here we have written $p := p_{\Delta[n]}$).  There is a natural transformation 
$\alpha\colon \mathrm{id} \to rp$ whose components are given by 
\[
([m],\alpha\colon [m]\to [n]) \to ([\alpha(m)],[\alpha(m)] \hookrightarrow [n]) 
\]
where the map $[m]\to [\alpha(m)]$ is the canonical map induced by $\alpha$.  
Clearly the components of $\alpha$ belong to $W_{\Delta[n]}$. 

The functors $p$ and $r$ induce maps 
\[
p^*\colon \map(\Delta[n],Z)\to \map_W(N(\Delta/[n]),Z)
\]
and
\[
r^*\colon \map_W(N(\Delta/[n]),Z)\to \map(\Delta[n],Z)  
\]
respectively.  It suffices to prove that there is a natural equivalence between $p^*r^*$ 
and the identity mapping on $\map_W(N(\Delta/[n]),Z)$.  

Therefore, we may consider 
the following general situation.  Suppose that $A$ is a simplicial set and $W\subset A_1$ is a 
collection of arrows in $A$.  Suppose that $\lambda\colon A\times I\to A$ is a homotopy 
between maps $f,g\colon A\to A$ such that $f(W)\subset W$ and $g(W)\subset W$, so 
that $f$ and $g$ induce maps $f^*,g^*\colon \map_W(A,Z)\to \map_W(A,Z)$ with the 
obvious notation.  We claim that if the components $\lambda_a\colon f(a)\to g(a)$ belong to $W$, then 
$\lambda$ induces a natural equivalence $\mu\colon \map_W(A,Z)\times J\to \map_W(A,Z)$ 
from $f^*$ to $g^*$.  This is enough to prove the statement above, i.e.\ the case where 
$f = rp$ and $g = \mathrm{id}$.    

The homotopy $\lambda$ induces a homotopy $\map_W(A,Z)\times I\to \map(A,Z)$.  Since 
$\map_W(A,Z)\subset \map(A,Z)$ is a full subcategory it follows by the assumption on $f$ and 
$g$ that this homotopy restricts to a homotopy $\mu\colon \map_W(A,Z)\times I\to \map_W(A,Z)$.  To show that this 
latter homotopy $\mu$ is a natural equivalence, it suffices by Theorem 5.14 of \cite{Joyal-Barcelona} 
to prove that its components are equivalences.  Let $\phi\colon A\to Z$ be a 0-simplex of 
$\map_W(A,Z)$.  Then $\mu\colon \set{\phi}\times I\to \map_W(A,Z)$ is an edge in 
$\map_W(A,Z)$ from $\phi f$ to $\phi g$.  Since $\map_W(A,Z)\subset \map(A,Z)$ is a full subcategory, 
it suffices to show that $\mu\colon \set{\phi}\times I\to \map(A,Z)$ is an equivalence.  
Therefore, by Theorem 5.14 of op.\ cit.\ again, it suffices to show that the components 
$\mu_a\colon \phi(f(a))\to \phi(g(a))$ are equivalences in $Z$.  But $\lambda_a\colon f(a)\to g(a)$ 
belongs to $W$ by hypothesis, and hence $\mu_a$ is an equivalence for all $a\in A$, which 
completes the proof.       
\end{proof}

\begin{remark}
\label{rem:dual flattening thm}
Note that there is a canonical isomorphism $\Delta/B = \Delta/B^{\op}$ between 
the simplex categories of the simplicial set and the opposite simplicial set 
$B^{\op}$.  Under this isomorphism, a simplex $\sigma\colon \Delta[n]\to B$ is sent 
to the corresponding simplex of the opposite simplicial set $B^{\op}$; observe also 
that final vertex maps in $\Delta/B$ are sent to initial 
vertex maps in $\Delta/B^{\op}$.  Composing this 
isomorphism with $p_{B^{\op}}$, we obtain a map of simplicial sets $N(\Delta/B)\to B^{\op}$.  
The opposite $q_B:= (p_{B^{\op}})^{\op}$ is then a map  
\[
q_B\colon (N(\Delta/B))^{\op}\to B. 
\]
As for $p_B$, the map $q_B$ is determined by the functor $q_{\Delta[n]}$ 
which sends $u\colon [m]\to [n]$ in $(\Delta/[n])^{\op}$ to $u(0)$.  
Straightforward manipulations with opposites then give the dual 
version of Theorem~\ref{thm:Joyals flattening thm}:   
the map 
\[
q_B\colon N(\Delta/B)^{\op}\to B 
\]
exhibits $B$ as a localization of $N(\Delta/B)^{\op}$ at the set of initial vertex 
maps in $N(\Delta/B)$.  
\end{remark}
   
Combining the previous results with Lemma~\ref{lem:comparison of L(X,W)}, 
we obtain the following corollary.  

\begin{corollary} 
\label{corr:flattening} 
Let $B$ be a simplicial set and let $S\subset \Delta/B$ denote the set 
of final vertex maps.  Then there is an isomorphism 
\[
\mathfrak{C}[B] = L(\mathfrak{C}[N(\Delta/B)],S)
\]
in $\ho(\SCat)$.  
\end{corollary}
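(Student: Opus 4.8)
The plan is to deduce this formally from Theorem~\ref{thm:Joyals flattening thm} and Lemma~\ref{lem:comparison of L(X,W)}, using only the fact — one of the standing facts of the paper — that $\mathfrak{C}[-]$ is a left Quillen functor from the Joyal model structure on $\sSet$ to the Bergner model structure on $\SCat$. Since $\ho(\SCat)$ denotes the homotopy category of the Bergner model structure, the asserted equality just amounts to producing a DK-equivalence between $\mathfrak{C}[B]$ and $L(\mathfrak{C}[N(\Delta/B)],S)$.

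First I would invoke Theorem~\ref{thm:Joyals flattening thm}: the canonical map $L(N(\Delta/B),S)\to B$ is a weak categorical equivalence. Every simplicial set is cofibrant in the Joyal model structure, and a left Quillen functor preserves weak equivalences between cofibrant objects, so the induced map
\[
\mathfrak{C}[L(N(\Delta/B),S)]\longrightarrow \mathfrak{C}[B]
\]
is a weak equivalence in the Bergner model structure, that is, a DK-equivalence. Next I would apply Lemma~\ref{lem:comparison of L(X,W)} with $X = N(\Delta/B)$ and the set of final vertex map arrows $S\subset X_1$; this yields an isomorphism of simplicial categories
\[
\mathfrak{C}[L(N(\Delta/B),S)] = L(\mathfrak{C}[N(\Delta/B)],S),
\]
where the right-hand side is the localization of Definition~\ref{def:luries simp loc}. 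Composing, $L(\mathfrak{C}[N(\Delta/B)],S)\to \mathfrak{C}[B]$ is a DK-equivalence, hence $\mathfrak{C}[B] = L(\mathfrak{C}[N(\Delta/B)],S)$ in $\ho(\SCat)$.

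The main obstacle is not really located in this corollary at all: the substantive input is the delocalization Theorem~\ref{thm:Joyals flattening thm}, and once it is in hand the present statement is a one-line formal consequence. The only points requiring a token of care are that the hypotheses of Lemma~\ref{lem:comparison of L(X,W)} are met — the final vertex maps are arrows of the category $\Delta/B$, hence $1$-simplices of $N(\Delta/B)$, so $S\subset (N(\Delta/B))_1$ is legitimate — and keeping straight which localization the symbol $L$ denotes on each side, namely the simplicial-set localization of Definition~\ref{def:quasi-localzn} applied to $N(\Delta/B)$ versus the simplicial-category localization of Definition~\ref{def:luries simp loc} applied to $\mathfrak{C}[N(\Delta/B)]$; reconciling the two is exactly the content of Lemma~\ref{lem:comparison of L(X,W)}. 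If one preferred, the dual formulation of Remark~\ref{rem:dual flattening thm}, using initial vertex maps in $N(\Delta/B)$, would give the corollary by the identical argument.
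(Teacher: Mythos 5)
Your proposal is correct and is exactly the argument the paper intends: combine Theorem~\ref{thm:Joyals flattening thm} with Lemma~\ref{lem:comparison of L(X,W)}, using that $\mathfrak{C}[-]$ is left Quillen (and every simplicial set is Joyal-cofibrant) to turn the weak categorical equivalence $L(N(\Delta/B),S)\to B$ into a DK-equivalence $\mathfrak{C}[L(N(\Delta/B),S)]\to\mathfrak{C}[B]$. The paper leaves this as an immediate consequence of those two results, and your write-up supplies precisely the same one-line deduction, with the correct care about which localization $L$ denotes on each side.
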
 

There is a similar isomorphism with $S$ replaced by the set of initial vertex maps.  
From this corollary, together with Proposition~\ref{prop:Riehl} 
and Proposition~\ref{prop:DK simp loc and Lurie simp loc}, one may prove the following proposition, 
which gives a new model for the simplicial category $\mathfrak{C}[B]$.   

\begin{proposition} 
\label{thm: comparison C[B] and deltaB}
Let $B$ be a simplicial set and let $W\subset \Delta/B$ denote the 
wide subcategory of final vertex maps.  Then there is an isomorphism  
\[
 \mathfrak{C}[B] = L^H(\Delta/B,W) 
\]
in the homotopy category $\ho(\SCat)$.  
\end{proposition}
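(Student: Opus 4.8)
The plan is to assemble the statement by gluing together three facts already at hand: Corollary~\ref{corr:flattening}, which identifies $\mathfrak{C}[B]$ with the Lurie localization $L(\mathfrak{C}[N(\Delta/B)],S)$ in $\ho(\SCat)$; Proposition~\ref{prop:Riehl}, which supplies a DK-equivalence $\pi_0\colon \mathfrak{C}[N(\Delta/B)]\to\Delta/B$; and Proposition~\ref{prop:DK simp loc and Lurie simp loc} together with the Dwyer--Kan zig-zag comparing $L^S$ and $L^H$, which will carry us from the Lurie localization of the ordinary category $\Delta/B$ down to its hammock localization. The proof is therefore essentially bookkeeping: one must keep straight that the three localization constructions ($L$, $L^S$, $L^H$) are all applied to the \emph{same} marked set of arrows under these identifications. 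I expect no genuine obstacle beyond this, since all of the homotopical content is already packaged in the cited results; the one point requiring a little care is the homotopy-invariance of the Lurie construction in the pair, carried out below.

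First I would note that, under the isomorphism $\mathfrak{C}[N(\Delta/B)] = F_*(\Delta/B)$ of Proposition~\ref{prop:Riehl}, the functor $\pi_0$ carries the set $S$ of final vertex maps --- regarded as a set of $0$-simplices of the mapping spaces of $\mathfrak{C}[N(\Delta/B)]$ --- bijectively onto the set of arrows of the wide subcategory $W\subset\Delta/B$ of final vertex maps. Indeed an edge of $N(\Delta/B)$ is precisely a morphism of $\Delta/B$, and $\pi_0$ sends the corresponding $0$-simplex back to that morphism; so $\pi_0$ is a map of pairs $(\mathfrak{C}[N(\Delta/B)],S)\to(\Delta/B,W)$.

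Next I would show that the induced map $L(\mathfrak{C}[N(\Delta/B)],S)\to L(\Delta/B,W)$ is a DK-equivalence. Recall from Definition~\ref{def:luries simp loc} that $L(C,V)$ is the pushout of $\bigsqcup_{v\in V}\mathfrak{C}[J]\leftarrow\bigsqcup_{v\in V}I\to C$, and that $\bigsqcup_{v}I\to\bigsqcup_{v}\mathfrak{C}[J]$ is a cofibration in the Bergner model structure, being a coproduct of copies of the cofibration $\mathfrak{C}[I]\to\mathfrak{C}[J]$ obtained by applying the left Quillen functor $\mathfrak{C}[-]$ to $I\to J$. In particular $\mathfrak{C}[N(\Delta/B)]\to L(\mathfrak{C}[N(\Delta/B)],S)$ is a cofibration. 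Since the arrow-set of $W$ equals $S$ and the inclusion $\bigsqcup_{S}I\to\Delta/B$ factors through $\pi_0$, the pasting law for pushouts gives
\[ L(\Delta/B,W) \;=\; L(\mathfrak{C}[N(\Delta/B)],S)\cup_{\mathfrak{C}[N(\Delta/B)]}\Delta/B, \]
so that $L(\Delta/B,W)$ is obtained from $L(\mathfrak{C}[N(\Delta/B)],S)$ by pushing the DK-equivalence $\pi_0$ out along a cofibration. As the Bergner model structure is left proper, this pushout is again a DK-equivalence --- the same maneuver used in the proof of Proposition~\ref{prop:DK simp loc and Lurie simp loc}.

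Finally, $\Delta/B$ is an ordinary category and $W$ is a wide subcategory, so the pair $(\Delta/B,W)$ has discrete mapping spaces and Proposition~\ref{prop:DK simp loc and Lurie simp loc} applies to give a weak equivalence $L(\Delta/B,W)\simeq L^S(\Delta/B,W)$; the zig-zag $L^H(\Delta/B,W)\leftarrow\diag L^H(F_*(\Delta/B),F_*W)\to L^S(\Delta/B,W)$ of Dwyer--Kan then gives $L^S(\Delta/B,W)\simeq L^H(\Delta/B,W)$. Stringing these together yields
\[ \mathfrak{C}[B] \;=\; L(\mathfrak{C}[N(\Delta/B)],S)\;\simeq\;L(\Delta/B,W)\;\simeq\;L^S(\Delta/B,W)\;\simeq\;L^H(\Delta/B,W) \]
in $\ho(\SCat)$, which is the claim. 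The variant with $W$ the wide subcategory of initial vertex maps follows in exactly the same way, starting instead from the corresponding version of Corollary~\ref{corr:flattening} and using Remark~\ref{rem:dual flattening thm}.
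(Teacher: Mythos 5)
Your proof is correct and takes essentially the same route the paper intends: the paper deduces this proposition from Corollary~\ref{corr:flattening}, Proposition~\ref{prop:Riehl} and Proposition~\ref{prop:DK simp loc and Lurie simp loc}, leaving the details to the reader, and your argument fills them in exactly along those lines. The pushout-pasting plus left-properness step is the same maneuver used in the proof of Proposition~\ref{prop:DK simp loc and Lurie simp loc}, and the final passage from $L^S$ to $L^H$ via the Dwyer--Kan zig-zag is as recalled in Section~\ref{subsec:simp loc}.
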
 

Again, there is a similar isomorphism with $W$ replaced by 
the wide subcategory of initial vertex maps.  We leave the details to the reader.

\subsection{$L$-cofinal functors} 
\label{subsec:Lcofinal}

In this subsection we generalize some results of Dwyer and Kan on the concept of $L$-cofinal functors.  
This concept was originally introduced in the paper \cite{DK4} in the context 
of functors between ordinary categories;  
it has an evident generalization to the context of maps between simplicial sets which we now describe.

\begin{definition} 
If $u\colon A\to B$ is a map of simplicial sets then we say $u$ is {\em $L$-cofinal} 
if the following two conditions are satisfied: 
\begin{enumerate} 
\item the fiber $u^{-1}(b)$ is weakly contractible for every vertex $b\in B$; 
\item if $b\in B$ is a vertex and $1\to Rb\to B$ is a factorization of $b\colon 1\to B$ 
into a right anodyne map followed by a right fibration, then the map 
$u^{-1}(b)\to A\times_B Rb$ is left cofinal.  
\end{enumerate}
\end{definition} 

Here the map $u^{-1}(b)\to A\times_B Rb$ in (2) above is the canonical map into 
the pullback arising from the commutative diagram 
\[
\begin{tikzcd}
u^{-1}(b) \arrow[d] \arrow[r] & Rb \arrow[d] \\ 
A \arrow[r,"u"'] & B 
\end{tikzcd} 
\]
% \[
% \xymatrix{ 
% u^{-1}(b) \ar[d] \ar[r] & Rb \ar[d] \\ 
% A \ar[r]_-u & B } 
% \]
in which $u^{-1}(b)\to Rb$ is the map given by the composite 
$u^{-1}(b)\to 1 \to Rb$.    

Notice that if (2) above holds for one such factorization of $b\colon 1\to B$, then 
it holds for every such factorization.  
The following lemma gives a simpler, equivalent, formulation of the notion of 
$L$-cofinal map when $A$ and $B$ are quasi-categories.  

\begin{lemma}
Suppose that $u\colon A\to B$ is a 
map of quasi-categories.   Then $u$ is {\em $L$-cofinal} if and only if the 
following two conditions are satisfied: 
\begin{enumerate} 
\item the fiber $u^{-1}(b)$ is weakly contractible for every vertex $b\in B$; 
\item the canonical map $u^{-1}(b)\to A\times_B B_{/b}$ 
is left cofinal for every vertex $b\in B$.  
\end{enumerate} 
\end{lemma}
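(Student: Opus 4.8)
The plan is to show that for quasi-categories $A$ and $B$, the general definition of $L$-cofinality collapses to the stated simplified form. Condition (1) is literally the same in both definitions, so all the work lies in condition (2): we must prove that, when $A$ and $B$ are quasi-categories, the map $u^{-1}(b)\to A\times_B Rb$ (for some right-anodyne-to-right-fibration factorization $1\to Rb\to B$) is left cofinal if and only if the map $u^{-1}(b)\to A\times_B B_{/b}$ is left cofinal. The key observation is that when $B$ is a quasi-category, the slice $B_{/b}\to B$ is itself a right fibration (this is the quasi-categorical analog of Corollary 2.1.2.2 of \cite{HTT}, applied to the right fibration $B_{/b}\to B$), and the canonical map $1\to B_{/b}$ picking out the identity edge $1_b$ is right anodyne (indeed $1_b$ is a terminal object of $B_{/b}$, exactly as in the remarks in Section~\ref{subsec:cov model str} about the dual slice). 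Hence $1\to B_{/b}\to B$ is one valid choice of factorization of $b\colon 1\to B$ into a right anodyne map followed by a right fibration.

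First I would record that, as observed immediately after the definition of $L$-cofinal, condition (2) is independent of the chosen factorization: given two factorizations $1\to Rb\to B$ and $1\to R'b\to B$, one produces a common refinement or compares them directly, using that any two fibrant replacements of $b\colon 1\to B$ in the contravariant model structure on $\sSet/B$ are connected by a zig-zag of trivial fibrations over $B$, and that left cofinality is invariant under composition with trivial Kan fibrations and under base change of such zig-zags along the projection to $B$. Concretely, if $Rb\to R'b$ is a trivial fibration over $B$ compatible with the maps from $1$, then pulling back along $A\to B$ gives a trivial fibration $A\times_B Rb\to A\times_B R'b$, and by the two-out-of-three property for left cofinal maps (right cancellation, from the remarks following Lemma~\ref{lem:cov equiv right cof}) the map $u^{-1}(b)\to A\times_B Rb$ is left cofinal iff $u^{-1}(b)\to A\times_B R'b$ is.

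Next I would simply apply this independence statement with the specific factorization $1\to B_{/b}\to B$: since $B$ is a quasi-category, this is a legitimate right-anodyne-then-right-fibration factorization of $b$, so condition (2) of the general definition is equivalent to the assertion that $u^{-1}(b)\to A\times_B B_{/b}$ is left cofinal, which is precisely condition (2) of the lemma. This would complete the proof.

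The main obstacle I anticipate is establishing cleanly that condition (2) of the $L$-cofinality definition really is independent of the chosen factorization — the excerpt asserts this as an aside ("Notice that if (2) above holds for one such factorization…") but to use it rigorously I need the comparison of two fibrant replacements of $1\to B$ in the contravariant model structure together with the stability of left cofinal maps under base change along right fibrations (Lemma in Section~\ref{subsec:cofinal}, the dual of "the base change of a right cofinal map along a right fibration is right cofinal") and under right cancellation. Everything else — that $B_{/b}\to B$ is a right fibration and that $1_b\colon 1\to B_{/b}$ is right anodyne when $B$ is a quasi-category — is standard and already invoked in the excerpt for the dual slices $B_{b/}$ and $B^{b/}$.
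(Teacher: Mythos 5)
Your overall strategy is the natural (surely intended) one, and the paper itself leaves this proof to the reader: since $B$ is a quasi-category, $B_{/b}\to B$ is a right fibration, and $1_b\colon 1\to B_{/b}$ is the inclusion of a terminal vertex, hence left cofinal and therefore right anodyne by the dual of Lemma~\ref{lem:right cofinal for mono}; so $1\to B_{/b}\to B$ is an admissible factorization of $b\colon 1\to B$ and the lemma reduces to the independence of condition (2) of the chosen factorization, which the paper records right after the definition. If you are content to cite that remark, you are done.

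The genuine soft spot is in your proposed justification of that independence. First, the zig-zag you want is best produced explicitly: factor $1\to Rb\times_B R'b$ as a right anodyne map $1\to Q$ followed by a right fibration; the two projections $Q\to Rb$ and $Q\to R'b$ are then right fibrations and contravariant equivalences under $1$ over $B$, hence trivial Kan fibrations, and they pull back along $A\to B$ to trivial Kan fibrations $\bar\alpha\colon A\times_B Q\to A\times_B Rb$ and $\bar\beta\colon A\times_B Q\to A\times_B R'b$ compatible with the maps from $u^{-1}(b)$. Now one direction of the comparison is just that left cofinal maps compose. The other direction requires: if $\bar\alpha$ is a trivial Kan fibration and $\bar\alpha f_Q$ is left cofinal, then $f_Q$ is left cofinal. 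This is a \emph{left} cancellation, not the right cancellation you invoke (right cancellation concludes cofinality of the \emph{second} map of a composite), and left cancellation is false for general left cofinal maps: in $\set{0}\subset \Delta[1]\to \Delta[0]$ both $\Delta[1]\to\Delta[0]$ and the composite are left cofinal, but $\set{0}\subset\Delta[1]$ is not. It does hold in your situation because $\bar\alpha$ is a trivial Kan fibration, hence a weak categorical equivalence: by the dual of Theorem~\ref{thm:weak cat equivs and base change} the functor $\bar\alpha_!$ is a left Quillen equivalence for the contravariant model structures and therefore reflects contravariant equivalences (all objects being cofibrant); combining this with the dual of Lemma~\ref{lem:cov equiv right cof} (a map is left cofinal iff it is a contravariant equivalence over its codomain) and two-out-of-three over $A\times_B Rb$ gives exactly the missing direction. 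With that repair the argument is complete; as written, the cancellation step is not justified by the tool you cite.
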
 

\begin{proof} 
The proof of this lemma is straightforward and is left to the reader.  
\end{proof}
  
It is not hard to show that every $L$-cofinal map is both left cofinal and right cofinal 
(in fact every $L$-cofinal map is {\em dominant} --- see \cite{Joyal-Barcelona}, page 173).

Let $u\colon A\to B$ be $L$-cofinal, and let $S$ denote the set of arrows 
$S = A_1\times_{B_1}B_0$ in $A$.  Write $S$ also for the set of maps  
$(\set{1},b)\to (I,f)$ in $\sSet/A$ 
with $f\colon a\to b\in S\subset A_1$.  Then $u_!\colon \sSet/A\to \sSet/B$ 
sends every map in $S$ to a covariant equivalence in $\sSet/B$ and hence 
descends to a left Quillen functor $u_!\colon L_S\sSet/A\to \sSet/B$ from the left 
Bousfield localization of the covariant model structure on $\sSet/A$.  
We have the following result.  

\begin{theorem}
\label{thm:localization thm1} 
Let $u\colon A\to B$ be an $L$-cofinal map between simplicial sets 
and let $S = A_1\times_{B_1}B_0$.  Then the 
map $u\colon A\to B$ induces a Quillen equivalence 
\[
u_!\colon L_S\sSet/A\rightleftarrows \sSet/B\colon u^*
\]
where $L_S\sSet/A$ denotes the left Bousfield localization of the covariant model structure 
on $\sSet/A$ and where $\sSet/B$ is equipped with the covariant model structure.  
\end{theorem}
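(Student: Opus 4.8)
The plan is to prove this as a combination of the localization results established earlier. First, I would show that $u$ factors (up to the relevant equivalences) through the quasi-localization $L(A,S)$. The set $S = A_1 \times_{B_1} B_0$ consists precisely of those arrows of $A$ that $u$ collapses to degenerate edges of $B$, so $u$ sends every arrow in $S$ to an identity, hence an equivalence, in $B$. By the discussion following Lemma~\ref{lem:descent of cov model str} (the ``instance'' remark about maps factoring through $L(A,S)$), the Quillen adjunction $(u_!,u^*)$ descends to a Quillen adjunction $u_!\colon L_S\sSet/A \rightleftarrows \sSet/B \colon u^*$, and since every arrow of $S$ goes to an equivalence we get a factorization $A \xrightarrow{v} L(A,S) \xrightarrow{w} B$. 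Then by Theorem~\ref{thm:loc and loc}, the Quillen adjunction $(u_!,u^*)$ from $L_S\sSet/A$ is a Quillen equivalence if and only if $w\colon L(A,S) \to B$ is a weak $r$-equivalence.

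So the whole theorem reduces to the assertion: \emph{if $u\colon A\to B$ is $L$-cofinal, then the induced map $w\colon L(A,S)\to B$ is a weak $r$-equivalence}. Equivalently, using Proposition~\ref{prop:char of weak cat equivs} and Theorem~\ref{thm:loc and loc}, I must show the Quillen adjunction $(v_!,v^*)$ composed with $(w_!,w^*)$, i.e.\ $(u_!,u^*)$ from $L_S\sSet/A$, is a Quillen equivalence directly. The cleanest route: since $(v_!,v^*)\colon L_S\sSet/A \rightleftarrows \sSet/L(A,S)$ is already known to be a Quillen equivalence (proven earlier in Section~\ref{subsec:qcat loc}), it suffices to prove that $(u_!,u^*)\colon L_S\sSet/A \rightleftarrows \sSet/B$ is a Quillen equivalence, which by two-out-of-three for Quillen equivalences is equivalent to $(w_!,w^*)\colon \sSet/L(A,S)\rightleftarrows \sSet/B$ being a Quillen equivalence. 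I would establish this by checking the two standard conditions: (a) $u_!$ reflects weak equivalences between cofibrant objects of $L_S\sSet/A$, and (b) for every fibrant (i.e.\ left fibration) $X\in \sSet/B$, the derived counit $u_!\, Q\, u^* X \to X$ is a covariant equivalence.

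For both (a) and (b), the key computational input is the $L$-cofinality hypothesis combined with Joyal's criterion for covariant equivalences (Theorem~\ref{prop:joyal crit for cov eq}) and for cofinal maps (Theorem~\ref{thm:Joyals char of cov equiv}). Specifically, for $X \in \bL(B)$ and a vertex $b$ of $B$ with factorization $1 \to Rb \to B$, I would analyze the pullback $Rb \times_B A \times_B X = Rb \times_B u^* X$ and compare it to the fiber $u^{-1}(b) \times_B X = u^{-1}(b)$-indexed pullback, using condition (2) of $L$-cofinality: the map $u^{-1}(b) \to A \times_B Rb$ is left cofinal, so after pulling back the (smooth) left fibration $X \to B$ it remains a weak homotopy equivalence on the relevant fiber products; combined with condition (1) (weak contractibility of $u^{-1}(b)$) this shows $Rb \times_B u^* X$ has the homotopy type of the fiber $X(b)$. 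This is exactly the input needed to run the argument that $u_! u^* X \to X$ is a covariant equivalence via Theorem~\ref{prop:joyal crit for cov eq}. The analogous fiberwise analysis, now without $X$, handles the fully faithfulness half and the reflection of equivalences.

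The main obstacle I anticipate is the bookkeeping around the localized model structure $L_S\sSet/A$: one must be careful that ``weak equivalence between cofibrant objects'' means $S$-local equivalence, and that the fibrant objects are the $S$-local left fibrations over $A$, which by the earlier analysis are exactly those $X \in \bL(A)$ of the form $v^* Y$ for $Y \in \bL(L(A,S))$ (up to covariant equivalence). Threading this characterization through the fiberwise computations — in particular verifying that the pullback constructions respect $S$-locality and that the left-cofinality of $u^{-1}(b) \to A\times_B Rb$ interacts correctly with the smoothness of left fibrations under base change — is where the real work lies. Once that is set up, the rest follows formally from Theorem~\ref{thm:loc and loc} and Proposition~\ref{prop:char of weak cat equivs}, together with the fact, recorded just before the theorem, that $u_!$ already descends to $L_S\sSet/A$ because every map in $S$ becomes a covariant equivalence in $\sSet/B$ (an $L$-cofinal map being in particular left cofinal, so the relevant edges $(\{1\},b)\to(I,f)$ go to covariant equivalences).
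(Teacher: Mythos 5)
Your proposal has a genuine gap, and it sits exactly where the real content of the theorem lies. Your half (b) is fine: the counit computation that $u_!u^*X\to X$ is a covariant equivalence for $X\in \bL(B)$ does go through by the fiberwise analysis you sketch (pull back the left cofinal map $u^{-1}(b)\to A\times_B Rb$ along the left fibration --- note left fibrations are \emph{proper}, not smooth, in the paper's terminology --- and use contractibility of $u^{-1}(b)$ together with Theorem~\ref{prop:joyal crit for cov eq}); this mirrors the technique of the paper's proof, though the paper applies it to the unit rather than the counit. The problem is your half (a), the conservativity of $(u_!)^L$ on $L_S\sSet/A$. Given (b), condition (a) is logically equivalent to the derived unit $X\to u^*Ru_!X$ being an $S$-local equivalence for every $S$-local left fibration $X$ over $A$, i.e.\ to the essential surjectivity of $(u^*)^R$ onto $S$-local left fibrations --- and this is precisely the hard step. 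Your only remark on it, ``the analogous fiberwise analysis, now without $X$, handles \dots the reflection of equivalences,'' does not supply an argument: the analysis without $X$ only shows (via Theorem~\ref{thm:Joyals char of cov equiv}) that $u$ is right cofinal, which does not imply that $u_!$ reflects $S$-local equivalences, since $S$-local equivalences over $A$ admit no direct fiberwise criterion over $A$. Tellingly, your sketch never uses $S$-locality of objects over $A$ at all, whereas the paper's proof uses it crucially: for an $S$-local left fibration $X$, the pullback of $X$ to $u^{-1}(b)$ is a \emph{Kan} fibration (Proposition 2.1.3.1 of \cite{HTT}), so weak contractibility of $u^{-1}(b)$ identifies the fiber $a^*X$ with the relevant comparison object; this is how the unit is shown to be an equivalence, and it is the step your plan omits.

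Two smaller points. First, your opening reduction is circular: you invoke Theorem~\ref{thm:loc and loc} to reduce the theorem to ``$w\colon L(A,S)\to B$ is a weak $r$-equivalence,'' and then immediately translate back (via the Quillen equivalence $(v_!,v^*)$ and two-out-of-three) to the original statement that $(u_!,u^*)$ out of $L_S\sSet/A$ is a Quillen equivalence, so nothing has been gained; the paper instead proves the present theorem directly (showing $(u_!)^L$ is fully faithful and $(u^*)^R$ is conservative, the latter being the genuinely easy half, using only surjectivity of $u$ on vertices) and \emph{then} feeds it into Theorem~\ref{thm:loc and loc} to obtain the delocalization statement. Second, the descent of $(u_!,u^*)$ to $L_S\sSet/A$ holds because $u$ sends each $f\in S$ to a degenerate edge, hence an equivalence, in $B$ (Lemma~\ref{lem:descent of cov model str}), not because $u$ is left cofinal; this is minor, but the justification you give is not the right one.
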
 

\begin{proof} 
We show that the left derived functor $(u_!)^L$ is fully faithful and the right derived functor 
$(u^*)^R$ is conservative.  For the second statement, suppose that $X\to Y$ is a map 
in $\bL(B)$ such that $u^*X\to u^*Y$ is a weak equivalence in the localized model structure.  Then 
$u^*X\to u^*Y$ is a covariant equivalence in $\bL(A)$ and hence is a fiberwise weak homotopy 
equivalence.  Therefore $X(u(a))\to Y(u(a))$ is a weak homotopy equivalence for all $a\in A_0$, where 
$X(u(a))$ and $Y(u(a))$ denote the fibers of $X$ and $Y$ over $u(a)$.  Since $u\colon A\to B$ 
is surjective it follows that $X\to Y$ is a covariant equivalence.  

To show that the left derived functor $(u_!)^L$ is fully faithful we must show that the canonical map 
$X\to u^*Ru_!X$ is an $S$-local equivalence for every $S$-local left fibration 
$X$ on $A$, where $Ru_!X$ denotes a fibrant replacement of $u_!X$.  Since 
$u^*Ru_!X$ is $S$-local, we see therefore that $(u_!)^L$ is fully faithful 
if and only if $a^*X\to a^*u^*Ru_!X = b^*Ru_!X$ is a weak homotopy equivalence 
for every vertex $a\in A$, where $a\colon 1\to A$ and $b=u(a)\colon 1\to B$ denote 
the canonical maps.  

The map $b\colon 1\to B$ factors as $1_b\colon 1\to Rb\to B$ where $1_b\colon 1\to Rb$ is 
right anodyne and $p\colon Rb\to B$ is a right fibration.  Consider the pullback 
diagram 
\[
\begin{tikzcd}
A\times_B Rb \arrow[r,"v"] \arrow[d,"q"'] & Rb \arrow[d,"p"] \\ 
A\arrow[r,"u"'] & B 
\end{tikzcd}
\]
% \[
% \xymatrix{ 
% A\times_B Rb \ar[r]^-v \ar[d]_-q & Rb \ar[d]^-p \\ 
% A\ar[r]_-u & B }
% \]
Then the vertical maps are right fibrations and hence are smooth in the sense of 
Definition 11.1 of \cite{Joyal-Barcelona}.  It follows (see Proposition 11.6 of \cite{Joyal-Barcelona}) that the 
diagram of functors 
\[
\begin{tikzcd} 
\ho(\sSet/A\times_B Rb) \arrow[r,"(v_!)^L"] & \ho(\sSet/Rb) \\ 
\ho(\sSet/A) \arrow[u,"q^*"] \arrow[r,"(u_!)^L"'] & \ho(\sSet/B) \arrow[u,"p^*"'] 
\end{tikzcd} 
\]
% \[
% \xymatrix{ 
% \ho(\sSet/A\times_B Rb) \ar[r]^-{(v_!)^L} & \ho(\sSet/Rb) \\ 
% \ho(\sSet/A) \ar[u]^-{q^*} \ar[r]_-{(u_!)^L} & \ho(\sSet/B) \ar[u]_-{p^*} } 
% \]
commutes up to a natural isomorphism.  In particular we have a covariant equivalence  
\[
Rv_!q^*X\to p^*Ru_!X  
\]
for any left fibration $X$ on $A$.  
Since $1_b\colon 1\to Rb$ is right anodyne, the induced map $b^*Ru_!X = 1_b^*p^*Ru_!X\to p^*Ru_!X$ 
is right anodyne and hence is a weak homotopy equivalence.  Observe that the map 
$a\colon 1\to A$ factors as the composite map $q(a,1_b)$ where $(a,1_b)\colon 1\to A\times_B Rb$ 
denotes the inclusion of the corresponding vertex.  Therefore there is a canonical map 
$a^*X\to q^*X$.  We have a commutative diagram 
\[
\begin{tikzcd} 
a^*X \arrow[rr] \arrow[d] & & b^*Ru_!X \arrow[d] \\ 
q^*X \arrow[r] & Rv_!q^*X \arrow[r] & p^*Ru_!X 
\end{tikzcd} 
\]
% \[
% \xymatrix{ 
% a^*X \ar[rr] \ar[d] & & b^*Ru_!X \ar[d] \\ 
% q^*X \ar[r] & Rv_!q^*X \ar[r] & p^*Ru_!X } 
% \]
in which the map $q^*X\to Rv_!q^*X$ is a covariant equivalence in $\sSet/Rb$ 
and hence is a weak homotopy equivalence.  It follows therefore that the map $a^*X\to b^*Ru_!X$ is a weak 
homotopy equivalence if and only if the map $a^*X\to q^*X$ is a weak 
homotopy equivalence.  

Since $i\colon u^{-1}(b)\to A\times_BRb$ is left cofinal, the canonical map $Y\to q^*X$ is a weak 
homotopy equivalence, where $Y$ is defined by the pullback diagram 
\[
\begin{tikzcd} 
Y \arrow[r] \arrow[d] & q^*X \arrow[d] \\ 
u^{-1}(b) \arrow[r,"i"'] & A\times_B Rb 
\end{tikzcd} 
\]
% \[
% \xymatrix{ 
% Y \ar[r] \ar[d] & q^*X \ar[d] \\ 
% u^{-1}(b) \ar[r]_-i & A\times_B Rb } 
% \]
Since $X$ is $S$-local, it follows from Proposition 2.1.3.1 of \cite{HTT} 
that $Y\to u^{-1}(b)$ is a Kan fibration.  
Hence $a^*Y\to Y$ is a weak homotopy equivalence since $u^{-1}(b)$ is weakly contractible.  This 
completes the proof of the theorem.   
\end{proof}

The following result is a direct analog of 2.7 of \cite{DK2}.  
 
\begin{theorem} 
\label{thm:localization thm}
Let $u\colon A\to B$ be an $L$-cofinal map between simplicial sets.  
Then the map $u\colon A\to B$ exhibits $B$ as a localization of $A$ at the set of arrows 
$S = A_1\times_{B_1}B_0$ in $A$.  
\end{theorem}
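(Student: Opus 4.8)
The plan is to deduce this almost entirely from the Quillen equivalence of Theorem~\ref{thm:localization thm1}, using the factorization criterion of Theorem~\ref{thm:loc and loc} and its consequence Remark~\ref{rem:surjectivity}. In other words, the real content of the statement was already proved in Theorem~\ref{thm:localization thm1}, and what remains is to translate that Quillen equivalence into the universal property defining a quasi-categorical localization.

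First I would check that $u$ factors through $L(A,S)$. Every arrow $f$ of $S = A_1\times_{B_1}B_0$ has the property that $u(f)$ is a degenerate edge of $B$, in particular an equivalence in $B$; hence $u$ sends every arrow in $S$ to an equivalence in $B$, and (as in the discussion following Lemma~\ref{lem:descent of cov model str}) there is a canonical factorization $A\xrightarrow{v} L(A,S)\xrightarrow{w} B$. This places us in the situation of Theorem~\ref{thm:loc and loc}. Next I would record that $u$ is essentially surjective: by condition~(1) in the definition of $L$-cofinal, the fiber $u^{-1}(b)$ is weakly contractible, hence non-empty, for every vertex $b\in B$, so $u$ is surjective on vertices.

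Finally I would invoke Theorem~\ref{thm:localization thm1}, which asserts precisely that $u_!\colon L_S\sSet/A\rightleftarrows\sSet/B\colon u^*$ is a Quillen equivalence. Given the factorization of $u$ through $L(A,S)$ and the essential surjectivity of $u$, Remark~\ref{rem:surjectivity} then directly yields that $u\colon A\to B$ exhibits $B$ as a localization of $A$ at $S$.

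Since all the genuine work is packaged in Theorem~\ref{thm:localization thm1}, I do not expect a real obstacle here; the only point requiring a moment's care is the bookkeeping underlying Remark~\ref{rem:surjectivity}, namely that Theorem~\ref{thm:loc and loc} gives $w\colon L(A,S)\to B$ a weak $r$-equivalence, and that essential surjectivity of $u=wv$ (together with the fact that $v$ is bijective on vertices) upgrades $w$ to an essentially surjective weak $r$-equivalence, hence to a weak categorical equivalence $L(A,S)\to B$, which is exactly the universal property in Definition~\ref{def:quasi-localzn}.
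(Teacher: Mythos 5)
Your proposal is correct and follows exactly the paper's route: the paper deduces the theorem immediately from Theorem~\ref{thm:localization thm1} together with Theorem~\ref{thm:loc and loc} (via Remark~\ref{rem:surjectivity}), which is precisely your argument. The bookkeeping you spell out — arrows of $S$ mapping to degenerate edges so that $u$ factors through $L(A,S)$, and essential surjectivity from the non-empty fibers — is exactly what the paper leaves implicit.
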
 

\begin{proof}
This follows immediately from Theorem~\ref{thm:localization thm1} and 
Theorem~\ref{thm:loc and loc}.   
\end{proof}

\section{The straightening theorem} 
\label{sec:straightening}

\subsection{The straightening and unstraightening functors} 
\label{subsec:intro to straightening} 
In Section 2.2.1 of \cite{HTT} Lurie defines a pair of functors $(\St_B,\Un_B)$ --- the {\em straightening} and {\em unstraightening} 
functors respectively --- forming part of an adjunction 
\[
\St_B\colon \sSet/B\leftrightarrows 
[\mathfrak{C}[B],\sSet]\colon \Un_B.
\]  
Recall (see \cite{HTT}) that if $X\in \sSet/B$ then 
$\St_B(X)\colon \mathfrak{C}[B]\to \sSet$ is the simplicial functor defined by 
\[
\St_B(X) = \mathfrak{C}[B\cup_X 1\star X](1,-)
\]
where 1 denotes the cone point of the join $1\star X$.   
The adjunction $(\St_B,\Un_B)$ is 
a Quillen adjunction for the covariant model structure on $\sSet/B$ and the projective 
model structure on $[\mathfrak{C}[B],\sSet]$.  Recall the statement of the straightening 
theorem: 

\begin{theorem*}[Lurie \cite{HTT}]
The Quillen adjunction 
\[
\St_B\colon \sSet/B\rightleftarrows [\mathfrak{C}[B],\sSet]\colon \Un_B 
\]
is a Quillen equivalence.  
\end{theorem*}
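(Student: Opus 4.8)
The plan is to prove the theorem first when $B = NC$ is the nerve of a small category and then to deduce the general case from the delocalization Theorem~\ref{thm:Joyals flattening thm}.

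\emph{The base case.} Suppose $B = NC$. By Theorem~\ref{thm:main} the covariant model structure on $\sSet/NC$ is Quillen equivalent to $L_W[(\Delta/NC)^{\op},\sSet]$. By Proposition~\ref{thm: comparison C[B] and deltaB} (in the form involving $(\Delta/B)^{\op}$ and the initial vertex maps) the simplicial localization of the pair $((\Delta/NC)^{\op},W)$ is DK-equivalent to $\mathfrak{C}[NC]$, so Theorem~\ref{thm:DK2 Thm 2.2}, together with Proposition~\ref{prop:DK simp loc and Lurie simp loc}, yields a Quillen equivalence $L_W[(\Delta/NC)^{\op},\sSet]\rightleftarrows [\mathfrak{C}[NC],\sSet]$; composing, $\sSet/NC$ is Quillen equivalent to $[\mathfrak{C}[NC],\sSet]$. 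What remains for the base case is to identify this composite, on the level of homotopy categories and in the unstraightening direction, with the derived functor of $\Un_{NC}$ — equivalently, to identify $\St_{NC}$ up to natural weak equivalence with $\Re_{NC}$ followed by the above left Quillen functor into $[\mathfrak{C}[NC],\sSet]$. This is precisely the watered-down form of Theorem~\ref{thm:reversed straightening} referred to in the introduction; over a nerve it can be checked by hand, comparing the explicit coend description of $\Un_{NC}$ with that of the composite functor and using Propositions~\ref{prop:Riehl} and~\ref{thm: comparison C[B] and deltaB} to control the rigidification, together with the fact that the fibre of $\Un_{NC}(F)$ over an object $c$ of $C$ is computed by evaluation $F(c)$.

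\emph{Reduction to the base case.} For arbitrary $B$ put $A = N(\Delta/B)$ and let $p = p_B\colon A\to B$ be the last vertex map. By Theorem~\ref{thm:Joyals flattening thm}, $p$ exhibits $B$ as a localization of $A$ at the set $S$ of final vertex maps in $\Delta/B$; since $p$ is surjective on vertices, Theorem~\ref{thm:loc and loc} and Remark~\ref{rem:surjectivity} show that $(p_!,p^*)$ descends to a Quillen equivalence $p_!\colon L_S\sSet/A\rightleftarrows \sSet/B\colon p^*$ between the localized covariant model structure and the covariant model structure. On the other side, Corollary~\ref{corr:flattening} together with Lemma~\ref{lem:comparison of L(X,W)} identifies $\mathfrak{C}[B]$ with the simplicial localization $L(\mathfrak{C}[A],S)$, the comparison map being (up to this identification) the canonical map $\mathfrak{C}[p]\colon \mathfrak{C}[A]\to \mathfrak{C}[B]$; so, by Proposition~\ref{prop:DK simp loc and Lurie simp loc} and Theorem~\ref{thm:DK2 Thm 2.2}, $\mathfrak{C}[p]$ induces a Quillen equivalence $\mathfrak{C}[p]_!\colon L_S[\mathfrak{C}[A],\sSet]\rightleftarrows [\mathfrak{C}[B],\sSet]\colon \mathfrak{C}[p]^*$. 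Because straightening is natural in the base there is a natural isomorphism $\St_B\circ p_!\cong \mathfrak{C}[p]_!\circ \St_A$; moreover $\St_A$ descends to a left Quillen functor $L_S\sSet/A\to L_S[\mathfrak{C}[A],\sSet]$, since its right adjoint $\Un_A$ carries $S$-local diagrams to $S$-local left fibrations (again because the fibre of $\Un_A(F)$ over a vertex is computed by evaluation, so the transition maps of $\Un_A(F)$ along arrows of $S$ are equivalences). By the base case, and the fact that $\St_A$ and $\Un_A$ identify the $S$-local objects on the two sides, the descended adjunction $L_S\sSet/A\rightleftarrows L_S[\mathfrak{C}[A],\sSet]$ is a Quillen equivalence; combined with the two vertical Quillen equivalences and the commuting square, this forces $(\St_B,\Un_B)$ to be a Quillen equivalence.

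\emph{The main obstacle.} The crux is the base case: one has to describe the straightening and unstraightening functors over a nerve $NC$ explicitly enough to recognize $\Un_{NC}$, up to natural weak equivalence of derived functors, as the composite Quillen equivalence assembled from Theorems~\ref{thm:main} and~\ref{thm:DK2 Thm 2.2}, which in turn requires a concrete analysis of $\mathfrak{C}[NC\cup_X 1\star X]$ and of the rigidification $\mathfrak{C}[-]$. A secondary technical point is justifying, in the reduction step, the naturality square $\St_B\circ p_!\cong \mathfrak{C}[p]_!\circ \St_A$ and the descent to the localized model structures; both rest on the identification of the fibres of $\Un$ with evaluation and on $\mathfrak{C}[-]$ preserving the relevant pushouts.
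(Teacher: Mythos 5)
Your reduction step is essentially the paper's own: both arguments use Theorem~\ref{thm:Joyals flattening thm}, the naturality square $\St_B\circ (p_B)_!\cong \mathfrak{C}[p_B]_!\circ \St_{N(\Delta/B)}$, and the localized Quillen equivalence $\mathfrak{C}[p_B]_!\colon L_S[\mathfrak{C}[N(\Delta/B)],\sSet]\rightleftarrows[\mathfrak{C}[B],\sSet]$ obtained from Proposition~\ref{prop:Riehl}, Proposition~\ref{prop:DK simp loc and Lurie simp loc} and Theorem~\ref{thm:DK2 Thm 2.2}. The genuine gap is the base case, which you correctly single out as the crux but do not actually prove. You dispose of it by asserting that the composite abstract equivalence can be identified with $(\St_{NC},\Un_{NC})$ ``by hand'', and the only concrete ingredient you offer is that the fibre of $\Un_{NC}(F)$ over an object $c$ ``is computed by evaluation $F(c)$''. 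As an isomorphism-level statement this is false: an $n$-simplex of that fibre is a map $\St_{NC}(\Delta[n]\xrightarrow{\mathrm{const}\,c}NC)\to F$, and $\St_{NC}$ of a simplex collapsed to a vertex is not the corepresentable for $n>0$. As a statement up to weak equivalence it is true for projectively fibrant $F$, but proving it requires showing that $\St$ of a simplex over a point is weakly contractible -- exactly the kind of nontrivial analysis of $\mathfrak{C}[-]$ (Lurie's combinatorial lemmas) that this paper deliberately avoids and never supplies, and which is essentially part of the straightening theorem itself. The same unproved claim is also what you use to descend $\St_{N(\Delta/B)}$ to the $S$-localized structures, so both places where your argument touches the actual functors $\St$ and $\Un$ rest on it.

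The paper closes this gap differently, and this is the ingredient you are missing: it works with $\St$ on representables rather than with fibres of $\Un$. Having the reversed straightening equivalence $\phi_!\colon[\mathfrak{C}[NC],\sSet]\rightleftarrows\sSet/NC$ (Theorem~\ref{thm:reversed straightening}, or its watered-down form of Remark~\ref{rem:simple straightening thm}, built from the same localization inputs you cite), it suffices to show that $\St_\psi\circ u$ is naturally equivalent to the identity of $[C,\sSet]$, where $u$ sends $c$ to $NC_{c/}$ and $\psi\colon\mathfrak{C}[NC]\to C$ is the DK-equivalence of Proposition~\ref{prop:Riehl}. This is done by an explicit representable computation: $\St_{NC}(\{c\})\cong\mathfrak{C}[NC](c,-)$ on the nose, the left anodyne map $\{c\}\to NC_{c/}$ gives a projective weak equivalence $\St_{NC}(\{c\})\to\St_{NC}(NC_{c/})$, and the composite to $\mathfrak{C}[NC](c,-)$ is an isomorphism, so $\St_{NC}(NC_{c/})\to\mathfrak{C}[NC](c,-)$ is a natural projective equivalence; by Dugger's results a natural transformation that is an equivalence on representables is an equivalence, which identifies $\St_\psi\circ u$ with the identity on homotopy categories (Section~\ref{subsec:straightening for categories}). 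The same computation (applied to $\St_A$ of the maps $(\set{1},b)\to(I,f)$, $f\in S$) is what legitimately justifies the descent of the straightening over $N(\Delta/B)$ to the localized model structures. Without this, or an equivalent analysis, your proposal does not close.
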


In this section we will give a reasonably straightforward proof of this theorem.  
In fact, we will also prove a variation on the straightening theorem in which the 
directions of the left and right adjoints are reversed.  This is the `reversed' straightening 
theorem (see Theorem~\ref{thm:reversed straightening}) which we prove in the next section.  

Our strategy to prove the straightening theorem is to reduce it to the special 
case in which $B$ is equal to the nerve $NC$ of a category $C$.  
Recall (Proposition 2.2.1.1 of \cite{HTT}) that the straightening and unstraightening 
functors are natural with respect to maps of the base and hence the 
following diagram of left Quillen functors commutes up 
to a natural isomorphism: 
\[
\begin{tikzcd} 
\sSet/N(\Delta/B) \arrow[d,"{p}_!"'] \arrow[rr,"\mathrm{St}_{N(\Delta/B)}"] 
& & \left[\mathfrak{C}\left[N(\Delta/B)\right],\sSet\right] \arrow[d,"{\mathfrak{C}\left[p\right]}_!"] \\ 
\sSet/B \arrow[rr,"{\mathrm{St}}_B"'] & & \left[\mathfrak{C}\left[B\right],\sSet\right]
\end{tikzcd} 
\]  
% \[
% \xymatrix{ 
% \sSet/N(\Delta/B) \ar[d]_-{p_!} \ar[rr]^-{\St_{N(\Delta/B)}} 
% & & \left[\mathfrak{C}\left[N(\Delta/B)\right],\sSet\right] \ar[d]^-{\mathfrak{C}\left[p\right]_!} \\ 
% \sSet/B \ar[rr]_-{\St_B} & & \left[\mathfrak{C}\left[B\right],\sSet\right] } 
% \]        
It is easy to show, using Theorem~\ref{thm:Joyals flattening thm}, that on taking 
localizations with respect to the set of final vertex maps 
the vertical maps induce equivalences on homotopy categories.  It therefore 
suffices to prove that the top horizontal map is the left adjoint in a Quillen equivalence.  
We give the details in Sections~\ref{subsec:straightening for categories} 
and~\ref{subsec:proof of straightening thm} below.  

\subsection{A reversed straightening theorem}
\label{subsec:reversed straightening}
As an application of Theorem~\ref{thm:Joyals flattening thm}, we make the following construction.  
Let $B$ be a simplicial set.  Choose a fibrant replacement functor $\sSet/B\to \bL(B)$ for the 
covariant model structure on $\sSet/B$ (by \cite{RSS} such a functorial fibrant replacement exists because 
the covariant model structure is combinatorial).  There is a natural 
inclusion $N(\bL(B))\hookrightarrow \sNerve(\bL(B))$ from the ordinary nerve of the category underlying 
the simplicially enriched category $\bL(B)$ into the simplicial nerve of 
$\bL(B)$, and we may consider the functor $\psi\colon N(\Delta/B)\to \sNerve(\bL(B))$ defined as the composite  
\[
N(\Delta/B)\hookrightarrow N(\sSet/B)\to N(\bL(B))\hookrightarrow \sNerve(\bL(B)).  
\]
Observe that this composite functor sends every initial vertex map  
to an equivalence in $\sNerve(\bL(B))$ (but not in $N(\bL(B))$).  
Therefore, by the dual version of Theorem~\ref{thm:Joyals flattening thm} (see 
Remark~\ref{rem:dual flattening thm}), it follows that 
there is a map 
\[
\phi\colon B^{\op}\to \sNerve(\bL(B)) 
\]
such that the diagram 
\[
\begin{tikzcd} 
N(\Delta/B) \arrow[d,"p_B"'] \arrow[r,"\psi"] & \sNerve(\bL(B)) \\ 
B^{\op} \arrow[ur,"\phi"'] &
\end{tikzcd} 
\]
% \[
% \xymatrix{ 
% N(\Delta/B) \ar[d] \ar[r] & \sNerve(\bL(B)) \\ 
% B^{\op} \ar[ur] &} 
% \]
commutes up to an invertible 1-arrow.  In other words, there exists a map 
$h\colon N(\Delta/B)\times J\to \sNerve(\bL(B))$ restricting to $\psi$ on $N(\Delta/B)\times \set{0}$ 
and $\phi p_B$ on $N(\Delta/B)\times \set{1}$.  

By adjointness the map $\phi$ 
corresponds to a unique simplicial functor 
\[
\bar{\phi}\colon \mathfrak{C}[B]^{\op}\to \bL(B).  
\]
While this functor is difficult to describe explicitly, its action on objects 
is easy to understand; it sends a vertex $b\in B$ to the left fibration $Lb\in \bL(B)$ determined 
by a choice of a factorization $1\to Lb\to B$ of the vertex $b\colon 1\to B$ into a 
left anodyne map followed by a left fibration.    

\begin{remark}
\label{rem:left fibn map for NC}
In the case when $B=NC$ is the nerve of a category $C$, we can however give a 
much more explicit description of the functor $NC^{\op}\to \sNerve\bL(NC)$.  
In this case we may simply take the (simplicial) nerve of the functor $C^{\op}\to \bL(NC)$ 
which sends an object $c$ in $C$ to the left fibration $NC_{c/}$ over $NC$.  
If $\sigma\colon \Delta[n]\to NC$ is an object of $\Delta/NC$, then there is a  
canonical map $\Delta[n]\to NC_{\sigma(0)/}$ making the diagram 
\[
\begin{tikzcd}
\Delta[n] \arrow[rr] \arrow[dr,"\sigma"'] & & NC_{\sigma(0)/} \arrow[dl] \\ 
& NC 
\end{tikzcd}
\]
% \[
% \xymatrix{
% \Delta[n] \ar[rr] \ar[dr]_-{\sigma} & & NC_{\sigma(0)/} \ar[dl] \\ 
% & NC } 
% \]
commute; this defines a natural transformation from the inclusion $\Delta/NC\subset 
\sSet/NC$ to the composite functor $\Delta/NC \to C^{\op}\to \bL(NC)\to \sSet/NC$.  
Note that the canonical map $\Delta[n]\to NC_{\sigma(0)/}$ is left anodyne.      
\end{remark}

Left Kan extension of the composite map $\mathfrak{C}[B]^{\op}\to \bL(B)\subset \sSet/B$ along 
the (simplicial) Yoneda embedding $\mathfrak{C}[B]^{\op}\to [\mathfrak{C}[B],\sSet]$ determines a 
(simplicial) adjunction 
\[
\phi_!\colon [\mathfrak{C}[B],\sSet]\rightleftarrows \sSet/B\colon \phi^!. 
\]
Here the right adjoint is the functor $\phi^!$ which on objects sends $X\in \sSet/B$ to the 
simplicial presheaf $\map_B(\bar{\phi}(-),X)$.  
Since every object in $\sSet/B$ is cofibrant, it follows easily that the right adjoint 
sends (trivial) fibrations in $\sSet/B$ to pointwise (trivial) fibrations in 
$[\mathfrak{C}[B],\sSet]$, in other words the adjunction above is a (simplicial) 
Quillen adjunction.  
Our main result in this section is the following theorem from the introduction.  

\revstr*

\begin{proof} 
From the construction of the map $\phi\colon B^{\op}\to \sNerve(\bL(B))$, 
recall that the composite functor $N(\Delta/B)\to B^{\op}\to \sNerve(\bL(B))$ is 
naturally equivalent to the functor given as the composite $N(\Delta/B)\to N(\bL(B))\hookrightarrow 
\sNerve(\bL(B))$.  By adjointness, it follows that we have two commutative diagrams 
\begin{equation} \tag{A} \label{eq:A}
	\begin{tikzcd} 
	\mathfrak{C}\left[N(\Delta/B)\right] \arrow[r] \arrow[d] & \Delta/B \arrow[d] \\ 
	\mathfrak{C}\left[N(\Delta/B)\times J\right] \arrow[r,"\bar{h}"'] & \bL(B)
	\end{tikzcd} 
\end{equation}
% \begin{equation} \tag{A} \label{eq:A}
% 	\xy 
% 		(-15,7.5)*+{\mathfrak{C}[N(\Delta/B)]}="1"; 
% 		(15,7.5)*+{\Delta/B}="2"; 
% 		(-15,-7.5)*+{\mathfrak{C}[N(\Delta/B)\times J]}="3"; 
% 		(15,-7.5)*+{\bL(B)}="4"; 
% 		{\ar "1";"2"};
% 		{\ar "1";"3"};
% 		{\ar "2";"4"}; 
% 		{\ar "3";"4"};
% 	\endxy
% \end{equation}
and 
\begin{equation} \tag{B} \label{eq:B}
	\begin{tikzcd} 
		\mathfrak{C}\left[N(\Delta/B)\times J\right] \arrow[r,"\bar{h}"] & \bL(B) \\ 
		\mathfrak{C}\left[N(\Delta/B)\right] \arrow[u] \arrow[r,"{\mathfrak{C}\left[p_B\right]}"'] & 
		\mathfrak{C}\left[B\right]^{\op} \arrow[u,"\bar{\phi}"']
	\end{tikzcd} 
\end{equation} 
% \begin{equation} \tag{B} \label{eq:B}
% 	\xy 
% 		(-15,7.5)*+{\mathfrak{C}[N(\Delta/B)\times J]}="1"; 
% 		(15,7.5)*+{\bL(B)}="2"; 
% 		(-15,-7.5)*+{\mathfrak{C}[N(\Delta/B)]}="3"; 
% 		(15,-7.5)*+{\mathfrak{C}[B]^{\op}}="4"; 
% 		{\ar "1";"2"};
% 		{\ar "3";"1"};
% 		{\ar "3";"4"}; 
% 		{\ar "4";"2"};
% 	\endxy
% \end{equation}
Here the map $\mathfrak{C}[N(\Delta/B)]\to \Delta/B$ in diagram~\eqref{eq:A} is the counit of the adjunction $\mathfrak{C}\dashv \sNerve$ 
(recall that the simplicial nerve and the ordinary nerve coincide on $\Delta/B$), while 
the map $\Delta/B\to \bL(B)$ in diagram~\eqref{eq:A} is the composite $\Delta/B \hookrightarrow \sSet/B\to \bL(B)$ 
defining the functor $\psi$. The simplicial functor $\bar{h}$ is the adjoint of the natural 
equivalence $h\colon N(\Delta/B)\times J\to \sNerve(\bL(B))$.  

Using the inclusion $\bL(B)\subset \sSet/B$ we may extend 
diagram~\eqref{eq:A} to a commutative square in which the bottom right hand corner is $\sSet/B$.  
From this new diagram we obtain in a standard way by left Kan extension the 
following diagram of left Quillen functors 
\[
\begin{tikzcd}
\left[\mathfrak{C}\left[N(\Delta/B)\right]^{\op},\sSet\right] \arrow[r] \arrow[d] & \left[(\Delta/B)^{\op},\sSet\right] \arrow[d] \\ 
\left[\mathfrak{C}\left[N(\Delta/B)\times J\right]^{\op},\sSet\right] \arrow[r] & \sSet/B 
\end{tikzcd} 
\]
% \[
% \xymatrix{ 
% [\mathfrak{C}[N(\Delta/B)]^{\op},\sSet] \ar[r] \ar[d] & [(\Delta/B)^{\op},\sSet] \ar[d] \\ 
% [\mathfrak{C}[N(\Delta/B)\times J]^{\op},\sSet] \ar[r] & \sSet/B } 
% \]
which commutes up to natural isomorphism, and in which 
$\sSet/B$ is equipped with the covariant model structure, and the categories 
of simplicial presheaves are equipped with the respective projective model structures.  
Observe that the functors $\mathfrak{C}[N(\Delta/B)]\to \Delta/B$ and $\mathfrak{C}[N(\Delta/B)] 
\to \mathfrak{C}[N(\Delta/B)\times J]$ are DK equivalences.  Therefore, by 
Proposition~\ref{prop:DK equiv induces Quillen equiv} above, 
the upper horizontal map and the left-hand vertical map are Quillen equivalences.    

The set $S\subset \Delta/B$ of initial vertex maps $0\colon \Delta[0]\to \Delta[n]$ induces, 
by Yoneda, corresponding sets of maps in $[\mathfrak{C}[N(\Delta/B)]^{\op},\sSet]$ 
and $[(\Delta/B)^{\op},\sSet]$; let us abusively denote these sets of maps by $S$ again.  
Likewise we obtain a set of maps $S\times \set{0,1}$ in 
$[\mathfrak{C}[N(\Delta/B)\times J]^{\op},\sSet]$ in the obvious way.  On passing to 
left Bousfield localizations we obtain the diagram of left Quillen 
functors 
\[
\begin{tikzcd} 
L_S\left[\mathfrak{C}\left[N(\Delta/B)\right]^{\op},\sSet\right] \arrow[r] \arrow[d] & L_S\left[(\Delta/B)^{\op},\sSet\right] \arrow[d] \\ 
L_{S\times \set{0,1}}\left[\mathfrak{C}\left[N(\Delta/B)\times J\right]^{\op},\sSet\right] \arrow[r] & \sSet/B 
\end{tikzcd} 
\]
% \[
% 	\xymatrix{ 
% L_S[\mathfrak{C}[N(\Delta/B)]^{\op},\sSet] \ar[r] \ar[d] & L_S[(\Delta/B)^{\op},\sSet] \ar[d] \\ 
% L_{S\times \set{0,1}}[\mathfrak{C}[N(\Delta/B)\times J]^{\op},\sSet] \ar[r] & \sSet/B } 
% \]
commuting up to natural isomorphism.  
The left Quillen functor $L_S[(\Delta/B)^{\op},\sSet]\to \sSet/B$ is induced by the 
functor $R$ given as the composite $R\colon \Delta/B\to \sSet/B\to \bL(B)$, where the second 
functor is the fibrant replacement chosen above.  Let $i\colon \Delta/B\to \sSet/B$ 
denote the inclusion; there is then a natural transformation $i\to R$ which induces a 
natural transformation between the two Quillen adjunctions 
\[
[(\Delta/B)^{\op},\sSet]\rightleftarrows \sSet/B   
\]
and which becomes a natural isomorphism at the level of homotopy categories.  It follows, 
by Theorem~\ref{thm:main}, that both Quillen adjunctions are Quillen equivalences.  Hence 
the left Quillen functor 
\[
L_{S\times \set{0,1}}[\mathfrak{C}[N(\Delta/B)\times J]^{\op},\sSet]\to \sSet/B 
\]
is a Quillen equivalence.  

Now we turn our attention to the second commutative diagram~\eqref{eq:B} above.  Again, by 
composing with the inclusion $\bL(B)\subset \sSet/B$ we obtain a commutative square in which the 
top right hand corner is $\sSet/B$.  This square induces, 
in a standard way by left Kan extension and taking localizations, the following diagram 
of left Quillen functors
\[
\begin{tikzcd} 
L_{S\times \set{0,1}}\left[\mathfrak{C}\left[N(\Delta/B)\times J\right]^{\op},\sSet\right] \arrow[r] & \sSet/B \\ 
L_S\left[\mathfrak{C}\left[N(\Delta/B)\right]^{\op},\sSet\right] \arrow[r] \arrow[u] & \left[\mathfrak{C}\left[B\right],\sSet\right] \arrow[u] 
\end{tikzcd} 
\]
% \[
% \xymatrix{ 
% L_{S\times \set{0,1}}[\mathfrak{C}[N(\Delta/B)\times J]^{\op},\sSet] \ar[r] & \sSet/B \\ 
% L_S[\mathfrak{C}[N(\Delta/B)]^{\op},\sSet] \ar[r] \ar[u] & [\mathfrak{C}[B],\sSet] \ar[u] } 
% \]
commuting up to natural isomorphism.
Again, the left hand vertical functor is a Quillen equivalence, and so to prove 
that the left Quillen functor $[\mathfrak{C}[B],\sSet]\to \sSet/B$ 
is a Quillen equivalence it suffices to prove that the lower right hand functor is a 
Quillen equivalence.  For this, observe that we may replace the set 
$S$ with the set of arrows in the subcategory $W$ of $\Delta/B$ consisting of the 
maps $u\colon \Delta[m]\to \Delta[n]$ in $\Delta/B$ such that 
$u(m) = n$; we then have an equality 
\[
L_S[\mathfrak{C}[N(\Delta/B)],\sSet] = 
L_W[\mathfrak{C}[N(\Delta/B)],\sSet]
\]
of model structures.  The desired statement then follows from 
a combination of Theorem~\ref{thm:Joyals flattening thm}, 
Proposition~\ref{prop:DK simp loc and Lurie simp loc}  
and Theorem~\ref{thm:DK2 Thm 2.2} above.  
\end{proof}  

\begin{remark}
\label{rem:simple straightening thm}
It follows from the proof above that if we take as the map $\phi$
the canonical functor $NC^{\op}\to \sNerve\bL(NC)$ 
described in Remark~\ref{rem:left fibn map for NC} above, we obtain 
a Quillen equivalence as in Theorem~\ref{thm:reversed straightening} above.  
Using the fact that $\mathfrak{C}[NC]\to C$ is a DK equivalence and 
Theorem~\ref{thm:DK2 Thm 2.2} we see that the Quillen adjunction 
\[
[C,\sSet]\rightleftarrows \sSet/NC 
\] 
induced by the canonical functor $C^{\op}\to \bL(NC)\subset \sSet/NC$ is 
a Quillen equivalence (or one may prove this directly).  
\end{remark}

\subsection{The straightening theorem for categories} 
\label{subsec:straightening for categories} In this section we prove 
the straightening theorem in the special case where the base is the nerve of a category.  

\begin{proposition}
Let $C$ be a small category.  Then the Quillen adjunction 
\[
\St_{NC}\colon \sSet/NC \rightleftarrows [\mathfrak{C}[NC],\sSet]\colon \Un_{NC} 
\]
is a Quillen equivalence.  
\end{proposition}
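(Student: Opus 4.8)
The plan is to trade the target $[\mathfrak{C}[NC],\sSet]$ for $[C,\sSet]$ and then match the straightening functor with the machinery already developed in Sections~\ref{sec:main} and~\ref{sec:localization}. By Proposition~\ref{prop:Riehl} the canonical map $\pi_0\colon\mathfrak{C}[NC]\to C$ is a DK-equivalence, so by Proposition~\ref{prop:DK equiv induces Quillen equiv} the adjunction $((\pi_0)_!,\pi_0^*)$ between $[\mathfrak{C}[NC],\sSet]$ and $[C,\sSet]$ is a Quillen equivalence for the projective model structures. By the two-out-of-three property for Quillen equivalences it therefore suffices to prove that the composite left Quillen functor $T:=(\pi_0)_!\circ\St_{NC}\colon\sSet/NC\to[C,\sSet]$ is a Quillen equivalence.

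First I would build an \emph{a priori unrelated} Quillen equivalence $\sSet/NC\xrightarrow{\ \sim\ }[C,\sSet]$ out of the results already in hand. Let $W\subset\Delta/NC$ be the wide subcategory of initial vertex maps and let $\iota\colon(\Delta/NC)^{\op}\to C$ be the first-vertex functor, $\iota(\sigma)=\sigma(0)$; one checks directly that $\iota$ carries the image $W^{\op}$ of $W$ to identities, so it induces a left Quillen functor $\iota_!\colon L_W[(\Delta/NC)^{\op},\sSet]\to[C,\sSet]$ on the localized projective structures. I claim $\iota_!$ is a Quillen equivalence: by Theorem~\ref{thm:DK2 Thm 2.2} it is enough that $L^S((\Delta/NC)^{\op},W^{\op})\to L^S(C,\varnothing)=C$ be a DK-equivalence, and this follows from Proposition~\ref{thm: comparison C[B] and deltaB} (identifying the hammock localization $L^H(\Delta/NC,W)$ with $\mathfrak{C}[NC]^{\op}$ in $\ho(\SCat)$), the comparison of $L^H$ and $L^S$ recalled in Section~\ref{subsec:simp loc} together with Proposition~\ref{prop:DK simp loc and Lurie simp loc}, and Proposition~\ref{prop:Riehl} (so $L^H(\Delta/NC,W)\simeq\mathfrak{C}[NC]^{\op}\simeq C^{\op}$, whence $L^S((\Delta/NC)^{\op},W^{\op})\simeq C$). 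Composing with the Quillen equivalence $s_!\colon\sSet/NC\rightleftarrows L_W[(\Delta/NC)^{\op},\sSet]\colon s^!$ of Theorem~\ref{thm:QE for tildeQ} gives the desired equivalence $T':=\iota_!s_!\colon\sSet/NC\to[C,\sSet]$.

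It then remains to identify $T$ with $T'$ up to natural weak equivalence of left derived functors; granting this, $T$ is a Quillen equivalence and we are done. Both $T$ and $T'$ are left Quillen, hence (every object of $\sSet/NC$ being cofibrant) send covariant equivalences to weak equivalences and commute with homotopy colimits; and $\sSet/NC$ is generated under homotopy colimits by the representables $\sigma\colon\Delta[n]\to NC$ --- made precise either through Dugger's universal-homotopy-theory description of $L_W[(\Delta/NC)^{\op},\sSet]\simeq\sSet/NC$ (Theorem~\ref{thm:main}, \cite{Dugger}) or through the simplicial-replacement map $ds(X)\to X$ of Proposition~\ref{prop:dtildeQX to X initial}. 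So it suffices to compare $T$ and $T'$ on a representable $\sigma$ and to see the comparison is natural in $\sigma$. On one side, naturality of straightening in the base (Proposition 2.2.1.1 of \cite{HTT}) gives $\St_{NC}(\sigma)\cong\mathfrak{C}[\sigma]_!\,\St_{\Delta[n]}(\mathrm{id}_{\Delta[n]})$, and the computation $1\star\Delta[n]\cong\Delta[n+1]$ shows $\St_{\Delta[n]}(\mathrm{id})$ is projectively equivalent to the corepresentable at the initial vertex, so $T(\sigma)\simeq C(\sigma(0),-)$; on the other side $T'(\sigma)=\iota_!s_!(\sigma)$ is, by the universal property of $\iota_!$ and Proposition~\ref{prop:dtildeQX to X initial}, equivalent to $C(\iota(\sigma),-)=C(\sigma(0),-)$. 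Assembling these pointwise equivalences into a natural transformation $\mathbf{L}T\Rightarrow\mathbf{L}T'$ --- which amounts to pinning down the opposite-category bookkeeping above together with the $\mathfrak{C}[-]$-combinatorics entering the representable computation of $\St$ --- is the one genuinely technical step; everything else is formal. (An alternative to this last step, available once one knows that $\mathbf{L}\St_{NC}(X)$ evaluated at a vertex $b$ computes the fibre over $b$ of a fibrant replacement of $X$, is to identify $\mathbf{L}T$ directly with $\mathbf{R}\phi^!$ from Theorem~\ref{thm:reversed straightening}, using the maps $f_!\colon NC_{b/}\to NC_{a/}$ of Section~\ref{subsec:cov model str} as the transport data; but that objectwise description of $\St$ is close to the straightening theorem itself, so I would favour the route above.)
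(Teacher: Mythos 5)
Your overall architecture is sound and genuinely different from the paper's (which runs everything through Theorem~\ref{thm:reversed straightening} via Remark~\ref{rem:simple straightening thm} and then shows $\St_\psi\circ u$ is naturally isomorphic to the identity of $[C,\sSet]$ at the level of homotopy categories). But your proof has a real gap exactly at its central step: you never construct the comparison between $T=(\pi_0)_!\circ\St_{NC}$ and $T'=\iota_!\circ s_!$, and the step you defer as ``bookkeeping'' is where all the content lies. Knowing that two left Quillen functors take weakly equivalent values on representables does not by itself identify their derived functors; you need an honest natural transformation (or zig-zag) defined on a dense subcategory, and the pointwise identifications you propose are \emph{not} strictly natural in $\sigma\in\Delta/NC$. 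Concretely, for a map $u\colon\sigma\to\sigma'$ in $\Delta/NC$ the initial vertex is not preserved unless $u\in W$: the two ways around the naturality square for $C(\sigma(0),-)\to(\pi_0)_!\St_{NC}(-)$ go through the vertex $u(0)$ of $\sigma'$ on one side and through the vertex $0$ of $\sigma'$ composed with the edge $\sigma'(0)\to\sigma'(u(0))$ on the other, and in $\St_{NC}$ these differ (they are only connected by the action of that edge, i.e.\ by a word-versus-composite discrepancy in $\mathfrak{C}[NC]$). This is precisely the difficulty the paper's proof is engineered to avoid: it compares $\St_\psi\circ u$ with the identity of $[C,\sSet]$ on the corepresentables $c\mapsto NC_{c/}$, where the canonical maps $\St_{NC}(NC_{c/})\to\mathfrak{C}[NC](c,-)$ become strictly natural in $c$ after applying $\psi_!$ (the discrepancy is killed because $\psi$ composes words), and then extends along colimits of representables. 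Your alternative parenthetical route through the objectwise description of $\mathbf{L}\St_{NC}$ is, as you say yourself, essentially assuming the straightening theorem.

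A secondary, repairable issue: to invoke Theorem~\ref{thm:DK2 Thm 2.2} for $\iota\colon((\Delta/NC)^{\op},W^{\op})\to(C,\mathrm{ids})$ you need the \emph{map} $L^S((\Delta/NC)^{\op},W^{\op})\to C$ induced by $\iota$ to be a DK-equivalence, whereas your chain of identifications only produces an abstract equivalence of the two simplicial categories. This can be fixed without changing your plan: by Remark~\ref{rem:dual flattening thm} the map $q_{NC}=N\iota$ exhibits $NC$ as the localization of $N(\Delta/NC)^{\op}$ at the initial vertex maps, and then Lemma~\ref{lem:comparison of L(X,W)}, Proposition~\ref{prop:DK simp loc and Lurie simp loc} and Proposition~\ref{prop:Riehl} identify the induced map on simplicial localizations, exactly as in the last step of the paper's proof of Theorem~\ref{thm:reversed straightening}. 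But until the comparison $\mathbf{L}T\simeq\mathbf{L}T'$ is actually produced (strictly natural, or via a homotopy-coherent replacement), the proposal does not yet prove the proposition.
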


\begin{proof} 
By Theorem~\ref{thm:reversed straightening} it suffices to prove that the composite 
functor 
\[
[\mathfrak{C}[NC],\sSet] \xrightarrow{\phi_!} \sSet/NC 
\xrightarrow{\St_{NC}} [\mathfrak{C}[NC],\sSet] 
\]
is the left adjoint in a Quillen equivalence.  Since the functor 
$\phi_!\colon [\mathfrak{C}[NC],\sSet] \to \sSet/NC$ factors as 
$[\mathfrak{C}[NC],\sSet]\to [C,\sSet]\xrightarrow{u} \sSet/NC$, it suffices by 
Remark~\ref{rem:simple straightening thm} to prove that the composite 
functor 
\[
[C,\sSet]\xrightarrow{u} \sSet/NC \xrightarrow{\St_{NC}} [\mathfrak{C}[NC],\sSet]
\]
is the left adjoint in a Quillen equivalence.  Composing with the 
DK-equivalence $\psi\colon \mathfrak{C}[NC]\to C$ (see Proposition~\ref{prop:Riehl}), 
we see that it suffices (by Proposition~\ref{prop:DK equiv induces Quillen equiv} 
and Proposition 2.2.1.1 of \cite{HTT}) 
to prove that the 
composite 
\[
[C,\sSet]\xrightarrow{u} \sSet/NC\xrightarrow{\St_\psi} [C,\sSet] 
\]
is the left adjoint in a Quillen equivalence.  Let $c$ be an 
object of $C$ and let $y(c)$ be the (discrete) representable 
simplicial presheaf associated to $c$.  The image of $y(c)$ 
under $u$ is the left fibration $NC_{c/}$.  
The canonical map $1\star NC_{c/}\to NC$ sends the cone point 
to $c$; it induces a map 
\[
f\colon \St_{NC}(NC_{c/})(-)\to \mathfrak{C}[NC](c,-) 
\]  
of simplicial presheaves.  The canonical map $\mathrm{id}\colon \set{c}\to NC_{c/}$ 
induces a projective weak equivalence 
$\St_{NC}(\mathrm{id})\colon \St_{NC}(\set{c})(-)\to \St_{NC}(NC_{c/})(-)$ 
and the composite $f\circ \St_{NC}(\mathrm{id})$
is an isomorphism.  Therefore the map $f$ is a projective 
weak equivalence.  Applying the functor $\psi_!$ and composing with the 
DK equivalence $\phi$ gives a projective weak equivalence  
\[
\St_{\psi}(NC_{c/})(-)\to C(c,-) 
\]
in $[C,\sSet]$.  Moreover this map is natural in $c$.  We obtain 
therefore a natural transformation from the composite map 
$\St_{\psi}\circ u$ to the identity map on $[C,\sSet]$.   
The components of this natural transformation are projective weak 
equivalences (it suffices by results of \cite{Dugger} to check this on representables, which is the 
statement above) and therefore it follows that it defines a natural isomorphism at 
the level of homotopy categories.  Therefore, the composite 
$\St_{\psi}\circ u$ above is an equivalence at the level of 
homotopy categories, since it is naturally isomorphic to the 
identity functor.       
\end{proof}

\subsection{Proof of the straightening theorem}
\label{subsec:proof of straightening thm} 
In this section we give the details of the proof of the straightening theorem 
sketched in Section~\ref{subsec:intro to straightening} above.  
Recall that the 
following diagram of left Quillen functors commutes up 
to a natural isomorphism: 
\[
\begin{tikzcd} 
\sSet/N(\Delta/B) \arrow[d,"p_!"'] \arrow[rr,"{\St}_{N(\Delta/B)}"] 
& & \left[\mathfrak{C}\left[N(\Delta/B)\right],\sSet\right] \arrow[d,"{\mathfrak{C}\left[p\right]}_!"] \\ 
\sSet/B \arrow[rr,"{\St}_B"'] & & \left[\mathfrak{C}\left[B\right],\sSet\right] 
\end{tikzcd} 
\]
% \[
% \xymatrix{ 
% \sSet/N(\Delta/B) \ar[d]_-{p_!} \ar[rr]^-{\St_{N(\Delta/B)}} 
% & & [\mathfrak{C}[N(\Delta/B)],\sSet] \ar[d]^-{\mathfrak{C}[p]_!} \\ 
% \sSet/B \ar[rr]_-{\St_B} & & [\mathfrak{C}[B],\sSet] } 
% \]
Our aim is to prove that the lower horizontal map induces an equivalence 
of homotopy categories.  

Let $S\subset \Delta/B$ denote the set of maps induced by the final vertex 
maps $n\colon [0]\to [n]$; let $S$ also denote the corresponding 
set of maps in $\mathfrak{C}[N(\Delta/B)]$.  
By Remark~\ref{rem:surjectivity} and Theorem~\ref{thm:Joyals flattening thm} the induced left 
Quillen functor $p_!\colon L_S\sSet/N(\Delta/B)\to \sSet/B$ is a left Quillen 
equivalence.  By Proposition~\ref{subsec:straightening for categories} 
above, it suffices to prove that the induced Quillen adjunction 
\[
\mathfrak{C}[p]_!\colon L_S[\mathfrak{C}[N(\Delta/B)],\sSet]\rightleftarrows 
[\mathfrak{C}[B],\sSet]\colon \mathfrak{C}[p]^* 
\]
is a Quillen equivalence.  But this has been proven above in the 
last step of the proof of Theorem~\ref{thm:reversed straightening} 
above.  

\bigskip

\noindent
{\bf Acknowledgement}: I am extremely grateful to an anonymous referee for a 
careful reading of the first version of this paper and for pointing out 
some errors and making many useful suggestions.

\end{document}